\documentclass[11pt, reqno]{amsart}
\usepackage{amssymb,latexsym,amsmath,amsfonts,mathdots,enumitem}
\usepackage{latexsym}
\usepackage[mathscr]{eucal}
\usepackage{soul,xcolor}
\usepackage{colortbl,xcolor}
\usepackage{lmodern}
\usepackage{sansmathaccent}
\usepackage[utf8]{inputenc}
\usepackage{tikz}
\usetikzlibrary{shapes,arrows}
\usetikzlibrary{matrix,calc,shapes,arrows,positioning}
%\pdfmapfile{+sansmathaccent.map}

\newcommand{\rvline}{\hspace*{-\arraycolsep}\vline\hspace*{-\arraycolsep}}
\voffset = -54pt \hoffset = -80pt \textwidth = 17cm
\textheight = 23cm

\numberwithin{equation}{section}
% THEOREM Environments ---------------------------------------------------
\theoremstyle{plain}

\newtheorem{thm}{Theorem}[section]

\newtheorem{cor}[thm]{Corollary}
\newtheorem{lem}[thm]{Lemma}
\newtheorem{prop}[thm]{Proposition}
\theoremstyle{definition}
\newtheorem{defn}[thm]{Definition}
\newtheorem{rem}[thm]{Remark}

%characters
\numberwithin{equation}{section}
%%%%%%%%%%%%%%%%%%%%%%%%%%%%%%%%%%%%%%%%%%%%%%%%%%%%%%%%%%%%%%%%%%%%%%%
% MATH -------------------------------------------------------------------
\def\A{{\mathcal A}}

\def\Q{\mathbb{Q}}

\def\a{\alpha}
\def\b{\beta}

\def\Re{\operatorname{Re}}

\def\d{\sum}

\def\beq{\begin{eqnarray}}
\def\eeq{\end{eqnarray}}
\def\beqa{\begin{eqnarray*}}
\def\eeqa{\end{eqnarray*}}

% End TeXmacs macros
%\renewcommand{\baselinestretch}{1.25}

% D Yakubovich, key sequences:

\def\beqn{\begin{equation}}
\def\eeqn{\end{equation}}

\def\mg#1{}

\renewcommand{\epsilon}{\varepsilon}
\renewcommand{\phi}{\varphi}

\begin{document}
 %%%%%%%%%%%%%%%%%%%%%%%%%%%%%%%%%%%%%%%%%%%%%%%%%%%%%%%%%%%%%%%%%%%%%%%%
\title[Function Theory and necessary conditions for a Schwarz lemma]{Function Theory and necessary conditions for a Schwarz lemma related to $\mu$-Synthesis Domains}
\author{Dinesh Kumar Keshari, Shubhankar Mandal and Avijit Pal}
%\vskip-1cm
\address[ D. K. Keshari]{School of Mathematical Sciences, National Institute of Science Education and Research Bhubaneswar, An OCC of Homi Bhabha National Institute, Jatni, Khurda,  Odisha-752050, India}
\email{dinesh@niser.ac.in}
\address[S. Mandal]{Department of Mathematics, IIT Bhilai, 6th Lane Road, Jevra, Chhattisgarh 491002}
\email{S. Mandal:shubhankarm@iitbhilai.ac.in}

\address[A. Pal]{Department of Mathematics, IIT Bhilai, 6th Lane Road, Jevra, Chhattisgarh 491002}
\email{A. Pal:avijit@iitbhilai.ac.in}

\subjclass[2010]{30C80, 32A38, 47A13, 47A48, 47A56.}

\keywords{$\mu$-synthesis, Generalized tetrablock, Realization formula, Simply connected, Polynomially convex, Distinguished boundary, Schwarz lemma}

\begin{abstract}
  
%A subset of $\mathbb C^7$ (respectively, of $\mathbb{C}^5$) associated with the structured singular value of $\mu$ %defined on $3\times 3$ matrices is denoted as $G_{E(3;3;1,1,1)}$ (respectively, as $G_{E(3;2;1,2)}$). In control %engineering, $\mu$ serves to analyze the robustness and performance of linear feedback systems. We characterise %the domain $G_{E(3;3;1,1,1)}$  and its closure $\Gamma_{E(3;3;1,1,1)}$. We use  realization formulas to describe %%$G_{E(3;3;1,1,1)}$ and $\Gamma_{E(3;3;1,1,1)}$. The domain $G_{E(3;3;1,1,1)}$ and  its closure $%%%\Gamma_{E(3;3;1,1,1)}$ are not circular and are non-convex. However, they are simply connected. We give an %alternative proof of polynomial and linearly convexity for the domain $\Gamma_{E(3;3;1,1,1)}$. We establish  %necessary conditions for Schwarz lemma for the domains $G_{E(3;3;1,1,1)}$ and $G_{E(3;2;1,2)}$. We also describe %the relationship between the domains $G_{E(3;2;1,2)}$ and $G_{E(3;3;1,1,1)}$ as well as between their closed %boundaries.

A subset of $\mathbb{C}^7$ (respectively, of $\mathbb{C}^5$) associated with the structured singular value $\mu_E$, defined on $3 \times 3$ matrices, is denoted by $G_{E(3;3;1,1,1)}$ (respectively, by $G_{E(3;2;1,2)}$). In control engineering, the structured singular value $\mu_E$ plays a crucial role in analyzing the robustness and performance of linear feedback systems.

We characterize the domain $G_{E(3;3;1,1,1)}$ and its closure $\Gamma_{E(3;3;1,1,1)}$, and employ realization formulas to describe both. The domain $G_{E(3;3;1,1,1)}$ and its closure are neither circular nor convex; however, they are simply connected. We provide an alternative proof of the polynomial and linear convexity of  $\Gamma_{E(3;3;1,1,1)}$. Furthermore, we establish necessary conditions for a Schwarz lemma on the domains $G_{E(3;3;1,1,1)}$ and $G_{E(3;2;1,2)}$, and describe the relationships between these two domains as well as between their closed boundaries.
\end{abstract}
\maketitle
\vskip-.5cm

%%%%%%%%%%%%%%%%%%%%%%%%%%%%%%%%%%%%%%%%%%%%%%%%%%%%%%%%%%%%%%%%%%%%%%%%%%%
\section{{\bf{Introduction}}}

Let  $\Omega_1,\Omega_2\subset \mathbb C^n$ be domains in $\mathbb C^n.$  Let $\mathcal{O}\left(\Omega_{1}, \Omega_{2}\right)$  be the set of all holomorphic functions from $\Omega_{1} $ to $\Omega_{2}$.
We recall the definition of $\mu_{E}$  from \cite{aj}. 
%The $\mu$-synthesis problem, a key problem that emerges in $H^{\infty}$ control theory. \cite{ds} J. C. Doyle and G. Stein established the structured singular value of a matrix for the first time in the 1980s. The approach he takes is centred on the introduction of the structured unique value, which is a function defined on matrices. 
Let $\mathcal M_{n\times n}(\mathbb{C})$ be the set of all $n\times n$ complex matrices and  $E$ be a linear subspace of $\mathcal M_{n\times n}(\mathbb{C}).$ We define the function $\mu_{E}: \mathcal M_{n\times n}(\mathbb{C}) \to [0,\infty)$ as follows:
\begin{equation}\label{mu}
\mu_{E}(A):=\frac{1}{\inf\{\|X\|: \,\ \det(1-AX)=0,\,\, X\in E\}},\;\; A\in \mathcal M_{n\times n}(\mathbb{C})
	\end{equation}
with the understanding that $\mu_{E}(A):=0$ if $1-AX$ is  nonsingular for all $X\in E.$  Here $\|\cdot\|$ denotes the operator norm.
%\textbf{The two-by-two spectral Nevanlinna-Pick problem:} Suppose $\alpha_1,\dots,\alpha_n$ are the distinct points in $ \mathbb{D}$ and $A_1,\dots,A_n\in \mathbb{C}^{2\times 2}$, does there is an analytic function $S:\mathbb{D}\rightarrow \mathbb{C}^{2\times 2}$ such that $S(\alpha_i)=A_i$, $i=1,\dots,n$ and $r(S(z))\leq1$ for all $z\in \mathbb{D}$.
%\newline Recall that $r$ is the spectral radius of a matrix $A\in \mathbb{C}^{2\times2}$ and it is  defined as follows
%$$r(A) = \operatorname{max}\{|\lambda_i| : \lambda_i\,\, \text{are the eigenvalues of}\,\,A\}.$$

We recall the definition of generalised tetrablock from \cite{Pawel}. Let us consider positive integers $n\geq 2,s\leq n$ and $r_1,\ldots,r_s$ with $\sum_{i=1}^{s}r_i=n.$ Let
$$A(r_1,\ldots,r_s):=\{0,1,2,\ldots,r_1\}\times\{0,1,2,\ldots,r_2\}\times \ldots \times\{0,1,2,\ldots,r_s\}\setminus\{(0,\ldots,0)\}.$$ On the set $A(r_1,\ldots,r_s)$ we introduce the following order: \\for $\alpha=(\alpha_{1},\dots,\alpha_{s}),\beta=(\beta_{1},\dots\beta_{s})\in A(r_1,\dots,r_s)$ we say that $$\alpha<\beta \,\, \text{if and only if}\,\, \alpha_{j_{o}}<\beta_{j_{o}}, \text{where} \,\, j_{o}=\operatorname{max}\{j:\alpha_{j}\neq\beta_{j}\}.$$ 
 For $\beta=(\beta_1,\ldots,\beta_s)\in A(r_1,\ldots,r_s)$ and $\textbf{z}=(z_1,\ldots,z_s)\in \mathbb C^s$, we set $|\beta|=\sum_{j=1}^{s}\beta_{j}$ and $\textbf{z}^{\beta}:=z_1^{\beta_1}\ldots z_s^{\beta_s}.$ We arrange the elements of $A(r_1,\ldots,r_s)$ in ascending order, namely, $\alpha^{(1)}<\dots<\alpha^{(N)},$ where  $N:=\prod _{j=1}^{s}(r_j+1)-1.$ For ${\textbf{x}=(x_{1},\dots,x_{N})\in \mathbb{C}^{N}}$ and ${\bold z=(z_{1},\dots,z_{s})\in \mathbb{C}^{s}},$ we define 
	  \begin{equation}\label{Rz1}
	  	R^{(n;s;r_1,\ldots,r_s)}_{\textbf{x}}(\bold z):= 1+\sum_{j=1}^{N} (-1) ^{|\alpha^{(j)}|}x_{j}{\textbf z}^{\alpha^{(j)}}
	  \end{equation}
and for $r>0$, $$\mathcal  A_{(n;s;r_1,\ldots,r_s)}^{(r)}:=\{(x_1,\ldots,x_N)\in \mathbb C^N:R^{(n;s;r_1,\ldots,r_s)}_{\textbf{x}}(\bold z)\neq 0~{\rm{for~all}}~\bold z\in r{\bar{\mathbb D}}^s\}$$ and 
$$\mathcal  B_{(n;s;r_1,\ldots,r_s)}^{(r)}:=\{(x_1,\ldots,x_N)\in \mathbb C^N:R^{(n;s;r_1,\ldots,r_s)}_{\textbf{x}}(\bold z)\neq 0~{\rm{for~all}}~\bold z\in r{\mathbb D}^s\}.$$We refer to the set $\mathcal  A_{(n;s;r_1,\ldots,r_s)}^{(1)}$ as generalized tetrablock. %We now establish a relationship between  $\mathcal A_{(n;s;r_1,\ldots,r_s)}^{(r)}$ and the domain associated with $\mu$-synthesis.

Let  $E(n;s;r_{1},\dots,r_{s})\subset \mathcal M_{n\times n}(\mathbb{C})$ be the vector subspace  consisting of  block diagonal matrices, namely,
\begin{equation}\label{ls}
    	E=E(n;s;r_{1},\dots,r_{s}):=\{\operatorname{diag}[z_{1}I_{r_{1}},\ldots,z_{s}I_{r_{s}}]\in \mathcal M_{n\times n}(\mathbb{C}): z_{1},\ldots,z_{s}\in \mathbb{C}\},
\end{equation}
 where $\sum_{i=1}^{s}r_i=n.$ Let $$\Omega^{(r)}_{E(n;s;r_{1},\dots,r_{s})}:=\{A\in \mathcal M_{n\times n}(\mathbb{C}):\mu_{E(n;s;r_{1},\dots,r_{s})}(A)<\frac{1}{r}\}.$$ Let $$\mathcal I^{j}:=\{(i_1,\ldots,i_j)\in {\mathbb N}^j:1\leq i_1<\ldots<i_j\leq n, j\leq n\}.$$ For every $\alpha\in A(r_1,\ldots,r_s)$, we define $I^{|\alpha|}_{\alpha}$ as follows:
   \begin{align}
 \mathcal I^{|\alpha|}_{\alpha}:\nonumber&=\{(i_1,\ldots,i_{|\alpha|})\in \mathcal I^{|\alpha|}:r_1+\ldots+r_{j-1}+1\leq i_{(\alpha_1+\ldots+\alpha_{j-1}+1)}\\&<\ldots <i_{(\alpha_1+\ldots+\alpha_{j})}\leq r_1+\ldots+r_{j}, j=1,\ldots,s\},\end{align} where $r_0:=0$ and $\alpha_0=0.$ Note that $\cup_{j=1}^{n}\mathcal I^{j}=\cup_{j=1}^{N}\mathcal I_{\alpha^{(j)}}^{|\alpha^{(j)}|}$ and $\mathcal  I_{\alpha}^{|\alpha|}\cap \mathcal I_{\beta}^{|\beta|}=\emptyset$ for $\alpha \neq \beta.$ For $I\in \mathcal I^{j}$ and $A\in \mathcal M_{n\times n}(\mathbb{C}),$ let $A_{I}$ denotes the $j\times j$ submatrix of $A$ whose rows and columns are indexed by $I.$ We define a polynomial map $\pi_{E(n;s;r_{1},\dots,r_{s})}: \mathcal M_{n\times n}(\mathbb{C}) \to \mathbb C^N$ as follows:
   $$\pi_{E(n;s;r_{1},\dots,r_{s})}(A):=\big(\sum_{I\in \mathcal I_{\alpha^{(1)}}^{|\alpha^{(1)}|}} \det A_I,\ldots,\sum_{I\in \mathcal I_{\alpha^{(N)}}^{|\alpha^{(N)}|}} \det A_I\big).$$ Define 
  $$G_{E{(n;s;r_1,\ldots,r_s)}}^{(r)}:=\{\pi_{E(n;s;r_{1},\dots,r_{s})}(A):\mu_{E(n;s;r_{1},\dots,r_{s})}(A)<\frac{1}{r}\}$$ and 
   $$\Gamma_{E{(n;s;r_1,\ldots,r_s)}}^{(r)}:=\{\pi_{E(n;s;r_{1},\dots,r_{s})}(A):\mu_{E(n;s;r_{1},\dots,r_{s})}(A)\leq\frac{1}{r}\}.$$   
Observe that $G_{E{(n;s;r_1,\ldots,r_s)}}^{(r)}$ is an open set and $\Gamma_{E{(n;s;r_1,\ldots,r_s)}}^{(r)}$ is the closure of $G_{E{(n;s;r_1,\ldots,r_s)}}^{(r)}.$ Here we use the notation  $G_{E{(n;s;r_1,\ldots,r_s)}}^{(1)}=G_{E{(n;s;r_1,\ldots,r_s)}}$ and $\Gamma_{E{(n;s;r_1,\ldots,r_s)}}^{(1)}=\Gamma_{E{(n;s;r_1,\ldots,r_s)}}.$ The above definitions was proposed by P. Zapalowski  \cite{Pawel}. However, if $$E=E(n;1;n)=\{zI_n\in \mathcal M_{n\times n}(\mathbb{C}):z\in \mathbb{C}\},$$ then the domain associated with this $\mu_{E}$-synthesis problem represents  \textit{symmetrized polydisc} \cite{Ccostara}. In particular, for $n=2$ the associated domain is \textit{symmetrized bidisc} \cite{JAY, JANY,JAgler,ay,ay1}. Moreover, note that if $$E=E(n;2;1,n-1)=\{\operatorname{diag}(z_1,z_2I_{n-1})\in \mathcal M_{n\times n}(\mathbb{C}):z_1,z_2\in \mathbb{C}\},$$ then the domain connected with this $\mu_{E}$-synthesis problem is $\mu_{1,n}$-\textit{quotients} \cite{bha}. The domain tetrablock, however, is represented by the $\mu_{1,2}$-\textit{quotients}. If $$E=E(3;3,1,1,1)=\{\operatorname{diag}(z_1,z_2,z_3)\in \mathcal M_{3\times 3}(\mathbb{C}):z_1,z_2,z_3\in \mathbb{C}\},$$ then from Proposition $3.3$ \cite{Pawel} we have
\begin{align}\label {G}G_{E{(3;3;1,1,1)}}^{(r)}:\nonumber &=\{\textbf{w}=(w_1=a_{33},w_2=a_{22},w_3=w_1w_2-a_{23}a_{32},w_4=a_{11}, w_5=w_1w_4-a_{13}a_{31}, \\&w_6=w_2w_4-a_{12}a_{21}, w_7=\det A): A\in \mathcal M_{3\times 3}(\mathbb C) ~{\rm{and }}~\mu_{E(3;3;1,1,1)}(A)<\frac{1}{r}\}.\end{align}

The  following lemma is  simple to ascertain. Consequently, we omit the proof.
\begin{lem}\label{AAB}
			Suppose $A\in \mathcal M_{3\times 3}(\mathbb C).$ Then $$a_{ii}=0~{\rm{for}}~i=1,2,3, a_{11}a_{22}-a_{12}a_{21}=0,a_{11}a_{33}-a_{13}a_{31}=0,\\a_{22}a_{33}-a_{23}a_{32}=0,~{\rm{and}}\operatorname{det}A=0$$ if and only if one of the following condition holds:
			\begin{enumerate}
				\item $a_{ii}=0$ for $i=1,2,3$, $a_{21}=0=a_{13}=a_{23},a_{12}\neq 0,a_{31}\neq 0$ and $a_{32}\neq 0.$
				
				\item $a_{ii}=0$ for $i=1,2,3$, $a_{12}=0=a_{31}=a_{32},a_{21}\neq 0,a_{13}\neq 0$ and $a_{23}\neq 0.$
				
				\item $a_{ii}=0$ for $i=1,2,3$, $a_{21}=0=a_{31}=a_{32},a_{12}\neq 0,a_{13}\neq 0$ and $a_{23}\neq 0.$
				
				\item $a_{ii}=0$ for $i=1,2,3$, $a_{12}=0=a_{13}=a_{23},a_{21}\neq 0,a_{31}\neq 0$ and $a_{32}\neq 0.$
				
				\item $a_{ii}=0$ for $i=1,2,3$, $a_{21}=0=a_{31}=a_{23},a_{12}\neq 0,a_{13}\neq 0$ and $a_{32}\neq 0.$
				
				\item $a_{ii}=0$ for $i=1,2,3$, $a_{12}=0=a_{13}=a_{32},a_{21}\neq 0,a_{31}\neq 0$ and $a_{23}\neq 0.$
				
				\item $a_{ii}=0$ for $i=1,2,3$, $a_{21}=a_{12}=0=a_{31}=a_{32},a_{13}\neq 0$ and $a_{23}\neq 0.$
				
				\item $a_{ii}=0$ for $i=1,2,3$, $a_{21}=a_{12}=0=a_{31}=a_{23},a_{13}\neq 0$ and $a_{32}\neq 0.$
				
				\item  $a_{ii}=0$ for $i=1,2,3$, $a_{21}=a_{12}=0=a_{31}=a_{23}=a_{13}$ and $a_{32}\neq 0.$
				
				\item  $a_{ii}=0$ for $i=1,2,3$, $a_{21}=a_{12}=0=a_{31}=a_{32}=a_{13}$ and $a_{23}\neq 0.$
				
				\item $a_{ij}=0$ for all $i,j=1,2,3.$
			\end{enumerate}
			
		\end{lem}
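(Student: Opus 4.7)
My plan is to reduce the hypothesis to a compact combinatorial condition on the off-diagonal support of $A$ and then read off the admissible configurations. First, since $a_{ii}=0$ for $i=1,2,3$, each of the three equalities $a_{ii}a_{jj}-a_{ij}a_{ji}=0$ collapses to $a_{ij}a_{ji}=0$, and the cofactor expansion of the determinant reduces to
\[
\det A \;=\; a_{12}a_{23}a_{31} + a_{13}a_{21}a_{32}.
\]
Hence the hypothesis is equivalent to the \emph{pair conditions} $a_{12}a_{21}=a_{13}a_{31}=a_{23}a_{32}=0$ together with $a_{12}a_{23}a_{31}+a_{13}a_{21}a_{32}=0$.

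The key observation is that the two cubic monomials involve disjoint sets of off-diagonal entries: the first corresponds to the directed cycle $1\to 2\to 3\to 1$ and the second to $1\to 3\to 2\to 1$, and together they exhaust all six off-diagonal positions. If both monomials were nonzero, every pair $(a_{ij},a_{ji})$ would have both members nonzero, contradicting the pair conditions. So at least one of the two monomials already vanishes, and then from the sum being zero both must vanish. The hypothesis is therefore equivalent to the pair conditions together with
\[
a_{12}a_{23}a_{31}=0 \qquad\text{and}\qquad a_{13}a_{21}a_{32}=0.
\]

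The \emph{if} direction is then a routine check: in each of the listed patterns every two-term product in the pair conditions vanishes entry-by-entry, and each of the two directed $3$-cycles contains at least one zero edge, so the cubic conditions hold as well. For the \emph{only if} direction I would enumerate the admissible ``support patterns'' $S:=\{(i,j)\colon i\ne j,\ a_{ij}\ne 0\}$: the pair conditions force $S$ to contain at most one entry from each of the three transposition pairs $\{(1,2),(2,1)\}$, $\{(1,3),(3,1)\}$, $\{(2,3),(3,2)\}$, and the cubic conditions forbid $S$ from containing either of the two directed $3$-cycles. Grouping the admissible $S$ by $|S|$ and reading off the corresponding patterns on $A$ yields the cases in the statement.

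The only real obstacle is the bookkeeping. Viewing each admissible $S$ as the edge set of a loopless oriented graph on $\{1,2,3\}$ with neither digon nor directed triangle makes the enumeration mechanical; the split into the size-three, size-two, size-one, and empty supports reproduces the grouping into cases~(1)--(6), (7)--(8), (9)--(10), and (11).
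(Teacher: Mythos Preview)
The paper omits the proof entirely (``simple to ascertain''), so there is no argument to compare against.  Your reduction is correct and clean: with zero diagonal the three $2\times 2$ principal minors become the pair conditions $a_{12}a_{21}=a_{13}a_{31}=a_{23}a_{32}=0$, the determinant collapses to $a_{12}a_{23}a_{31}+a_{13}a_{21}a_{32}$, and the pair conditions force both cubic monomials to vanish separately.  Encoding the support $S=\{(i,j):a_{ij}\neq 0\}$ as an oriented graph with no digons and no directed triangle is the right picture.

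The gap is in your last sentence.  For $|S|=3$ your count is correct: eight selections, two forbidden triangles removed, six survivors, and these are exactly cases~(1)--(6).  But for $|S|=2$ every selection of one edge from each of two transposition pairs is admissible (a two-edge set cannot contain a three-edge cycle), giving $\binom{3}{2}\cdot 2^2=12$ patterns, not the two listed in~(7)--(8); likewise $|S|=1$ gives six admissible patterns, not the two in~(9)--(10).  Concretely, the matrix with $a_{12}=1$ and all other entries zero satisfies all of the hypotheses yet lies in none of the eleven listed cases.  So your enumeration does not ``reproduce the grouping'' as claimed; what it actually shows is that the ``only if'' direction of the lemma, as stated, is false.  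The ``if'' direction survives, and that is all the paper ever uses (Lemma~\ref{M_3} only needs that each listed pattern makes $\det(I-AX)\equiv 1$), but your write-up should flag the discrepancy rather than assert a match that does not hold.
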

		\begin{lem}\label{M_3}
			Let $A\in \mathcal M_{3\times 3}(\mathbb C).$ Then 
			
			\begin{equation}\label{ade}
			\begin{aligned}
			a_{ii}=0~{\rm{for}}~i=1,2,3, a_{11}a_{22}-a_{12}a_{21}=0,a_{11}a_{33}-a_{13}a_{31}=0, a_{22}a_{33}-a_{23}a_{32}=0,~{\rm{and}}\operatorname{det}A=0 \end{aligned}\end{equation} if and only if when $\mu_{E(3;3;1,1,1)}(A)=0$.
		\end{lem}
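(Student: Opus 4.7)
The plan is to translate the condition $\mu_{E(3;3;1,1,1)}(A)=0$ into a polynomial identity in three complex variables and then match coefficients against \eqref{ade}. By the convention adopted in \eqref{mu}, $\mu_{E(3;3;1,1,1)}(A)=0$ exactly when $I_3-AX$ is nonsingular for every $X\in E(3;3;1,1,1)$. Parametrising $X=\operatorname{diag}(z_1,z_2,z_3)$ by $(z_1,z_2,z_3)\in\mathbb{C}^3$, this is the statement that
$$p(z_1,z_2,z_3):=\det(I_3-AX)\in\mathbb{C}[z_1,z_2,z_3]$$
has no zero in $\mathbb{C}^3$.

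The first step is to compute $p$ explicitly from the coefficients of the characteristic polynomial of $AX$. For any $3\times 3$ matrix $M$ one has $\det(I_3-M)=1-e_1(M)+e_2(M)-e_3(M)$, where $e_k(M)$ is the sum of the $k\times k$ principal minors of $M$. Since $(AX)_{ij}=a_{ij}z_j$, a direct calculation yields
$$p(z_1,z_2,z_3)=1-\sum_{i=1}^{3}a_{ii}z_i+\sum_{1\le i<j\le 3}(a_{ii}a_{jj}-a_{ij}a_{ji})\,z_iz_j-(\det A)\,z_1z_2z_3.$$

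The second step is the elementary observation that a polynomial in $\mathbb{C}[z_1,\dots,z_n]$ which vanishes nowhere on $\mathbb{C}^n$ must be a nonzero constant: if $p$ had positive degree in some variable $z_k$, one could specialise the remaining variables so that the leading coefficient of $p$ in $z_k$ is nonzero, contradicting the fundamental theorem of algebra applied to the resulting univariate polynomial. Since $p(0,0,0)=1$, the non-vanishing of $p$ on $\mathbb{C}^3$ is equivalent to $p\equiv 1$, and reading off the coefficients of $z_i$, $z_iz_j$, and $z_1z_2z_3$ in the expression above gives precisely the system \eqref{ade}. The reverse implication is immediate: if \eqref{ade} holds, then $p\equiv 1$ is nowhere zero on $\mathbb{C}^3$, whence $\mu_{E(3;3;1,1,1)}(A)=0$.

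I expect no serious obstacle here; the only mildly delicate step is the fact that a zero-free polynomial in several complex variables must be constant, which is handled by the one-line argument above.
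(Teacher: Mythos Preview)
Your proof is correct and follows the same core strategy as the paper: both compute $\det(I_3-A\operatorname{diag}(z_1,z_2,z_3))$ explicitly and use that a zero-free polynomial on $\mathbb{C}^3$ must be a nonzero constant. The only difference is in the implication \eqref{ade}$\Rightarrow\mu_{E(3;3;1,1,1)}(A)=0$: the paper invokes the case classification of Lemma~\ref{AAB} to verify $\det(I-AX)\equiv 1$, whereas you obtain this immediately by substituting the relations \eqref{ade} into your formula for $p$, which is shorter and makes Lemma~\ref{AAB} unnecessary for this step.
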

		\begin{proof}

	Suppose that given equation \eqref{ade} holds. By  Lemma \ref{AAB}, we have  $a_{ii}=0$ for $i=1,2,3$, $a_{21}=0=a_{13}=a_{23},a_{12}\neq 0,a_{31}\neq 0$ and $a_{32}\neq 0$ and consequently, we get $$\det(I-AX)=1~~{\rm{for~~ all }}~~X\in E(3;3;1,1,1),$$ which implies $\mu_{E(n;n;1,\dots,1)}(A)=0.$ 		
			
Assume, on the other hand, that $\mu_{E(3;3;1,1,1)}(A)=0.$ This states that there is no $X\in E(3;3;1,1,1)$ that makes $I-AX$ is singular, that is, \begin{equation}\label{detA}\operatorname{det}(I-AX)\neq0\,\, \text{for all}\,\, X\in E(3;3;1,1,1).\end{equation} 
			Note that 
			\small{\begin{align}\label{A33}
					\det(I-AX)\nonumber&=1-a_{11}z_1-a_{22}z_2+(a_{11}a_{22}-a_{12}a_{21})z_1z_2-a_{33}z_3\\&+(a_{11}a_{33}-a_{13}a_{31})z_1z_3+(a_{33}a_{22}-a_{32}a_{23})z_2z_3-\operatorname{det}Az_1z_2z_3.
			\end{align}}

			%\begin{equation}\label{det}
			%\operatorname{det}(I-AX)=1-a_{11}z_1-a_{22}z_2+\operatorname{det}A_{12}z_1z_2-a_{33}z_3+\operatorname{det}A_{13}z_1z_3+\operatorname{det}A_{23}z_2z_3-\operatorname{det}Az_1z_2z_3
			%\end{equation}
%			$\operatorname{det}(I-AX)$ is a polynomial in $z_{1},z_{2}$ and $z_{3}$ that has no zeros in $\mathbb{C}^{3}$.  
%			In particular,  for fix $z_{1}$ and $z_{2}$, that is, $z_{1}=z_{1}^0$ and $z_{2}=z_{2}^0$ from \eqref{A33}, we observe that $\operatorname{det}(I-AX)$ is a polynomial of degree one in $\mathbb{C}[z_{3}]$. By the Fundamental Theorem of Algebra, we find that $\operatorname{det}(I-AX)$ has a root in $\mathbb C$, which is a contradiction. 
As the polynomial in \eqref{A33} has no zeros in $\mathbb C^3$ and hence it is  a non zero constant. Thus we have  $$a_{ii}=0~{\rm{for}}~i=1,2,3, a_{11}a_{22}-a_{12}a_{21}=0,a_{11}a_{33}-a_{13}a_{31}=0,\\a_{22}a_{33}-a_{23}a_{32}=0~{\rm{and}}\operatorname{det}A=0.$$ This completes the proof.\end{proof}
		
\begin{lem}\label{331}
			Let $A=((a_{ij}))_{i,j=1}^{3}$ and $B=J_1AJ_1,$ where $J_1=\left(\begin{smallmatrix} 0 &0 & 1\\0 & 1 & 0\\ 1& 0& 0\end{smallmatrix}\right).$  Then $$\mu_{E(3;3;1,1,1)}(A)=\mu_{E(3;3;1,1,1)}(B).$$

		\end{lem}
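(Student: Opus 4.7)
The plan is to exploit the fact that $J_1$ is a symmetric, involutive permutation matrix ($J_1 = J_1^{-1} = J_1^*$), so conjugation by $J_1$ is a unitary similarity that leaves the subspace $E(3;3;1,1,1)$ invariant.

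First I would show that for every $X \in E(3;3;1,1,1)$,
\[
\det(I - BX) = \det(I - A\, J_1 X J_1).
\]
This follows by the identity $J_1(I - J_1 A J_1 X)J_1 = I - A J_1 X J_1$ together with $\det(J_1)^2 = 1$. Next, I would observe that if $X = \operatorname{diag}(z_1,z_2,z_3)$, then $J_1 X J_1 = \operatorname{diag}(z_3, z_2, z_1)$, so the assignment $X \mapsto J_1 X J_1$ is a bijection of $E(3;3;1,1,1)$ onto itself, and since $J_1$ is unitary it preserves the operator norm: $\|J_1 X J_1\| = \|X\|$.

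Combining these two facts, the set of $X \in E(3;3;1,1,1)$ with $\det(I-BX)=0$ is in norm-preserving bijection with the set of $Y \in E(3;3;1,1,1)$ with $\det(I-AY)=0$, via $Y = J_1 X J_1$. Therefore the two infima in the definition \eqref{mu} coincide, which yields $\mu_{E(3;3;1,1,1)}(A) = \mu_{E(3;3;1,1,1)}(B)$. One should also handle the degenerate case where both infima are taken over an empty set (so $\mu_E$ is $0$ by convention), but this is immediate from the same bijection.

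There is no real obstacle here: the argument is essentially a change of variables, and the only thing to verify carefully is that the conjugation of a diagonal matrix by the order-reversing permutation matrix stays diagonal with the entries reordered, which is a direct computation. The same proof would work for any permutation matrix $J$ (not just $J_1$), reflecting the underlying symmetry of $\mu_E$ under simultaneous permutation of the coordinate diagonal blocks.
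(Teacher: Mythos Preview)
Your proof is correct and follows essentially the same change-of-variables argument as the paper: both rely on the norm-preserving bijection $X\mapsto J_1XJ_1$ of $E(3;3;1,1,1)$ onto itself to identify the two infima. The only difference is that the paper treats the degenerate case $\mu_{E}(A)=0$ separately via Lemma~\ref{M_3}, whereas you dispatch it directly from the same bijection, which is slightly cleaner.
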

		\begin{proof}
			The proof of the lemma involves two cases:
			
			\noindent$\textbf{Case 1}$: Assume that $\mu_{E(3;3;1,1,1)}(A)=0$. From Lemma \ref{M_3}, it follows that 
			$w_i=0$ for $1\leq i\leq 7,$ where \small{$$w_1=a_{33},w_2=a_{22},w_3=w_1w_2-a_{23}a_{32},w_4=a_{11}, w_5=w_1w_4-a_{13}a_{31}, w_2w_4-a_{12}a_{21}=w_6, w_7=\det A.$$ } Observe that $B= ((b_{ij}))_{i,j=1}^{3}=J_1AJ_1=\left(\begin{smallmatrix} a_{33} & a_{32} & a_{31}\\a_{23} & a_{22} & a_{21}\\ a_{13} & a_{12} & a_{11}\end{smallmatrix}\right).$ Clearly,
			\begin{align*} \label{muEE} 
b_{33}=a_{11}=0, b_{22}=a_{22}=0, b_{33}b_{22}-b_{23}b_{32}=a_{11}a_{22}-a_{12}a_{21}=0, b_{11}=a_{33}=0\\
b_{33}b_{11}-b_{13}b_{31}=a_{11}a_{33}-a_{13}a_{31}=0, b_{11}b_{22}-b_{12}b_{21}=a_{22}a_{33}-a_{23}a_{32}=0, \det B=\det A=0.
			\end{align*} 
			Hence from Lemma \ref{M_3}, we have $\mu_{E(3;3;1,1,1)}(B)=0.$ 
			
			$\textbf{Case 2}$: Assume that $\mu_{E(3;3;1,1,1)}(A)\neq0$.  Then by definition we have
			\begin{align*}
				\frac{1}{\mu_{E(3;3;1,1,1)}(A)}&=\inf\{\|X\|: X\in E(3;3;1,1,1), I-AX~{\rm{is~singular}}\}\\&=\inf\{\|X\|: X\in E(3;3;1,1,1), I-J_1BJ_1X~{\rm{is~singular}}\}\\&=\inf\{\|X\|: X\in E(3;3;1,1,1), I-BJ_1XJ_1~{\rm{is~singular}}\}\\&=\inf\{\|Y\||: Y=J_1XJ_1\in E(3;3;1,1,1), I-BY~{\rm{is~singular}}\}\\&=\frac{1}{\mu_{E(3;3;1,1,1)}(B)}
			\end{align*}
			This completes the proof.
		\end{proof}

The proof of the following lemma is easy to verify. We therefore omit the  proof.  
\begin{lem}\label{meq}
Let $(x_1,x_2,\ldots, x_7)\in \mathbb{C}^7$. Then there exists a $3\times 3$ matrix $A=((a_{ij}))^3_{i,j=1}$ such that
$x_1=a_{11}, x_2=a_{22}, x_3=a_{11}a_{22}-a_{12}a_{21}, x_4=a_{33},$ $x_5=a_{11}a_{33}-a_{13}a_{31},, x_6=a_{22}a_{33}-a_{23}a_{32}$ $~{\rm{and}}~x_7=\det A$. 
\end{lem}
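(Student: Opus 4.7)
The plan is to reduce the lemma to a purely algebraic question about off-diagonal entries of a $3 \times 3$ matrix and then to dispose of it by a short case analysis. Since $x_1 = a_{11}$, $x_2 = a_{22}$, $x_4 = a_{33}$, I would set $a_{11} := x_1$, $a_{22} := x_2$, $a_{33} := x_4$ at the outset and search for the six off-diagonal entries. The equations for $x_3, x_5, x_6$ then translate to the product constraints
\begin{equation*}
a_{12}a_{21} = p, \quad a_{13}a_{31} = q, \quad a_{23}a_{32} = r,
\end{equation*}
where $p := x_1 x_2 - x_3$, $q := x_1 x_4 - x_5$, $r := x_2 x_4 - x_6$. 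Expanding $\det A$ along the first row and substituting these three identities, the determinant condition $\det A = x_7$ simplifies to the single scalar equation
\begin{equation*}
a_{12}a_{23}a_{31} + a_{13}a_{21}a_{32} = c,
\end{equation*}
with $c := x_7 + 2 x_1 x_2 x_4 - x_1 x_6 - x_3 x_4 - x_2 x_5$. Thus the lemma reduces to the claim that, for every $(p, q, r, c) \in \mathbb{C}^4$, these four equations have a solution in $\mathbb{C}^6$.

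I would establish the reduced claim by case analysis on the vanishing of $p, q, r$. If $p = q = r = 0$, the explicit choice $a_{12} = 1$, $a_{21} = 0$, $a_{13} = 0$, $a_{31} = 1$, $a_{23} = c$, $a_{32} = 0$ satisfies all four equations simultaneously. Otherwise, a permutation of the index set $\{1,2,3\}$ permutes $p, q, r$ and leaves the cubic form $a_{12}a_{23}a_{31} + a_{13}a_{21}a_{32}$ symmetric, so without loss of generality we may assume $p \neq 0$ and normalize $a_{12} := 1$, $a_{21} := p$. If in addition $q \neq 0$, set $a_{13} := 1$, $a_{31} := q$; the remaining conditions become $q a_{23} + p a_{32} = c$ together with $a_{23} a_{32} = r$, and eliminating $a_{32}$ gives the quadratic $q t^2 - c t + p r = 0$ in $t = a_{23}$, which is always solvable over $\mathbb{C}$. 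If $q = 0$, small explicit choices handle the sub-cases $c \neq 0$ (take $a_{13} = 0$, $a_{31} = 1$, $a_{23} = c$, $a_{32} = r/c$) and $c = 0$ (take $a_{13} = a_{31} = 0$, $a_{23} = 1$, $a_{32} = r$).

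The main obstacle is the bookkeeping in the degenerate cases. Because $\mathbb{C}$ has no nonzero zero divisors, every vanishing among $p, q, r$ forces one factor of the corresponding pair to be zero, which in turn constrains the two cubic monomials $a_{12}a_{23}a_{31}$ and $a_{13}a_{21}a_{32}$. One must check that in each such pattern some legal choice of off-diagonal entries still realizes the prescribed value of $c$. The algebra in each sub-case is elementary, but the enumeration has to be carried out carefully so as not to miss any configuration.
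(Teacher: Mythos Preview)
Your proof is correct. The paper does not actually supply a proof of this lemma---it simply remarks that it ``is easy to verify'' and omits the argument---so there is nothing in the paper to compare against. Your reduction of the problem to the four scalar equations
\[
a_{12}a_{21}=p,\quad a_{13}a_{31}=q,\quad a_{23}a_{32}=r,\quad a_{12}a_{23}a_{31}+a_{13}a_{21}a_{32}=c
\]
in the six off-diagonal unknowns, together with the $S_3$-symmetry observation and the short case analysis (all of $p,q,r$ zero; otherwise WLOG $p\neq 0$ and split on $q$), is a clean and complete verification of what the authors left to the reader.
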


The following proposition is due to P. Zapolowski \cite{Pawel}.

\begin{prop}[Proposition $3.3$, \cite{Pawel}]\label{pawel}
$\mathcal  A_{(3;3;1,1,1)}^{(r)}=G_{E(3;3;1,1,1)}^{(r)}.$

\end{prop}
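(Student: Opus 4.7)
The plan is to prove $\mathcal{A}^{(r)}_{(3;3;1,1,1)} = G^{(r)}_{E(3;3;1,1,1)}$ by first establishing a single computational identity that matches the defining polynomial $R^{(3;3;1,1,1)}_{\mathbf{x}}$ with the characteristic-type determinant $\det(I-AX)$ for diagonal $X$, and then bridging between sides using the polynomial map $\pi_{E(3;3;1,1,1)}$ and Lemma \ref{meq}.

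The key identity I would begin with is
\[
R^{(3;3;1,1,1)}_{\pi_{E(3;3;1,1,1)}(A)}(z_1,z_2,z_3) \;=\; \det(I-AX), \qquad X = \operatorname{diag}(z_1,z_2,z_3)\in E(3;3;1,1,1).
\]
This is a matter of bookkeeping: the expansion of $\det(I-AX)$ recorded in \eqref{A33} has exactly seven nontrivial coefficients $a_{11},a_{22},a_{33}$, the three $2\times 2$ principal minors $a_{ii}a_{jj}-a_{ij}a_{ji}$, and $\det A$, which are precisely the coordinates of $\pi_{E(3;3;1,1,1)}(A)$; the signs $(-1)^{|\alpha|}$ in the definition \eqref{Rz1} of $R^{(3;3;1,1,1)}_{\mathbf{x}}$ match those in \eqref{A33} entry by entry.

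For the inclusion $G^{(r)}_{E(3;3;1,1,1)}\subseteq \mathcal{A}^{(r)}_{(3;3;1,1,1)}$, take $\mathbf{x}=\pi_{E(3;3;1,1,1)}(A)$ with $\mu_{E(3;3;1,1,1)}(A)<1/r$. By definition \eqref{mu}, the infimum of $\|X\|$ over those $X\in E(3;3;1,1,1)$ with $\det(I-AX)=0$ is strictly greater than $r$. Since for diagonal $X=\operatorname{diag}(z_1,z_2,z_3)$ one has $\|X\|=\max_i |z_i|$, it follows that $\det(I-AX)\neq 0$ for every $\mathbf{z}\in r\bar{\mathbb{D}}^3$; by the identity above $R^{(3;3;1,1,1)}_{\mathbf{x}}(\mathbf{z})\neq 0$ on $r\bar{\mathbb{D}}^3$, i.e.\ $\mathbf{x}\in \mathcal{A}^{(r)}_{(3;3;1,1,1)}$.

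For the reverse inclusion $\mathcal{A}^{(r)}_{(3;3;1,1,1)}\subseteq G^{(r)}_{E(3;3;1,1,1)}$, let $\mathbf{x}\in \mathcal{A}^{(r)}_{(3;3;1,1,1)}$. Lemma \ref{meq} produces $A\in\mathcal{M}_{3\times 3}(\mathbb{C})$ with $\pi_{E(3;3;1,1,1)}(A)=\mathbf{x}$; the identity then shows that no diagonal $X\in E(3;3;1,1,1)$ with $\|X\|\leq r$ makes $I-AX$ singular. To upgrade this to the strict inequality $\mu_{E(3;3;1,1,1)}(A)<1/r$, I would invoke compactness: $E(3;3;1,1,1)$ is finite-dimensional and isometric via $\mathbf{z}\mapsto \operatorname{diag}(z_1,z_2,z_3)$ to $(\mathbb{C}^3,\|\cdot\|_\infty)$, so closed balls in $E(3;3;1,1,1)$ are compact. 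The closed set $\{X\in E(3;3;1,1,1):\det(I-AX)=0\}$ is disjoint from the compact ball $\{X:\|X\|\leq r\}$, so its distance to the origin is strictly greater than $r$, giving $\mu_{E(3;3;1,1,1)}(A)<1/r$. Hence $\mathbf{x}=\pi_{E(3;3;1,1,1)}(A)\in G^{(r)}_{E(3;3;1,1,1)}$.

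The only genuinely delicate point is this last step, where non-vanishing of $R^{(3;3;1,1,1)}_{\mathbf{x}}$ on the \emph{closed} polydisc must be promoted to the strict inequality on $\mu_{E(3;3;1,1,1)}$; everything else reduces to matching coefficients. I therefore expect the compactness/strictness argument to be the main obstacle, while the identification $R^{(3;3;1,1,1)}_{\pi(A)}=\det(I-A\cdot)$ is essentially already done in \eqref{A33}.
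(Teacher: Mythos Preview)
Your proof is correct and uses the same core ingredients as the paper: the identity $R^{(3;3;1,1,1)}_{\pi(A)}(\mathbf z)=\det(I-A\,\mathrm{diag}(\mathbf z))$ (which is equation~\eqref{A33}), Lemma~\ref{meq} for surjectivity of $\pi$, and a compactness argument to pass from nonvanishing on the closed polydisc to the strict inequality $\mu<1/r$. The only organizational difference is that the paper routes the strictness step through the closed version $\Gamma^{(r)}=\mathcal B^{(r)}$ (Theorem~\ref{R_3}) and then uses compactness of $\mathbb T^3$ to enlarge the polydisc slightly (Corollary~\ref{R_33}), whereas you apply compactness directly in the finite-dimensional space $E(3;3;1,1,1)$ to the zero set of $\det(I-AX)$; both arguments are equally valid and of comparable length. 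One small point: your forward-direction phrasing implicitly covers the degenerate case $\mu_{E}(A)=0$ (where the singular set is empty and the infimum is $+\infty$), which the paper treats separately via Lemma~\ref{M_3}; your handling is fine as written.
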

Let $A=((a_{ij}))_{i,j=1}^{3}$ be a $3\times 3$  matrix.  Set \small{$x_1=a_{11}, x_2=a_{22}, x_3=a_{11}a_{22}-a_{12}a_{21}, x_4=a_{33},$ \\$x_5=a_{11}a_{33}-a_{13}a_{31},, x_6=a_{22}a_{33}-a_{23}a_{32}$} $~{\rm{and}}~x_7=\det A$  and $w_1=a_{33},w_2=a_{22},$\\$w_3=w_1w_2-a_{23}a_{32},w_4=a_{11}, w_5=w_1w_4-a_{13}a_{31}, w_6=w_2w_4-a_{12}a_{21}$ and $w_7=\det A.$ Set  $\textbf{x}:=(x_1,\ldots,x_7)$ and $\textbf{w}:=(w_1,\ldots,w_7).$

\begin{prop} \label{new}${\bf{w}}\in G_{E{(3;3;1,1,1)}}^{(r)}~  ({\rm{respectively }}~{\bf{w}}\in \Gamma_{E{(3;3;1,1,1)}}^{(r)}) $ if and only if $${\bf{x}}\in G_{E{(3;3;1,1,1)}}^{(r)}~ ({\rm{respectively }}~{\bf{x}}\in \Gamma_{E{(3;3;1,1,1)}}^{(r)}).$$
\end{prop}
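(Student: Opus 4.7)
The plan is to realize the passage from $\mathbf{w}$ to $\mathbf{x}$ as the permutation of $\mathbb{C}^{7}$ induced on $\pi_{E(3;3;1,1,1)}$ by the involution $A\mapsto J_{1}AJ_{1}$ already analysed in Lemma~\ref{331}. Since that involution preserves $\mu_{E(3;3;1,1,1)}$, it preserves the defining inequality of $G_{E(3;3;1,1,1)}^{(r)}$, from which the proposition follows at once.

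First, comparing the two definitions term by term one reads off $x_{1}=w_{4}$, $x_{2}=w_{2}$, $x_{3}=w_{6}$, $x_{4}=w_{1}$, $x_{5}=w_{5}$, $x_{6}=w_{3}$, $x_{7}=w_{7}$, so $\mathbf{x}=\sigma(\mathbf{w})$ for the coordinate involution $\sigma$ of $\mathbb{C}^{7}$ that swaps $1\leftrightarrow 4$ and $3\leftrightarrow 6$ and fixes $2,5,7$. Second, I would verify the matrix identity $\pi_{E(3;3;1,1,1)}(J_{1}MJ_{1})=\sigma\bigl(\pi_{E(3;3;1,1,1)}(M)\bigr)$ for every $M\in\mathcal{M}_{3\times 3}(\mathbb{C})$: writing $N=J_{1}MJ_{1}$ one has $n_{ij}=m_{4-i,\,4-j}$, so the diagonal of $N$ is $(m_{33},m_{22},m_{11})$, the three principal $2\times 2$ minors of $N$ are the three principal $2\times 2$ minors of $M$ in reverse order, and $\det N=\det M$; a coordinate-by-coordinate comparison with $\sigma\circ\pi$ then closes the verification.

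These two observations combine immediately. If $\mathbf{w}\in G_{E(3;3;1,1,1)}^{(r)}$, choose $A'$ with $\pi_{E(3;3;1,1,1)}(A')=\mathbf{w}$ and $\mu_{E(3;3;1,1,1)}(A')<\tfrac{1}{r}$; then $B':=J_{1}A'J_{1}$ satisfies $\pi_{E(3;3;1,1,1)}(B')=\sigma(\mathbf{w})=\mathbf{x}$ and, by Lemma~\ref{331}, $\mu_{E(3;3;1,1,1)}(B')<\tfrac{1}{r}$, so $\mathbf{x}\in G_{E(3;3;1,1,1)}^{(r)}$. The reverse implication is identical because $\sigma^{2}=\operatorname{id}$ and $J_{1}^{2}=I$. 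The claim for $\Gamma_{E(3;3;1,1,1)}^{(r)}$ follows from the same argument with the strict inequality replaced by $\mu_{E(3;3;1,1,1)}(\,\cdot\,)\le\tfrac{1}{r}$, or, alternatively, from the fact that $\sigma$ is a linear homeomorphism of $\mathbb{C}^{7}$ and $\Gamma^{(r)}=\overline{G^{(r)}}$. I do not foresee a substantial obstacle: the whole argument is a symmetry observation based on Lemma~\ref{331}, and the only calculation required is the short coordinate check $\pi\circ(J_{1}\,\cdot\,J_{1})=\sigma\circ\pi$.
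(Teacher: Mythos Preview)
Your proposal is correct and follows essentially the same route as the paper: both arguments conjugate by $J_{1}$ and invoke Lemma~\ref{331} to preserve $\mu_{E(3;3;1,1,1)}$, the only difference being that you make the coordinate permutation $\sigma$ and the identity $\pi_{E(3;3;1,1,1)}(J_{1}MJ_{1})=\sigma\bigl(\pi_{E(3;3;1,1,1)}(M)\bigr)$ explicit, whereas the paper leaves this step implicit.
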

\begin{proof}
Suppose $\textbf{w}\in G_{E{(3;3;1,1,1)}}^{(r)}$. Then there exists a $3\times 3$ complex matrix  $A=((a_{ij}))_{i,j=1}^{3}$ such that \small{$$w_1=a_{33}, w_2=a_{22}, w_3=\det \left(\begin{smallmatrix} a_{22} & a_{23}\\
					a_{32} & a_{11}
				\end{smallmatrix}\right), w_4=a_{11}, w_5=\det \left(\begin{smallmatrix}
					a_{11} & a_{13}\\
					a_{31} & a_{33}
				\end{smallmatrix}\right), w_6=\det  \left(\begin{smallmatrix}
					a_{22} & a_{21}\\
					a_{12} & a_{11}\end{smallmatrix}\right) ~{\rm{and}}~w_7=\det A$$}

with $\mu_{E(3;3;1,1,1)}(A)<\frac{1}{r}.$ Let $B=J_1AJ_1$. By Lemma \ref{331} we have $$\mu_{E(3;3;1,1,1)}(A)=\mu_{E(3;3;1,1,1)}(B)<\frac{1}{r}$$ and hence we have $\textbf{x}\in G_{E{(3;3;1,1,1)}}^{(r)}.$
 
Similarly, we deduce that $\textbf{x}\in G_{E{(3;3;1,1,1)}}^{(r)}$ implies $\textbf{w}\in G_{E{(3;3;1,1,1)}}^{(r)}$ by applying the aforementioned argument.
This completes the proof.
\end{proof}
By Proposition \ref{new}, we will use the following definitions for $G_{E{(3;3;1,1,1)}}^{(r)}$ and $\Gamma_{E{(3;3;1,1,1)}}^{(r)}$ from now onwards:
%\begin{align}\label{FRdet}
%	&\Gamma^{(r)}_{E(n;n;1,\dots,1)}\nonumber\\:=&\{(\operatorname{det}A_{(1)},\operatorname{det}A_{(2)},\operatorname{det}A_{(12)},\dots,\operatorname{det}A_{(12\dots (n-1))},\operatorname{det}A_{(n)},\nonumber\\&\operatorname{det}A_{(1n)},\dots,\operatorname{det}A_{(23\dots n)},\operatorname{det}A)
%	:A\in \mathcal M_{n\times n}(\mathbb{C}),\mu_{E(n;n;1,\dots,1)}(A)\leq \frac{1}{r}\},\end{align} 
%and 
%\begin{align}\label{GRdet}
%	&G^{(r)}_{E(n;n;1,\dots,1)}\nonumber\\:=&\{(\operatorname{det}A_{(1)},\operatorname{det}A_{(2)},\operatorname{det}A_{(12)},\dots,\operatorname{det}A_{(12\dots (n-1))},\operatorname{det}A_{(n)},\nonumber\\&\operatorname{det}A_{(1n)},\dots,\operatorname{det}A_{(23\dots n)},\operatorname{det}A)
%	:A\in \mathcal M_{n\times n}(\mathbb{C}), \mu_{E(n;n;1,\dots,1)}(A)<\frac{1}{r}\}.
%\end{align}
\small{\begin{equation}\label {GE}
\begin{aligned}
G_{E{(3;3;1,1,1)}}^{(r)}:=\Big \{ \textbf{x}=(x_1=a_{11}, x_2=a_{22}, x_3=a_{11}a_{22}-a_{12}a_{21}, x_4=a_{33}, x_5=a_{11}a_{33}-a_{13}a_{31},&\\ x_6=a_{22}a_{33}-a_{23}a_{32},x_7=\det A )\in \mathbb C^7: A\in \mathcal M_{3\times 3}(\mathbb C)~{\rm{and}}~\mu_{E(3;3;1,1,1)}(A)<\frac{1}{r} \Big \},
\end{aligned}
\end{equation}}
and 
\begin{equation}\label {GammaE}
\begin{aligned}
\Gamma_{E{(3;3;1,1,1)}}^{(r)}:=\Big \{\textbf{x}=(x_1=a_{11}, x_2=a_{22}, x_3=a_{11}a_{22}-a_{12}a_{21}, x_4=a_{33}, x_5=a_{11}a_{33}-a_{13}a_{31},&\\ x_6=a_{22}a_{33}-a_{23}a_{32},x_7=\det A)\in \mathbb C^7: A\in \mathcal M_{3\times 3}(\mathbb C)~{\rm{and}}~\mu_{E(3;3;1,1,1)}(A)\leq \frac{1}{r}\Big \}.
\end{aligned}
\end{equation}
This paper is organized as follows. Section $2$ deals with the characterization of the domain $G_{E(3;3;1,1,1)}.$ First, we characterise the domain by using the zero set of the polynomial analogous to the symmetrized bi-disc, symmetrized poly-disc, tetrablock and $\mu_{1,n}$-\textit{quotients} domain. We also observe that an element $\textbf{x}\in G_{E(3;3;1,1,1)}$ can be viewed as  an element of tetrablock  $G_{E(2;2;1,1)}$ (See Theorem $2.1.2$). To describe the domain $G_{E(3;3;1,1,1)},$ we consider  $3$ rational functions, namely,  $\Psi^{(1)},\Psi^{(2)}~{\rm{and}}~ \Psi^{(3)}$ as we describe in Section $2.$ It is exceedingly challenging to compute the supremum norm of $\Psi^{(i)},1\leq i\leq 3,$ over $\bar{\mathbb D}^2.$ By realization formula we show that a point  $\mathbf{x}\in G_{E(3;3;1,1,1)}$ is equivalent to the existence of a $3\times 3$ matrix   $A\in \mathcal M_{3\times 3}(\mathbb C)$  such that \small{\begin{equation}\label{rels}x_1=a_{11}, x_2=a_{22}, x_3=\det \left(\begin{smallmatrix} a_{11} & a_{12}\\
					a_{21} & a_{22}
				\end{smallmatrix}\right), x_4=a_{33}, x_5=\det \left(\begin{smallmatrix}
					a_{11} & a_{13}\\
					a_{31} & a_{33}
				\end{smallmatrix}\right), x_6=\det  \left(\begin{smallmatrix}
					a_{22} & a_{23}\\
					a_{32} & a_{33}\end{smallmatrix}\right) ~{\rm{and}}~x_7=\det A,\end{equation}}  \begin{equation}\label{tilgamma133}
					\overline{\tilde{\gamma}_1(z_2,z_3)}(1-|z_2|^2)\tilde{\gamma}_1(z_2,z_3)+\overline{\tilde{\gamma}_2(z_2,z_3)}(1-|z_3|^2)\tilde{\gamma}_2(z_2,z_3)+{\eta(z_2,z_3)}^*(I_{3}-A^*A)\eta(z_2,z_3)>0 
				\end{equation}
				and $\det\left(I_{2}-\left(\begin{smallmatrix}a_{22}&a_{23}\\a_{32} & a_{33}
				\end{smallmatrix}\right)\left(\begin{smallmatrix}z_2 & 0 \\0 &z_{3}
				\end{smallmatrix}\right)\right)\neq 0$ for all $z_2,z_3\in \bar{\mathbb D}$ (see Proposition \ref{matrix AA}).
It follows from Theorem \ref{matix A} that if \\$A\in \mathcal M_{3\times 3}(\mathbb C)$ with $\|A\|<1,$ then $\textbf{x}=(x_1,\ldots,x_7)$ given by \eqref{rels} belongs to $ G_{E(3;3;1,1,1)}$. It is important to know whether the converse of the Theorem  \ref{matix A} is true. The expression in  \eqref{tilgamma133} might be helpful in deciding whether a point $\textbf{x}=(x_1,\ldots,x_7)\in G_{E(3;3;1,1,1)}$ is equivalent to the existence of a $3\times 3$ matrix $A\in \mathcal M_{3\times 3}(\mathbb C)$ with $\|A\|<1$ such that \small{$$x_1=a_{11}, x_2=a_{22}, x_3=\det \left(\begin{smallmatrix} a_{11} & a_{12}\\
					a_{21} & a_{22}
				\end{smallmatrix}\right), x_4=a_{33}, x_5=\det \left(\begin{smallmatrix}
					a_{11} & a_{13}\\
					a_{31} & a_{33}
				\end{smallmatrix}\right), x_6=\det  \left(\begin{smallmatrix}
					a_{22} & a_{23}\\
					a_{32} & a_{33}\end{smallmatrix}\right) ~{\rm{and}}~x_7=\det A.$$} 
 Let $\mathcal E$ and $\mathcal F$ be the Hilbert spaces and let $A=((A_{ij}))_{i=1}^{2}$ be an operator from $\mathcal{E} \oplus \mathcal{H}$ to $\mathcal{F} \oplus \mathcal{K}$ and $X$ be an operator from $\mathcal{K}$ to $\mathcal{H}$ such that $I-A_{22}X$ is invertible.  The matricial Mobius transformation $ \mathcal M_{A}:\mathcal B(\mathcal K,\mathcal H)\to \mathcal B(\mathcal E,\mathcal F)$ is defined by \begin{equation}\label{rf}
	\mathcal M_{A}(X):=A_{11}+A_{12}X\left(I-A_{22}X\right)^{-1}A_{21}. 
	\end{equation}
 Let $$\mathbb D^d=\{(z_1,\ldots,z_d)\in \mathbb C^d:|z_k|<1~{\rm{for}}~k=1,\ldots,d \}.$$ We recall the defintion of Schur and Schur Agler class from \cite{BDK}. The following is the definition of $d$-variable Schur class: $$\mathcal S_{d}(\mathcal E, \mathcal F)=\{S:\mathbb D^d\to \mathcal B(\mathcal E,\mathcal F):S ~{\rm{is~analytic~on}} ~\mathbb D^d ~{\rm{and}}~S(\textbf{z}):\mathcal E \to \mathcal F ~{\rm{is~a~contraction, ~for ~all}\;\;\textbf{z}\in \mathbb{D}^d}\}.$$ Let $\textbf{T}=(T_1,\ldots, T_d)$ be a commuting tuples of strict contractions defined on a Hilbert space $\mathcal K.$ Set $\textbf{T}^n={T_1}^{n_1}\cdots {T_d}^{n_d},$ where $n=n_1+\cdots +n_d.$ The Schur-Agler class $\mathcal S\mathcal A_{d}(\mathcal E, \mathcal F)$ is the set of functions $S(\textbf{z})=\sum_{n\in \mathbb Z_{+}^{d}}S_n\textbf{z}^n$ that are holomorphic on $\mathbb D^d$ and satisfy the condition  $\|S(\textbf{T})\|\leq 1$, where {\small{$S(\textbf{T})=\sum_{n\in \mathbb Z_{+}^{d}}S_n\otimes \textbf{T}^n\in \mathcal B(\mathcal E\otimes \mathcal K, \mathcal F\otimes \mathcal K)~{\rm{and}}~S_n\in \mathcal B(\mathcal E, \mathcal F)$} (convergence~in~the~strong~operator~topology)\cite{BDK}.
 It is worth mentioning that the Schur class and Schur-Agler class are the same for $d=1$ and $d=2.$ 

\begin{thm}[Theorem $1.2$, \cite{BDK}]\label{schuragler}
Let  $S: \mathbb{D}^d \rightarrow \mathcal{B}(\mathcal{E}, \mathcal{F})$ be an analytic function. Then the following are equivalent
	\begin{enumerate}
		\item $S \in \mathcal{S}\mathcal A_{d}(\mathcal {E},\mathcal {F})$;
%		\item There exists auxiliary Hilbert spaces $\mathcal K^{\prime}_{1},\ldots, \mathcal K^{\prime}_{d}$ and kernel functions $K_i:\mathbb D^{d}\to \mathcal{B}(\mathcal K^{\prime}_{i},\mathcal{F}), 1\leq i \leq d,$  so that
%	$S$ possess the following Agler decomposition	$$
%		I-S(\mathbf{w})^* S(\mathbf{z})=\sum_{i=1}^{d}\left(1-\bar{w}_i z_i\right) K_i(\mathbf{z}){K_i(\mathbf {w})}^*;$$
		\item There exists Hilbert spaces $\mathcal K_{1},\ldots, \mathcal K_{d}$ and unitary colligation $A$ of the following form 
			$$
		\mathrm{A}=\left[\begin{array}{ll}
			A_{11} & A_{12} \\
			A_{21} & A_{22}
		\end{array}\right]:\left[\begin{array}{l}
			
			\mathcal{E}\\
\mathcal{K}_1 \\\vdots\\\mathcal K_{d}		\end{array}\right] \rightarrow\left[\begin{array}{l}
			\mathcal{F}\\\mathcal{K}_1 \\\vdots\\\mathcal K_{d}
			
		\end{array}\right], 
		$$
		so that
		\begin{equation}\label{rf1}
		S(\mathbf z)=A_{11}+A_{12}P(\mathbf z) (I- A_{22}P(\mathbf z))^{-1} A_{21} ,\,\,\text{for}\,\,\mathbf{z} \in \mathbb{D}^d	, 
		\end{equation}
		where
		$$
		P(\mathbf z)=\begin{bmatrix}z_{1} I_{\mathcal K_1}& & \\ & \ddots & \\ & & z_{d}I_{\mathcal K_d}\end{bmatrix}.$$
		
		 \end{enumerate}
	\end{thm}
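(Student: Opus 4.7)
The plan is to prove the two implications separately: $(2)\Rightarrow(1)$ is straightforward algebra using the unitarity of the colligation, while $(1)\Rightarrow(2)$ is substantive and follows the now-standard Agler scheme---Hahn--Banach separation to produce a certain kernel decomposition, followed by a lurking-isometry construction. For $(2)\Rightarrow(1)$, given a commuting tuple $\mathbf{T}=(T_1,\ldots,T_d)$ of strict contractions, I would form the block-diagonal ampliation $P(\mathbf{T})=\mathrm{diag}(T_1\otimes I_{\mathcal{K}_1},\ldots,T_d\otimes I_{\mathcal{K}_d})$; since $\|A_{22}\|\le 1$ and $\|T_j\|<1$, the operator $I-A_{22}P(\mathbf{T})$ is invertible, and a direct Schur-complement computation using $AA^{*}=I$ yields an identity of the form
$$I - S(\mathbf{T})S(\mathbf{T})^{*} \;=\; \widetilde{A}_{12}\bigl(I-P(\mathbf{T})\widetilde{A}_{22}\bigr)^{-1}\bigl(I-P(\mathbf{T})P(\mathbf{T})^{*}\bigr)\bigl(I-\widetilde{A}_{22}^{*}P(\mathbf{T})^{*}\bigr)^{-1}\widetilde{A}_{12}^{*},$$
with $\widetilde{A}_{ij}=A_{ij}\otimes I$. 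The middle factor is positive because $I-P(\mathbf{T})P(\mathbf{T})^{*}=\mathrm{diag}_{j}(I-T_{j}T_{j}^{*})\ge 0$, hence $\|S(\mathbf{T})\|\le 1$.

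For the converse, the first and main step is the Agler decomposition: produce positive $\mathcal{B}(\mathcal{F})$-valued kernels $\Gamma_1,\ldots,\Gamma_d$ on $\mathbb{D}^d$ with
\begin{equation}\label{agdecomp}
I_{\mathcal{F}} - S(\mathbf{z})S(\mathbf{w})^{*} \;=\; \sum_{j=1}^{d}(1-z_j\bar{w}_j)\,\Gamma_j(\mathbf{z},\mathbf{w}).
\end{equation}
I would obtain \eqref{agdecomp} by a Hahn--Banach separation inside a real locally convex space of Hermitian $\mathcal{B}(\mathcal{F})$-valued kernels on $\mathbb{D}^{d}$: let $\mathcal{C}$ be the closed convex cone generated by expressions $(1-z_j\bar{w}_j)K_j(\mathbf{z},\mathbf{w})$ with each $K_j$ positive, and assume for contradiction that $I-S(\mathbf{z})S(\mathbf{w})^{*}\notin \mathcal{C}$. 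A separating linear functional, together with a GNS/Kolmogorov factorization and the spectral theorem, would produce a Hilbert space carrying a commuting tuple of strict contractions $T_1,\ldots,T_d$ such that $\|S(\mathbf{T})\|>1$, contradicting the Schur--Agler hypothesis.

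Once \eqref{agdecomp} is available, the remaining steps are essentially formal. By Kolmogorov factorization, each positive kernel $\Gamma_j$ can be written as $\Gamma_j(\mathbf{z},\mathbf{w})=H_j(\mathbf{z})H_j(\mathbf{w})^{*}$ for some $H_j\colon\mathbb{D}^d\to\mathcal{B}(\mathcal{K}_j,\mathcal{F})$, where $\mathcal{K}_j$ is the associated reproducing-kernel Hilbert space. Rearranging \eqref{agdecomp} yields the Gram identity
$$S(\mathbf{z})S(\mathbf{w})^{*}+\sum_{j=1}^{d}H_j(\mathbf{z})H_j(\mathbf{w})^{*}\;=\;I_{\mathcal{F}}+\sum_{j=1}^{d}z_j H_j(\mathbf{z})\,\bar{w}_j H_j(\mathbf{w})^{*},$$
which says precisely that sending the column vector with entries $\xi,\bar{w}_1 H_1(\mathbf{w})^{*}\xi,\ldots,\bar{w}_d H_d(\mathbf{w})^{*}\xi$ to the column with entries $S(\mathbf{w})^{*}\xi,H_1(\mathbf{w})^{*}\xi,\ldots,H_d(\mathbf{w})^{*}\xi$ defines an isometry $V$ on the closed linear span of its domain (for $\mathbf{w}\in\mathbb{D}^d$ and $\xi\in\mathcal{F}$). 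Extending $V$ to a unitary on all of $\mathcal{E}\oplus\mathcal{K}_1\oplus\cdots\oplus\mathcal{K}_d$ (enlarging the $\mathcal{K}_j$'s if necessary) and taking $V^{*}=A$ gives the unitary colligation; reading off the four blocks of $A$ and a short computation recovers \eqref{rf1}.

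The main obstacle is Step 1, the Agler decomposition. Setting up the correct topology on $\mathcal{B}(\mathcal{F})$-valued Hermitian kernels so that $\mathcal{C}$ is closed, applying Hahn--Banach, and then converting an alleged separating functional into a concrete commuting tuple of strict contractions that violates the Schur--Agler bound are the delicate parts; this step uses the full strength of the hypothesis and is where the deep content of the theorem resides. The Kolmogorov factorization and the lurking-isometry construction are standard once \eqref{agdecomp} is in hand.
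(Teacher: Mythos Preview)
The paper does not prove this theorem; it is quoted verbatim as Theorem~1.2 of \cite{BDK} and used as a black box. There is therefore no ``paper's own proof'' to compare against. Your outline is the standard Agler--Ball argument (cone separation to obtain the Agler decomposition, Kolmogorov factorization, then the lurking-isometry construction), which is precisely the strategy of the cited reference and its predecessors, so your proposal is correct and aligned with the literature source the paper invokes.
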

Above theorem is also valid for $d$-variable Schur class $\mathcal S_{d}(\mathcal E, \mathcal F)$ [\cite{BallF}, Theorem $1.3$]. Observe that $S(\textbf{z})$ is the specific form of the matricial Mobius transformation  $\mathcal M_{A}$, if we consider $\mathcal H=\mathcal K_1\oplus \ldots\oplus K_d=\mathcal K$ and $P(\textbf{z})=X,$ as determined by Theorem \ref{schuragler}. This formula is crucial for understanding the domain $G_{E(2;2;1,1)}.$ 
%In order to characterise the domain $G_{E(2;2;1,1)}$ ($\Gamma_{E(2;2;1,1)}$ respectively) Abouhajar, White and Young \cite{Abouhajar} introduced the function $$\Psi(z,x_1,x_2,x_3)=\frac{x_1-zx_3}{1-x_2z}~{\rm{ for}}~ (x_1,x_2,x_3)\in G_{E(2;2;1,1)} ~{\rm{and}}~ z\in \bar{\mathbb D}.$$ Here, we aim to show how the matricial mobius transformation can be used to obtain $\Psi$.  
 Let $A=((a_{ij}))_{i,j=1}^{2}\in \mathcal M_{2\times2}(\mathbb C),\mathcal E=\mathbb C=\mathcal F=\mathcal K_1$ and $X=z$ for all $z\in \mathbb C$ with $1-a_{22}z\neq 0.$  Then from \eqref{rf}, we have 
\begin{align}\label{MAZ}
\mathcal M_{A}(z)&\nonumber=a_{11}+a_{12}z\left(1-a_{22}z\right)^{-1}a_{21}\\&\nonumber=\frac{a_{11}-z(a_{22}a_{11}-a_{12}a_{21})}{1-a_{22}z}\\&=\nonumber \frac{x_1-zx_3}{1-x_2z}\\&=\Psi(z,(x_1,x_2,x_3)),
\end{align}
 where $x_1=a_{11},x_2=a_{22}~{\rm{and}}~x_3= \det A.$ This shows that the matricial Mobius transformation is basically the function $\Psi$ which is defined in \cite{Abouhajar}. By computing the norm of $\Psi$, Abouhajar, White and Young \cite{Abouhajar} showed that a point $(x_1,x_2,x_3)\in G_{E(2;2;1,1)}$ if and  only if there exists a $2\times 2$ matrix $A=((a_{ij}))_{i=1}^{2}$ with $\|A\|<1$ such that $x_ 1=a_{11},x_2=a_{22}~{\rm{and}}~x_3=\det A.$   
 For $A=((a_{ij}))_{i,j=1}^{2}\in \mathcal M_{2\times 2}(\mathbb C)$, we define $\gamma$ and $\eta$ by
	\begin{align}\label{gaaa}\gamma(z)=\left(1-a_{22} z\right)^{-1}a_{21}, ~{\rm{and}}~\eta(z)=\left(\begin{smallmatrix}1\\ z\gamma(z)		\end{smallmatrix}\right), 
	\end{align} 
for all $z\in\mathbb C$ with $(1-a_{22}z)\neq 0.$ Note that 
\begin{align}\label{AAAA norm12}
			1-\overline{\mathcal M_{A}(z)}\mathcal M_{A}(z)&=\overline{\gamma}(z)(1-|z|^2)\gamma (z)+\eta(z)^*(I_{2}-A^*A)\eta(z).
\end{align}
%Suppose there exists $2\times 2$ matrix $A\in \mathcal M_{2\times 2}(\mathbb C)$ with $\|A\|<1$ such that $x_{1}=a_{11},x_{2}=a_{22},x_{3}=\operatorname{det}A.$ Then it follows from \eqref{AAAA norm12} that  $|\mathcal M_{A}(z)|=|\Psi(z,x_1,x_2,x_3)|<1$ for all $z\in \mathbb{ \bar{D}}$, which implies that  $(x_1,x_2,x_3)\in G_{E(2;2;1,1)}.$ 
It yields from \eqref{MAZ} and \eqref{AAAA norm12} that a point $(x_1,x_2,x_3)\in G_{E(2;2;1,1)}$ if and only if there exist a $2\times 2$ matrix $A\in \mathcal M_{2\times 2}(\mathbb C)$ such that $x_{1}=a_{11},x_{2}=a_{22},x_{3}=\operatorname{det}A$,
\begin{equation}\label{tetragamma}
\overline{\gamma}(z)(1-|z|^2)\gamma (z)+\eta(z)^*(I_{2}-A^*A)\eta(z)>0~~{\rm{and}}~(1-a_{22}z)\neq 0 ~{\rm{for ~all}}~z\in\bar{\mathbb D}.
\end{equation}
%The characterization of tetrablock implies that a point $(x_1,x_2,x_3)\in G_{E(2;2;1,1)}$ if and only if $\sup_{z\in \mathbb T}|\Psi(z,x_1,x_2,x_3)|<1.$ 
Note that the function $z\to |\Psi(z,(x_1,x_2,x_3))|$ is a  continuous function on $\mathbb T.$ Since $\mathbb T$ is compact,  there exists a point $z_0\in \mathbb T$ such that \begin{equation}\label{Psi112} |\mathcal M_{A}(z_0)|=|\Psi(z_0,(x_1,x_2,x_3))|=\sup_{z\in \mathbb T}|\Psi(z,(x_1,x_2,x_3))|.\end{equation}  From \eqref{tetragamma} and \eqref{Psi112}, we notice that a point $(x_1,x_2,x_3)\in G_{E(2;2;1,1)}$ if and only if $$\eta(z_0)^*(I_{2}-A^*A)\eta(z_0)>0.$$ Furthermore, based on the characterization of tetrablock, we conclude that 
$\frac{\|A\eta(z_0)\|}{\|\eta(z_0)\|}=\|A\|.$  Thus, we think that the realization formula may  play a crucial role in determining a point $\textbf{x}=(x_1,\ldots,x_7)\in G_{E(3;3;1,1,1)}$ is equivalent to the existence of a $3\times 3$ matrix $A\in \mathcal M_{3\times 3}(\mathbb C)$ with $\|A\|<1$ such that \small{$$x_1=a_{11}, x_2=a_{22}, x_3=\det \left(\begin{smallmatrix} a_{11} & a_{12}\\
					a_{21} & a_{22}
				\end{smallmatrix}\right), x_4=a_{33}, x_5=\det \left(\begin{smallmatrix}
					a_{11} & a_{13}\\
					a_{31} & a_{33}
				\end{smallmatrix}\right), x_6=\det  \left(\begin{smallmatrix}
					a_{22} & a_{23}\\
					a_{32} & a_{33}\end{smallmatrix}\right) ~{\rm{and}}~x_7=\det A.$$} 
We also explore the relationship between the domains $G_{E(3;2;1,2)}$ and $G_{E(3;3;1,1,1)}$. We  observe that  if $A\in \mathcal M_{3\times 3}(\mathbb C)$ with $\|A\|<1,$ then $\tilde{\textbf{x}}=(x_1,x_2,x_3,y_1,y_2)$ given by \eqref{mu13qu} belongs to $ G_{E(3;2;1,2)}$. The domain $G_{E(3;3;1,1,1)}$ and its closure are neither circular nor convex; however, they are simply connected. We provide an alternative proof of the polynomial and linear convexity of  $\Gamma_{E(3;3;1,1,1)}$. We describe the relationships between these $G_{E(3;3;1,1,1)}$ and $G_{E(3;2;1,2)}$ domains as well as between their closed boundaries.

 The standard Schwarz lemma gives a necessary and sufficient condition for solving a two-point interpolation problem for analytic functions from the open unit disc $\mathbb D$ into itself. We discuss the necessary conditions for a Schwarz lemma for the domains $G_{E(3;3;1,1,1)}$ and $G_{E(3;2;1,2)}$ in Section $4$. 
  
\section{Characterization of the domain $G_{E(3;3;1,1,1)}$}

In this section we use the zero set of a polynomial to characterise the domains $G_{E(3;3;1,1,1)}$ and $\Gamma_{E(3;3;1,1,1)}$ by applying the idea of the structured singular value. This polynomial allow us to create rational functions $\Psi^{(i)}$ that map $\mathbb{\bar{D}}^2$ to $\mathbb{D}$ for $i=1,2,3.$ The supremum of these rational functions $\Psi^{(i)}$, $1\leq i\leq 3$, is attained on $\mathbb T^2$ and is bounded by $1$. For the purpose of characterising the domains $G_{E(3;3;1,1,1)}$ and $\Gamma_{E(3;3;1,1,1)}$, these rational functions are crucial. Since $\Psi^{(i)} \in \mathcal{S}_{2}(\mathbb C,\mathbb C)$, $1\leq i\leq 3$, it is expressed by the form in \eqref{rf1}. The realization formula is very much useful to express each rational functions $\Psi^{(i)}$ for $i=1,2,3$. We demonstrate that one of these three rational functions can be expressed in terms of the other two. We also discuss two version of realization formulas to describe the domains $G_{E(3;3,1,1,1)}$ and $\Gamma_{E(3;3,1,1,1)}$.
 
\subsection{Characterization by the zero set of a polynomial}
	
	 For $\beta=(\beta_1,\ldots,\beta_n)\in A(1,\ldots,1)$ and $\textbf{z}=(z_1,\ldots,z_n)\in \mathbb C^n$, we set $|\beta|=\sum_{j=1}^{n}\beta_{j}$ and $\textbf{z}^{\beta}:=z_1^{\beta_1}\ldots z_n^{\beta_n}.$  We can arrange the elements of $A(1,\ldots,1)$ as $\alpha^{(1)}<\dots<\alpha^{(N)},$ where  $N=2^{n}-1.$ For ${\textbf{x}=(x_{1},\dots,x_{N})\in \mathbb{C}^{N}}$ and ${\bold z=(z_{1},\dots,z_{n})\in \mathbb{C}^{n}},$ from \eqref{Rz1} we have 
	  \begin{equation}\label{Rz}
	  	R^{(n;n;1,\ldots,1)}_{\textbf{x}}(\bold z):= 1+\sum_{j=1}^{N} (-1) ^{|\alpha^{(j)}|}x_{j}{\textbf z}^{\alpha^{(j)}}.
	  \end{equation}

For $n=2$, we have $$R^{(2;2,1,1)}_{\textbf{x}}(\textbf{z})=1-x_1z_1-x_2z_2+x_3z_1z_2,$$ where $\textbf{x}=(x_1,x_2,x_3)$ and $\mathbf {z}=(z_1,z_2)\in\mathbb C^2.$ In \cite{Abouhajar}, it is shown that 
	  $\mathcal A^{(1)}_{(2;2,1,1)}=G_{E(2;2;1,1)}$ and $\mathcal B^{(1)}_{(2;2;1,1)}=\Gamma_{E(2;2;1,1)}.$  For $n\geq 4,$ Zapolowski \cite{Pawel} proved that $G_{E(n;n;1,\dots,1)}\subset \mathcal A^{(1)}_{(n;n;1,\dots,1)}$ and $\mathcal A^{(1)}_{(3;3;1,1,1)}=G_{E(3;3;1,1,1)}.$
Here we give an explicit proof of $$\mathcal A^{(r)}_{(3;3;1,1,1)}=G^{(r)}_{E(3;3;1,1,1)}~{\rm{and }}~\mathcal B^{(r)}_{(3;3;1,1,1)}=\Gamma^{(r)}_{E(3;3;1,1,1)},$$ for $r>0$, using the same method as in \cite{aaabou}.  	
We use the structured singular valued functional to prove the following theorem, implying that these domains evolved from the $\mu$-synthesis process.	
		\begin{thm}\label{R_3}
			For $r>0,$  $\Gamma^{(r)}_{E(3;3;1,1,1)}=\mathcal B^{(r)}_{(3;3;1,1,1)}.$ In particular, $\Gamma_{E(3;3;1,1,1)}=\mathcal B^{(1)}_{(3;3;1,1,1)}.$
		\end{thm}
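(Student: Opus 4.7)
The plan is a direct verification of the set equality by unwinding the two definitions, leveraging the polynomial identity already recorded in \eqref{A33}. For any $A = ((a_{ij}))_{i,j=1}^{3} \in \mathcal M_{3\times 3}(\mathbb C)$ and $X = \operatorname{diag}(z_1,z_2,z_3) \in E(3;3;1,1,1)$, comparing \eqref{A33} with the polynomial \eqref{Rz1} and the $\mathbf{x}$-convention \eqref{GE} yields the key identity
\[
\det(I_3 - AX) \;=\; R^{(3;3;1,1,1)}_{\pi_{E(3;3;1,1,1)}(A)}(\mathbf{z}).
\]
Moreover, since $X$ is diagonal, $\|X\| = \max\{|z_1|,|z_2|,|z_3|\}$, so $\|X\| < r$ is equivalent to $\mathbf{z} \in r\mathbb D^3$. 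These two facts are all that the proof will use.

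For the forward inclusion $\Gamma^{(r)}_{E(3;3;1,1,1)} \subseteq \mathcal B^{(r)}_{(3;3;1,1,1)}$, I would take $\mathbf{x} = \pi_{E(3;3;1,1,1)}(A)$ with $\mu_{E(3;3;1,1,1)}(A) \leq 1/r$. Unpacking \eqref{mu} gives $\inf\{\|X\|: X \in E(3;3;1,1,1),\,\det(I-AX) = 0\} \geq r$, so no diagonal $X$ with $\|X\| < r$ is a zero of $\det(I-AX)$. Translating through the identity above, $R^{(3;3;1,1,1)}_{\mathbf{x}}(\mathbf{z}) \neq 0$ for every $\mathbf{z} \in r\mathbb D^3$, i.e., $\mathbf{x} \in \mathcal B^{(r)}_{(3;3;1,1,1)}$.

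For the reverse inclusion, I would invoke Lemma \ref{meq} to realize any given $\mathbf{x} \in \mathcal B^{(r)}_{(3;3;1,1,1)}$ as $\pi_{E(3;3;1,1,1)}(A)$ for some $A \in \mathcal M_{3\times 3}(\mathbb C)$; this is the point where the proof genuinely needs something beyond bookkeeping, since it requires surjectivity of the polynomial map $\pi_{E(3;3;1,1,1)} : \mathcal M_{3\times 3}(\mathbb C) \to \mathbb C^7$. The hypothesis on $\mathbf{x}$ together with the polynomial identity then gives $\det(I-AX) \neq 0$ for every diagonal $X$ with $\|X\| < r$, and hence $\mu_{E(3;3;1,1,1)}(A) \leq 1/r$ by \eqref{mu}. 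Consequently $\mathbf{x} \in \Gamma^{(r)}_{E(3;3;1,1,1)}$; the case $r = 1$ is an immediate specialization.

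No substantive obstacle arises: the argument reduces to rewriting the structured-singular-value condition in terms of the polynomial $R^{(3;3;1,1,1)}_{\mathbf{x}}$ and applying Lemma \ref{meq}. The one bookkeeping detail worth emphasizing is the strict-versus-weak inequality: $\mu(A) \leq 1/r$ corresponds to the infimum of $\|X\|$ being $\geq r$, which is the same as ``no zero in the \emph{open} polydisc $r\mathbb D^3$''; this is precisely what cleanly separates this theorem from the companion statement $\mathcal A^{(r)}_{(3;3;1,1,1)} = G^{(r)}_{E(3;3;1,1,1)}$ of Proposition \ref{pawel}, where strict inequality matches the closed polydisc instead.
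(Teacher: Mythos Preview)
Your proposal is correct and follows essentially the same approach as the paper: both directions hinge on the identity $\det(I_3 - AX) = R^{(3;3;1,1,1)}_{\pi(A)}(\mathbf z)$ together with Lemma~\ref{meq} for the reverse inclusion. The only difference is cosmetic: the paper separates out the case $\mu_{E(3;3;1,1,1)}(A)=0$ in the forward inclusion and invokes Lemma~\ref{M_3} to conclude $R_{\mathbf x}\equiv 1$, whereas you absorb that case into the uniform statement ``no $X$ with $\|X\|<r$ makes $I-AX$ singular'' (valid since the infimum over the empty set is $+\infty$), which is in fact a slight streamlining.
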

		
		\begin{proof}
			
			We first show that 
			\begin{equation}\label{de1} \Gamma^{(r)}_{E(3;3;1,1,1)}\subseteq \mathcal B^{(r)}_{(3;3;1,1,1)}.\end{equation} 
			
			Let $\textbf{x}=(x_1,x_2,x_3,x_4,x_5,x_6,x_7)\in \Gamma^{(r)}_{E(3;3;1,1,1)}$, so by  \eqref{GammaE} there exits a matrix $A=((a_{ij}))_{i,j=1}^{3}$ such that \small{\begin{equation*}x_1=a_{11}, x_2=a_{22}, x_3=\det \left(\begin{smallmatrix} a_{11} & a_{12}\\
					a_{21} & a_{22}
				\end{smallmatrix}\right), x_4=a_{33}, x_5=\det \left(\begin{smallmatrix}
					a_{11} & a_{13}\\
					a_{31} & a_{33}
				\end{smallmatrix}\right), x_6=\det  \left(\begin{smallmatrix}
					a_{22} & a_{23}\\
					a_{32} & a_{33}\end{smallmatrix}\right) ~{\rm{and}}~x_7=\det A\end{equation*}} and $\mu_{E(3;3;1,1,1)}(A)\leq\frac{1}{r}$. We consider two cases. 
			
			$\textbf{Case 1}$: 
			Assume that $\mu_{E(3;3;1,1,1)}(A)\neq0$. As
			
			$$\mu_{E(3;3;1,1,1)}(A)=\frac{1}{\inf\{\|X\|: X\in E(3;3;1,1,1), (I-AX)\,\,\mbox{is singular}\}} \leq\frac{1}{r}$$ is equivalent to $$\inf\{\|X\|: X\in E(3;3;1,1,1), (I-AX)\,\,\mbox{is singular}\} \geq r.$$ Thus if $X=\begin{pmatrix}
				z_1 & 0 & 0\\
				0 & z_2 & 0\\
				0 & 0 & z_3
			\end{pmatrix}\in  E(3;3;1,1,1)$ such that $\det (I-AX)=0$ then we have $$\|X\|=\max\{|z_1|,|z_2|,|z_3|\}\geq r.$$ In other words, if  $X\in E(3;3;1,1,1)$ and $\|X\|=\max\{|z_1|,|z_2|,|z_3|\} < r$ then $\det (I-AX)\neq 0$. For $z_1,z_2,z_3 \in r\mathbb{D}$ we have 
			\begin{eqnarray*}
				0&\neq& \det (I-AX)\\
				&=& 1-a_{11}z_1-a_{22}z_2+\det \left(\begin{smallmatrix} a_{11} & a_{12}\\
					a_{21} & a_{22}
				\end{smallmatrix}\right)\, z_1z_2-a_{33}z_3\\&+&\det \left(\begin{smallmatrix} a_{11} & a_{13}\\
					a_{31} & a_{33}
				\end{smallmatrix}\right)\, z_1z_3+\det \left(\begin{smallmatrix} a_{22} & a_{23}\\
					a_{32} & a_{33}
				\end{smallmatrix}\right)\, z_2z_3-\det A\, z_1z_2z_3\\
				&=& 1-x_1z_1-x_2z_2+x_3z_1z_2-x_4z_3+x_5z_1z_3+x_6z_2z_3-x_7z_1z_2z_3\\
				&=& R_{\bf{x}}^{(3;3;1,1,1)}(\textbf{z}).
			\end{eqnarray*}
			Hence $\textbf{x}\in \mathcal{B}^{(r)}_{(3;3;1,1,1)}$.
			
			$\textbf{Case 2}$: Assume that $\mu_{E(3;3;1,1,1)}(A)=0$. From Lemma \ref{M_3}, it follows that 
			$$a_{ii}=0~{\rm{for}}~i=1,2,3, a_{11}a_{22}-a_{12}a_{21}=0,a_{11}a_{33}-a_{13}a_{31}=0,\\a_{22}a_{33}-a_{23}a_{32}=0~{\rm{and}}\operatorname{det}A=0.$$
			For $\textbf{z}=(z_1,z_2,z_3)\in \mathbb{C}^3$ we have 
			\begin{eqnarray*}
				R_{\textbf{x}}^{(3;3;1,1,1)}(\textbf{z})&=& 1-x_1z_1-x_2z_2+x_3z_1z_2-x_4z_3+x_5z_1z_3+x_6z_2z_3-x_7z_1z_2z_3\\
				&=& 1-a_{11}z_1-a_{22}z_2+\det \left(\begin{smallmatrix} a_{11} & a_{12}\\
					a_{21} & a_{22}
				\end{smallmatrix}\right)\, z_1z_2-a_{33}z_3\\&+&\det \left(\begin{smallmatrix} a_{11} & a_{13}\\
					a_{31} & a_{33}
				\end{smallmatrix}\right)\, z_1z_3+\det \left(\begin{smallmatrix} a_{22} & a_{23}\\
					a_{32} & a_{33}
				\end{smallmatrix}\right)\, z_2z_3-\det A\, z_1z_2z_3\\				&=&1\\
				&\neq& 0.
			\end{eqnarray*}
			In particular, $R_{\textbf{x}}^{(3;3;1,1,1)}(\textbf{z})\neq 0$ for all $\textbf{z}\in r\mathbb{D}^3$. Hence $\textbf{x}\in \mathcal{B}^{(r)}_{(3;3;1,1,1)}$. This completes the proof of (\ref{de1}). 
			
			Now we show that 
			\begin{equation}\label{dem2} \mathcal B^{(r)}_{(3;3;1,1,1)} \subseteq  \Gamma^{(r)}_{E(3;3;1,1,1)}.\end{equation} 
			
			Let $\textbf{x}=(x_1,x_2,x_3,x_4,x_5,x_6,x_7) \in \mathcal B^{(r)}_{(3;3;1,1,1)}$, so for $\textbf{z}=(z_1,z_2,z_3)\in r\mathbb{D}^3$ we have
			\small{\begin{eqnarray}\label{de2}
				0 &\neq& R_{\bf{x}}^{(3;3;1,1,1)}(\textbf{z}) \nonumber\\
				&=& 1-x_1z_1-x_2z_2+x_3z_1z_2-x_4z_3+x_5z_1z_3+x_6z_2z_3-x_7z_1z_2z_3.
			\end{eqnarray}}
			By Lemma \ref{meq}, there exists a matrix $A=((a_{ij}))_{i,j=1}^{3}$ such that 
			\small{\begin{eqnarray}\label{de3}
				x_1=a_{11}, x_2=a_{22}, x_3=\det\left( \begin{smallmatrix} a_{11} & a_{12}\\
					a_{21} & a_{22}
				\end{smallmatrix}\right) x_4=a_{33}, x_5=\det \left(\begin{smallmatrix}
					a_{11} & a_{13}\\
					a_{31} & a_{33}
				\end{smallmatrix}\right),\nonumber
				\\
				x_6=\det \left( \begin{smallmatrix}
					a_{22} & a_{23}\\
					a_{32} & a_{33}\end{smallmatrix}\right)~ \mbox{and} ~x_7=\det A .
			\end{eqnarray}}
			
			Thus from \eqref{de2} and \eqref{de3} we have 
			\begin{equation}\label{de4}
				\det(I-AX)=R_{\bf{x}}^{(3;3;1,1,1)}(\textbf{z})\neq 0\;\;\;\mbox{for all}\;\; \textbf{z}\in r\mathbb{D}^3.
			\end{equation}
			From \eqref{de4} follows that if $\det (I-AX)=0$, then $\max\{|z_1|,|z_2|,|z_3|\}\geq r$, that is, $$\|X\|=\|\begin{pmatrix}
				z_1 & 0 & 0\\
				0 & z_2 & 0\\
				0 & 0 & z_3
			\end{pmatrix}\|\geq r.$$ Consequently, we get $\inf\{\|X\|: X\in E(3;3;1,1,1), (I-AX)\,\,\mbox{is singular}\} \geq r$ which implies that $\mu_{E(3;3;1,1,1)}(A)\leq\frac{1}{r}$.
			Thus we have $\textbf{x}\in \Gamma^{(r)}_{E(3;3;1,1,1)}$. This completes the proof of \eqref{dem2}. 
		\end{proof}	
		\begin{cor}\label{R_33}
			$G_{E(3;3;1,1,1)}=\mathcal A^{(1)}_{(3;3;1,1,1)}.$
		\end{cor}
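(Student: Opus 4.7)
The plan is to derive this corollary by a compactness argument from Theorem~\ref{R_3}, exploiting the fact that the Corollary's statement involves the open stratum ($\mu < 1$) on the $G$-side and the closed polydisc ($\bar{\mathbb{D}}^3$) on the $\mathcal{A}$-side, while Theorem~\ref{R_3} pairs the closed stratum with the open polydisc. These can be bridged by letting the parameter $r$ vary slightly above $1$.

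First I would establish the two set-theoretic identities
\[
G_{E(3;3;1,1,1)} \;=\; \bigcup_{r>1}\Gamma^{(r)}_{E(3;3;1,1,1)},\qquad
\mathcal{A}^{(1)}_{(3;3;1,1,1)} \;=\; \bigcup_{r>1}\mathcal{B}^{(r)}_{(3;3;1,1,1)}.
\]
The first is immediate from the definitions: $\mathbf{x}\in G_{E(3;3;1,1,1)}$ means some preimage $A$ satisfies $\mu_{E(3;3;1,1,1)}(A)<1$, which is equivalent to the existence of $r>1$ with $\mu_{E(3;3;1,1,1)}(A)\leq 1/r$, i.e.\ $\mathbf{x}\in \Gamma^{(r)}_{E(3;3;1,1,1)}$.

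For the second identity, one inclusion is trivial, since $r>1$ implies $\bar{\mathbb{D}}^3\subset r\mathbb{D}^3$, so the non-vanishing of $R^{(3;3;1,1,1)}_{\mathbf{x}}$ on $r\mathbb{D}^3$ forces non-vanishing on $\bar{\mathbb{D}}^3$. The other inclusion is where I would apply compactness: if $\mathbf{x}\in\mathcal{A}^{(1)}_{(3;3;1,1,1)}$, then $R^{(3;3;1,1,1)}_{\mathbf{x}}$ is a polynomial that is non-zero on the compact set $\bar{\mathbb{D}}^3$, so $|R^{(3;3;1,1,1)}_{\mathbf{x}}|$ attains a strictly positive minimum there, and continuity then produces some $r>1$ on whose closed polydisc $r\bar{\mathbb{D}}^3$ the polynomial still does not vanish; in particular, $\mathbf{x}\in \mathcal{B}^{(r)}_{(3;3;1,1,1)}$.

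Finally, combining these two identities with Theorem~\ref{R_3} applied at each $r>1$ gives
\[
G_{E(3;3;1,1,1)} \;=\; \bigcup_{r>1}\Gamma^{(r)}_{E(3;3;1,1,1)} \;=\; \bigcup_{r>1}\mathcal{B}^{(r)}_{(3;3;1,1,1)} \;=\; \mathcal{A}^{(1)}_{(3;3;1,1,1)}.
\]
There is no serious obstacle here; the only non-trivial step is the compactness argument producing an $r>1$ strictly larger than $1$ on which non-vanishing persists, and that is standard since $R^{(3;3;1,1,1)}_{\mathbf{x}}$ is a polynomial in $\mathbf{z}$ and $\bar{\mathbb{D}}^3$ is compact.
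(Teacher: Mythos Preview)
Your proof is correct and follows essentially the same approach as the paper: both directions hinge on passing to some $r>1$ (using $\mu<1\Leftrightarrow \mu\le 1/r$ for some $r>1$ on one side, and a compactness/continuity argument for the polynomial $R^{(3;3;1,1,1)}_{\mathbf{x}}$ on the other) and then invoking Theorem~\ref{R_3}. Your presentation via the two union identities is a tidy repackaging of exactly the argument the paper gives inclusion by inclusion.
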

		\begin{proof}
			Let $\textbf{x}=(x_1,x_2,x_3,x_4,x_5,x_6,x_7)\in G_{E(3;3;1,1,1)}$. By \eqref{GE} there exits a matrix $A=((a_{ij}))_{i,j=1}^{3}$ such that \small{$$x_1=a_{11}, x_2=a_{22}, x_3=\det \left(\begin{smallmatrix} a_{11} & a_{12}\\
					a_{21} & a_{22}
				\end{smallmatrix}\right), x_4=a_{33}, x_5=\det \left(\begin{smallmatrix}
					a_{11} & a_{13}\\
					a_{31} & a_{33}
				\end{smallmatrix}\right), x_6=\det  \left(\begin{smallmatrix}
					a_{22} & a_{23}\\
					a_{32} & a_{33}\end{smallmatrix}\right) ~{\rm{and}}~x_7=\det A,$$} and $\mu_{E(3;3;1,1,1)}(A)<1$. Let $r_0 > 1$ be such that $\mu_{E(3;3;1,1,1)}(A)\leq \frac{1}{r_0}.$ For all $\textbf{z}\in r_0\mathbb{D}^3$, from Theorem \ref{R_3}, it follows that $R^{(3;3;1,1,1)}_{\textbf{x}}(\textbf{z})\neq 0$. In particular, $R^{(3;3;1,1,1)}_{\textbf{x}}(\textbf{z})\neq 0$ for all $\textbf{z}\in  \bar{\mathbb D}^3$. Thus 
			$\textbf{x}\in \mathcal{A}_{(3;3;1,1,1)}^{(1)}$. Hence $G_{E(3;3;1,1,1)}\subseteq \mathcal A^{(1)}_{(3;3;1,1,1)}$.
			
			Conversely, let $\textbf{x}=(x_1,\ldots,x_7)\in \mathcal A^{(1)}_{(3;3;1,1,1)}.$ From  Lemma \ref{meq}, there exists a matrix $A=((a_{ij}))^3_{ i, j=1}$ such that 
			\small{$$x_1=a_{11}, x_2=a_{22}, x_3=\det \left(\begin{smallmatrix} a_{11} & a_{12}\\
					a_{21} & a_{22}
				\end{smallmatrix}\right), x_4=a_{33}, x_5=\det \left(\begin{smallmatrix}
					a_{11} & a_{13}\\
					a_{31} & a_{33}
				\end{smallmatrix}\right), x_6=\det  \left(\begin{smallmatrix}
					a_{22} & a_{23}\\
					a_{32} & a_{33}\end{smallmatrix}\right) ~{\rm{and}}~x_7=\det A.$$}
			Since $\textbf{x}\in \mathcal{A}_{(3;3;1,1,1)}^{(1)}$, so for all $z\in \bar{\mathbb D}^3$, we have 
			
			\begin{eqnarray*}
				0 &\neq & R_{\bf{x}}^{(3;3;1,1,1)}(\textbf{z})\\
				&=& 1-a_{11}z_1-a_{22}z_2+\det A_{12}\, z_1z_2-a_{33}z_3+\det A_{13}\, z_1z_3+\det A_{23}\, z_2z_3-\det A\, z_1z_2z_3.
			\end{eqnarray*}
			From continuity of $R_{\bf{x}}^{(3;3;1,1,1)}(\textbf{z})$ in the variable $\textbf{z}$ and compactness of $\mathbb T\times \mathbb T \times \mathbb T$, there exists $\epsilon_0>0$ such that $R_{\bf{x}}^{(3;3;1,1,1)}(\textbf{z})\neq 0$ for all $\textbf{z} \in (1+\epsilon_0)\mathbb{D}^3$. Therefore,  $\textbf{x}$ is a member of $\mathcal B^{(1+\epsilon_0)}_3$. By Theorem \ref{R_3},  $\textbf{x}$ belongs to $\Gamma^{(1+\epsilon_0)}_{E(3;3;1,1,1)}$. From the definition of $\Gamma^{(1+\epsilon_0)}_{E(3;3;1,1,1)}$, we obtain $\mu_{E(3;3;1,1,1)}(A)\leq \frac{1}{(1+\epsilon_0)}<1$. Thus $\textbf{x}$ belongs to  $G_{E(3;3;1,1,1)}$. In light of this, $\mathcal A^{(1)}_{(3;3;1,1,1)} \subseteq G_{E(3;3;1,1,1)}$. This completes the proof. 
		\end{proof}

\subsection{Characterization by rational functions}

In the work of Abouhajar, White and Young  \cite{Abouhajar}, rational functions plays essential role for the description of the domain tetrablock. To describe the domain $G_{E(3;3;1,1,1)}$, we consider $3$ rational functions, namely, $\Psi^{(1)},\Psi^{(2)}~{\rm{and}}~~\Psi^{(3)}$. Let $J^{(i)}=\{1,2,3\}\setminus\{i\}, 1\leq i\leq 3$. For $1\leq i \leq 3$ and $j_1,j_2 \in J^{(i)}$ with $J_1< J_2$, set $$\textbf{x}_{J^{(i)}}^{\prime}:=(x_{2^{j_{1}-1}},x_{2^{j_{2}-1}},x_{(2^{j_{1}-1}+2^{j_{2}-1})}
		),\;\; \textbf{x}_{J^{(i)}}^{\prime\prime}:=(x_{2^{i-1}},x_{(2^{j_{1}-1}+2^{i-1})},x_{(2^{j_{2}-1}+2^{i-1})}
		,x_{(2^{j_{1}-1}+2^{j_{2}-1}+2^{i-1})}),$$ and  $\textbf{z}_{J^{(i)}}:=(z_{j_1},z_{j_2})$. We define the polynomials
$$\begin{aligned}
			R^{(i)}(\textbf{z}_{J^{(i)}},\textbf{x}_{J^{(i)}}^{\prime})=&1-\sum_{l\in J^{(i)}}x_{2^{l-1}}z_{l}+x_{(2^{j_{1}-1}+2^{j_{2}-1})}z_{j_{1}}z_{j_{2}},
		\end{aligned}$$ and  $$\begin{aligned}
			P^{(i)}(\textbf{z}_{J^{(i)}},\textbf{x}_{J^{(i)}}^{\prime\prime})=&x_{2^{i-1}}-\sum_{l\in J^{(i)}}x_{(2^{l-1}+2^{i-1})}z_{l}+x_{(2^{j_{1}-1}+2^{j_{2}-1}+2^{i-1})}z_{j_{1}}z_{j_{2}}.
		\end{aligned}$$
 If $\textbf{x}_{J^{(i)}}^{\prime}\in \Gamma_{E(2;2;1,1)}$, then $R^{(i)}(\textbf{z}_{J^{(i)}},\textbf{x}_{J^{(i)}}^{\prime})\neq 0$, $1\leq i \leq 3$. We define $$\Psi^{(i)}(\textbf{z}_{J^{(i)}},\textbf{x})=\frac{P^{(i)}(\textbf{z}_{J^{(i)}},\textbf{x}_{J^{(i)}}^{\prime\prime})}{R^{(i)}(\textbf{z}_{J^{(i)}},\textbf{x}_{J^{(i)}}^{\prime})}, \;\;\; 1\leq i \leq 3.$$	
		
		For example, for $i=1$, we have 
		$J^{(1)}=\{2,3\}, \textbf{z}_{J^{(1)}}=(z_2,z_3).$ From above construction, we observe that
		$$R^{(1)}(\textbf{z}_{J^{(1)}},\textbf{x}_{J^{(1)}}^{\prime})=1-x_2z_2-x_4z_3+x_6z_2z_3,$$ and $$P^{(1)}(\textbf{z}_{J^{(1)}},\textbf{x}_{J^{(1)}}^{\prime\prime})=x_1-x_3z_2-x_5z_3+x_7z_2z_3.$$ Also, from above discussion, we see that 
		$\textbf{x}_{J^{(1)}}^{\prime}=(x_{2},x_{4},x_{6}
		)$, $\textbf{x}_{J^{(1)}}^{\prime\prime}=(x_{1},x_{3},x_{5}
		,x_{7})$  and  $$\Psi^{(1)}(\textbf{z}_{J^{(1)}},\textbf{x})=\frac{x_{7}z_{2}z_{3}-x_{3}z_{2}-x_{5}z_{3}+x_{1}}{x_{6}z_{2}z_{3}-x_{2}z_{2}-x_{4}z_{3}+1}~{\rm{for}}~\textbf{x}_{J^{(1)}}^{\prime}\in \Gamma_{E(2;2;1,1)}.$$
		
		Note that $\Psi^{(1)}(\cdot,\cdot,\textbf{x})$ is a constant function $x_1$ when $x_{7}=x_{6}x_{1},\,\,x_{3}=x_{2}x_{1},\,\,x_{5}=x_{4}x_{1}.$ 
		%Thus, we conclude that $\Psi^{(1)}(z_2,z_3,\textbf{x})$ is defined when $(x_2,x_4,x_6)\in \Gamma_{E(2;2;1,1)}$ or $x_{7}=x_{6}x_{1},\,\,x_{3}=x_{2}x_{1},\,\,x_{5}=x_{4}x_{1}.$ 
		Similarly to the above constructions, we can also describe the functions $\Psi^{(2)}(\textbf{z}_{J^{(2)}},\textbf{x})$ and $\Psi^{(3)}(\textbf{z}_{J^{(3)}},\textbf{x}).$ Also, note that for  $\textbf{x}=(x_{1},\dots,x_{7})$, we get 
			\begin{equation}\label{eq:3.0}
				\Psi^{(3)}(z,w,x_1,x_4,x_5,x_2,x_3,x_6,x_7)=\Psi^{(2)}(z,w,\textbf{x})
				=\Psi^{(1)}(z,w,x_{2},x_{1},x_{3},x_{4},x_{6},x_{5},x_{7}).
			\end{equation}

			\begin{thm}\label{phi}
		Let $\textbf{z}_{J^{(i)}}=(z_{j_1},z_{j_2})\in \mathbb{C}^{2}$ for $j_1,j_2\in J^{(i)}$, $1\leq i \leq 3$,  with $j_1<j_2.$ Then $\textbf{x}\in G_{E(3;3;1,1,1)}$ if and only if it satisfies one of the following condition:
				\begin{enumerate}
					\item $\textbf{x}_{J^{(1)}}^{\prime}\in G_{E(2;2;1,1)}$ and 
					$\|\Psi^{(1)}(\cdot,\textbf{x})\|_{H^{\infty}(\bar{\mathbb{D}}^{2})}=\|\Psi^{(1)}(\cdot,\textbf{x})\|_{H^{\infty}(\mathbb{T}^{2})}< 1$ and if  we suppose that $x_{7}=x_{6}x_{1},\,\,x_{3}=x_{2}x_{1},\,\,x_{5}=x_{4}x_{1}$ then $|x_1|<1;$
					\item $\textbf{x}_{J^{(2)}}^{\prime}\in G_{E(2;2;1,1)}$ and 
					$\|\Psi^{(2)}(\cdot,\textbf{x})\|_{H^{\infty}(\bar{\mathbb{D}}^{2})}=\|\Psi^{(2)}(\cdot,\textbf{x})\|_{H^{\infty}(\mathbb{T}^{2})}< 1$  and if  we suppose that $x_{7}=x_{5}x_{2},\,\,x_{3}=x_{2}x_{1},\,\,x_{6}=x_{4}x_{2}$ then $|x_2|<1;$
					
					\item $\textbf{x}_{J^{(3)}}^{\prime}\in G_{E(2;2;1,1)}$ and 
					$\|\Psi^{(3)}(\cdot,\textbf{x})\|_{H^{\infty}(\bar{\mathbb{D}}^{2})}=\|\Psi^{(3)}(\cdot,\textbf{x})\|_{H^{\infty}(\mathbb{T}^{2})}< 1$  and if  we suppose that $x_{7}=x_{3}x_{4},\,\,x_{6}=x_{2}x_{4},\,\,x_{5}=x_{4}x_{1}$ then $|x_4|<1.$
					
				\end{enumerate}
			\end{thm}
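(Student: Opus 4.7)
The plan is to reduce the characterization to the polynomial non-vanishing criterion from Corollary \ref{R_33} and then factor the polynomial $R^{(3;3;1,1,1)}_{\textbf{x}}$ as a linear expression in each of the variables $z_1,z_2,z_3$ in turn. By Corollary \ref{R_33}, $\textbf{x}\in G_{E(3;3;1,1,1)}$ iff $R^{(3;3;1,1,1)}_{\textbf{x}}(\textbf{z})\neq 0$ for all $\textbf{z}\in\bar{\mathbb D}^3$. Collecting coefficients according to the power of $z_1$ gives the identity
$$R^{(3;3;1,1,1)}_{\textbf{x}}(z_1,z_2,z_3)=R^{(1)}(\textbf{z}_{J^{(1)}},\textbf{x}^{\prime}_{J^{(1)}})-z_1\,P^{(1)}(\textbf{z}_{J^{(1)}},\textbf{x}^{\prime\prime}_{J^{(1)}}),$$
which is the key algebraic identity. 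I would prove condition (1) in full; conditions (2) and (3) then follow by performing the analogous factorization as linear in $z_2$ and $z_3$, respectively.

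For the forward direction I would set $z_1=0$ in the identity, forcing $R^{(1)}(z_2,z_3)\neq 0$ on $\bar{\mathbb D}^2$, which by the two-variable tetrablock characterization of \cite{Abouhajar} is equivalent to $\textbf{x}^{\prime}_{J^{(1)}}\in G_{E(2;2;1,1)}$. With $R^{(1)}$ non-vanishing on $\bar{\mathbb D}^2$ I may rewrite
$$R^{(3;3;1,1,1)}_{\textbf{x}}(z_1,z_2,z_3)=R^{(1)}(z_2,z_3)\,\bigl[1-z_1\,\Psi^{(1)}(\textbf{z}_{J^{(1)}},\textbf{x})\bigr],$$
so the non-vanishing of $R^{(3;3;1,1,1)}_{\textbf{x}}$ on $\bar{\mathbb D}^3$ is equivalent to $1-z_1\Psi^{(1)}(z_2,z_3)\neq 0$ for all $z_1\in\bar{\mathbb D}$ and $(z_2,z_3)\in\bar{\mathbb D}^2$. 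Since $1-\zeta w\neq 0$ for every $\zeta\in\bar{\mathbb D}$ iff $|w|<1$, this becomes the pointwise bound $|\Psi^{(1)}(z_2,z_3)|<1$ on $\bar{\mathbb D}^2$.

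To upgrade the pointwise bound to a sup-norm bound, I would observe that since $R^{(1)}$ is non-vanishing on the compact set $\bar{\mathbb D}^2$ it is bounded away from zero on some neighborhood, so $\Psi^{(1)}(\cdot,\textbf{x})$ is holomorphic on a neighborhood of $\bar{\mathbb D}^2$ and continuous on $\bar{\mathbb D}^2$. Compactness gives that the supremum is attained and strictly less than $1$, and the maximum modulus principle applied coordinate-wise on the bidisc yields the equality $\|\Psi^{(1)}(\cdot,\textbf{x})\|_{H^\infty(\bar{\mathbb D}^2)}=\|\Psi^{(1)}(\cdot,\textbf{x})\|_{H^\infty(\mathbb T^2)}<1$. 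Reversing the chain of equivalences gives the converse direction. The degenerate case $x_7=x_6x_1$, $x_3=x_2x_1$, $x_5=x_4x_1$ is handled separately by direct substitution: these relations force $P^{(1)}=x_1R^{(1)}$, so $\Psi^{(1)}\equiv x_1$ collapses to a constant and the sup-norm condition reduces to $|x_1|<1$.

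The main technical obstacle is the transition from the pointwise inequality $|\Psi^{(1)}(z_2,z_3)|<1$ on $\bar{\mathbb D}^2$ to the equality of the two $H^\infty$ norms and their strict bound; this requires justifying the polydisc maximum modulus principle in a neighborhood of $\bar{\mathbb D}^2$ (which in turn depends on the non-vanishing of $R^{(1)}$ already provided by the tetrablock step), together with a clean isolation of the degenerate locus where $P^{(1)}$ becomes a scalar multiple of $R^{(1)}$. Aside from these two points, the remainder of the argument is the systematic bookkeeping of the equivalence $R^{(3;3;1,1,1)}_{\textbf{x}}\neq 0\text{ on }\bar{\mathbb D}^3 \Longleftrightarrow R^{(i)}\neq 0\text{ on }\bar{\mathbb D}^2\text{ and }|\Psi^{(i)}|<1\text{ on }\bar{\mathbb D}^2$ implied by the linear-in-$z_i$ factorization.
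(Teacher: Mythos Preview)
Your proposal is correct and follows essentially the same route as the paper: both start from the polynomial non-vanishing criterion of Corollary \ref{R_33}, factor $R^{(3;3;1,1,1)}_{\textbf{x}}$ as linear in one of the variables to obtain $R^{(i)}(1-z_i\Psi^{(i)})$, deduce $\textbf{x}^{\prime}_{J^{(i)}}\in G_{E(2;2;1,1)}$ together with the pointwise bound $|\Psi^{(i)}|<1$ on $\bar{\mathbb D}^2$, and then invoke the maximum modulus principle on the bidisc to obtain the norm equality and strict bound. The only cosmetic difference is that the paper proves case (2) and transfers to (1) and (3) via the symmetry identities \eqref{eq:3.0}, whereas you prove (1) directly and appeal to the analogous linear-in-$z_2$ and linear-in-$z_3$ factorizations; your treatment of the degenerate case and of the compactness/continuity step is also a bit more explicit than the paper's.
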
 
			\begin{proof}
				We only prove that $\textbf{x}\in G_{E(3;3;1,1,1)}$ is equivalent to $$\textbf{x}_{J^{(2)}}^{\prime}\in G_{E(2;2;1,1)}~~{\rm{ and }}~~
				\|\Psi^{(2)}(\cdot,\textbf{x})\|_{H^{\infty}(\bar{\mathbb{D}}^{2})}=\|\Psi^{(2)}(\cdot,\textbf{x})\|_{H^{\infty}(\mathbb{T}^{2})}< 1,$$ because the proof of $(1)$ and $(3)$ will follow easily by using the following identities \begin{equation*}
					\Psi^{(3)}(z,w,x_1,x_4,x_5,x_2,x_3,x_6,x_7)=\Psi^{(2)}(z,w,\textbf{x})
					=\Psi^{(1)}(z,w,x_{2},x_{1},x_{3},x_{4},x_{6},x_{5},x_{7}).
				\end{equation*} 
				From Theorem \ref{R_3}, we get $\textbf{x}\in G_{E(3;3;1,1,1)}$ if and only if 
%				$R^{(3)}_{\textbf{x}}(\textbf{z})\neq 0$ for all $\textbf{z}=(z_1,z_2,z_3)\in \bar{\mathbb{D}}^{3}$. Equivalently, $\textbf{x}\in G_{E(3;3;1,1,1)}$ if and only if 
				 $$R^{(3;3;1,1,1)}_{\textbf{x}}(\textbf{z})=R^{(2)}(\textbf{z}_{J^{(2)}},\textbf{x}_{J^{(2)}}^{\prime})-z_{2}P^{(2)}(\textbf{z}_{J^{(2)}},\textbf{x}_{J^{(2)}}^{\prime\prime})\neq 0, \,\,\;\mbox{for}\;\; \textbf{z}_{J^{(2)}}\in \bar{\mathbb{D}}^{2},\,\,z_{2}\in \bar{\mathbb{D}},$$ that is, $\textbf{x}_{J^{(2)}}^{\prime}\in G_{E(2;2;1,1)}$ and $1\neq z_{2}\Psi^{(2)}(\textbf{z}_{J^{(2)}},\textbf{x})$ for all $\textbf{z}_{J^{(2)}}\in \bar{\mathbb{D}}^{2}$. However, $\Psi^{(2)}(\cdot,\textbf{x})$ is holomorphic on $\mathbb{D}^{2}$ and continuous on $\bar{\mathbb{D}}^{2}$. Hence by the Maximum modulus theorem, it follows that $$\max_{\textbf{z}_{J^{(2)}} \in \bar{\mathbb{D}}^{2}}|\Psi^{(2)}\left(\textbf{z}_{J^{(2)}},\textbf{x}\right)|=\max_{\textbf{z}_{J^{(2)}} \in \mathbb{T}^{2}} |\Psi^{(2)}\left(\textbf{z}_{J^{(2)}},\textbf{x}\right)|<1.$$ This completes the proof.
			\end{proof}
			
			\begin{cor}
				Suppose $\textbf{x}=(x_1,\ldots,x_7)\in \mathbb C^{7}.$ Then
				\begin{enumerate}
					\item $\textbf{x}\in G_{E(3;3;1,1,1)}$ if and only if $(x_1,x_4,x_5,x_2,x_3,x_6,x_7)\in G_{E(3;3;1,1,1)}$.
					\item  $\textbf{x}\in G_{E(3;3;1,1,1)}$ if and only if $(x_2,x_1,x_3,x_4,x_6,x_5,x_7)\in G_{E(3;3;1,1,1)}.$
				\end{enumerate}
				
			\end{cor}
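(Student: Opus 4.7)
The plan is to mirror the proof of Lemma~\ref{331}, replacing the permutation matrix $J_1$ (which exchanges indices $1$ and $3$) with each of the other two transpositions on three letters. The lemma rests on only two properties of $J_1$: (a)~it is unitary, so $\|J_1 X J_1\| = \|X\|$; and (b)~conjugation by $J_1$ preserves the subspace $E(3;3;1,1,1)$ of diagonal matrices. Both properties hold for every $3 \times 3$ permutation matrix $P$, so the identical argument from Case~2 of Lemma~\ref{331} yields $\mu_{E(3;3;1,1,1)}(PAP^{-1}) = \mu_{E(3;3;1,1,1)}(A)$ for every such $P$; Case~1 generalizes similarly via Lemma~\ref{M_3}.

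For part~(1), I would set $J_2 = \begin{pmatrix} 1 & 0 & 0 \\ 0 & 0 & 1 \\ 0 & 1 & 0 \end{pmatrix}$. Given $\textbf{x} \in G_{E(3;3;1,1,1)}$ realized by $A = ((a_{ij}))$, a direct computation of $B = J_2 A J_2$ yields $b_{11}=a_{11}$, $b_{22}=a_{33}$, $b_{33}=a_{22}$, with $b_{12}=a_{13}$, $b_{13}=a_{12}$, $b_{21}=a_{31}$, $b_{31}=a_{21}$, $b_{23}=a_{32}$, $b_{32}=a_{23}$; hence the seven coordinates of $B$ are $(x_1, x_4, x_5, x_2, x_3, x_6, x_7)$, as the diagonal entries $a_{22}$ and $a_{33}$ swap, the $\{1,2\}$- and $\{1,3\}$-principal $2 \times 2$ minors swap, and the $\{2,3\}$-minor and $\det A$ are fixed. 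The invariance $\mu_{E(3;3;1,1,1)}(B) = \mu_{E(3;3;1,1,1)}(A) < 1$ then gives the forward implication, and the converse is immediate since $J_2^2 = I$. Part~(2) proceeds identically with $J_3 = \begin{pmatrix} 0 & 1 & 0 \\ 1 & 0 & 0 \\ 0 & 0 & 1 \end{pmatrix}$; then $B = J_3 A J_3$ has coordinate tuple $(x_2, x_1, x_3, x_4, x_6, x_5, x_7)$, since $a_{11}$ and $a_{22}$ swap, the $\{1,2\}$-minor is fixed (being symmetric in those indices), the $\{1,3\}$- and $\{2,3\}$-minors swap, and $\det A$ is preserved.

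The proof is essentially bookkeeping; there is no genuine obstacle beyond carefully tracking which $2 \times 2$ minor maps to which under the two transpositions. Conceptually, the corollary (together with Lemma~\ref{331}) expresses that the natural $S_3$-action on $\mathcal{M}_{3 \times 3}(\mathbb{C})$ by simultaneous row/column permutations descends under $\pi_{E(3;3;1,1,1)}$ to a coordinate permutation action of $S_3$ on $G_{E(3;3;1,1,1)}$, generated by the three transpositions $(13), (23), (12)$.
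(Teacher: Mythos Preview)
Your proposal is correct and complete. The bookkeeping checks: conjugating $A$ by the transposition $J_2$ (swapping indices $2,3$) sends the coordinate tuple to $(x_1,x_4,x_5,x_2,x_3,x_6,x_7)$, and conjugating by $J_3$ (swapping $1,2$) sends it to $(x_2,x_1,x_3,x_4,x_6,x_5,x_7)$; the $\mu_E$-invariance argument from Lemma~\ref{331} carries over verbatim since any permutation matrix is unitary and normalizes $E(3;3;1,1,1)$.

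However, the paper takes a different route. The corollary is placed immediately after Theorem~\ref{phi} with no proof, and the intended argument is via the rational-function identities~\eqref{eq:3.0}:
\[
\Psi^{(3)}(z,w,x_1,x_4,x_5,x_2,x_3,x_6,x_7)=\Psi^{(2)}(z,w,\textbf{x})=\Psi^{(1)}(z,w,x_{2},x_{1},x_{3},x_{4},x_{6},x_{5},x_{7}).
\]
One then observes that condition~(2) of Theorem~\ref{phi} for $\textbf{x}$ coincides (including the $\textbf{x}'_{J^{(i)}}$ tetrablock condition) with condition~(3) for $(x_1,x_4,x_5,x_2,x_3,x_6,x_7)$ and with condition~(1) for $(x_2,x_1,x_3,x_4,x_6,x_5,x_7)$. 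Your approach is more structural---it exhibits the full $S_3$-symmetry at the level of $3\times 3$ matrices before passing to $\mathbb{C}^7$, exactly parallel to how Lemma~\ref{331} and Proposition~\ref{new} handled the transposition $(13)$---whereas the paper's approach works entirely inside $\mathbb{C}^7$ via the $\Psi^{(i)}$ characterization just established. Both are short; yours makes the group action explicit, the paper's makes the corollary an immediate dividend of Theorem~\ref{phi}.
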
	 
The following theorem provides a neat criterion for membership of an element $\textbf{x}\in G_{E(3;3;1,1,1)}$ to its preceding level, that is, tetrablock  $G_{E(2;2;1,1)}.$
	\begin{thm}\label{char}
		Suppose $\textbf{x}=(x_1,\ldots,x_7)\in \mathbb C^{7}.$ Then $\textbf{x}\in 
		G_{E(3;3;1,1,1)}$ if and only if
		it satisfies one of the following conditions:
		\begin{enumerate}
			\item $\left(\frac{x_2-z_1x_3}{1-x_1z_1},\frac{x_4-z_1x_5}{1-x_1z_1},\frac{x_6-z_1x_7}{1-x_1z_1}\right)\in G_{E(2;2;1,1)}$ for all $z_1\in \bar{\mathbb D}.$
			
			\item $\left(\frac{x_1-z_2x_3}{1-x_2z_2},\frac{x_4-z_2x_6}{1-x_2z_2},\frac{x_5-z_2x_7}{1-x_2z_2}\right)\in G_{E(2;2;1,1)}$ for all $z_2\in \bar{\mathbb D}.$
			
			\item $\left(\frac{x_1-z_3x_5}{1-x_4z_3},\frac{x_2-z_3x_6}{1-x_4z_3},\frac{x_3-z_3x_7}{1-x_4z_3}\right)\in G_{E(2;2;1,1)}$  for all $z_3\in \bar{\mathbb D}.$
		\end{enumerate}
	\end{thm}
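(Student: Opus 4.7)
The plan is to reduce the statement to the polynomial characterization $\textbf{x}\in G_{E(3;3;1,1,1)}$ if and only if $R^{(3;3;1,1,1)}_{\textbf{x}}(\textbf{z})\neq 0$ for all $\textbf{z}\in\bar{\mathbb D}^3$, which is Corollary~\ref{R_33}, together with the corresponding tetrablock fact $\mathcal A^{(1)}_{(2;2;1,1)}=G_{E(2;2;1,1)}$. The whole argument hinges on the observation that $R^{(3;3;1,1,1)}_{\textbf{x}}$ can be regrouped as a linear polynomial in each of the variables $z_1,z_2,z_3$ whose leading term is exactly the quantity $(1-x_iz_i)$ appearing in the denominators of the theorem.

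I will carry out the equivalence with condition (1) in detail; conditions (2) and (3) will follow by the same procedure after grouping the monomials by powers of $z_2$ or $z_3$ respectively, a permutation made transparent by the symmetry relation \eqref{eq:3.0}. First I would group the polynomial by powers of $z_1$:
\begin{equation*}
R^{(3;3;1,1,1)}_{\textbf{x}}(\textbf{z}) = (1-x_1z_1) - (x_2-x_3z_1)z_2 - (x_4-x_5z_1)z_3 + (x_6-x_7z_1)z_2z_3 .
\end{equation*}
Specialising to $z_2=z_3=0$ shows that $\textbf{x}\in G_{E(3;3;1,1,1)}$ forces $1-x_1z_1\neq 0$ for all $z_1\in\bar{\mathbb D}$, hence $|x_1|<1$; and the same nonvanishing is implicit in the well-posedness of condition (1). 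Thus on both sides the factor $1-x_1z_1$ is nonzero on $\bar{\mathbb D}$, and we may divide to obtain
\begin{equation*}
\frac{R^{(3;3;1,1,1)}_{\textbf{x}}(\textbf{z})}{1-x_1z_1} = 1 - \frac{x_2-x_3z_1}{1-x_1z_1}\,z_2 - \frac{x_4-x_5z_1}{1-x_1z_1}\,z_3 + \frac{x_6-x_7z_1}{1-x_1z_1}\,z_2z_3 = R^{(2;2;1,1)}_{\textbf{y}(z_1)}(z_2,z_3),
\end{equation*}
where $\textbf{y}(z_1)=\left(\tfrac{x_2-z_1x_3}{1-x_1z_1},\tfrac{x_4-z_1x_5}{1-x_1z_1},\tfrac{x_6-z_1x_7}{1-x_1z_1}\right)$.

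The third step is to combine these ingredients: the nonvanishing of $R^{(3;3;1,1,1)}_{\textbf{x}}$ on $\bar{\mathbb D}^3$ is equivalent, in view of the above factorisation, to the nonvanishing of $R^{(2;2;1,1)}_{\textbf{y}(z_1)}$ on $\bar{\mathbb D}^2$ for every fixed $z_1\in\bar{\mathbb D}$. By the tetrablock identification $\mathcal A^{(1)}_{(2;2;1,1)}=G_{E(2;2;1,1)}$, the latter is exactly the statement $\textbf{y}(z_1)\in G_{E(2;2;1,1)}$ for all $z_1\in\bar{\mathbb D}$, which is condition (1). For condition (2), I would regroup by powers of $z_2$, obtaining the analogous factorisation $R^{(3;3;1,1,1)}_{\textbf{x}}(\textbf{z})=(1-x_2z_2)\cdot R^{(2;2;1,1)}_{(\cdot)}(z_1,z_3)$; for condition (3) the same trick with $z_3$ gives $(1-x_4z_3)\cdot R^{(2;2;1,1)}_{(\cdot)}(z_1,z_2)$. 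In each case the same three-line argument closes the equivalence.

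There is no real obstacle: once the factorisation of $R^{(3;3;1,1,1)}_{\textbf{x}}$ is spotted, the proof is a short bookkeeping exercise on top of Corollary~\ref{R_33} and its tetrablock analogue. The only point that requires a line of justification is the automatic well-definedness of the denominators, handled by the $z_2=z_3=0$ specialisation noted above.
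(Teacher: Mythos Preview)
Your proposal is correct and follows essentially the same approach as the paper: both arguments rest on the polynomial characterization (Corollary~\ref{R_33}) and the identity
\[
R^{(3;3;1,1,1)}_{\textbf{x}}(\textbf{z}) = (1-x_1z_1)\,R^{(2;2;1,1)}_{\textbf{y}(z_1)}(z_2,z_3),
\]
together with the tetrablock identification $\mathcal A^{(1)}_{(2;2;1,1)}=G_{E(2;2;1,1)}$. The only difference is in how $|x_1|<1$ is obtained in the forward direction: the paper invokes Theorem~\ref{phi} to get $(x_1,x_2,x_3)\in G_{E(2;2;1,1)}$ and then reads off $|x_1|<1$ from the tetrablock characterization, whereas you specialise $z_2=z_3=0$ directly in $R^{(3;3;1,1,1)}_{\textbf{x}}$ --- a slightly more economical route that avoids the detour through $\Psi^{(i)}$.
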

	\begin{proof}
	Theorem $2.2$ in \cite{Abouhajar} describes various characterizations of tetrablock. Out of those, the one that we need for our proof, is described below:
		\begin{equation}\label{x3x2}|x_3|<1~{\rm{and~there~exist}}~\beta_1,\beta_2\in\mathbb C ~{\rm{with}}~|\beta_1|+|\beta_2|<1~{\rm{such~that}}~x_1=\beta_1+\bar{\beta}_2x_3~{\rm{and}}~x_2=\beta_2+\bar{\beta}_1x_3.\end{equation} We prove that  $\textbf{x}\in G_{E(3;3;1,1,1)}$ if and only if $(1)$ is satisfied, as the proof of the other cases is similar. Let $\textbf{x}\in G_{E(3;3;1,1,1)}$. By Theorem \ref{R_3}
and Theorem \ref{phi}, we have  $(x_1,x_2,x_3)\in G_{E(2;2;1,1)}$ and \begin{equation}\label{r33}
			1-x_1z_1-x_2z_2+x_3z_1z_2-x_4z_3+x_5z_1z_3+x_6z_2z_3-x_7z_1z_2z_3\neq 0~~{\rm{for ~~all}}~~z_1,z_2,z_3\in\bar{\mathbb D}.
		\end{equation}
		It is evident from \eqref{x3x2} that $$|x_1|<1,|x_2|<1~{\rm{ and }}~|x_3|<1.$$
Since $|x_1|<1$, we have $1-x_1z_1\neq 0$ for all $z_1 \in\bar{\mathbb D}.$  As a result, from \eqref{r33} we conclude that  $$\left(\frac{x_2-z_1x_3}{1-x_1z_1},\frac{x_4-z_1x_5}{1-x_1z_1},\frac{x_6-z_1x_7}{1-x_1z_1}\right)\in G_{E(2;2;1,1)}$$ for all $z_1\in \bar{\mathbb D}.$
		
		Conversely, suppose that $\left(\frac{x_2-z_1x_3}{1-x_1z_1},\frac{x_4-z_1x_5}{1-x_1z_1},\frac{x_6-z_1x_7}{1-x_1z_1}\right)\in G_{E(2;2;1,1)}$ for all $z_1\in \bar{\mathbb D}.$ It follows from the definition of $G_{E(2;2;1,1)}$ that 
		$$1-\left(\frac{x_2-z_1x_3}{1-x_1z_1}\right)z_2-\left(\frac{x_4-z_1x_5}{1-x_1z_1}\right)z_3+\left(\frac{x_6-z_1x_7}{1-x_1z_1}\right)z_2z_3\neq 0~~{\rm{for ~~all}}~~z_2,z_3\in\bar{\mathbb D},$$ which implies that $R^{(3;3;1,1,1)}_{\textbf {x}}(\textbf{z})\neq 0$ for all $\textbf{z}=(z_1,z_2,z_3)\in \bar{\mathbb {D}}^3.$ Hence, from  Theorem \ref{R_3}, we conclude that $\textbf{x}=(x_1,\ldots,x_7)\in G_{E(3;3;1,1,1)}$. This completes the proof.
	\end{proof}
\begin{rem}
By  the characterization of tetrablock it follows that

  $$\left(\frac{x_2-z_1x_3}{1-x_1z_1},\frac{x_4-z_1x_5}{1-x_1z_1},\frac{x_6-z_1x_7}{1-x_1z_1}\right)\in G_{E(2;2;1,1)} \;\; \mbox{for all}\;\; z_1\in \bar{\mathbb D}$$ if and only if $$\left(\frac{x_4-z_1x_5}{1-x_1z_1},\frac{x_2-z_1x_3}{1-x_1z_1}\frac{x_6-z_1x_7}{1-x_1z_1}\right)\in G_{E(2;2;1,1)} \;\;\mbox{for all}\;\; z_1\in \bar{\mathbb D}.$$

\end{rem}	
	As a consequence of the above theorem, we prove the following corollary.
	\begin{cor}
		Suppose $\textbf{x}=(x_1,\ldots,x_7)\in \mathbb C^{7}.$ Then $\textbf{x}\in 
		G_{E(3;3;1,1,1)}$ if and only if
		it satisfies one of the following condition:
		\begin{enumerate}
			
	\item
			There exists a $2\times 2$ symmetric matrix $B(z_1)$ with $\|B(z_1)\|<1$ for all $z_1\in \mathbb{\bar{\mathbb D}}$ 
			such that $$B_{11}(z_1)=\frac{x_2-z_1x_3}{1-x_1z_1},B_{22}(z_1)=\frac{x_4-z_1x_5}{1-x_1z_1}~{\rm{ and }}~\det(B(z_1))=\frac{x_6-z_1x_7}{1-x_1z_1}~{\rm{ for ~all }}~z_1\in \bar{\mathbb D};$$

			\item
			There exists a $2\times 2$ symmetric matrix $A(z_3)$ with $\|A(z_3)\|<1$ for all $z_3\in \mathbb{\bar{\mathbb D}}$ 
			such that $$A_{11}(z_3)=\frac{x_1-z_3x_5}{1-x_4z_3},A_{22}(z_3)=\frac{x_2-z_3x_6}{1-x_4z_3}~{\rm{ and }}~\det(A(z_3))=\frac{x_3-z_3x_7}{1-x_4z_3}~{\rm{ for ~all }}~z_3\in \bar{\mathbb D};$$

			\item
			There exists a $2\times 2$ symmetric matrix $C(z_2)$ with $\|C(z_2)\|<1$ for all $z_2\in \mathbb{\bar{\mathbb D}}$ 
			such that $$C_{11}(z_2)=\frac{x_4-z_2x_6}{1-x_2z_2},C_{22}(z_2)=\frac{x_1-z_2x_3}{1-x_2z_2}~{\rm{ and }}~\det(C(z_2))=\frac{x_5-z_2x_7}{1-x_2z_2}~{\rm{ for ~all }}~z_2\in \bar{\mathbb D}.$$
		\end{enumerate} 
	\end{cor}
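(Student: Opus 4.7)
The plan is to derive this corollary directly from Theorem \ref{char} together with the symmetric $2\times 2$ matrix realization of tetrablock points. The key ingredient, deducible from the equivalent characterizations of $G_{E(2;2;1,1)}$ given in Theorem $2.2$ of \cite{Abouhajar}, is that a triple $(y_1,y_2,y_3)\in\mathbb{C}^3$ lies in $G_{E(2;2;1,1)}$ if and only if there exists a \emph{symmetric} matrix $M\in\mathcal{M}_{2\times 2}(\mathbb{C})$ with $\|M\|<1$ satisfying $M_{11}=y_1$, $M_{22}=y_2$, and $\det M=y_3$. Equivalently, the projection $M\mapsto(M_{11},M_{22},\det M)$ from the open unit ball of the space of symmetric $2\times 2$ matrices onto $G_{E(2;2;1,1)}$ is surjective.

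Granting this, the proof of statement $(1)$ is by pointwise application of Theorem \ref{char}$(1)$: for each fixed $z_1\in\bar{\mathbb{D}}$, the triple $\left(\frac{x_2-z_1x_3}{1-x_1z_1},\frac{x_4-z_1x_5}{1-x_1z_1},\frac{x_6-z_1x_7}{1-x_1z_1}\right)$ belongs to $G_{E(2;2;1,1)}$ if and only if there is a symmetric matrix $B(z_1)$ of norm strictly less than $1$ with the prescribed diagonal entries and determinant. Summing over all $z_1\in\bar{\mathbb{D}}$ (using the axiom of choice) produces the $z_1$-indexed family $B(z_1)$. Statement $(2)$ follows identically from Theorem \ref{char}$(3)$ with parameter $z_3$. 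Statement $(3)$ follows from Theorem \ref{char}$(2)$ with parameter $z_2$, where one additionally invokes the $(y_1,y_2,y_3)\mapsto(y_2,y_1,y_3)$ symmetry of $G_{E(2;2;1,1)}$ (recorded in the remark immediately preceding the corollary) in order to reconcile the fact that $C_{11}(z_2)$ and $C_{22}(z_2)$ appear in the opposite order from the first two coordinates of the triple in Theorem \ref{char}$(2)$.

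The main obstacle, and the only nontrivial step, is verifying the symmetric-matrix realization of the tetrablock itself. The natural strategy is to start from a (not necessarily symmetric) contractive matrix $A\in\mathcal{M}_{2\times 2}(\mathbb{C})$ realizing $(y_1,y_2,y_3)$, and to replace $A$ by $M=\bigl(\begin{smallmatrix} y_1 & s \\ s & y_2 \end{smallmatrix}\bigr)$ with $s$ a suitable square root of $y_1y_2-y_3$; one must then check that the branch can be chosen so that $I-M^*M$ remains positive definite, which reduces to a scalar inequality involving $|y_1|,|y_2|,|s|$ that is implied by the known conditions characterizing membership in $G_{E(2;2;1,1)}$ (such as \eqref{x3x2}). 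Since the corollary asks only for the pointwise existence of $B(z_1)$, with no continuity or holomorphy in $z_1$, no selection or parametric argument is needed beyond this scalar realization lemma.
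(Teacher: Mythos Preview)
Your proposal is correct and follows essentially the same route as the paper: invoke Theorem~\ref{char} to reduce to the tetrablock condition for each fixed parameter, then apply the symmetric $2\times 2$ matrix characterization of $G_{E(2;2;1,1)}$ from \cite[Theorem~2.2]{Abouhajar} pointwise. The paper simply cites that characterization rather than sketching its proof as you do, and it dispatches cases (2) and (3) with ``the proof of the other cases are identical'' rather than tracking the coordinate swap you note for case~(3); but the substance is the same.
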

	\begin{proof}
	We only prove $(1)$ because the proof of the other cases are identical. By Theorem \ref{char}, it follows that  $\textbf{x}\in 
		G_{E(3;3;1,1,1)}$ if and only if $$\left(\frac{x_2-z_1x_3}{1-x_1z_1},\frac{x_4-z_1x_5}{1-x_1z_1},\frac{x_6-z_1x_7}{1-x_1z_1}\right)\in G_{E(2;2;1,1)}\;\; \mbox{for all}\;\;z_1\in \bar{\mathbb D}.$$  
		As $\left(\frac{x_2-z_1x_3}{1-x_1z_1},\frac{x_4-z_1x_5}{1-x_1z_1},\frac{x_6-z_1x_7}{1-x_1z_1}\right)\in G_{E(2;2;1,1)}$  for all $z_1\in \bar{\mathbb D},$ then by characterization of tetrablock [Theorem $2.2$, \cite{Abouhajar}], it yields that there exists $2\times 2$ symmetric matrix $B(z_1)$ with $\|B(z_1)\|<1$ for all $z_1\in \mathbb{\bar{\mathbb D}}$ 
		such that $$B_{11}(z_1)=\frac{x_2-z_1x_3}{1-x_1z_1},B_{22}(z_1)=\frac{x_4-z_1x_5}{1-x_1z_1}~{\rm{ and }}~\det(B(z_1))=\frac{x_6-z_1x_7}{1-x_1z_1}~{\rm{for ~all }}~z_1\in \bar{\mathbb D}$$ and vice-versa. This completes the proof.
		\end{proof}

			\subsection{Characterization by  realization formula}
	In this subsection we describe the domain $G_{E(3;3;1,1,1)}$ by using realization formula. We explore two kinds of realization formulas to illustrate this.

\noindent \textbf{First realization formula:} Let $\mathcal H_{i}$ and $\mathcal K_i, 1\leq i \leq 3$, be the Hilbert spaces and $$P=
\begin{pmatrix}
	\begin{matrix}
		P_{11}
	\end{matrix}& \rvline & 
	\begin{matrix}
		P_{12} & P_{13}
	\end{matrix}  \\
	\hline \begin{matrix}
		P_{21} \\
		P_{31}
	\end{matrix} 
	&\rvline& \begin{matrix}
		P_{22} & P_{23} \\
		P_{32} & P_{33}
	\end{matrix}
\end{pmatrix}\left(=\begin{pmatrix}
	A_{11}& \rvline & 
	A_{12}  \\
	\hline A_{21} 
	&\rvline& A_{22}
\end{pmatrix}\right):\oplus_{i=1}^{3}\mathcal H_i\rightarrow \oplus_{i=1}^{3}\mathcal K_i.$$  Let $X$ be an operator from $\mathcal K_2\oplus\mathcal K_3$ to $\mathcal H_2 \oplus\mathcal H_3$ such that  $\left(I_{\mathcal K_2\oplus\mathcal K_3}-\left(\begin{smallmatrix}P_{22}&P_{23}\\P_{32} & P_{33}
	\end{smallmatrix}\right)X\right)$ is invertible. We have the following expression of the \textit{realization formula} for $A=P$, renaming the term $\mathcal M_{A}(X)$ as $\mathcal G_{P}(X)$ and for $\mathcal{E}=\mathcal H_{1}$, $\mathcal{H}=\mathcal H_{2}\oplus\mathcal{H}_{3}$, $\mathcal F=\mathcal K_1$ and $\mathcal K=\mathcal K_{2}\oplus\mathcal K_{2}$ in \eqref{rf}, 
	\begin{equation}\label{GPX}
		\mathcal G_{P}(X)=P_{11}+\left(\begin{smallmatrix}P_{12}&P_{13}
		\end{smallmatrix}\right)X\left(I_{\mathcal K_2\oplus\mathcal K_3}-\left(\begin{smallmatrix}P_{22}&P_{23}\\P_{32} & P_{33}
		\end{smallmatrix}\right)X\right)^{-1}\left(\begin{smallmatrix}P_{21}\\P_{31}
		\end{smallmatrix}\right).
	\end{equation}  Clearly, $\mathcal G_{P}(X)$ is an operator from $\mathcal H_1$ to $\mathcal K_1.$ The following Proposition follows easily from [Proposition $4.1$, \cite{BLY}].
	\begin{prop}\label{contt.}
		Let $\mathcal H_{i}$ and $\mathcal K_i, 1\leq i \leq 3$, be the Hilbert spaces and $P=((P_{ij}))_{i,j=1}^{3}$ and $Q=((Q_{ij}))_{i,j=1}^{3}$ be the operator from $\oplus_{i=1}^{3}\mathcal H_i$ to $\oplus_{i=1}^{3}\mathcal K_i.$ Also,  let $X$ and $Y$ be operators from $\mathcal K_2\oplus\mathcal K_3$ to $\mathcal H_2\oplus\mathcal H_3$ such that $\left(I_{\mathcal K_2\oplus\mathcal K_3}-\left(\begin{smallmatrix}P_{22}&P_{23}\\P_{32} & P_{33}
		\end{smallmatrix}\right)X\right)$ and $\left(I_{\mathcal K_2\oplus\mathcal K_3}-\left(\begin{smallmatrix}Q_{22}&Q_{23}\\Q_{32} & Q_{33}
		\end{smallmatrix}\right)Y\right)$ are invertible. Then 
		\small{\begin{align}\label{FPQQ}
				I_{\mathcal H_1}-\mathcal G_{Q}(Y)^*\mathcal G_{P}(X)&=B_{1}^*(I_{\mathcal K_2\oplus\mathcal K_3}-Y^*X)A_1+\left(\begin{smallmatrix} I_{\mathcal H_1} & B_1^*Y^*\end{smallmatrix}\right)(I_{\oplus_{i=1}^3\mathcal H_i}-Q^*P)\left(\begin{smallmatrix} I_{\mathcal H_1} \\ XA_1
				\end{smallmatrix}\right),
		\end{align}}
		where $A_1=\left(I_{\mathcal K_2\oplus\mathcal K_3}-\left(\begin{smallmatrix}P_{22}&P_{23}\\P_{32} & P_{33}
		\end{smallmatrix}\right)X\right)^{-1}\left(\begin{smallmatrix}P_{21} \\ P_{31}
		\end{smallmatrix}\right)$ and $B_1=\left(I_{\mathcal K_2\oplus\mathcal K_3}-\left(\begin{smallmatrix}Q_{22}&Q_{23}\\Q_{32} & Q_{33}
		\end{smallmatrix}\right)Y\right)^{-1}\left(\begin{smallmatrix}Q_{21} \\ Q_{31}
		\end{smallmatrix}\right).$
	\end{prop}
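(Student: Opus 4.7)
The plan is to verify the identity by direct block-matrix expansion. Following the natural $2\times 2$ decomposition implicit in the realization formula, I set
\[
P_{12}'=(P_{12}\ P_{13}),\qquad P_{21}'=\begin{pmatrix}P_{21}\\ P_{31}\end{pmatrix},\qquad P_{22}'=\begin{pmatrix}P_{22}&P_{23}\\ P_{32}&P_{33}\end{pmatrix},
\]
and analogously for $Q$, so that $A_1=(I-P_{22}'X)^{-1}P_{21}'$ and $B_1=(I-Q_{22}'Y)^{-1}Q_{21}'$. By the definition in \eqref{GPX} one has $\mathcal G_P(X)=P_{11}+P_{12}'XA_1$ and $\mathcal G_Q(Y)=Q_{11}+Q_{12}'YB_1$, so expanding the product gives
\[
I - \mathcal G_Q(Y)^*\mathcal G_P(X) = I - Q_{11}^*P_{11} - Q_{11}^*P_{12}'XA_1 - B_1^*Y^*(Q_{12}')^*P_{11} - B_1^*Y^*(Q_{12}')^*P_{12}'XA_1,
\]
which I take as the target.

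Next I would expand the right-hand side. Writing $I-Q^*P$ as a $2\times 2$ block matrix with blocks indexed by $\mathcal H_1$ and $\mathcal H_2\oplus\mathcal H_3$, the conjugation by the row $(I,\ B_1^*Y^*)$ on the left and the column $\binom{I}{XA_1}$ on the right produces a nine-term expansion. Adding the term $B_1^*(I-Y^*X)A_1 = B_1^*A_1 - B_1^*Y^*XA_1$, the contributions $+B_1^*Y^*XA_1$ (coming from the $(2,2)$-block entry $I$) and $-B_1^*Y^*XA_1$ cancel, and the four summands involving only $Q_{11}^*$, $(Q_{12}')^*$, $P_{11}$, or $P_{12}'$ reproduce precisely the four cross-terms of the target expansion.

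All that remains is to verify that the residual
\[
R := B_1^*A_1 - \bigl((Q_{21}')^* + B_1^*Y^*(Q_{22}')^*\bigr)\bigl(P_{21}' + P_{22}'XA_1\bigr)
\]
vanishes. Two simple Moebius-type identities do this. From the definition of $A_1$ one gets $P_{21}' + P_{22}'XA_1 = \bigl(I + P_{22}'X(I-P_{22}'X)^{-1}\bigr)P_{21}' = A_1$; and taking adjoints in the defining relation $(I-Q_{22}'Y)B_1 = Q_{21}'$ yields $B_1^*(I - Y^*(Q_{22}')^*) = (Q_{21}')^*$, equivalently $B_1^* = (Q_{21}')^* + B_1^*Y^*(Q_{22}')^*$. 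Substituting both into $R$ produces $B_1^*A_1 - B_1^*A_1 = 0$.

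The only real obstacle is notational bookkeeping: the $3\times 3$ block decomposition of $P$ and $Q$ must be consistently regrouped as the $2\times 2$ block form separating the $\mathcal H_1$ (resp.\ $\mathcal K_1$) component from the remaining two, so that the matrix multiplications align. Beyond this, no analytic input is required, which is why the identity is stated to follow easily from Proposition $4.1$ of \cite{BLY}.
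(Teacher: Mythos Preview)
Your proof is correct and takes essentially the same approach as the paper: both regroup the $3\times3$ block structure into the $2\times2$ block form separating $\mathcal H_1$ from $\mathcal H_2\oplus\mathcal H_3$, which is exactly the substitution the paper makes before citing \cite[Proposition~4.1]{BLY}. The only difference is that you carry out the block-matrix verification explicitly rather than invoking the reference, and your computation (including the two M\"obius-type identities that collapse the residual $R$) is clean and correct.
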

	\begin{proof}
		Let $\tilde{P}_{11}=P_{11},\tilde{P}_{12}=\left(\begin{smallmatrix}P_{12} & P_{13}
		\end{smallmatrix}\right),\tilde{P}_{21}=\left(\begin{smallmatrix}P_{21} \\ P_{31}
		\end{smallmatrix}\right),\tilde{P}_{22}=\left(\begin{smallmatrix}P_{22} &P_{23} \\ P_{32} &P_{33}
		\end{smallmatrix}\right), \mathcal H= \mathcal H_1, \mathcal U= \mathcal H_2\oplus\mathcal H_3, \mathcal K= \mathcal K_1$ and $ \mathcal Y= \mathcal K_2\oplus\mathcal K_3$. Then the above identity follows easily by  [Proposition $4.1$, \cite{BLY}].
	\end{proof}

	The following proposition is a consequence of the above proposition.
	
	\begin{prop}\label{fpzz}
		Suppose $\mathcal H_{i}$ and $\mathcal K_i, 1\leq i \leq 3$, are the Hilbert spaces. Let $P=((P_{ij}))_{i,j=1}^{3}$ be the operator from $\oplus_{i=1}^{3}\mathcal H_i$ to $\oplus_{i=1}^{3}\mathcal K_i$ and  $X$ be an operator from $\mathcal K_2\oplus\mathcal K_3$ to $\mathcal H_2\oplus\mathcal H_3$ such that $\left(I_{\mathcal K_2\oplus\mathcal K_3}-\left(\begin{smallmatrix}P_{22}&P_{23}\\P_{32} & P_{33}
		\end{smallmatrix}\right)X\right)$ is invertible. If $\|X\|\leq1$ and $\|P\|< 1$, then $\|\mathcal G_{P}(X)\|<1$.
		
	\end{prop}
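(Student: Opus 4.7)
\bigskip

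\noindent\textbf{Proof proposal.} The plan is to apply the transfer-function identity in Proposition \ref{contt.} with the special choice $Q=P$ and $Y=X$, so that in the notation of that proposition $A_1=B_1$. Writing $\tilde P_{22}=\left(\begin{smallmatrix}P_{22}&P_{23}\\P_{32}&P_{33}\end{smallmatrix}\right)$ and $\tilde P_{21}=\left(\begin{smallmatrix}P_{21}\\P_{31}\end{smallmatrix}\right)$, this yields
\begin{equation*}
I_{\mathcal H_1}-\mathcal G_P(X)^*\mathcal G_P(X)=A_1^*(I_{\mathcal K_2\oplus\mathcal K_3}-X^*X)A_1+\bigl(\begin{smallmatrix} I_{\mathcal H_1} & A_1^*X^*\end{smallmatrix}\bigr)(I_{\oplus_{i=1}^3\mathcal H_i}-P^*P)\left(\begin{smallmatrix} I_{\mathcal H_1}\\ XA_1\end{smallmatrix}\right),
\end{equation*}
where $A_1=(I-\tilde P_{22}X)^{-1}\tilde P_{21}$ (which is well defined by hypothesis).

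Next I would estimate the two summands on the right. The hypothesis $\|X\|\le 1$ gives $I-X^*X\ge 0$, so the first term $A_1^*(I-X^*X)A_1$ is a non-negative operator on $\mathcal H_1$. For the second term, the hypothesis $\|P\|<1$ gives the strict operator inequality
\begin{equation*}
I_{\oplus_{i=1}^3\mathcal H_i}-P^*P\ \ge\ (1-\|P\|^2)\,I_{\oplus_{i=1}^3\mathcal H_i}.
\end{equation*}
Sandwiching this between the column $\left(\begin{smallmatrix} I_{\mathcal H_1}\\ XA_1\end{smallmatrix}\right)$ and its adjoint, the second summand is bounded below by
$(1-\|P\|^2)\bigl(I_{\mathcal H_1}+A_1^*X^*XA_1\bigr)\ \ge\ (1-\|P\|^2)\,I_{\mathcal H_1}.$

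Combining the two estimates gives $I_{\mathcal H_1}-\mathcal G_P(X)^*\mathcal G_P(X)\ge (1-\|P\|^2)I_{\mathcal H_1}$, hence $\mathcal G_P(X)^*\mathcal G_P(X)\le \|P\|^2\,I_{\mathcal H_1}$, which yields $\|\mathcal G_P(X)\|\le \|P\|<1$. There is no real obstacle here: once Proposition \ref{contt.} is specialised to $Q=P$, $Y=X$ so that the ``cross-term'' collapses into $A_1^*(I-X^*X)A_1\ge 0$, the result reduces to the two standard positivity facts $I-X^*X\ge 0$ and $I-P^*P\ge(1-\|P\|^2)I$; the only point that requires a moment of care is ensuring one applies the strict inequality $\|P\|<1$ on the full space $\oplus_{i=1}^3\mathcal H_i$ before compressing by the column vector, which is exactly what provides the strict bound $\|\mathcal G_P(X)\|<1$.
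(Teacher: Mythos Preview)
Your proof is correct and follows essentially the same route as the paper's: specialise Proposition~\ref{contt.} to $Q=P$, $Y=X$, then use $I-X^*X\ge 0$ and $I-P^*P>0$ to conclude positivity of the right-hand side. The only difference is cosmetic---you extract the explicit quantitative bound $\|\mathcal G_P(X)\|\le\|P\|$, whereas the paper simply asserts strict positivity and hence $\|\mathcal G_P(X)\|<1$; your version in fact makes the passage from ``$>0$'' to ``norm $<1$'' rigorous in the infinite-dimensional setting.
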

	\begin{proof}
		From Proposition \ref{contt.}, we have
		\begin{align}\label{GP}
			I_{\mathcal H_1}-\mathcal G_{P}(X)^*\mathcal G_{P}(X)&=A_{1}^*(I_{\mathcal K_2\oplus\mathcal K_3}-X^*X)A_1+\left(\begin{smallmatrix} I_{\mathcal H_1} & A_1^*X^*\end{smallmatrix}\right)(I_{\oplus_{i=1}^3\mathcal H_i}-P^*P)\left(\begin{smallmatrix} I_{\mathcal H_1} \\ XA_1
			\end{smallmatrix}\right).
		\end{align}
		
		Let  
		$A_2=\left(\begin{smallmatrix} I_{\mathcal H_1} \\XA_1
		\end{smallmatrix}\right):\mathcal H_1 \to \mathcal H_1\oplus\mathcal H_2\oplus\mathcal H_3$ be the mapping from $\mathcal H_1$ to $\mathcal H_1\oplus\mathcal H_2\oplus\mathcal H_3.$ 
		Then from \eqref{GP}, we notice that 
		\begin{align}\label{GP1}
			I_{\mathcal H_1}-\mathcal G_{P}(X)^*\mathcal G_{P}(X)=A_1^*(I_{\mathcal K_2\oplus \mathcal K_3}-X^*X)A_1+A_2^*(I_{\oplus_{i=1}^3\mathcal H_i}-P^*P)A_2.
		\end{align}
		Since $\|X\|\leq1 $ and $\|P\|<1,$ from \eqref{GP1} it implies that $I_{\mathcal H_1}-\mathcal G_{P}(X)^*\mathcal G_{P}(X)>0$ and hence we have $\|\mathcal G_{P}(X)\|<1.$ This completes the proof.
	\end{proof}
We use the Proposition \ref{contt.} to  prove the following proposition and the proof of the proposition is similar to the Proposition \ref{fpzz}. Therefore, we skip the proof.
	\begin{prop}\label{fpzz1}
		Suppose $\mathcal H_{i}$ and $\mathcal K_i, 1\leq i \leq 3$, are the Hilbert spaces. Let $P=((P_{ij}))_{i,j=1}^{3}$ be the operator from $\oplus_{i=1}^{3}\mathcal H_i$ to $\oplus_{i=1}^{3}\mathcal K_i$ and  $X$ be an operator from $\mathcal K_2\oplus\mathcal K_3$ to $\mathcal H_2\oplus\mathcal H_3$ such that $\left(I_{\mathcal K_2\oplus\mathcal K_3}-\left(\begin{smallmatrix}P_{22}&P_{23}\\P_{32} & P_{33}
		\end{smallmatrix}\right)X\right)$ is invertible. If $\|X\|\leq1$ and $\|P\|\leq 1$, then $\|\mathcal G_{P}(X)\|\leq1.$ 
	\end{prop}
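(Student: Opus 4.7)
The plan is to essentially repeat the argument given for Proposition \ref{fpzz}, with the only change being that the strict inequality $\|P\|<1$ is replaced by the non-strict one $\|P\|\leq 1$. The argument rests entirely on the operator identity from Proposition \ref{contt.}, so the first step is to specialise that proposition by setting $Q=P$ and $Y=X$. This gives
\begin{equation*}
I_{\mathcal H_1}-\mathcal G_{P}(X)^*\mathcal G_{P}(X)=A_{1}^*(I_{\mathcal K_2\oplus\mathcal K_3}-X^*X)A_1+\left(\begin{smallmatrix} I_{\mathcal H_1} & A_1^*X^*\end{smallmatrix}\right)(I_{\oplus_{i=1}^3\mathcal H_i}-P^*P)\left(\begin{smallmatrix} I_{\mathcal H_1} \\ XA_1\end{smallmatrix}\right),
\end{equation*}
where $A_1=\left(I_{\mathcal K_2\oplus\mathcal K_3}-\left(\begin{smallmatrix}P_{22}&P_{23}\\P_{32} & P_{33}\end{smallmatrix}\right)X\right)^{-1}\left(\begin{smallmatrix}P_{21} \\ P_{31}\end{smallmatrix}\right)$, which is well defined by the invertibility hypothesis.

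Next, following the same notational device used in Proposition \ref{fpzz}, I would introduce the operator $A_2=\left(\begin{smallmatrix} I_{\mathcal H_1} \\ XA_1\end{smallmatrix}\right):\mathcal H_1\to \mathcal H_1\oplus\mathcal H_2\oplus \mathcal H_3$ in order to repackage the identity as
\begin{equation*}
I_{\mathcal H_1}-\mathcal G_{P}(X)^*\mathcal G_{P}(X)=A_1^*(I_{\mathcal K_2\oplus \mathcal K_3}-X^*X)A_1+A_2^*(I_{\oplus_{i=1}^3\mathcal H_i}-P^*P)A_2.
\end{equation*}

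Now I would observe that the contractivity hypotheses $\|X\|\leq 1$ and $\|P\|\leq 1$ are equivalent to $I_{\mathcal K_2\oplus\mathcal K_3}-X^*X\geq 0$ and $I_{\oplus_{i=1}^3\mathcal H_i}-P^*P\geq 0$ respectively. Hence both summands on the right are positive semi-definite (a conjugation $T\mapsto S^*TS$ preserves positivity), and therefore $I_{\mathcal H_1}-\mathcal G_{P}(X)^*\mathcal G_{P}(X)\geq 0$. This yields $\|\mathcal G_{P}(X)\|\leq 1$ and completes the argument. There is no real obstacle here since all the heavy lifting has already been done in Proposition \ref{contt.}; the only subtle point compared to Proposition \ref{fpzz} is that we lose strict positivity on the right and so can only conclude $\|\mathcal G_{P}(X)\|\leq 1$ rather than the strict inequality.
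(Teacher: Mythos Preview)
Your proposal is correct and follows exactly the approach the paper indicates: it explicitly says the proof uses Proposition \ref{contt.} and is similar to Proposition \ref{fpzz}, which is precisely what you have written out, with the only modification being the replacement of strict positivity by semi-definiteness to accommodate the non-strict hypotheses $\|X\|\le 1$ and $\|P\|\le 1$.
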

	
	For each $A=((a_{ij}))_{i,j=1}^{3}\in \mathcal M_{3\times 3}(\mathbb C)$ with $\|A\|<1$, we define $\tilde{\gamma}$ and $\tilde{\eta}$ by
	\begin{align}\label{ga}\tilde{\gamma}(z_2,z_3)\nonumber&=\left(I_{2}-\left(\begin{smallmatrix}a_{22} & a_{23} \\a_{32} &a_{33}
		\end{smallmatrix}\right)\left(\begin{smallmatrix}z_2 & 0 \\0 &z_{3}
		\end{smallmatrix}\right)\right)^{-1}\left(\begin{smallmatrix}a_{21} \\ a_{31}
		\end{smallmatrix}\right)\\&=\left(\begin{smallmatrix}\tilde{\gamma}_1(z_2,z_3) \\ \tilde{\gamma}_2(z_2,z_3)
		\end{smallmatrix}\right),
	\end{align} 
	and
	\begin{align}\label{et}\tilde{\eta}(z_2,z_3)\nonumber&=\left(\begin{smallmatrix}1\\\left(\begin{smallmatrix}z_2 & 0 \\0 &z_{3}
			\end{smallmatrix}\right)\tilde{\gamma}\left(\left(\begin{smallmatrix}z_2 & 0 \\0 &z_{3}
			\end{smallmatrix}\right)\right)
		\end{smallmatrix}\right)\\&=\left(\begin{smallmatrix}1\\z_2\tilde{\gamma}_1(z_2,z_3) \\ z_3\tilde{\gamma}_2(z_2,z_3)
		\end{smallmatrix}\right)
	\end{align}
	for all $z_2,z_3\in\mathbb C$ with $\det(C(z_2,z_3))\neq 0,$ where \small{$$C(z_2,z_3)=\left(I_{2}-\left(\begin{smallmatrix}a_{22} & a_{23} \\a_{32} &a_{33}
		\end{smallmatrix}\right)\left(\begin{smallmatrix}z_2 & 0 \\0 &z_{3}
		\end{smallmatrix}\right)\right),\tilde{\gamma}_1(z_2,z_3)=\frac{(1-a_{33}z_3)a_{21}+z_3a_{23}a_{31}} {\det(C(z_2,z_3))},$$ and $$\tilde{\gamma}_2(z_2,z_3)=\frac{(1-a_{22}z_2)a_{31}+z_2a_{32}a_{21}} {\det(C(z_2,z_3))}.$$}
	As a result of the above proposition, we will prove the following proposition. 
%	The following proposition is also useful to prove an element $\textbf{x}=(x_1,\ldots,x_7)\in G_{E(3;3;1,1,1)}$ is equivalent to the existence of $A\in \mathcal M_{3\times 3}(\mathbb C)$ with $\|A\|<1$ such that $$x_{1}=a_{11},x_{2}=a_{22},x_{3}=\operatorname{det}A_{12},x_{4}=a_{33},x_{5}=\operatorname{det}A_{13}, x_{6}=\operatorname{det}A_{23}~{\rm{and}}~x_{7}=\operatorname{det}A.$$
	\begin{prop}\label{matrix AA}
		Let  $A\in \mathcal M_{3\times 3}(\mathbb C).$  Then 
		\begin{align}\label{AAAA norm}
			1-\overline{\mathcal G_{A}\left(\left(\begin{smallmatrix}w_2 & 0 \\0 &w_{3}
				\end{smallmatrix}\right)\right)}\mathcal G_{A}\left(\left(\begin{smallmatrix}z_2 & 0 \\0 &z_{3}
			\end{smallmatrix}\right)\right)\nonumber&=\overline{{\tilde{\gamma_1}(w_2,w_3)}}(1-\bar{w}_2z_2)\tilde{\gamma}_1(z_2,z_3)+\overline{{\tilde{\gamma_2}(w_2,w_3)}}(1-\bar{w}_3z_3)\tilde{\gamma}_2(z_2,z_3)\\&+{\tilde{\eta}(w_2,w_3)}^*(I_{3}-A^*A)\tilde{\eta}(z_2,z_3)
		\end{align}
		for all $z_2,w_2,z_3,w_3\in \mathbb C$ with $\det\left(I_2-\left(\begin{smallmatrix}a_{22} & a_{23} \\a_{32} &a_{33}
		\end{smallmatrix}\right)\left(\begin{smallmatrix}z_2 & 0 \\0 &z_{3}
		\end{smallmatrix}\right)\right)\neq 0$ and $\det\left(I_2-\left(\begin{smallmatrix}a_{22} & a_{23} \\a_{32} &a_{33}
		\end{smallmatrix}\right)\left(\begin{smallmatrix}w_2 & 0 \\0 &w_{3}
		\end{smallmatrix}\right)\right)\neq 0$. Moreover, if $\|A\|<1$, then $|\mathcal G_{A}\left(\left(\begin{smallmatrix}z_2 & 0 \\0 &z_{3}
		\end{smallmatrix}\right)\right)|<1$ for all $z_2,z_3\in \bar{\mathbb D}.$ 		
	\end{prop}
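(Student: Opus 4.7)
The plan is to apply Proposition \ref{contt.} directly with the specific choices $P=Q=A\in\mathcal M_{3\times 3}(\mathbb C)$ (viewed as an operator on $\mathbb C\oplus\mathbb C\oplus\mathbb C$ with $\mathcal H_i=\mathcal K_i=\mathbb C$), $X=\left(\begin{smallmatrix}z_2 & 0\\ 0 & z_3\end{smallmatrix}\right)$, and $Y=\left(\begin{smallmatrix}w_2 & 0\\ 0 & w_3\end{smallmatrix}\right)$. Under these choices the quantities $A_1$ and $B_1$ appearing in Proposition \ref{contt.} coincide, respectively, with $\tilde\gamma(z_2,z_3)$ and $\tilde\gamma(w_2,w_3)$ by \eqref{ga}, and the column vectors $\left(\begin{smallmatrix}I\\ XA_1\end{smallmatrix}\right)$, $\left(\begin{smallmatrix}I\\ YB_1\end{smallmatrix}\right)$ coincide with $\tilde\eta(z_2,z_3)$ and $\tilde\eta(w_2,w_3)$ by \eqref{et}.

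Next, the diagonal structure of $X$ and $Y$ gives $Y^*X=\left(\begin{smallmatrix}\bar w_2 z_2 & 0\\ 0 & \bar w_3 z_3\end{smallmatrix}\right)$, so
\[
B_1^*(I_{\mathcal K_2\oplus\mathcal K_3}-Y^*X)A_1=\overline{\tilde\gamma_1(w_2,w_3)}(1-\bar w_2 z_2)\tilde\gamma_1(z_2,z_3)+\overline{\tilde\gamma_2(w_2,w_3)}(1-\bar w_3 z_3)\tilde\gamma_2(z_2,z_3),
\]
which is the first two terms on the right-hand side of \eqref{AAAA norm}. The second summand in \eqref{FPQQ} becomes exactly $\tilde\eta(w_2,w_3)^*(I_3-A^*A)\tilde\eta(z_2,z_3)$. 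Putting these pieces together yields the identity \eqref{AAAA norm}. The invertibility hypotheses in Proposition \ref{contt.} are checked by noting that the assumption on $\det(C(z_2,z_3))$ and $\det(C(w_2,w_3))$ stated in the proposition is precisely what is needed.

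For the moreover clause, set $w_i=z_i$ so that the identity specializes to
\[
1-\Bigl|\mathcal G_A\bigl(\left(\begin{smallmatrix}z_2 & 0\\ 0 & z_3\end{smallmatrix}\right)\bigr)\Bigr|^2=|\tilde\gamma_1(z_2,z_3)|^2(1-|z_2|^2)+|\tilde\gamma_2(z_2,z_3)|^2(1-|z_3|^2)+\tilde\eta(z_2,z_3)^*(I_3-A^*A)\tilde\eta(z_2,z_3).
\]
When $\|A\|<1$, the submatrix $\left(\begin{smallmatrix}a_{22} & a_{23}\\ a_{32} & a_{33}\end{smallmatrix}\right)$ is also a strict contraction, so $C(z_2,z_3)$ is invertible for every $(z_2,z_3)\in\bar{\mathbb D}^2$ and the identity is legitimate there. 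For $(z_2,z_3)\in\bar{\mathbb D}^2$ the first two terms on the right are non-negative, and the third term is strictly positive because $I_3-A^*A$ is positive definite and $\tilde\eta(z_2,z_3)\neq 0$ (its first coordinate is $1$). Hence $|\mathcal G_A(X)|<1$ for all $z_2,z_3\in\bar{\mathbb D}$.

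The only mild obstacle is careful bookkeeping: verifying that the operator-theoretic identity of Proposition \ref{contt.} collapses in the scalar $3\times 3$ setting to the stated formula via the definitions of $\tilde\gamma$ and $\tilde\eta$. Beyond that, the argument is a direct substitution plus an inspection that each of the three summands is non-negative with the third strictly positive.
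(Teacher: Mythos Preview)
Your proof is correct and follows essentially the same route as the paper: both substitute $\mathcal H_i=\mathcal K_i=\mathbb C$, $P=Q=A$, $X=\left(\begin{smallmatrix}z_2&0\\0&z_3\end{smallmatrix}\right)$, $Y=\left(\begin{smallmatrix}w_2&0\\0&w_3\end{smallmatrix}\right)$ into Proposition~\ref{contt.} and identify $A_1,B_1$ with $\tilde\gamma$ and the augmented vectors with $\tilde\eta$. The only cosmetic difference is that for the ``moreover'' clause the paper invokes Proposition~\ref{fpzz}, whereas you unpack that same argument directly from the specialized identity; the content is identical.
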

	
	\begin{proof}
		Suppose $\mathcal H_i=\mathbb C=\mathcal K_i,1 \leq i\leq 3,$ $P=Q=A$, $X=\left(\begin{smallmatrix}z_2 & 0 \\0 &z_{3}
		\end{smallmatrix}\right)$ and $Y=\left(\begin{smallmatrix}w_2 & 0 \\0 &w_{3}
		\end{smallmatrix}\right)$ in Proposition \ref{contt.}. Then from \eqref{FPQQ}, we observe that 
		
		\small{\begin{align}\label{gpq112}
				&1-\overline{\mathcal G_{A}\left(\left(\begin{smallmatrix}w_2 & 0 \\0 &w_{3}
					\end{smallmatrix}\right)\right)}\mathcal G_{A}\left(\left(\begin{smallmatrix}z_2 & 0 \\0 &z_{3}
				\end{smallmatrix}\right)\right)\nonumber\\ &=\left(\begin{smallmatrix}\tilde{\gamma}_1(w_2,w_3) \\ \tilde{\gamma}_2(w_2,w_3)
				\end{smallmatrix}\right)^*\left(\begin{smallmatrix}(1-\bar{w}_2z_2) & 0 \\0 &(1-\bar{w}_3z_3)
				\end{smallmatrix}\right)\left(\begin{smallmatrix}\tilde{\gamma}_1(z_2,z_3) \\ \nonumber\tilde{\gamma}_2(z_2,z_3)
				\end{smallmatrix}\right)+\tilde{\eta}(w_2,w_3)^*(I_{3}-A^*A)\tilde{\eta}(z_2,z_3)\\\nonumber&=\overline{\tilde{\gamma}_1(w_2,w_3)}(1-\bar{w}_2z_2)\tilde{\gamma}_1(z_2,z_3)+\overline{\tilde{\gamma}_2(w_2,w_3)}(1-\bar{w}_3z_3)\tilde{\gamma}_2(z_2,z_3)\\&+{\eta(w_2,w_3)}^*(I_{3}-A^*A)\eta(z_2,z_3)
\end{align}}
for all $z_2,w_2,z_3,w_3\in \mathbb C$ with $\det\left(I_2-\left(\begin{smallmatrix}a_{22} & a_{23} \\a_{32} &a_{33}
\end{smallmatrix}\right)\left(\begin{smallmatrix}z_2 & 0 \\0 &z_{3}\end{smallmatrix}\right)\right)\neq 0$ and $\det\left(I_2-\left(\begin{smallmatrix}a_{22} & a_{23} \\a_{32} &a_{33}\end{smallmatrix}\right)\left(\begin{smallmatrix}w_2 & 0 \\0 &w_{3}\end{smallmatrix}\right)\right)\neq 0$. It also follows from Proposition \ref{fpzz} that $|\mathcal G_{A}\left(\left(\begin{smallmatrix}z_2 & 0 \\0 &z_{3}\end{smallmatrix}\right)\right)|<1$  for all $z_2,z_3\in \bar{\mathbb D}.$ This completes the proof.
	\end{proof}
	
\noindent \textbf{Second realization formula:} Let $\mathcal H_{i}$ and $\mathcal K_i,$ $1\leq i \leq 3,$  be the Hilbert spaces $$P=
\begin{pmatrix}
	\begin{matrix}
		P_{11} & P_{12} \\
		P_{21} & P_{22}
	\end{matrix}& \rvline & 
	\begin{matrix}
		P_{13}\\
		P_{23}
	\end{matrix}  \\
	\hline \begin{matrix}
		P_{31} & P_{32}
	\end{matrix} 
	&\rvline& P_{33}
\end{pmatrix}\left(=\begin{pmatrix}
	A_{11}& \rvline & 
	A_{12}  \\
	\hline A_{21} 
	&\rvline& A_{22}
\end{pmatrix}\right)
:\oplus_{i=1}^{3}\mathcal H_i\rightarrow \oplus_{i=1}^{3}\mathcal K_i.$$  Assume that $X$ be an operator from $\mathcal K_3$ to $\mathcal H_3$ such that  $\left(I_{\mathcal K_3}-P_{33}X\right)$ is invertible. We denote  $\mathcal M_{A}(X)$ as $\mathcal F_{P}(X)$ and for $\mathcal{U}_1=\mathcal H_{1}\oplus\mathcal H_{2}$, $\mathcal{U}_2=\mathcal{H}_{3}$, $\mathcal V_1=\mathcal K_1\oplus\mathcal K_{2}$ and $\mathcal V_2=\mathcal K_{3}$ in \eqref{rf}, we have the following expression for the \textit{realization formula} 
	\begin{equation}\label{FPX}
		\mathcal F_{P}(X)=((P_{ij}))_{i,j=1}^{2}+\left(\begin{smallmatrix}P_{13}\\P_{23}
		\end{smallmatrix}\right)X(I_{\mathcal K_3}-P_{33}X)^{-1}\left(\begin{smallmatrix}P_{31}&P_{32}
		\end{smallmatrix}\right).
	\end{equation}  Obviously, $\mathcal F_{P}(X)$ is an operator from $\oplus_{i=1}^{2}\mathcal H_i$ to $\oplus_{i=1}^{2}\mathcal K_i.$ The following standard identity follows from [Proposition $4.1$, \cite{BLY}].
	\begin{prop}\label{cont.}
		Let $\mathcal H_{i}$ and $\mathcal K_i$ for $i=1,2,3$ be the Hilbert spaces and $P=((P_{ij}))_{i,j=1}^{3}$ and $Q=((Q_{ij}))_{i,j=1}^{3}$ be the operator from $\oplus_{i=1}^{3}\mathcal H_i$ to $\oplus_{i=1}^{3}\mathcal K_i.$ Also,  let $X$ and $Y$ be operators from $\mathcal K_3$ to $\mathcal H_3$ such that  $I_{\mathcal K_3}-P_{33}X$ and $I_{\mathcal K_3}-Q_{33}Y$ are invertible. Then 
		\small{\begin{align}\label{FPQ}
				&I_{\mathcal H_1\oplus \mathcal H_2}-\mathcal F_{Q}(Y)^*\mathcal F_{P}(X)\nonumber\\ \nonumber&=\left(\begin{smallmatrix}Q_{31} & Q_{32}
				\end{smallmatrix}\right)^*(I_{\mathcal K_3}-Y^*Q_{33}^*)^{-1}(I_{\mathcal K_3}-Y^*X)(I_{\mathcal K_3}-P_{33}X)^{-1}\left(\begin{smallmatrix}P_{31} & P_{32}
				\end{smallmatrix}\right)\\&+\left(\begin{smallmatrix} I_{\mathcal H_1\oplus \mathcal H_2} ~& ~\left(\begin{smallmatrix}Q_{31} & Q_{32}
					\end{smallmatrix}\right)^*(I_{\mathcal K_3}-Y^*Q_{33}^*)^{-1}Y^*\end{smallmatrix}\right)(I_{\oplus_{i=1}^3\mathcal H_i}-Q^*P)\left(\begin{smallmatrix} I_{\mathcal H_1\oplus \mathcal H_2} \\ X(I_{\mathcal K_3}-P_{33}X)^{-1}\left(\begin{smallmatrix}P_{31}& P_{32}
					\end{smallmatrix}\right)\end{smallmatrix}\right).
		\end{align}}
		
	\end{prop}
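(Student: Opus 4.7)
The plan is to reduce Proposition \ref{cont.} to the matricial Möbius transformation identity [Proposition $4.1$, BLY] by regrouping the $3\times 3$ block structure of $P$ and $Q$ into a $2\times 2$ block structure, mirroring the strategy used for Proposition \ref{contt.} but now pairing the indices $\{1,2\}$ together instead of $\{2,3\}$. Concretely, I would set
\begin{equation*}
\tilde{P}_{11}=((P_{ij}))_{i,j=1}^{2},\quad \tilde{P}_{12}=\begin{pmatrix}P_{13}\\ P_{23}\end{pmatrix},\quad \tilde{P}_{21}=\begin{pmatrix}P_{31}& P_{32}\end{pmatrix},\quad \tilde{P}_{22}=P_{33},
\end{equation*}
and view $\tilde{P}$ as the same operator $P$, merely reassociated to act between $(\mathcal{H}_1\oplus \mathcal{H}_2)\oplus \mathcal{H}_3$ and $(\mathcal{K}_1\oplus\mathcal{K}_2)\oplus\mathcal{K}_3$; define $\tilde{Q}$ analogously. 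Comparing the definition \eqref{FPX} of $\mathcal{F}_P$ with the definition \eqref{rf} of $\mathcal{M}_A$ shows at once that $\mathcal{F}_P(X)=\mathcal{M}_{\tilde{P}}(X)$ and $\mathcal{F}_Q(Y)=\mathcal{M}_{\tilde{Q}}(Y)$, while under the reassociation $\tilde{Q}^{*}\tilde{P}=Q^{*}P$.

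Next, I would invoke [Proposition $4.1$, BLY] with the choices $\mathcal{H}=\mathcal{H}_1\oplus \mathcal{H}_2$, $\mathcal{U}=\mathcal{H}_3$, $\mathcal{K}=\mathcal{K}_1\oplus \mathcal{K}_2$, $\mathcal{Y}=\mathcal{K}_3$, applied to $\tilde{P},\tilde{Q},X,Y$. It yields
\begin{equation*}
I_{\mathcal{H}_1\oplus \mathcal{H}_2}-\mathcal{M}_{\tilde{Q}}(Y)^{*}\mathcal{M}_{\tilde{P}}(X)=B_1^{*}(I_{\mathcal{K}_3}-Y^{*}X)A_1+\begin{pmatrix}I_{\mathcal{H}_1\oplus \mathcal{H}_2} & B_1^{*}Y^{*}\end{pmatrix}(I-\tilde{Q}^{*}\tilde{P})\begin{pmatrix}I_{\mathcal{H}_1\oplus \mathcal{H}_2}\\ XA_1\end{pmatrix},
\end{equation*}
with $A_1=(I_{\mathcal{K}_3}-P_{33}X)^{-1}\begin{pmatrix}P_{31}& P_{32}\end{pmatrix}$ and $B_1=(I_{\mathcal{K}_3}-Q_{33}Y)^{-1}\begin{pmatrix}Q_{31}& Q_{32}\end{pmatrix}$. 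Rewriting $B_1^{*}=\begin{pmatrix}Q_{31}& Q_{32}\end{pmatrix}^{*}(I_{\mathcal{K}_3}-Y^{*}Q_{33}^{*})^{-1}$ by commuting adjoint with inverse, and substituting both $A_1$ and $B_1^{*}$ back, produces exactly the identity \eqref{FPQ} stated in Proposition \ref{cont.}.

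The proof is therefore entirely bookkeeping once the reduction is in place; no estimate or compactness argument is required. The only mildly subtle point, which I would flag \emph{en passant}, is that the orthogonal-direct-sum reassociation preserves adjoints and compositions so that $\tilde{Q}^{*}\tilde{P}=Q^{*}P$ really holds as claimed; this is automatic but worth naming explicitly. Since the mechanism is identical to the one the authors use in Proposition \ref{contt.}, I expect no genuine obstacle, and this explains why the authors merely cite [Proposition $4.1$, BLY] without repeating the manipulation.
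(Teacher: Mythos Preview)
Your proposal is correct and follows essentially the same approach as the paper: both proofs regroup the $3\times 3$ block structure into a $2\times 2$ structure by setting $\tilde{P}_{11}=((P_{ij}))_{i,j=1}^{2}$, $\tilde{P}_{12}=\left(\begin{smallmatrix}P_{13}\\P_{23}\end{smallmatrix}\right)$, $\tilde{P}_{21}=\left(\begin{smallmatrix}P_{31}&P_{32}\end{smallmatrix}\right)$, $\tilde{P}_{22}=P_{33}$, with $\mathcal{H}=\mathcal{H}_1\oplus\mathcal{H}_2$, $\mathcal{U}=\mathcal{H}_3$, $\mathcal{K}=\mathcal{K}_1\oplus\mathcal{K}_2$, and the third summand as the remaining space, then invoke [Proposition~4.1, BLY]. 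Your write-up is slightly more explicit about why $\tilde{Q}^*\tilde{P}=Q^*P$ and how $B_1^*$ unfolds, but the substance is identical.
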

	\begin{proof} Let
		$$\tilde{P}_{11}=((P_{ij}))_{i,j=1}^{2},\tilde{P}_{12}=\left(\begin{smallmatrix}P_{13} \\ P_{23}
		\end{smallmatrix}\right),\tilde{P}_{21}=\left(\begin{smallmatrix}P_{31} & P_{32}
		\end{smallmatrix}\right),\tilde{P}_{22}=P_{33}, \mathcal H= \mathcal H_1\oplus \mathcal H_2, \mathcal U=\mathcal H_3, \mathcal K= \mathcal K_1\oplus \mathcal K_2$$ and  $\mathcal V=\mathcal K_3$. It is easy to see that the above identity follows from [Proposition $4.1$, \cite{BLY}].
	\end{proof}
	As a consequence of the above proposition, we prove the following proposition.
	\begin{prop}\label{fpz}
		Let $\mathcal H_{i}$ and $\mathcal K_i, 1\leq i \leq 3$ be the Hilbert spaces. Suppose $P=((P_{ij}))_{i,j=1}^{3}$ is the operator from $\oplus_{i=1}^{3}\mathcal H_i$ to $\oplus_{i=1}^{3}\mathcal K_i$ and  $X$ is an operator from $\mathcal K_3$ to $\mathcal H_3$ such that  $I_{\mathcal K_3}-P_{33}X$ is invertible. If $\|X\|\leq1$ and $\|P\|< 1$, then $\|\mathcal F_{P}(X)\|<1.$
		 
	\end{prop}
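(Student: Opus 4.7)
The plan is to mimic the argument for Proposition \ref{fpzz} line-for-line, using Proposition \ref{cont.} instead of Proposition \ref{contt.}. First I would specialize the identity \eqref{FPQ} to $Q=P$ and $Y=X$. Since both $I_{\mathcal K_3}-P_{33}X$ and its adjoint are invertible, the factor $(I_{\mathcal K_3}-X^*P_{33}^*)^{-1}$ appearing in \eqref{FPQ} equals $\bigl((I_{\mathcal K_3}-P_{33}X)^{-1}\bigr)^*$. This lets me write the resulting identity in the clean form
\begin{equation*}
I_{\mathcal H_1\oplus\mathcal H_2}-\mathcal F_{P}(X)^*\mathcal F_{P}(X)=A_1^*(I_{\mathcal K_3}-X^*X)A_1+A_2^*(I_{\oplus_{i=1}^3\mathcal H_i}-P^*P)A_2,
\end{equation*}
where
\begin{equation*}
A_1=(I_{\mathcal K_3}-P_{33}X)^{-1}\bigl(\begin{smallmatrix}P_{31}&P_{32}\end{smallmatrix}\bigr):\mathcal H_1\oplus\mathcal H_2\to \mathcal K_3
\end{equation*}
and
\begin{equation*}
A_2=\left(\begin{smallmatrix} I_{\mathcal H_1\oplus\mathcal H_2} \\ X(I_{\mathcal K_3}-P_{33}X)^{-1}\bigl(\begin{smallmatrix}P_{31}&P_{32}\end{smallmatrix}\bigr)\end{smallmatrix}\right):\mathcal H_1\oplus\mathcal H_2\to \oplus_{i=1}^3\mathcal H_i.
\end{equation*}

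Next I would read off positivity of each of the two pieces. The hypothesis $\|X\|\le 1$ gives $I_{\mathcal K_3}-X^*X\ge 0$, so the first summand is positive semidefinite. The hypothesis $\|P\|<1$ gives $I_{\oplus_{i=1}^3\mathcal H_i}-P^*P\ge c I$ for some $c>0$, hence $A_2^*(I-P^*P)A_2\ge c A_2^*A_2$. The key structural observation is that $A_2$ has the identity operator in its top block, so $A_2$ is injective with $A_2^*A_2\ge I_{\mathcal H_1\oplus\mathcal H_2}$. Consequently the second summand is strictly positive.

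Combining these two bounds gives $I_{\mathcal H_1\oplus\mathcal H_2}-\mathcal F_{P}(X)^*\mathcal F_{P}(X)\ge c I_{\mathcal H_1\oplus\mathcal H_2}>0$, which is exactly the statement $\|\mathcal F_{P}(X)\|<1$. The only step that requires any care is the bookkeeping in specializing \eqref{FPQ}—in particular verifying that $A_2^*A_2\ge I_{\mathcal H_1\oplus\mathcal H_2}$ because of the identity block at the top—but otherwise the argument is a direct transcription of the proof of Proposition \ref{fpzz} to the second realization formula, and no new ideas are required. The analogous strengthening (replacing $\|P\|<1$ by $\|P\|\le 1$) would then yield Proposition \ref{fpzz1}'s counterpart for $\mathcal F_P$ by the same identity.
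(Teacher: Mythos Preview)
Your proposal is correct and follows essentially the same approach as the paper: specialize the identity from Proposition \ref{cont.} to $Q=P$, $Y=X$, introduce the same operators $A_1$ and $A_2$, and conclude positivity from $\|X\|\le 1$ and $\|P\|<1$. Your explicit justification that $A_2^*A_2\ge I_{\mathcal H_1\oplus\mathcal H_2}$ (via the identity block) is in fact a bit more careful than the paper, which simply asserts strict positivity of the right-hand side without spelling out this step.
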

	\begin{proof}
		From Proposition \ref{cont.}, we notice that 
		
		\begin{align}\label{FP}
			&I_{\mathcal H_1\oplus \mathcal H_2}-\mathcal F_{P}(X)^*\mathcal F_{P}(X)\nonumber\\ &=\left(\begin{smallmatrix}P_{31} & P_{32}
			\end{smallmatrix}\right)^*(I_{\mathcal K_3}-X^*P_{33}^*)^{-1}(I_{\mathcal K_3}-X^*X)(I_{\mathcal K_3}-P_{33}X)^{-1}\left(\begin{smallmatrix}P_{31} & P_{32}
			\end{smallmatrix}\right) \nonumber \\&+\left(\begin{smallmatrix} I_{\mathcal H_1\oplus \mathcal H_2} ~& ~\left(\begin{smallmatrix}P_{31} & P_{32}
				\end{smallmatrix}\right)^*(I_{\mathcal K_3}-X^*P_{33}^*)^{-1}X^*\end{smallmatrix}\right)(I_{\oplus_{i=1}^3\mathcal H_i}-P^*P)\left(\begin{smallmatrix} I_{\mathcal H_1\oplus \mathcal H_2} \\ X(I_{\mathcal K_3}-P_{33}X)^{-1}\left(\begin{smallmatrix}P_{31}& P_{32}
				\end{smallmatrix}\right)\end{smallmatrix}\right).
		\end{align}
		
		Suppose $A_1=(I_{\mathcal K_3}-P_{33}X)^{-1}\left(\begin{smallmatrix}P_{31} & P_{32}
		\end{smallmatrix}\right):\mathcal H_1\oplus\mathcal H_2 \to \mathcal K_3 $ and 
		$$A_2=\left(\begin{smallmatrix} I_{\mathcal H_1\oplus \mathcal H_2} \\XA_1
		\end{smallmatrix}\right):\mathcal H_1\oplus\mathcal H_2 \to \mathcal H_1\oplus\mathcal H_2\oplus\mathcal H_3. $$
		Then from \eqref{FP}, it follows that 
		\begin{align}\label{FP1}
			I_{\mathcal H_1\oplus \mathcal H_2}-\mathcal F_{P}(X)^*\mathcal F_{P}(X)=A_1^*(I_{\mathcal K_3}-X^*X)A_1+A_2^*(I_{\oplus_{i=1}^3\mathcal H_i}-P^*P)A_2.
		\end{align}
		Since $\|X\|\leq1 $ and $\|P\|<1,$ it follows  from \eqref{FP1} that $I_{\mathcal H_1\oplus \mathcal H_2}-\mathcal F_{P}(X)^*\mathcal F_{P}(X)>0$ and hence $\|\mathcal F_{P}(X)\|<1.$ This completes the proof.
	\end{proof}
	For each $A=((a_{ij}))_{i,j=1}^{3}\in \mathcal M_{3\times 3}(\mathbb C)$ with $\|A\|<1$, we define $\gamma$ and $\eta$ by
	$$\gamma(z_3)=(1-a_{33}z_3)^{-1}\left(\begin{smallmatrix}a_{31} & a_{32}
	\end{smallmatrix}\right),$$ and
	$$\eta(z_3)=\left(\begin{smallmatrix}I_{2}\\z_3(1-a_{33}z_3)^{-1}\left(\begin{smallmatrix}a_{31} & a_{32}
		\end{smallmatrix}\right)
	\end{smallmatrix}\right)=\left(\begin{smallmatrix}I_{2}\\z_3\gamma(z_3)
	\end{smallmatrix}\right)$$
	for all $z_3\in\mathbb C$ such that $1-a_{33}z_3\neq 0.$
	We prove the following proposition as a consequence of Proposition \ref{cont.}. 
	\begin{prop}\label{matrix A}
		Let  $A\in \mathcal M_{3\times 3}(\mathbb C)$. Then 
		\begin{align}\label{A norm}
			I_{2}-\mathcal F_{A}(w_3)^*\mathcal F_{A}(z_3)&={\gamma(w_3)}^*(1-\bar{w}_3z_3)\gamma(z_3)+{\eta(w_3)}^*(I_{3}-A^*A)\eta(z_3)
		\end{align}
		for all $z_3,w_3\in \mathbb C$ with $1-a_{33}w_3\neq 0$ and $1-a_{33}z_3\neq 0.$ Moreover, if $\|A\|<1$, then $\|\mathcal F_{A}(z_3)\|<1$ for all $z_3\in \bar{\mathbb D}.$ 
		
	\end{prop}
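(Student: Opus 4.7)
The plan is to obtain Proposition \ref{matrix A} as a direct specialization of Proposition \ref{cont.} to the scalar case, followed by an application of Proposition \ref{fpz} for the strict contractivity clause. To establish the identity \eqref{A norm}, I set $\mathcal{H}_i = \mathcal{K}_i = \mathbb{C}$ for $i=1,2,3$ in Proposition \ref{cont.}, take $P = Q = A$ (so $P_{ij} = a_{ij}$), and let $X = z_3$, $Y = w_3$ be scalars viewed as operators on $\mathcal{K}_3 = \mathbb{C}$. The hypotheses $1 - a_{33}z_3 \neq 0$ and $1 - a_{33}w_3 \neq 0$ are exactly the invertibility conditions $I_{\mathcal{K}_3} - P_{33}X$ and $I_{\mathcal{K}_3} - Q_{33}Y$ required to invoke that proposition.

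Under these identifications, the definition of $\gamma$ gives $(I_{\mathcal{K}_3} - P_{33}X)^{-1}\left(\begin{smallmatrix} P_{31} & P_{32} \end{smallmatrix}\right) = (1 - a_{33}z_3)^{-1}\left(\begin{smallmatrix} a_{31} & a_{32} \end{smallmatrix}\right) = \gamma(z_3)$, and the block column appearing on the right of \eqref{FPQ} becomes $\left(\begin{smallmatrix} I_2 \\ z_3\gamma(z_3) \end{smallmatrix}\right) = \eta(z_3)$; the analogous identifications hold for $w_3$ in the adjoint factor. Substituting everything into \eqref{FPQ} and noting that $P = Q = A$ makes the $(I_{\oplus_{i=1}^3 \mathcal{H}_i} - Q^*P)$ factor equal to $I_3 - A^*A$, the right-hand side collapses to exactly $\gamma(w_3)^*(1 - \bar{w}_3 z_3)\gamma(z_3) + \eta(w_3)^*(I_3 - A^*A)\eta(z_3)$, which is \eqref{A norm}.

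For the moreover statement, I first observe that $\|A\| < 1$ forces $|a_{33}| = |\langle Ae_3, e_3\rangle| \leq \|A\| < 1$, so $1 - a_{33}z_3 \neq 0$ for every $z_3 \in \bar{\mathbb{D}}$ and $\mathcal{F}_A(z_3)$ is well defined there. Proposition \ref{fpz} applied with $X = z_3$ (of norm at most $1$) and $P = A$ (of norm strictly less than $1$) then yields $\|\mathcal{F}_A(z_3)\| < 1$. Equivalently, setting $w_3 = z_3$ in \eqref{A norm} exhibits $I_2 - \mathcal{F}_A(z_3)^*\mathcal{F}_A(z_3)$ as the sum of the nonnegative term $\gamma(z_3)^*(1 - |z_3|^2)\gamma(z_3)$ and the strictly positive term $\eta(z_3)^*(I_3 - A^*A)\eta(z_3)$, strict positivity following from $I_3 - A^*A > 0$ together with $\eta(z_3) \neq 0$ (its top block is $I_2$).

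No substantive obstacle is expected; the proposition is essentially a corollary of Proposition \ref{cont.} and Proposition \ref{fpz}. The only mild care needed is the bookkeeping of block entries under the scalar specialization and the preliminary estimate $|a_{33}| \leq \|A\| < 1$ used to extend the strict contractivity claim from $\mathbb{D}$ to all of $\bar{\mathbb{D}}$.
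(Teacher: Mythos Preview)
Your proposal is correct and follows essentially the same route as the paper: specialize Proposition~\ref{cont.} with $\mathcal H_i=\mathcal K_i=\mathbb C$, $P=Q=A$, $X=z_3$, $Y=w_3$ to obtain the identity, then use either Proposition~\ref{fpz} or the $w_3=z_3$ case of the identity for the strict contractivity. Your explicit remark that $|a_{33}|\le\|A\|<1$ guarantees well-definedness of $\mathcal F_A(z_3)$ on all of $\bar{\mathbb D}$ is a detail the paper leaves implicit but is indeed needed.
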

	
	\begin{proof}
		Suppose $\mathcal H_i=\mathbb C=\mathcal K_i, 1 \leq i \leq 3$, $P=Q=A$, $X=z_3$ and $Y=w_3$ in Proposition \eqref{cont.}. Then from \eqref{FPQ}, we observe that 
		\begin{align}\label{fpq1}
			I_{2}-\mathcal F_{A}(w_3)^*\mathcal F_{A}(z_3)\nonumber&=\left(\begin{smallmatrix}a_{31} & a_{32}
			\end{smallmatrix}\right)^*(1-\bar{w}_3\bar{a}_{33})^{-1}(1-\bar{w}_3z_3)(1-a_{33}z_3)^{-1}\left(\begin{smallmatrix}a_{31} & a_{32}
			\end{smallmatrix}\right)\\\nonumber&+\left(\begin{smallmatrix} I_{2} ~&~ \left(\begin{smallmatrix}a_{31} & a_{32}
				\end{smallmatrix}\right)^*(1-\bar{w}_3\bar{a}_{33})^{-1}\bar{w}_3\end{smallmatrix}\right)(I_{3}-A^*A)\left(\begin{smallmatrix} I_{2} \\ z_3(1-a_{33}z_3)^{-1}\left(\begin{smallmatrix}a_{31}& a_{32}
				\end{smallmatrix}\right)\end{smallmatrix}\right)\\&={\gamma(w_3)}^*(1-\bar{w}_3z_3)\gamma(z_3)+{\eta(w_3)}^*(I_{3}-A^*A)\eta(z_3)
		\end{align}
		for all $z_3,w_3\in \mathbb C$ with $1-a_{33}w_3\neq 0$ and $1-a_{33}z_3\neq 0.$
		From \eqref{fpq1}, we have $\|\mathcal F_{A}(z_3)\|<1$ for all $z_3\in \bar{\mathbb D}$ and $\|A\|<1.$ This completes the proof.
	\end{proof}
	
We derive the following theorem by using first realization formula.			
			\begin{thm}\label{matix A1}
				For $\textbf{x}=(x_1,\ldots,x_7)\in \mathbb C^7$ the following are equivalent.
				\begin{enumerate}
					\item $\textbf{x}=(x_1,\ldots,x_7)\in G_{E(3;3;1,1,1)}$.
					
					\item There exists a $3\times 3$ matrix $A\in \mathcal M_{3\times 3}(\mathbb C)$  such that  \small{$$x_1=a_{11}, x_2=a_{22}, x_3=\det \left(\begin{smallmatrix} a_{11} & a_{12}\\
					a_{21} & a_{22}
				\end{smallmatrix}\right), x_4=a_{33}, x_5=\det \left(\begin{smallmatrix}
					a_{11} & a_{13}\\
					a_{31} & a_{33}
				\end{smallmatrix}\right), x_6=\det  \left(\begin{smallmatrix}
					a_{22} & a_{23}\\
					a_{32} & a_{33}\end{smallmatrix}\right) ~{\rm{and}}~x_7=\det A,$$}  \begin{equation}\label{tilgamma1}
					\overline{\tilde{\gamma}_1(z_2,z_3)}(1-|z_2|^2)\tilde{\gamma}_1(z_2,z_3)+\overline{\tilde{\gamma}_2(z_2,z_3)}(1-|z_3|^2)\tilde{\gamma}_2(z_2,z_3)+{\eta(z_2,z_3)}^*(I_{3}-A^*A)\eta(z_2,z_3)>0 
				\end{equation}
				and $\det\left(I_{2}-\left(\begin{smallmatrix}a_{22}&a_{23}\\a_{32} & a_{33}
				\end{smallmatrix}\right)\left(\begin{smallmatrix}z_2 & 0 \\0 &z_{3}
				\end{smallmatrix}\right)\right)\neq 0$ for all $z_2,z_3\in \bar{\mathbb D}$.
				\end{enumerate}
				
			\end{thm}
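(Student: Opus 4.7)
The plan is to interpret condition (2) as the first-realization-formula translation of Theorem \ref{phi}(1). The key bridge is the identity
\[
\mathcal{G}_{A}\!\left(\operatorname{diag}(z_2,z_3)\right) \;=\; \Psi^{(1)}(z_2,z_3,\textbf{x}),
\]
valid whenever $\det\!\left(I_2 - \left(\begin{smallmatrix}a_{22}&a_{23}\\a_{32}&a_{33}\end{smallmatrix}\right)\operatorname{diag}(z_2,z_3)\right) \neq 0$. I would verify this by direct computation using \eqref{GPX}: the denominator $\det(I_2 - A_{22}\operatorname{diag}(z_2,z_3)) = 1 - a_{22}z_2 - a_{33}z_3 + (a_{22}a_{33}-a_{23}a_{32})z_2 z_3$ is already $R^{(1)}(z_2,z_3,\textbf{x}_{J^{(1)}}')$; and applying Cramer's rule to invert $I_2 - A_{22}\operatorname{diag}(z_2,z_3)$, then expanding $a_{11} + (a_{12}\ a_{13})\operatorname{diag}(z_2,z_3)(I_2 - A_{22}\operatorname{diag}(z_2,z_3))^{-1}(a_{21},a_{31})^{T}$ and regrouping, produces exactly $x_1 - z_2 x_3 - z_3 x_5 + z_2 z_3 x_7 = P^{(1)}(z_2,z_3,\textbf{x}_{J^{(1)}}'')$.

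For $(1)\Rightarrow(2)$, given $\textbf{x} \in G_{E(3;3;1,1,1)}$, definition \eqref{GE} supplies a matrix $A$ realizing the coordinate relations with $\mu_{E(3;3;1,1,1)}(A) < 1$. Theorem \ref{phi}(1) then yields $\textbf{x}_{J^{(1)}}' \in G_{E(2;2;1,1)}$, which is exactly the determinant condition $R^{(1)} \neq 0$ on $\bar{\mathbb{D}}^2$, together with $\|\Psi^{(1)}(\cdot,\textbf{x})\|_{H^\infty(\bar{\mathbb{D}}^2)} < 1$. The bridge identity upgrades the latter to $|\mathcal{G}_A(\operatorname{diag}(z_2,z_3))|^2 < 1$ for all $z_2,z_3 \in \bar{\mathbb{D}}$, and Proposition \ref{matrix AA} specialized to $w_i = z_i$ rewrites $1 - |\mathcal{G}_A|^2$ as precisely the expression in \eqref{tilgamma1}, giving the required strict positivity. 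For $(2)\Rightarrow(1)$, the determinant hypothesis is $R^{(1)} \neq 0$ on $\bar{\mathbb{D}}^2$, so $\textbf{x}_{J^{(1)}}' \in G_{E(2;2;1,1)}$ via the equality $\mathcal{A}^{(1)}_{(2;2;1,1)} = G_{E(2;2;1,1)}$ of \cite{Abouhajar}. Proposition \ref{matrix AA} then turns \eqref{tilgamma1} into $|\mathcal{G}_A(\operatorname{diag}(z_2,z_3))|^2 < 1$ on $\bar{\mathbb{D}}^2$, hence $|\Psi^{(1)}(z_2,z_3,\textbf{x})| < 1$ there, and Theorem \ref{phi}(1) delivers $\textbf{x} \in G_{E(3;3;1,1,1)}$.

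The only technical point is the explicit verification of the bridge identity $\mathcal{G}_A(\operatorname{diag}(z_2,z_3)) = \Psi^{(1)}(z_2,z_3,\textbf{x})$, which is mechanical but requires careful bookkeeping of the $2\times 2$ minors defining $x_3$, $x_5$, and $x_7$. Once that is in hand, the two implications follow immediately from Proposition \ref{matrix AA} and Theorem \ref{phi}(1), with no new ingredients beyond the first realization formula \eqref{GPX} and the rational-function characterization already established.
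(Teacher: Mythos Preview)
Your proposal is correct and follows essentially the same approach as the paper: both establish the bridge identity $\mathcal{G}_A(\operatorname{diag}(z_2,z_3)) = \Psi^{(1)}(z_2,z_3,\mathbf{x})$ by direct computation, invoke Theorem~\ref{phi}(1) to equate membership in $G_{E(3;3;1,1,1)}$ with the conditions $\mathbf{x}_{J^{(1)}}' \in G_{E(2;2;1,1)}$ and $\|\Psi^{(1)}(\cdot,\mathbf{x})\|_{H^\infty(\bar{\mathbb D}^2)} < 1$, and then use Proposition~\ref{matrix AA} with $w_i=z_i$ to rewrite $1-|\mathcal{G}_A|^2$ as the left-hand side of \eqref{tilgamma1}. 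The only cosmetic difference is that the paper presents the argument as a single biconditional chain while you split it into the two implications.
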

			\begin{proof}
				Suppose $\mathbf{x}=(x_1,\ldots,x_7)\in G_{E(3;3;1,1,1)}.$ By \eqref{GE}, there exits a matrix $A=((a_{ij}))_{i,j=1}^{3}$ such that \small{$$x_1=a_{11}, x_2=a_{22}, x_3=\det \left(\begin{smallmatrix} a_{11} & a_{12}\\
					a_{21} & a_{22}
				\end{smallmatrix}\right), x_4=a_{33}, x_5=\det \left(\begin{smallmatrix}
					a_{11} & a_{13}\\
					a_{31} & a_{33}
				\end{smallmatrix}\right), x_6=\det  \left(\begin{smallmatrix}
					a_{22} & a_{23}\\
					a_{32} & a_{33}\end{smallmatrix}\right) ~{\rm{and}}~x_7=\det A.$$} Note that for all $z_2,z_3\in \mathbb C$ with $\det\left(I_{2}-\left(\begin{smallmatrix}a_{22}&a_{23}\\a_{32} & a_{33}
				\end{smallmatrix}\right)\left(\begin{smallmatrix}z_2 & 0 \\0 &z_{3}
				\end{smallmatrix}\right)\right)\neq 0,$ we have
				\small{\begin{align}\label{mathcalG}
						\mathcal G_{A}\left(\left(\begin{smallmatrix}z_2 & 0 \\0 &z_{3}
						\end{smallmatrix}\right)\right)\nonumber &=a_{11}+\left(\begin{smallmatrix}a_{12}&a_{13}
						\end{smallmatrix}\right)\left(\begin{smallmatrix}z_2 & 0 \\0 &z_{3}
						\end{smallmatrix}\right)\left(I_{2}-\left(\begin{smallmatrix}a_{22}&a_{23}\\a_{32} & a_{33}
						\end{smallmatrix}\right)\left(\begin{smallmatrix}z_2 & 0 \\0 &z_{3}
						\end{smallmatrix}\right)\right)^{-1}\left(\begin{smallmatrix}a_{21}\\a_{31}
						\end{smallmatrix}\right)\\\nonumber&=\frac{a_{11}-z_2(a_{11}a_{22}-a_{12}a_{21})-z_3(a_{33}a_{11}-a_{13}a_{31})+z_2z_3\det(A)}{1-z_2a_{22}-z_3a_{33}+z_2z_3(a_{22}a_{33}-a_{23}a_{32})}\\\nonumber&=\frac{x_1-z_2x_3-z_3x_5+z_2z_3x_7}{1-z_2x_2-z_3x_4+z_2z_3x_6}\\&=\Psi^{(1)}(\textbf{z}_{J^{(1)}},\textbf{x}).
				\end{align}}
From Theorem \ref{phi}, we have $\textbf{x}\in G_{E(3;3;1,1,1)}$ if and only if  
\begin{eqnarray}\label{eqn1}
\textbf{x}_{J^{(1)}}^{\prime}\in G_{E(2;2;1,1)}~{\rm{ and }}~\|\Psi^{(1)}(\cdot,\textbf{x})\|_{H^{\infty}(\bar{\mathbb{D}}^{2})}=\|\Psi^{(1)}(\cdot,\textbf{x})\|_{H^{\infty}(\mathbb{T}^{2})}< 1.\end{eqnarray} Thus, from \eqref{mathcalG} and \eqref{eqn1}, it follows that 
			$\textbf{x}\in G_{E(3;3;1,1,1)}$ if and only if	$\sup_{(z_2,z_3)\in\bar{\mathbb D}^2} |\mathcal G_{A}\left(\left(\begin{smallmatrix}z_2 & 0 \\0 &z_{3}
				\end{smallmatrix}\right)\right)|<1$, and $\det\left(I_{2}-\left(\begin{smallmatrix}a_{22}&a_{23}\\a_{32} & a_{33}
				\end{smallmatrix}\right)\left(\begin{smallmatrix}z_2 & 0 \\0 &z_{3}
				\end{smallmatrix}\right)\right)\neq 0$  for all $(z_2,z_3)\in\bar{\mathbb D}^2$. Thus we have $\textbf{x}\in G_{E(3;3;1,1,1)}$ if and only if
				\begin{equation}\label{GAA}
					1-\overline{\mathcal G_{A}\left(\left(\begin{smallmatrix}z_2 & 0 \\0 &z_{3}
						\end{smallmatrix}\right)\right)}\mathcal G_{A}\left(\left(\begin{smallmatrix}z_2 & 0 \\0 &z_{3}
					\end{smallmatrix}\right)\right)>0 ~{\rm{and}}~\det\left(I_{2}-\left(\begin{smallmatrix}a_{22}&a_{23}\\a_{32} & a_{33}
				\end{smallmatrix}\right)\left(\begin{smallmatrix}z_2 & 0 \\0 &z_{3}
				\end{smallmatrix}\right)\right)\neq 0 ~~{\rm{for~all}}~~(z_2,z_3)\in\bar{\mathbb D}^2.
				\end{equation}
				By \eqref{AAAA norm} and \eqref{GAA}, we conclude that $\textbf{x}\in G_{E(3;3;1,1,1)}$ if and only if 
				\begin{equation}\label{tilgamma}
					\overline{\tilde{\gamma}_1(z_2,z_3)}(1-|z_2|^2)\tilde{\gamma}_1(z_2,z_3)+\overline{\tilde{\gamma}_2(z_2,z_3)}(1-|z_3|^2)\tilde{\gamma}_2(z_2,z_3)+{\eta(z_2,z_3)}^*(I_{3}-A^*A)\eta(z_2,z_3)>0 
				\end{equation}
				and  $\det\left(I_{2}-\left(\begin{smallmatrix}a_{22}&a_{23}\\a_{32} & a_{33}
				\end{smallmatrix}\right)\left(\begin{smallmatrix}z_2 & 0 \\0 &z_{3}
				\end{smallmatrix}\right)\right)\neq 0$ for all $z_2,z_3\in \bar{\mathbb D}$.
This completes the proof.
				
						\end{proof}

Below we prove the another implication of Therem \ref{matix A1} by using the second realization formula. For \\$A\in \mathcal M_{3\times 3}(\mathbb C)$ and $\mathcal F_{A}(z_3)$ as in \eqref{FAA}, we define 
			\begin{equation}\label{GFA}\mathcal G_{\mathcal F_{A}(z_3)}(z_2)=A_{11}(z_3)+A_{12}(z_3)z_2(1-A_{22}(z_3)z_2)^{-1}A_{21}(z_3)\end{equation} for all $z_2,z_3\in \mathbb C$ such that $(1-a_{33}z_3)\neq 0$ and $(1-A_{22}(z_3)z_2)\neq 0,$ that is, $$1-a_{22}z_2-a_{33}z_3+(a_{22}a_{33}-a_{23}a_{32})z_2z_3\neq 0.$$
			For $A\in \mathcal M_{3\times 3}(\mathbb C)$ with $\|A\|<1,$ from [Proposition $4.2$,\cite{BLY}], we have $\|\mathcal F_{A}(z_3)\|<1$ for all $z_3\in \bar{\mathbb D}$  which implies from Proposition \ref{fpzz} that $\|\mathcal G_{\mathcal F_{A}(z_3)}(z_2)\|<1$ for all $z_2,z_3\in \mathbb {\bar{D}}$ such that $$1-a_{22}z_2-a_{33}z_3+(a_{22}a_{33}-a_{23}a_{32})z_2z_3\neq 0.$$
			For $\mathcal F_{A}(z_3)\in \mathcal M_{2\times 2}(\mathbb C)$, we define $\tilde{\gamma}^{(z_3)}$ and $\tilde{\eta}^{(z_3)}$ by
			\begin{align}\label{gaz}\tilde{\gamma}^{(z_3)}(z_2)&=(1-A_{22}(z_3)z_2)^{-1}A_{21}(z_3),
			\end{align} 
			and
			\begin{align}\label{ettz}\tilde{\eta}^{(z_3)}(z_2)&=\left(\begin{smallmatrix}1\\z_2\tilde{\gamma}^{(z_3)}(z_2)
				\end{smallmatrix}\right)
			\end{align}
			for all $z_2,z_3\in\mathbb C$ such that $(1-a_{33}z_3)\neq 0$ and  $1-a_{22}z_2-a_{33}z_3+(a_{22}a_{33}-a_{23}a_{32})z_2z_3\neq 0.$ The proof of the following proposition  follows from [Proposition $4.1$ \cite{BLY}]. 
			\begin{prop}\label{matrix AAZZ}
				Suppose  $\mathcal F_{A}(z_3)$  as in \eqref{FAA} for all $z_3\in \mathbb C$ such that $(1-a_{33}z_3)\neq 0.$  Then 
				\begin{align}\label{AAAA norm1}
					1-\overline{\mathcal G_{\mathcal F_{A}(w_3)}(w_2)}\mathcal G_{\mathcal F_{A}(z_3)}(z_2)\nonumber&=\overline{{\tilde{\gamma_1}^{(w_3)}(w_2)}}(1-\bar{w}_2z_2)\tilde{\gamma}_1^{(z_3)}(z_2)\\&+{\tilde{\eta}^{(w_3)}(w_2)}^*(I_{2}-\mathcal F_{A}(w_3)^*\mathcal F_{A}(z_3))\tilde{\eta}^{(z_3)}(z_2)
				\end{align}
				for all $z_2,w_2,z_3,w_3\in \mathbb C$ such that $(1-a_{33}z_3)\neq 0, 1-a_{22}z_2-a_{33}z_3+(a_{22}a_{33}-a_{23}a_{32})z_2z_3\neq 0, (1-a_{33}w_3)\neq 0,$ and  $1-a_{22}w_2-a_{33}w_3+(a_{22}a_{33}-a_{23}a_{32})w_2w _3\neq 0.$ Moreover, if $\|\mathcal F_{A}(z_3)\|<1$, then $|\mathcal G_{\mathcal F_{A}(z_3)}(z_2)|<1$ for all $z_2,z_3\in \bar{\mathbb D}$.
			\end{prop}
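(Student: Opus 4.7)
The plan is to obtain \eqref{AAAA norm1} as a direct specialization of the general realization-formula identity [BLY, Proposition 4.1], in exactly the spirit of Propositions \ref{matrix AA} and \ref{matrix A} above. First I would observe that the expression \eqref{GFA} for $\mathcal G_{\mathcal F_A(z_3)}(z_2)$ is precisely the matricial Mobius transformation $\mathcal M_P(X)$ from \eqref{rf} applied with $P = \mathcal F_A(z_3)$ viewed as a $2\times 2$ matrix operator $\mathbb C \oplus \mathbb C \to \mathbb C \oplus \mathbb C$, and with $X = z_2$ viewed as a scalar operator $\mathbb C \to \mathbb C$. Under this identification, $\tilde\gamma^{(z_3)}(z_2) = (1 - A_{22}(z_3) z_2)^{-1} A_{21}(z_3)$ and $\tilde\eta^{(z_3)}(z_2)$ from \eqref{gaz}--\eqref{ettz} play exactly the roles of the vectors $A_1$ and $A_2$ that appeared in the proofs of Propositions \ref{contt.} and \ref{cont.}.

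Next I would invoke [BLY, Proposition 4.1] with $P = \mathcal F_A(z_3)$, $Q = \mathcal F_A(w_3)$, $X = z_2$, $Y = w_2$, and all ambient Hilbert spaces taken to be $\mathbb C$. The resulting polarized identity reads
\begin{equation*}
1 - \overline{\mathcal M_Q(Y)}\, \mathcal M_P(X) = \overline{B}\,(1 - \bar Y X)\, A + \begin{pmatrix} 1 & \bar Y \bar B \end{pmatrix} (I_2 - Q^* P) \begin{pmatrix} 1 \\ X A \end{pmatrix},
\end{equation*}
where $A = (1 - P_{22} X)^{-1} P_{21}$ and $B = (1 - Q_{22} Y)^{-1} Q_{21}$. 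Substituting the chosen $P, Q, X, Y$ turns $A$ and $B$ into $\tilde\gamma^{(z_3)}(z_2)$ and $\tilde\gamma^{(w_3)}(w_2)$, and the column vectors become $\tilde\eta^{(z_3)}(z_2)$ and $\tilde\eta^{(w_3)}(w_2)$; this yields \eqref{AAAA norm1} on the nose. A small bookkeeping point is to verify that the invertibility hypothesis $(1 - A_{22}(z_3) z_2) \neq 0$ is equivalent to the listed nonvanishing condition $1 - a_{22}z_2 - a_{33}z_3 + (a_{22}a_{33} - a_{23}a_{32}) z_2 z_3 \neq 0$: since $A_{22}(z_3) = a_{22} + a_{23} z_3 (1 - a_{33} z_3)^{-1} a_{32}$, clearing the denominator $(1 - a_{33} z_3)$ in $(1 - A_{22}(z_3) z_2)$ produces exactly that polynomial, and the analogous calculation handles the $(w_2, w_3)$ pair.

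For the moreover part, I would specialize \eqref{AAAA norm1} to $w_3 = z_3$ and $w_2 = z_2$, obtaining
\begin{equation*}
1 - |\mathcal G_{\mathcal F_A(z_3)}(z_2)|^2 = |\tilde\gamma^{(z_3)}(z_2)|^2 (1 - |z_2|^2) + \tilde\eta^{(z_3)}(z_2)^* \bigl( I_2 - \mathcal F_A(z_3)^* \mathcal F_A(z_3) \bigr) \tilde\eta^{(z_3)}(z_2).
\end{equation*}
For $z_2 \in \bar{\mathbb D}$ the first summand is $\geq 0$, while the hypothesis $\|\mathcal F_A(z_3)\| < 1$ forces $I_2 - \mathcal F_A(z_3)^* \mathcal F_A(z_3) > 0$; since $\tilde\eta^{(z_3)}(z_2)$ has first coordinate equal to $1$ and is hence nonzero, the second summand is strictly positive. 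Thus $1 - |\mathcal G_{\mathcal F_A(z_3)}(z_2)|^2 > 0$, giving the strict bound. The main obstacle is essentially administrative: matching the hypotheses of the BLY identity to the present scalar-matrix setup and reconciling the two equivalent forms of the nonvanishing condition; once that dictionary is fixed, the proposition is a direct substitution, mirroring the pattern already used for Propositions \ref{matrix AA} and \ref{matrix A}.
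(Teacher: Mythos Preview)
Your proposal is correct and matches the paper's approach exactly: the paper simply states that the proposition follows from [Proposition~4.1, \cite{BLY}], and you have spelled out precisely that specialization with $P=\mathcal F_A(z_3)$, $Q=\mathcal F_A(w_3)$, $X=z_2$, $Y=w_2$, together with the bookkeeping on the nonvanishing conditions and the positivity argument for the ``moreover'' clause.
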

%			Suppose $\textbf{x}=(x_1,\ldots,x_7)\in G_{E(3;3;1,1,1)},$ then there exists a $3\times 3$ matrix $A$ with complex coefficient  such that $$x_{1}=a_{11},x_{2}=a_{22},x_{3}=\operatorname{det}A_{12},x_{4}=a_{33},x_{5}=\operatorname{det}A_{13}, x_{6}=\operatorname{det}A_{23}~{\rm{and}}~x_{7}=\operatorname{det}A.$$ 
			Note that 
			\begin{align}\label{gafz3}
				\mathcal G_{\mathcal F_{A}(z_3)}(z_2)\nonumber&=A_{11}(z_3)+A_{12}(z_3)z_2(1-A_{22}(z_3)z_2)^{-1}A_{21}(z_3)\\\nonumber &=\frac{x_1-z_3x_5}{1-x_4z_3}+\frac{(a_{12}-z_3(a_{12}a_{33}-a_{13}a_{32}))(a_{21}-z_3(a_{21}a_{33}-a_{23}a_{31}))z_2}{(1-x_4z_3)(1-x_2z_2-x_4z_3+z_2z_3x_6)}\\\nonumber&=\frac{x_1-z_2x_3-z_3x_5+z_2z_3x_7}{1-z_2x_2-z_3x_4+z_2z_3x_6}\\&=\Psi^{(1)}(\textbf{z}_{J^{(1)}},\textbf{x}).
			\end{align}

The proof of the following theorem is similar to Theorem \ref{matix A1}. Therefore, we skip the proof.
\begin{thm}\label{matix A144}
				For $\textbf{x}=(x_1,\ldots,x_7)\in \mathbb C^7$ the following are equivalent.
				\begin{enumerate}
					\item $\textbf{x}=(x_1,\ldots,x_7)\in G_{E(3;3;1,1,1)}$;
					
					\item There exists a $3\times 3$ matrix $A\in \mathcal M_{3\times 3}(\mathbb C)$  such that  \small{$$x_1=a_{11}, x_2=a_{22}, x_3=\det \left(\begin{smallmatrix} a_{11} & a_{12}\\
					a_{21} & a_{22}
				\end{smallmatrix}\right), x_4=a_{33}, x_5=\det \left(\begin{smallmatrix}
					a_{11} & a_{13}\\
					a_{31} & a_{33}
				\end{smallmatrix}\right), x_6=\det  \left(\begin{smallmatrix}
					a_{22} & a_{23}\\
					a_{32} & a_{33}\end{smallmatrix}\right) ~{\rm{and}}~x_7=\det A$$} with 
					\begin{equation}\label{tilgamma12}
					\overline{{\tilde{\gamma_1}^{(w_3)}(w_2)}}(1-\bar{w}_2z_2)\tilde{\gamma}_1^{(z_3)}(z_2)+{\tilde{\eta}^{(w_3)}(w_2)}^*(I_{2}-\mathcal F_{A}(w_3)^*\mathcal F_{A}(z_3))\tilde{\eta}^{(z_3)}(z_2)>0 
				\end{equation}
				for all $z_2,z_3\in \bar{\mathbb D}$ such that $\det\left(I_{2}-\left(\begin{smallmatrix}a_{22}&a_{23}\\a_{32} & a_{33}
				\end{smallmatrix}\right)\left(\begin{smallmatrix}z_2 & 0 \\0 &z_{3}
				\end{smallmatrix}\right)\right)\neq 0.$
				\end{enumerate}
				
			\end{thm}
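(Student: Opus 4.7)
The plan is to run the same argument as in Theorem~\ref{matix A1}, but with the iterated realization formula $\mathcal G_{\mathcal F_{A}(z_3)}(z_2)$ playing the role of $\mathcal G_{A}\!\left(\left(\begin{smallmatrix}z_2 & 0 \\ 0 & z_3\end{smallmatrix}\right)\right)$. The decisive observation is already recorded in the paper: formula \eqref{gafz3} identifies
\[
\mathcal G_{\mathcal F_{A}(z_3)}(z_2)=\Psi^{(1)}(\textbf{z}_{J^{(1)}},\textbf{x}),
\]
so the two realizations produce the same rational function $\Psi^{(1)}$. Hence Theorem~\ref{phi}(1) can be read off equally well from either.

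For the forward implication, suppose $\textbf{x}\in G_{E(3;3;1,1,1)}$. By \eqref{GE} we obtain a $3\times 3$ matrix $A$ realizing $x_1,\dots,x_7$ as stated. By Theorem~\ref{phi} we have $\textbf{x}_{J^{(1)}}^{\prime}\in G_{E(2;2;1,1)}$ and $\|\Psi^{(1)}(\cdot,\textbf{x})\|_{H^\infty(\bar{\mathbb D}^2)}<1$. The first condition forces $(1-a_{33}z_3)\neq 0$ for $z_3\in\bar{\mathbb D}$ (so $\mathcal F_{A}(z_3)$ is defined) together with $\det\!\left(I_{2}-\left(\begin{smallmatrix}a_{22}&a_{23}\\a_{32}&a_{33}\end{smallmatrix}\right)\left(\begin{smallmatrix}z_2&0\\0&z_{3}\end{smallmatrix}\right)\right)\neq 0$ for $(z_2,z_3)\in\bar{\mathbb D}^2$, which in turn guarantees the invertibility of $I-A_{22}(z_3)z_2$ needed to define $\mathcal G_{\mathcal F_{A}(z_3)}(z_2)$. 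The second condition, via \eqref{gafz3}, becomes $|\mathcal G_{\mathcal F_{A}(z_3)}(z_2)|<1$ on $\bar{\mathbb D}^2$. Specializing the kernel identity \eqref{AAAA norm1} in Proposition~\ref{matrix AAZZ} to $w_2=z_2$, $w_3=z_3$ converts this strict inequality into precisely the positivity \eqref{tilgamma12}.

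For the converse, assume (2). Again setting $w_2=z_2$ and $w_3=z_3$ in \eqref{AAAA norm1} gives $1-|\mathcal G_{\mathcal F_{A}(z_3)}(z_2)|^2>0$ on the set where the determinant in question does not vanish; via \eqref{gafz3} we then have $\|\Psi^{(1)}(\cdot,\textbf{x})\|_{H^\infty(\bar{\mathbb D}^2)}<1$, together with $\textbf{x}_{J^{(1)}}^{\prime}\in G_{E(2;2;1,1)}$ (by the denominator non-vanishing, using Theorem~\ref{R_3} for the two-variable case). An application of Theorem~\ref{phi}(1) yields $\textbf{x}\in G_{E(3;3;1,1,1)}$.

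I do not anticipate a genuine obstacle, since this is a reformulation of Theorem~\ref{matix A1}: the two realization formulas produce the same scalar function $\Psi^{(1)}$, and the Schur--Agler/Schur coincidence in two variables makes switching between them automatic. The only bookkeeping point that deserves care is matching the domains of definition: one must check that $(1-a_{33}z_3)\neq 0$ is implied by $\det\!\left(I_{2}-\left(\begin{smallmatrix}a_{22}&a_{23}\\a_{32}&a_{33}\end{smallmatrix}\right)\left(\begin{smallmatrix}z_2&0\\0&z_{3}\end{smallmatrix}\right)\right)\neq 0$ at $z_2=0$, so that $\mathcal F_{A}(z_3)$ and hence $\mathcal G_{\mathcal F_{A}(z_3)}(z_2)$ are legitimately defined wherever the hypothesis of \eqref{tilgamma12} is imposed.
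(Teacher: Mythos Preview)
Your proposal is correct and follows exactly the approach the paper intends: the paper omits the proof entirely, stating only that it ``is similar to Theorem \ref{matix A1}'', and you have filled in precisely those parallel steps, replacing Proposition~\ref{matrix AA} by Proposition~\ref{matrix AAZZ} and using the identity \eqref{gafz3} to connect $\mathcal G_{\mathcal F_{A}(z_3)}(z_2)$ with $\Psi^{(1)}$. Your remark about specializing $w_2=z_2$, $w_3=z_3$ is the right reading of the intended quantification in \eqref{tilgamma12}, and your final bookkeeping observation (that $1-a_{33}z_3\neq 0$ follows from the determinant condition at $z_2=0$) is a useful point the paper leaves implicit.
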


\begin{lem}\label{12}
			Let $A=((a_{ij}))_{i,j=1}^{3}$ and $\tilde{A}=J_1AJ_2,$ where $J_1=\left(\begin{smallmatrix} 0 &0 & 1\\1 & 0 & 0\\ 0& 1 & 0\end{smallmatrix}\right)$ and $J_2=\left(\begin{smallmatrix} 0 &1 & 0\\0 & 0 & 1\\ 1 & 0 & 0\end{smallmatrix}\right).$ Then $$\mu_{E(3;3;1,1,1)}(A)=\mu_{E(3;3;1,1,1)}(\tilde{A}).$$

		\end{lem}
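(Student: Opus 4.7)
My plan is to follow the template of Lemma~\ref{331} closely, because the key structural fact is the same: conjugation by the unitary matrix $J_1$ permutes the diagonal of any element of $E(3;3;1,1,1)$, so it induces a norm-preserving bijection of $E(3;3;1,1,1)$ onto itself. First I would record the two algebraic facts that drive everything: (i) $J_1^3=I_3$, so $J_2=J_1^{-1}=J_1^T$, giving $\tilde A=J_1 A J_2 = J_1 A J_1^{-1}$; and (ii) for any $X=\operatorname{diag}(z_1,z_2,z_3)\in E(3;3;1,1,1)$, a direct computation yields $J_1 X J_1^{-1}=\operatorname{diag}(z_2,z_3,z_1)\in E(3;3;1,1,1)$, with $\|J_1 X J_1^{-1}\|=\max\{|z_1|,|z_2|,|z_3|\}=\|X\|$. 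Thus $X\mapsto J_1 X J_1^{-1}$ is a norm-preserving bijection on $E(3;3;1,1,1)$.

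I would then split into the two cases used in Lemma~\ref{331}. In Case 1, where $\mu_{E(3;3;1,1,1)}(A)=0$, I would invoke Lemma~\ref{M_3} to translate this into the vanishing of the diagonal entries, the three $2\times 2$ principal minors, and $\det A$. Writing $\tilde A$ out explicitly (its $(i,j)$ entry is $a_{\sigma(i)\sigma(j)}$ for the cyclic permutation $\sigma$ corresponding to $J_1$) shows that these seven scalar conditions are simply permuted among themselves, together with the identity $\det\tilde A=\det(J_1)\det(A)\det(J_2)=\det A=0$. Applying Lemma~\ref{M_3} in the reverse direction yields $\mu_{E(3;3;1,1,1)}(\tilde A)=0$.

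In Case 2, where $\mu_{E(3;3;1,1,1)}(A)\neq 0$, I would use the substitution $A=J_1^{-1}\tilde A J_1$ together with the cyclic identity $\det(I-MN)=\det(I-NM)$ to get
\[
\det(I-AX)=\det(I-J_1^{-1}\tilde A J_1 X)=\det\bigl(I-\tilde A(J_1 X J_1^{-1})\bigr).
\]
Setting $Y=J_1 X J_1^{-1}$ and using the norm-preserving bijection from step~(ii), the chain of equalities
\begin{align*}
\tfrac{1}{\mu_{E(3;3;1,1,1)}(A)}
&=\inf\{\|X\|:X\in E(3;3;1,1,1),\ I-AX\text{ is singular}\}\\
&=\inf\{\|J_1 X J_1^{-1}\|:X\in E(3;3;1,1,1),\ I-\tilde A(J_1 X J_1^{-1})\text{ is singular}\}\\
&=\inf\{\|Y\|:Y\in E(3;3;1,1,1),\ I-\tilde A Y\text{ is singular}\}=\tfrac{1}{\mu_{E(3;3;1,1,1)}(\tilde A)}
\end{align*}
delivers the conclusion, exactly as in Lemma~\ref{331}.

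The only substantive step is the verification that conjugation by $J_1$ preserves $E(3;3;1,1,1)$ and its norm, which is an immediate calculation once one observes $J_1^3=I_3$. Everything else is bookkeeping; there is no genuine obstacle, and no new analytic input is required beyond Lemma~\ref{M_3} and the elementary identity $\det(I-MN)=\det(I-NM)$.
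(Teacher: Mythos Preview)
Your proof is correct and follows essentially the same two-case template as the paper's own argument (Case~1 via Lemma~\ref{M_3}, Case~2 via the substitution $Y=J_1XJ_2$); your additional observation that $J_2=J_1^{-1}$ makes the conjugation structure a bit more transparent than the paper's bare computation. One harmless slip: $J_1XJ_1^{-1}=\operatorname{diag}(z_3,z_1,z_2)$ rather than $\operatorname{diag}(z_2,z_3,z_1)$, but this does not affect anything.
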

		\begin{proof}
			The proof of the lemma involves two cases:
			
			\noindent$\textbf{Case 1}$: Assume that $\mu_{E(3;3;1,1,1)}(A)=0$. From Lemma \ref{M_3}, it follows that 
			\small{\begin{equation}\label{asa}a_{ii}=0 ~{\rm{for}}~i=1,2,3, a_{11}a_{22}-a_{12}a_{21}=0, a_{11}a_{33}-a_{13}a_{31}=0, a_{22}a_{33}-a_{23}a_{32}=0~{\rm{and}}~\det A=0\end{equation}} Clearly, $\tilde {A}= ((\tilde{a}_{ij}))_{i,j=1}^{3}=J_1AJ_2=\left(\begin{smallmatrix} a_{33} & a_{31} & a_{32}\\a_{13} & a_{11} & a_{12}\\ a_{23} & a_{21} & a_{22}\end{smallmatrix}\right).$ By \eqref{asa}, we get  
			\begin{equation} \label{muEE} 
\tilde{a}_{ii}=0 ~{\rm{for}}~i=1,2,3,\tilde{a}_{11}\tilde{a}_{22}-\tilde{a}_{12}\tilde{a}_{21}=0, \tilde{a}_{11}\tilde{a}_{33}-\tilde{a}_{13}\tilde{a}_{31}=0,\tilde{a}_{22}\tilde{a}_{33}-\tilde{a}_{23}\tilde{a}_{32}=0~{\rm{ and }}~\operatorname{det}\tilde{A}=\operatorname{det}A=0.
			\end{equation} 
			Hence from Lemma \ref{M_3}, we have $\mu_{E(3;3;1,1,1)}(\tilde{A})=0.$
			
			$\textbf{Case 2}$: Assume that $\mu_{E(3;3;1,1,1)}(A)\neq0$.  Then by definition we have
			\begin{align*}
				\frac{1}{\mu_{E(3;3;1,1,1)}(A)}&=\inf\{\|X\|: X\in E(3;3;1,1,1), I-AX~{\rm{is~singular}}\}\\&=\inf\{\|X\|: X\in E(3;3;1,1,1), I-J_2\tilde{A}J_1X~{\rm{is~singular}}\}\\&=\inf\{\|X\|: X\in E(3;3;1,1,1), I-\tilde{A}J_1XJ_2~{\rm{is~singular}}\}\\&=\inf\{\|Y\||: Y=J_1XJ_2\in E(3;3;1,1,1), I-\tilde{A}Y~{\rm{is~singular}}\}\\&=\frac{1}{\mu_{E(3;3;1,1,1)}(\tilde{A})}.
			\end{align*}
			This completes the proof.
		\end{proof}
		The proof of the following lemma is same as Lemma \ref{12}. Therefore, we skip the proof.
		\begin{lem}\label{13}
			Let $A=((a_{ij}))_{i,j=1}^{3}$ and $\tilde{B}=\tilde{J}_1A\tilde{J}_2,$ where $\tilde{J}_1=\left(\begin{smallmatrix} 0 &1& 0\\0& 0 & 1\\ 1& 0& 0\end{smallmatrix}\right)$ and $\tilde{J}_2=\left(\begin{smallmatrix} 0 &0 & 1\\1& 0 & 0\\ 0 & 1 & 0\end{smallmatrix}\right).$ Then $$\mu_{E(3;3;1,1,1)}(A)=\mu_{E(3;3;1,1,1)}(\tilde{B}).$$
			
		\end{lem}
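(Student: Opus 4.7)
The plan is to mimic the two-case argument of Lemma~\ref{12} essentially verbatim, the only new input being that the specific permutation matrices $\tilde J_1,\tilde J_2$ satisfy $\tilde J_1^{-1}=\tilde J_2$, $\tilde J_2^{-1}=\tilde J_1$ (since each is an orthogonal permutation matrix and $\tilde J_1^T=\tilde J_2$), so $A=\tilde J_2 \tilde B \tilde J_1$.

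\medskip

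\noindent\textbf{Case 1: }$\mu_{E(3;3;1,1,1)}(A)=0$. First I would invoke Lemma~\ref{M_3} to translate this hypothesis into the set of algebraic equations
\[
a_{ii}=0,\;\; a_{11}a_{22}-a_{12}a_{21}=0,\;\; a_{11}a_{33}-a_{13}a_{31}=0,\;\; a_{22}a_{33}-a_{23}a_{32}=0,\;\; \det A=0.
\]
Then I would compute $\tilde B=\tilde J_1 A\tilde J_2$ explicitly (a row-cycling followed by a column-cycling, which simply reindexes the entries of $A$) and verify by direct substitution that the same five conditions hold for $\tilde b_{ij}$. Since $\det$ is invariant under conjugation by permutation matrices, $\det \tilde B=\det A=0$. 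Applying Lemma~\ref{M_3} in the reverse direction then gives $\mu_{E(3;3;1,1,1)}(\tilde B)=0$.

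\medskip

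\noindent\textbf{Case 2: }$\mu_{E(3;3;1,1,1)}(A)\neq 0$. Here I would run the same chain of equalities as in the proof of Lemma~\ref{12}: write $A=\tilde J_2\tilde B\tilde J_1$, so that $I-AX=I-\tilde J_2\tilde B\tilde J_1 X$, and then use $\det(I-UV)=\det(I-VU)$ (or equivalently, conjugate by $\tilde J_1$) to see that $I-AX$ is singular iff $I-\tilde B(\tilde J_1 X\tilde J_2)$ is singular. The key verification is that the map $X\mapsto Y:=\tilde J_1 X\tilde J_2$ is a norm-preserving bijection of $E(3;3;1,1,1)$ onto itself: for $X=\operatorname{diag}(z_1,z_2,z_3)$ a quick computation shows $\tilde J_1 X\tilde J_2=\operatorname{diag}(z_2,z_3,z_1)$, which is again in $E(3;3;1,1,1)$, has the same operator norm $\max\{|z_1|,|z_2|,|z_3|\}$, and the correspondence is clearly invertible. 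Hence
\[
\frac{1}{\mu_{E(3;3;1,1,1)}(A)}=\inf\{\|X\|:X\in E(3;3;1,1,1),\; I-AX\text{ singular}\}=\inf\{\|Y\|:Y\in E(3;3;1,1,1),\; I-\tilde B Y\text{ singular}\}=\frac{1}{\mu_{E(3;3;1,1,1)}(\tilde B)}.
\]

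\medskip

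The only thing requiring genuine care is the norm-preservation and stability of $E(3;3;1,1,1)$ under $X\mapsto \tilde J_1 X\tilde J_2$; once this mini-computation is checked for the specific pair $\tilde J_1,\tilde J_2$ at hand, the rest is formally identical to Lemma~\ref{12}. I expect no real obstacle, which is presumably why the authors omit the proof.
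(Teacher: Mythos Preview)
Your proposal is correct and follows exactly the approach the paper intends: the authors simply state that the proof is the same as that of Lemma~\ref{12} and omit it. Your two-case argument, together with the explicit verification that $\tilde J_1 X\tilde J_2=\operatorname{diag}(z_2,z_3,z_1)$ is a norm-preserving bijection of $E(3;3;1,1,1)$, is precisely what is needed.
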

		As $J_1,J_2,\tilde{J}_1$ and $\tilde{J}_2$ are unitary, then $\|A\|=\|\tilde{A}\|=\|\tilde{B}\|.$ Observe that 
		\small{\begin{align}\label{mathcalG12}
				\mathcal G_{\tilde{A}}\left(\left(\begin{smallmatrix}z_1 & 0 \\0 &z_{2}
				\end{smallmatrix}\right)\right)\nonumber &=a_{33}+\left(\begin{smallmatrix}a_{31}&a_{32}
				\end{smallmatrix}\right)\left(\begin{smallmatrix}z_1 & 0 \\0 &z_{2}
				\end{smallmatrix}\right)\left(I_{2}-\left(\begin{smallmatrix}a_{11}&a_{12}\\a_{21} & a_{22}
				\end{smallmatrix}\right)\left(\begin{smallmatrix}z_1 & 0 \\0 &z_{2}
				\end{smallmatrix}\right)\right)^{-1}\left(\begin{smallmatrix}a_{13}\\a_{23}
				\end{smallmatrix}\right)\\\nonumber&=\frac{a_{33}-z_1(a_{11}a_{33}-a_{13}a_{31})-z_2(a_{22}a_{33}-a_{23}a_{32})+z_1z_2\det(A)}{1-z_1a_{11}-z_2a_{22}+z_1z_2(a_{11}a_{22}-a_{12}a_{21})}\\\nonumber&=\frac{x_4-z_1x_5-z_2x_6+z_1z_2x_7}{1-z_1x_1-z_2x_2+z_2z_3x_3}\\&=\Psi^{(3)}(\textbf{z}_{J^{(3)}},\textbf{x}).
		\end{align}}
		and 
		\small{\begin{align}\label{mathcalG31}
				\mathcal G_{\tilde{B}}\left(\left(\begin{smallmatrix}z_3 & 0 \\0 &z_{1}
				\end{smallmatrix}\right)\right)\nonumber &=a_{22}+\left(\begin{smallmatrix}a_{23}&a_{21}
				\end{smallmatrix}\right)\left(\begin{smallmatrix}z_3 & 0 \\0 &z_{1}
				\end{smallmatrix}\right)\left(I_{2}-\left(\begin{smallmatrix}a_{33}&a_{31}\\a_{13} & a_{11}
				\end{smallmatrix}\right)\left(\begin{smallmatrix}z_3& 0 \\0 &z_{1}
				\end{smallmatrix}\right)\right)^{-1}\left(\begin{smallmatrix}a_{32}\\a_{12}
				\end{smallmatrix}\right)\\\nonumber&=\frac{a_{22}-z_1(a_{11}a_{22}-a_{12}a_{21})-z_3(a_{22}a_{33}-a_{23}a_{32})+z_1z_3\det(A)}{1-z_1a_{11}-z_3a_{33}+z_1z_3(a_{11}a_{33}-a_{13}a_{31})}\\\nonumber&=\frac{x_2-z_1x_3-z_3x_6+z_1z_3x_7}{1-z_1x_1-z_3x_4+z_1z_3x_5}\\&=\Psi^{(2)}(\textbf{z}_{J^{(2)}},\textbf{x}).
		\end{align}}
			
Notice that
			if $\mathcal H_i=\mathbb C=\mathcal K_i$ for $i=1,2,3,$ $\tilde{A}=J_1AJ_2$   and $X=z_2$ for which $1-a_{22}z_2\neq 0$ in \eqref{FPX}, then we get
			\begin{align}\label{FAA2}
				\mathcal F_{\tilde{A}}(z_2)\nonumber&=\left(\begin{smallmatrix} a_{33} & a_{31}\\a_{13} & a_{11}\end{smallmatrix}\right)+\left(\begin{smallmatrix} a_{32} \\ a_{12}\end{smallmatrix}\right)z_2(1-a_{22}z_2)^{-1}\left(\begin{smallmatrix} a_{23} & a_{21}\end{smallmatrix}\right)\\\nonumber&=\begin{pmatrix} \frac{a_{33}-z_3(a_{22}a_{33}-a_{23}a_{32})}{1-a_{22}z_2} & \frac{a_{31}-z_2(a_{31}a_{22}-a_{21}a_{32})}{1-a_{22}z_2} \\\frac{a_{13}-z_2(a_{13}a_{22}-a_{23}a_{12})}{1-a_{22}z_2} & \frac{a_{11}-z_2(a_{22}a_{11}-a_{12}a_{21})}{1-a_{22}z_2}\end{pmatrix}
				\\&=\left(\begin{smallmatrix} \tilde{A}_{11}(z_2) & \tilde{A}_{12}(z_2)\\\tilde{A}_{21}(z_2) & \tilde{A}_{22}(z_2)\end{smallmatrix}\right).\end{align}
			It follows from \eqref{FAA2} that
			$\tilde{A}_{11}(z_2)=\frac{x_4-z_2x_6}{1-x_2z_2},\tilde{A}_{22}(z_2)=\frac{x_1-z_2x_3}{1-x_2z_2}$ and 
			\begin{align*}
				\det(\mathcal F_{\tilde{A}}(z_2))&=\tilde{A}_{11}(z_2)\tilde{A}_{22}(z_2)-\tilde{A}_{12}(z_2)\tilde{A}_{21}(z_2)\\&=\frac{x_4-z_2x_6}{1-x_2z_2}\frac{x_1-z_2x_3}{1-x_2z_2}-\frac{a_{13}-z_2(a_{13}a_{22}-a_{12}a_{23})}{1-x_2z_2}\frac{a_{31}-z_2(a_{31}a_{22}-a_{32}a_{21})}{1-x_2z_2}\\&=\frac{x_5-z_2x_7}{1-x_2z_2}
			\end{align*} 
			for all $z_2\in \bar{\mathbb D}.$ Also, observe that 
			\begin{align}\label{gafz2}
				\mathcal G_{\mathcal F_{\tilde{A}}(z_2)}(z_1)\nonumber&=\tilde{A}_{11}(z_2)+\tilde{A}_{12}(z_2)z_1(1-\tilde{A}_{22}(z_2)z_1)^{-1}\tilde{A}_{21}(z_2)\\\nonumber &=\frac{x_4-z_2x_6}{1-x_2z_2}+\frac{(a_{13}-z_2(a_{13}a_{22}-a_{12}a_{23}))(a_{31}-z_2(a_{31}a_{22}-a_{32}a_{21}))z_1}{(1-x_2z_2)(1-x_1z_1-x_2z_2+z_2z_1x_3)}\\\nonumber&=\frac{x_4-z_1x_5-z_2x_6+z_1z_2x_7}{1-z_1x_1-z_2x_2+z_2z_1x_3}\\&=\Psi^{(3)}(\textbf{z}_{J^{(3)}},\textbf{x}).
			\end{align}
			It is also important to note that if $\mathcal H_i=\mathbb C=\mathcal K_i$ for $i=1,2,3,$ $\tilde{B}=\tilde{J}_1A\tilde{J}_2$   and $X=z_1$ for which $1-a_{11}z_1\neq 0$ in \eqref{FPX}, then we have
			\begin{align}\label{FAA21}
				\mathcal F_{\tilde{B}}(z_1)\nonumber&=\left(\begin{smallmatrix} a_{22} & a_{23}\\a_{32} & a_{33}\end{smallmatrix}\right)+\left(\begin{smallmatrix} a_{21} \\ a_{31}\end{smallmatrix}\right)z_1(1-a_{11}z_1)^{-1}\left(\begin{smallmatrix} a_{12} & a_{13}\end{smallmatrix}\right)\\\nonumber&=\begin{pmatrix} \frac{a_{22}-z_1(a_{22}a_{11}-a_{21}a_{12})}{1-a_{11}z_1} & \frac{a_{23}-z_1(a_{23}a_{11}-a_{21}a_{13})}{1-a_{11}z_1} \\\frac{a_{32}-z_1(a_{32}a_{11}-a_{31}a_{12})}{1-a_{11}z_1} & \frac{a_{33}-z_1(a_{33}a_{11}-a_{31}a_{13})}{1-a_{11}z_1}\end{pmatrix}
				\\&=\left(\begin{smallmatrix} \tilde{B}_{11}(z_1) &\tilde{B}_{12}(z_1)\\\tilde{B}_{21}(z_1) & \tilde{B}_{22}(z_1)\end{smallmatrix}\right).\end{align}
			Similarly, we deduce that 
			\begin{equation}\label{GFAZ1Z2}\tilde{B}_{11}(z_1)=\frac{x_2-z_1x_3}{1-x_1z_1}, \tilde{B}_{22}(z_1)=\frac{x_4-z_1x_5}{1-x_1z_1}, \det(\mathcal F_{\tilde{B}}(z_1))=\frac{x_6-z_1x_7}{1-x_1z_1}.
			\end{equation}
By the same method as in \eqref{gafz2}, we also conclude that 
\begin{equation}\label{GFF}\mathcal G_{\mathcal F_{\tilde{B}}(z_1)}(z_3)=\Psi^{(2)}(\textbf{z}_{J^{(2)}},\textbf{x}).\end{equation}	
%								As a consequence of the Lemma \ref{12}, Lemma \ref{13}, \eqref{mathcalG12} and \eqref{mathcalG31}, we obtain the following theorem.

		\begin{thm}\label{mainresult}
			For $\textbf{x}=(x_1,\ldots,x_7)\in \mathbb C^7$ the following are equivalent.
			\begin{enumerate}
				\item[$1$.]  $\textbf{x}=(x_1,\ldots,x_7)\in G_{E(3;3;1,1,1)}$.
				
				\item[$2$.]  There exists a $3\times 3$ matrix $A\in \mathcal M_{3\times 3}(\mathbb C)$ such that \small{$x_{1}=a_{11},x_{2}=a_{22},x_{3}=a_{11}a_{22}-a_{12}a_{21},$}\\$x_{4}=a_{33},x_{5}=a_{11}a_{33}-a_{13}a_{31}, x_{6}=a_{33}a_{22}-a_{23}a_{32}~{\rm{and}}~x_{7}=\operatorname{det}A$ with $$\sup_{(z_2,z_3)\in\bar{\mathbb D}^2} |\mathcal G_{\mathcal F_{A}(z_3)}(z_2)|=\sup_{(z_2,z_3)\in\bar{\mathbb D}^2} |\mathcal G_{A}\left(\left(\begin{smallmatrix}z_2 & 0 \\0 &z_{3}
				\end{smallmatrix}\right)\right)|<1$$ for all $(z_2,z_3)\in\bar{\mathbb D}^2$ with $\det\left(I_{2}-\left(\begin{smallmatrix}a_{22}&a_{23}\\a_{32} & a_{33}
				\end{smallmatrix}\right)\left(\begin{smallmatrix}z_2 & 0 \\0 &z_{3}
				\end{smallmatrix}\right)\right)\neq 0.$

				\item[$2^{\prime}$.]  There exists a $3\times 3$ matrix $\tilde{A}\in \mathcal M_{3\times 3}(\mathbb C)$ such that \small{$x_{1}=\tilde{a}_{33},x_{2}=\tilde{a}_{11},x_{3}=\tilde{a}_{11}\tilde{a}_{33}-\tilde{a}_{13}\tilde{a}_{31},$} \\$x_{4}=\tilde{a}_{22},x_{5}=\tilde{a}_{22}\tilde{a}_{33}-\tilde{a}_{23}\tilde{a}_{32}, x_{6}=\tilde{a}_{11}\tilde{a}_{22}-\tilde{a}_{12}\tilde{a}_{21}~{\rm{and}}~{x}_{7}=\operatorname{det} \tilde{A}$ with $$\sup_{(z_1,z_2)\in\bar{\mathbb D}^2} |\mathcal G_{\mathcal F_{\tilde{A}}(z_2)}(z_1)|=\sup_{(z_1,z_2)\in\bar{\mathbb D}^2} |\mathcal G_{\tilde{A}}\left(\left(\begin{smallmatrix}z_1 & 0 \\0 &z_{2}
				\end{smallmatrix}\right)\right)|<1$$ for all $(z_1,z_2)\in\bar{\mathbb D}^2$ with $\det\left(I_{2}-\left(\begin{smallmatrix}\tilde{a}_{11}&\tilde{a}_{12}\\\tilde{a}_{21} & \tilde{a}_{22}
				\end{smallmatrix}\right)\left(\begin{smallmatrix}z_1 & 0 \\0 &z_{2}
				\end{smallmatrix}\right)\right)\neq 0.$
				
				\item[$2^{\prime\prime}$.] There exists a $3\times 3$ matrix $\tilde{B}\in \mathcal M_{3\times 3}(\mathbb C)$  such that \small{$ x_1=\tilde{b}_{22},x_2=\tilde{b}_{33},x_3=\tilde{b}_{22}\tilde{b}_{33}-\tilde{b}_{23}\tilde{b}_{32},$} \\$ x_4=\tilde{b}_{11},x_{5}=\tilde{b}_{11}\tilde{b}_{22}-\tilde{b}_{12}\tilde{b}_{21}, x_{6}=\tilde{b}_{11}\tilde{b}_{33}-\tilde{b}_{13}\tilde{b}_{31}~{\rm{and}}~x_{7}=\operatorname{det} \tilde{B}$ with $$\sup_{(z_1,z_3)\in\bar{\mathbb D}^2} |\mathcal G_{\mathcal F_{\tilde{B}}(z_1)}(z_3)|=\sup_{(z_1,z_3)\in\bar{\mathbb D}^2} |\mathcal G_{\tilde{B}}\left(\left(\begin{smallmatrix}z_1 & 0 \\0 &z_{3}
				\end{smallmatrix}\right)\right)|<1$$  for all $(z_1,z_3)\in\bar{\mathbb D}^2$  with $\det\left(I_{2}-\left(\begin{smallmatrix}\tilde{b}_{11}&\tilde{b}_{13}\\\tilde{b}_{31} & \tilde{b}_{33}
				\end{smallmatrix}\right)\left(\begin{smallmatrix}z_1 & 0 \\0 &z_{3}
				\end{smallmatrix}\right)\right)\neq 0.$
			\end{enumerate}
		\end{thm}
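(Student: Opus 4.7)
The four statements say the same thing expressed through three rational functions $\Psi^{(1)},\Psi^{(2)},\Psi^{(3)}$: condition (2) is the $\Psi^{(1)}$-realization, (2$'$) is the $\Psi^{(3)}$-realization, and (2$''$) is the $\Psi^{(2)}$-realization. The plan is to reduce each one to Theorem \ref{phi} (together with Theorems \ref{matix A1}/\ref{matix A144}) by using the explicit identities already computed in Section 2, namely \eqref{mathcalG}, \eqref{gafz3}, \eqref{mathcalG12}, \eqref{gafz2}, \eqref{mathcalG31}, \eqref{GFF}, and the permutation symmetries of $\mu_{E(3;3;1,1,1)}$ recorded in Lemmas \ref{12} and \ref{13}.

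\textbf{(1) $\Leftrightarrow$ (2).} Assume $\textbf{x}\in G_{E(3;3;1,1,1)}$. By \eqref{GE} pick $A\in\mathcal M_{3\times 3}(\mathbb C)$ with $\mu_{E(3;3;1,1,1)}(A)<1$ realising the entries $x_1,\dots,x_7$. The identity \eqref{mathcalG} (first realisation formula) gives $\mathcal G_A\bigl(\bigl(\begin{smallmatrix}z_2 & 0\\ 0 & z_3\end{smallmatrix}\bigr)\bigr)=\Psi^{(1)}(\textbf{z}_{J^{(1)}},\textbf{x})$, and \eqref{gafz3} (second realisation formula) gives $\mathcal G_{\mathcal F_A(z_3)}(z_2)=\Psi^{(1)}(\textbf{z}_{J^{(1)}},\textbf{x})$. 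Hence the two suprema appearing in (2) are both equal to $\|\Psi^{(1)}(\cdot,\textbf{x})\|_{H^\infty(\bar{\mathbb D}^2)}$, which by Theorem \ref{phi}(1) is $<1$ (treating the degenerate sub-case $x_7=x_6x_1$, $x_3=x_2x_1$, $x_5=x_4x_1$ separately, where $\Psi^{(1)}$ reduces to the constant $x_1$ and $|x_1|<1$ is needed). Conversely, if a matrix $A$ satisfies the conditions of (2), the same identities yield $\|\Psi^{(1)}(\cdot,\textbf{x})\|_{H^\infty(\bar{\mathbb D}^2)}<1$, and by Theorem \ref{phi}(1) $\textbf{x}\in G_{E(3;3;1,1,1)}$. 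Note that the side condition $\det\bigl(I_2-\bigl(\begin{smallmatrix}a_{22}&a_{23}\\ a_{32}&a_{33}\end{smallmatrix}\bigr)\bigl(\begin{smallmatrix}z_2&0\\0&z_3\end{smallmatrix}\bigr)\bigr)\neq 0$ on $\bar{\mathbb D}^2$, which is the well-definedness condition for both realisations, is exactly the statement $\textbf{x}_{J^{(1)}}^{\prime}\in G_{E(2;2;1,1)}$ required by Theorem \ref{phi}.

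\textbf{(1) $\Leftrightarrow$ (2$'$) and (1) $\Leftrightarrow$ (2$''$).} These follow by conjugating with the permutations of Lemmas \ref{12} and \ref{13}. Given $A$ as in (2), set $\tilde A=J_1AJ_2$. Since $J_1,J_2$ are unitary, $\|\tilde A\|=\|A\|$ and by Lemma \ref{12} $\mu_{E(3;3;1,1,1)}(\tilde A)=\mu_{E(3;3;1,1,1)}(A)$, so $\tilde A$ realises $\textbf{x}$ with the permuted correspondence $x_1=\tilde a_{33}$, $x_2=\tilde a_{11}$, \ldots\ stated in (2$'$). Now \eqref{mathcalG12} and \eqref{gafz2} give $\mathcal G_{\tilde A}\bigl(\bigl(\begin{smallmatrix}z_1&0\\0&z_2\end{smallmatrix}\bigr)\bigr)=\mathcal G_{\mathcal F_{\tilde A}(z_2)}(z_1)=\Psi^{(3)}(\textbf{z}_{J^{(3)}},\textbf{x})$, so (2$'$) is equivalent to $\|\Psi^{(3)}(\cdot,\textbf{x})\|_{H^\infty(\bar{\mathbb D}^2)}<1$, which by Theorem \ref{phi}(3) is equivalent to (1). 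The argument for (2$''$) is identical, using $\tilde B=\tilde J_1A\tilde J_2$, Lemma \ref{13}, equations \eqref{mathcalG31} and \eqref{GFF} (which give $\mathcal G_{\tilde B}=\mathcal G_{\mathcal F_{\tilde B}(z_1)}(z_3)=\Psi^{(2)}$), and Theorem \ref{phi}(2).

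\textbf{Main obstacle.} The core content is already absorbed into Theorems \ref{matix A1}, \ref{matix A144}, and \ref{phi}, so the proof is mostly bookkeeping. The one point deserving care is to verify, in each of the three cases, that the non-degeneracy hypothesis $\det(I_2-\cdots)\neq 0$ on $\bar{\mathbb D}^2$ written in (2), (2$'$), (2$''$) matches exactly the corresponding membership $\textbf{x}_{J^{(i)}}^{\prime}\in G_{E(2;2;1,1)}$ that Theorem \ref{phi} requires; this amounts to checking that the denominators of $\Psi^{(i)}$ are nonzero on $\bar{\mathbb D}^2$. The degenerate constant cases of $\Psi^{(i)}$ must also be handled by invoking the $|x_i|<1$ clauses of Theorem \ref{phi}.
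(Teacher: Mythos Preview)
Your proposal is correct and follows essentially the same route as the paper: the theorem is not given an explicit standalone proof in the paper but is meant to follow from the surrounding computations, and you have identified exactly those ingredients---the identities \eqref{mathcalG}, \eqref{gafz3}, \eqref{mathcalG12}, \eqref{gafz2}, \eqref{mathcalG31}, \eqref{GFF}, the permutation Lemmas \ref{12} and \ref{13}, and the reduction to Theorem \ref{phi} (equivalently Theorems \ref{matix A1}/\ref{matix A144}). Your observation that the determinant side condition in each of (2), (2$'$), (2$''$) is precisely the membership $\textbf{x}_{J^{(i)}}'\in G_{E(2;2;1,1)}$ required by Theorem \ref{phi} is the key bookkeeping point, and you have handled it correctly.
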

		As a consequence of the above theorem, we have the following corollary.
		\begin{cor}
			Suppose $\textbf{x}=(x_1,\ldots,x_7)\in \mathbb C^{7}.$ Then
			\begin{enumerate}
				\item $\textbf{x}\in G_{E(3;3;1,1,1)}$ if and only if $(x_4,x_1,x_5,x_2,x_6,x_3,x_7)\in G_{E(3;3;1,1,1)}$, and
				\item  $\textbf{x}\in G_{E(3;3;1,1,1)}$ if and only if $(x_2,x_4,x_6,x_1,x_3,x_5,x_7)\in G_{E(3;3;1,1,1)}.$
			\end{enumerate}
		\end{cor}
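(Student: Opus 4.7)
The plan is to read off the corollary directly from the invariance Lemmas \ref{12} and \ref{13}, which have already been established. The key observation will be that conjugation of a $3\times 3$ matrix by the permutation matrices appearing in those lemmas cyclically permutes the diagonal entries, cyclically permutes the three $2\times 2$ principal minors, and preserves the determinant; consequently the standard tuple of \eqref{GE} formed from the conjugated matrix is precisely the permutation of $\textbf{x}$ appearing in the statement. Theorem \ref{mainresult} is not really needed for this corollary; only the definition of the domain and the two invariance lemmas are used.

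For item $(1)$, given $\textbf{x}\in G_{E(3;3;1,1,1)}$, I would pick a matrix $A=((a_{ij}))_{i,j=1}^{3}$ with $\mu_{E(3;3;1,1,1)}(A)<1$ whose standard tuple is $\textbf{x}$. Setting $\tilde{A}=J_{1}AJ_{2}$ with $J_{1},J_{2}$ as in Lemma \ref{12} produces
$$\tilde{A}=\begin{pmatrix} a_{33} & a_{31} & a_{32}\\ a_{13} & a_{11} & a_{12}\\ a_{23} & a_{21} & a_{22}\end{pmatrix},$$
from which I read off $\tilde{a}_{11}=a_{33}=x_{4}$, $\tilde{a}_{22}=a_{11}=x_{1}$, $\tilde{a}_{33}=a_{22}=x_{2}$, the three principal minors $\tilde{a}_{11}\tilde{a}_{22}-\tilde{a}_{12}\tilde{a}_{21}=x_{5}$, $\tilde{a}_{11}\tilde{a}_{33}-\tilde{a}_{13}\tilde{a}_{31}=x_{6}$, $\tilde{a}_{22}\tilde{a}_{33}-\tilde{a}_{23}\tilde{a}_{32}=x_{3}$, and $\det\tilde{A}=\det A=x_{7}$. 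Lemma \ref{12} gives $\mu_{E(3;3;1,1,1)}(\tilde{A})=\mu_{E(3;3;1,1,1)}(A)<1$, so $\tilde{A}$ witnesses, via \eqref{GE}, that $(x_{4},x_{1},x_{5},x_{2},x_{6},x_{3},x_{7})\in G_{E(3;3;1,1,1)}$. The converse direction is immediate: the permutation sending $\textbf{x}$ to $(x_{4},x_{1},x_{5},x_{2},x_{6},x_{3},x_{7})$ is the product $(1\,4\,2)(3\,5\,6)$ of two disjoint three-cycles, hence has order three, so applying the forward implication three times recovers $\textbf{x}$.

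Item $(2)$ admits two equivalent routes. The first is to run the analogous argument with $\tilde{B}=\tilde{J}_{1}A\tilde{J}_{2}$ from Lemma \ref{13}; an identical computation shows that the standard tuple formed from $\tilde{B}$ equals $(x_{2},x_{4},x_{6},x_{1},x_{3},x_{5},x_{7})$, and Lemma \ref{13} supplies the required bound on $\mu_{E(3;3;1,1,1)}(\tilde{B})$. The quicker route is to observe that the permutation in $(2)$ is exactly the square of the permutation in $(1)$, so $(2)$ follows by applying $(1)$ twice.

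There is no genuine analytic obstacle here: the entire argument is bookkeeping around two explicit $3\times 3$ matrix products together with the already-established invariance lemmas. The only computation requiring attention is the verification that each of the three $2\times 2$ principal minors of the conjugated matrix matches the intended coordinate of the permuted tuple, which is routine but worth writing out carefully because the coordinates in \eqref{GE} are defined by minors indexed by specific pairs of rows and columns.
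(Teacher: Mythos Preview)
Your proof is correct. The route you take is essentially the one underlying the paper's argument, but you apply it more directly: the paper positions the corollary as a consequence of Theorem \ref{mainresult}, whose equivalences $2\Leftrightarrow 2'$ and $2\Leftrightarrow 2''$ are themselves built on Lemmas \ref{12} and \ref{13} together with the explicit matrix formulas for $\tilde A=J_1AJ_2$ and $\tilde B=\tilde J_1A\tilde J_2$. You bypass the realization-formula layer of Theorem \ref{mainresult} and work straight from the definition \eqref{GE} and the $\mu$-invariance lemmas, which is a cleaner path to the same conclusion. One cosmetic point: for the converse in item $(1)$ you should say ``applying the forward implication \emph{twice}'' (since $\sigma^3=\mathrm{id}$ means $\sigma^2$ carries $\sigma(\textbf{x})$ back to $\textbf{x}$), but the substance is fine.
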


We enumerate every characterization of the domain $G_{E(3;3;1,1,1)}$ in the subsequent theorem.
			\begin{thm}\label{mainthm}
				For $\textbf{x}=(x_1,\ldots,x_7)\in \mathbb C^7$ the following are equivalent.
				\begin{enumerate}
					\item[1.] $\textbf{x}=(x_1,\ldots,x_7)\in G_{E(3;3;1,1,1)}$;
					
					\item[2.] $R_{\bf{x}}^{(3;3;1,1,1)}(\textbf{z})= 1-x_1z_1-x_2z_2+x_3z_1z_2-x_4z_3+x_5z_1z_3+x_6z_2z_3-x_7z_1z_2z_3\neq 0$, for all, $z_1,z_2,z_3\in \bar{\mathbb{D}}$;
					
					\item[$3.$] $\textbf{x}_{J^{(1)}}^{\prime}\in G_{E(2;2;1,1)}$ and 
					$\|\Psi^{(1)}(\cdot,\textbf{x})\|_{H^{\infty}(\bar{\mathbb{D}}^{2})}=\|\Psi^{(1)}(\cdot,\textbf{x})\|_{H^{\infty}(\mathbb{T}^{2})}< 1$ and if  we suppose that $x_{7}=x_{6}x_{1},\,\,x_{3}=x_{2}x_{1},\,\,x_{5}=x_{4}x_{1}$ then $|x_1|<1;$
					
					\item[$3^{\prime}.$]
					$\textbf{x}_{J^{(2)}}^{\prime}\in G_{E(2;2;1,1)}$ and 
					$\|\Psi^{(2)}(\cdot,\textbf{x})\|_{H^{\infty}(\bar{\mathbb{D}}^{2})}=\|\Psi^{(2)}(\cdot,\textbf{x})\|_{H^{\infty}(\mathbb{T}^{2})}< 1$  and if  we suppose that $x_{7}=x_{5}x_{2},\,\,x_{3}=x_{2}x_{1},\,\,x_{6}=x_{4}x_{2}$ then $|x_2|<1;$
					
					\item[$3^{\prime\prime}.$]
					$\textbf{x}_{J^{(3)}}^{\prime}\in G_{E(2;2;1,1)}$ and 
					$\|\Psi^{(3)}(\cdot,\textbf{x})\|_{H^{\infty}(\bar{\mathbb{D}}^{2})}=\|\Psi^{(3)}(\cdot,\textbf{x})\|_{H^{\infty}(\mathbb{T}^{2})}< 1$  and if  we suppose that $x_{7}=x_{3}x_{4},\,\,x_{6}=x_{2}x_{4},\,\,x_{5}=x_{4}x_{1}$ then $|x_4|<1$ ;
					
					\item[$4.$] $\left(\frac{x_1-z_3x_5}{1-x_4z_3},\frac{x_2-z_3x_6}{1-x_4z_3},\frac{x_3-z_3x_7}{1-x_4z_3}\right)\in G_{E(2;2;1,1)}$ for all $z_3\in \bar{\mathbb D};$

					\item[$4^{\prime}.$] $\left(\frac{x_2-z_1x_3}{1-x_1z_1},\frac{x_4-z_1x_5}{1-x_1z_1},\frac{x_6-z_1x_7}{1-x_1z_1}\right)\in G_{E(2;2;1,1)}$  for all $z_1\in \bar{\mathbb D}$;
					
					\item[$4^{\prime\prime}.$] $\left(\frac{x_4-z_2x_6}{1-x_2z_2},\frac{x_1-z_2x_3}{1-x_2z_2},\frac{x_5-z_2x_7}{1-x_2z_2}\right)\in G_{E(2;2;1,1)}$  for all $z_2\in \bar{\mathbb D}$;

					\item[$5.$]
					There exists a $2\times 2$ symmetric matrix $A(z_3)$ with $\|A(z_3)\|<1$ such that $$A_{11}(z_3)=\frac{x_1-z_3x_5}{1-x_4z_3},A_{22}(z_3)=\frac{x_2-z_3x_6}{1-x_4z_3}~{\rm{ and}}~ \det(A(z_3))=\frac{x_3-z_3x_7}{1-x_4z_3}~{\rm{for~all}}~ z_3\in \bar{\mathbb D};$$

					\item[$5^{\prime}.$]
					There exists a $2\times 2$ symmetric matrix $B(z_1)$ with $\|B(z_1)\|<1$ such that $$B_{11}(z_1)=\frac{x_2-z_1x_3}{1-x_1z_1},B_{22}(z_1)=\frac{x_4-z_1x_5}{1-x_1z_1}~{\rm{ and }}~\det(B(z_1))=\frac{x_6-z_1x_7}{1-x_1z_1}~{\rm{for ~all }}~z_1\in \bar{\mathbb D};$$
					
					\item[$5^{\prime\prime}.$]
					There exists a $2\times 2$ symmetric matrix $C(z_2)$ with $\|C(z_2)\|<1$ 		such that $$C_{11}(z_2)=\frac{x_4-z_2x_6}{1-x_2z_2},C_{22}(z_2)=\frac{x_1-z_2x_3}{1-x_2z_2}~{\rm{and}}~ \det(C(z_2))=\frac{x_5-z_2x_7}{1-x_2z_2}~{\rm{for~all}}~z_2\in \bar{\mathbb D};$$
					
				\item[$6$.]  There exists a $3\times 3$ matrix $A\in \mathcal M_{3\times 3}(\mathbb C)$ such that \small{$x_{1}=a_{11},x_{2}=a_{22},x_{3}=a_{11}a_{22}-a_{12}a_{21},$}\\$x_{4}=a_{33},x_{5}=a_{11}a_{33}-a_{13}a_{31}, x_{6}=a_{33}a_{22}-a_{23}a_{32}~{\rm{and}}~x_{7}=\operatorname{det}A,$ $$\sup_{(z_2,z_3)\in\bar{\mathbb D}^2} |\mathcal G_{\mathcal F_{A}(z_3)}(z_2)|=\sup_{(z_2,z_3)\in\bar{\mathbb D}^2} |\mathcal G_{A}\left(\left(\begin{smallmatrix}z_2 & 0 \\0 &z_{3}
				\end{smallmatrix}\right)\right)|<1,$$ and $\det\left(I_{2}-\left(\begin{smallmatrix}a_{22}&a_{23}\\a_{32} & a_{33}
				\end{smallmatrix}\right)\left(\begin{smallmatrix}z_2 & 0 \\0 &z_{3}
				\end{smallmatrix}\right)\right)\neq 0$ for all $(z_2,z_3)\in\bar{\mathbb D}^2$.

				\item[$6^{\prime}$.]  There exists a $3\times 3$ matrix $\tilde{A}\in \mathcal M_{3\times 3}(\mathbb C)$ such that \small{$x_{1}=\tilde{a}_{33},x_{2}=\tilde{a}_{11},x_{3}=\tilde{a}_{11}\tilde{a}_{33}-\tilde{a}_{13}\tilde{a}_{31},$} \\$x_{4}=\tilde{a}_{22},x_{5}=\tilde{a}_{22}\tilde{a}_{33}-\tilde{a}_{23}\tilde{a}_{32}, x_{6}=\tilde{a}_{11}\tilde{a}_{22}-\tilde{a}_{12}\tilde{a}_{21}~{\rm{and}}~{x}_{7}=\operatorname{det} \tilde{A},$ $$\sup_{(z_1,z_2)\in\bar{\mathbb D}^2} |\mathcal G_{\mathcal F_{\tilde{A}}(z_2)}(z_1)|=\sup_{(z_1,z_2)\in\bar{\mathbb D}^2} |\mathcal G_{\tilde{A}}\left(\left(\begin{smallmatrix}z_1 & 0 \\0 &z_{2}
				\end{smallmatrix}\right)\right)|<1,$$ and  $\det\left(I_{2}-\left(\begin{smallmatrix}\tilde{a}_{11}&\tilde{a}_{12}\\\tilde{a}_{21} & \tilde{a}_{22}
				\end{smallmatrix}\right)\left(\begin{smallmatrix}z_1 & 0 \\0 &z_{2}
				\end{smallmatrix}\right)\right)\neq 0$ for all $(z_1,z_2)\in\bar{\mathbb D}^2$.
				
				\item[$6^{\prime\prime}$.] There exists a $3\times 3$ matrix $\tilde{B}\in \mathcal M_{3\times 3}(\mathbb C)$  such that \small{$ x_1=\tilde{b}_{22},x_2=\tilde{b}_{33},x_3=\tilde{b}_{22}\tilde{b}_{33}-\tilde{b}_{23}\tilde{b}_{32},$} \\$ x_4=\tilde{b}_{11},x_{5}=\tilde{b}_{11}\tilde{b}_{22}-\tilde{b}_{12}\tilde{b}_{21}, x_{6}=\tilde{b}_{11}\tilde{b}_{33}-\tilde{b}_{13}\tilde{b}_{31}~{\rm{and}}~x_{7}=\operatorname{det} \tilde{B},$  $$\sup_{(z_1,z_3)\in\bar{\mathbb D}^2} |\mathcal G_{\mathcal F_{\tilde{B}}(z_1)}(z_3)|=\sup_{(z_1,z_3)\in\bar{\mathbb D}^2} |\mathcal G_{\tilde{B}}\left(\left(\begin{smallmatrix}z_1 & 0 \\0 &z_{3}
				\end{smallmatrix}\right)\right)|<1,$$ and $\det\left(I_{2}-\left(\begin{smallmatrix}\tilde{b}_{11}&\tilde{b}_{13}\\\tilde{b}_{31} & \tilde{b}_{33}
				\end{smallmatrix}\right)\left(\begin{smallmatrix}z_1 & 0 \\0 &z_{3}
				\end{smallmatrix}\right)\right)\neq 0$  for all $(z_1,z_3)\in\bar{\mathbb D}^2$.
					
				\end{enumerate}
			\end{thm}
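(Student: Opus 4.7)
The proof is a consolidation: essentially every equivalence in the list has already been established as a separate theorem or corollary in Section~2, and the plan is to cite each of them in turn, using the permutation symmetries (Lemmas \ref{331}, \ref{12}, \ref{13}) to pass between the unprimed, primed and double-primed versions.

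First I would record the skeleton of the argument. The equivalence $1\Leftrightarrow 2$ is immediate from Corollary~\ref{R_33} together with Theorem~\ref{R_3} (which gives the analogous statement for $\Gamma_{E(3;3;1,1,1)}$; the open version is Corollary~\ref{R_33}). The equivalences $1\Leftrightarrow 3$, $1\Leftrightarrow 3'$ and $1\Leftrightarrow 3''$ are precisely the three cases of Theorem~\ref{phi}. The equivalences $1\Leftrightarrow 4$, $1\Leftrightarrow 4'$, $1\Leftrightarrow 4''$ are the three statements of Theorem~\ref{char}. The symmetric-matrix statements $1\Leftrightarrow 5$, $5'$, $5''$ are the three cases of the corollary following Theorem~\ref{char} (which itself uses the tetrablock characterization from \cite{Abouhajar}). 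Finally, the realization-formula equivalences $1\Leftrightarrow 6$, $6'$, $6''$ are precisely Theorem~\ref{mainresult}. Thus nothing new needs to be proved; the proof is just a table of references.

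The one step that requires care is making explicit the translation between the three "primed" variants. The plan is to invoke the symmetries
\[
\mu_{E(3;3;1,1,1)}(A)=\mu_{E(3;3;1,1,1)}(J_1 A J_1)=\mu_{E(3;3;1,1,1)}(J_1 A J_2)=\mu_{E(3;3;1,1,1)}(\tilde J_1 A \tilde J_2),
\]
established in Lemma~\ref{331}, Lemma~\ref{12} and Lemma~\ref{13}. Conjugation by these permutations implements the coordinate permutations of $\mathbf{x}=(x_1,\ldots,x_7)$ that send condition $3$ to $3'$ and $3''$, condition $4$ to $4'$ and $4''$, condition $5$ to $5'$ and $5''$, and condition $6$ to $6'$ and $6''$, respectively; the identities \eqref{mathcalG}, \eqref{mathcalG12}, \eqref{mathcalG31}, \eqref{gafz2}, \eqref{GFAZ1Z2} and \eqref{GFF} already establish that the three realization expressions $\mathcal G_A,\mathcal G_{\tilde A},\mathcal G_{\tilde B}$ produce $\Psi^{(1)},\Psi^{(3)},\Psi^{(2)}$ respectively.

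The hard part will not be any single implication but rather the bookkeeping: one must verify that the particular way in which the coordinates $(x_1,\ldots,x_7)$ are re-indexed in each primed version is exactly the re-indexing induced by the corresponding $J$-conjugation. I would therefore organize the proof as a cyclic chain
\[
1\;\Longleftrightarrow\;2\;\Longleftrightarrow\;3\;\Longleftrightarrow\;4\;\Longleftrightarrow\;5\;\Longleftrightarrow\;6\;\Longleftrightarrow\;1,
\]
invoking the results above, and then append one sentence noting that applying the same chain to $J_1 A J_1$, $J_1 A J_2$ and $\tilde J_1 A \tilde J_2$ in place of $A$ and reading off the corresponding permutation of $\mathbf{x}$ yields the primed and double-primed equivalences. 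This keeps the exposition short and avoids reproducing any of the computations already carried out in the section.
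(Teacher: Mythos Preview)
Your proposal is correct and matches the paper's approach: the paper gives no separate proof of Theorem~\ref{mainthm} at all, treating it as an enumeration of the characterizations already established in Corollary~\ref{R_33}, Theorem~\ref{phi}, Theorem~\ref{char}, its corollary, and Theorem~\ref{mainresult}. Your plan to cite each of these in turn, together with the permutation symmetries from Lemmas~\ref{331}, \ref{12}, \ref{13} for the primed variants, is exactly the intended consolidation.
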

			Set $\tilde{x}_1(z_1)=\frac{x_2-z_1x_3}{1-x_1z_1},~\tilde{x}_2(z_1)=\frac{x_4-z_1x_5}{1-x_1z_1},~\tilde{x}_3(z_1)=\frac{x_6-z_1x_7}{1-x_1z_1},~\tilde{y}_1(z_2)=\frac{x_1-z_2x_3}{1-x_2z_2},~\tilde{y}_2(z_2)=\frac{x_4-z_2x_6}{1-x_2z_2},~\\\tilde{y}_3(z_2)=\frac{x_5-z_2x_7}{1-x_2z_2}, \tilde{z}_1(z_3)=\frac{x_1-z_3x_5}{1-x_4z_3},\tilde{z}_2(z_3)=\frac{x_2-z_3x_6}{1-x_4z_3}~{\rm{and}}~\tilde{z}_3(z_3)=\frac{x_3-z_3x_7}{1-x_4z_3}.$  The proof of the following corollary follows easily from the characterization of $G_{E(2;2;1,1)}$ [Theorem $2.2$, \cite{Abouhajar}], therefore we skip the proof.
			
			\begin{cor}	
				For $\textbf{x}=(x_1,\ldots,x_7)\in \mathbb C^7$ the following are equivalent.
				
				\begin{enumerate}
					\item $\textbf{x}=(x_1,\ldots,x_7)\in G_{E(3;3;1,1,1)}$;		
					
					\item $\sup_{z_1\in \bar{\mathbb{D}}}\{|\tilde{x}_1(z_1)-\overline{\tilde{x}_2(z_1) }\tilde{x}_3(z_1)|+\left|\tilde{x}_1(z_1)\tilde{x}_2(z_1)-\tilde{x}_3(z_1)\right|\} < 1-\inf_{z_1\in \bar{\mathbb{D}}}\left|\tilde{x}_2(z_1)\right|^2$;
					\item $\sup_{z_1\in \bar{\mathbb{D}}}\{|\tilde{x}_2(z_1)-\overline{\tilde{x}_1(z_1) }\tilde{x}_3(z_1)|+\left|\tilde{x}_1(z_1)\tilde{x}_2(z_1)-\tilde{x}_3(z_1)\right|\} < 1-\inf_{z_1\in \bar{\mathbb{D}}}\left|\tilde{x}_1(z_1)\right|^2$;		
					\item $\sup_{z_1\in \bar{\mathbb{D}}}\left\{\left|\tilde{x}_1(z_1)\right|^2-\left|\tilde{x}_2(z_1)\right|^2+\left|\tilde{x}_3(z_1)\right|^2+2\left|\tilde{x}_2(z_1)-\overline{\tilde{x}_1(z_1) }\tilde{x}_3(z_1)\right|\right\} < 1$ and if we assume  $\tilde{x}_1(z_1)\tilde{x}_2(z_1)=\tilde{x}_3(z_1)$ for all $z_1\in \bar{\mathbb{D}}$, then $\sup_{z_1\in \bar{\mathbb{D}}}|\tilde{x}_2(z_1)|<1$;
					\item $\sup_{z_1\in \bar{\mathbb{D}}}\left \{-\left|\tilde{x}_1(z_1)\right|^2+\left|\tilde{x}_2(z_1)\right|^2+\left|\tilde{x}_3(z_1)\right|^2+2\left|\tilde{x}_1(z_1)-\overline{\tilde{x}_2 (z_1)}\tilde{x}_3(z_1)\right| \right\} < 1$ and if we assume $\tilde{x}_1(z_1)\tilde{x}_2(z_1)=\tilde{x}_3(z_1)$  for all $z_1\in \bar{\mathbb{D}}$, then $\sup_{z_1\in \bar{\mathbb{D}}}|\tilde{x}_1(z_1)|<1$;
					\item $\sup_{z_1\in \bar{\mathbb{D}}}\{\left|\tilde{x}_1(z_1)\right|^2+\left|\tilde{x}_2(z_1)\right|^2-\left|\tilde{x}_3(z_1)\right|^2+2\left|\tilde{x}_1(z_1) \tilde{x}_2(z_1)-\tilde{x}_3(z_1)\right|\} < 1$;
					\item $\sup_{z_1\in \bar{\mathbb{D}}}\left\{\left|\tilde{x}_1(z_1)-\overline{\tilde{x}_2(z_1)} \tilde{x}_3(z_1)\right|+\left|\tilde{x}_2(z_1)-\overline{\tilde{x}_1(z_1)} \tilde{x}_3(z_1)\right| \right\} < 1-\inf_{z_1\in \bar{\mathbb{D}}}\left|\tilde{x}_3(z_1)\right|^2$;
					\item $\sup_{z_2\in \bar{\mathbb{D}}}\{|\tilde{y}_1(z_2)-\overline{\tilde{y}_2(z_2)} \tilde{y}_3(z_2)|+\left|\tilde{y}_1(z_2) \tilde{y}_2(z_2)-\tilde{y}_3(z_2)\right|\} < 1-\inf_{z_2\in \bar{\mathbb{D}}}\left|\tilde{y}_2(z_2)\right|^2$;
					\item $\sup_{z_2\in \bar{\mathbb{D}}}\left\{\left|\tilde{y}_2(z_2)-\overline{\tilde{y}_1(z_2)} \tilde{y}_3(z_2)\right|+\left|\tilde{y}_1(z_2) \tilde{y}_2(z_2)-\tilde{y}_3(z_2)\right|\right\} < 1-\inf_{z_2\in \bar{\mathbb{D}}}\left|\tilde{y}_1(z_2)\right|^2$;
					\item $\sup_{z_2\in \bar{\mathbb{D}}}\left\{\left|\tilde{y}_1(z_2)\right|^2-\left|\tilde{y}_2(z_2)\right|^2+\left|\tilde{y}_3(z_2)\right|^2+2\left|\tilde{y}_2(z_2)-\overline{\tilde{y}_1(z_2)} \tilde{y}_3(z_2)\right|\right\} < 1$, and if we assume $\tilde{y}_1(z_2)\tilde{y}_2(z_2)=\tilde{y}_3(z_2)$  for all $z_2\in \bar{\mathbb{D}}$, then $\sup_{z_2\in \bar{\mathbb{D}}}|\tilde{y}_2(z_2)|<1$;
					\item $\sup_{z_2\in \bar{\mathbb{D}}}\left\{-\left|\tilde{y}_1(z_2)\right|^2+\left|\tilde{y}_2(z_2)\right|^2+\left|\tilde{y}_3(z_2)\right|^2+2\left|\tilde{y}_1(z_2)-\overline{\tilde{y}_2(z_2)} \tilde{y}_3(z_2)\right|\right\} < 1$, and if we assume $\tilde{y}_1(z_2)\tilde{y}_2(z_2)=\tilde{y}_3(z_2)$  for all $z_2\in \bar{\mathbb{D}}$, then $\sup_{z_2\in \bar{\mathbb{D}}}|\tilde{y}_1(z_2)|<1$;
					\item $\sup_{z_2\in \bar{\mathbb{D}}}\{\left|\tilde{y}_1(z_2)\right|^2+\left|\tilde{y}_2(z_2)\right|^2-\left|\tilde{y}_3(z_2)\right|^2+2\left|\tilde{y}_1(z_2) \tilde{y}_2(z_2)-\tilde{y}_3(z_2)\right|\} < 1$;
					\item $\sup_{z_2\in \bar{\mathbb{D}}}\left\{\left|\tilde{y}_1(z_2)-\overline{\tilde{y}_2(z_2)} \tilde{y}_3(z_2)\right|+\left|\tilde{y}_2(z_2)-\overline{\tilde{y}_1(z_2)} \tilde{y}_3(z_2)\right|\right\} < 1-\inf_{z_2\in \bar{\mathbb{D}}}\left|\tilde{y}_3(z_2)\right|^2$;
					\item $\sup_{z_3\in \bar{\mathbb{D}}}\{|\tilde{z}_1(z_3)-\overline{\tilde{z}_2(z_3)} \tilde{z}_3(z_3)|+\left|\tilde{z}_1(z_3) \tilde{z}_2(z_3)-\tilde{z}_3(z_3)\right|\} < 1-\inf_{z_3\in \bar{\mathbb{D}}}\left|\tilde{z}_2(z_3)\right|^2$;
					\item $\sup_{z_3\in \bar{\mathbb{D}}}\left\{\left|\tilde{z}_2(z_3)-\overline{\tilde{z}_1(z_3)} \tilde{z}_3(z_3)\right|+\left|\tilde{z}_1(z_3) \tilde{z}_2(z_3)-\tilde{z}_3(z_3)\right|\right\} < 1-\inf_{z_3\in \bar{\mathbb{D}}}\left|\tilde{z}_1(z_3)\right|^2$;
					\item $\sup_{z_3\in \bar{\mathbb{D}}}\left\{\left|\tilde{z}_1(z_3)\right|^2-\left|\tilde{z}_2(z_3)\right|^2+\left|\tilde{z}_3(z_3)\right|^2+2\left|\tilde{z}_2(z_3)-\overline{\tilde{z}_1(z_3)} \tilde{z}_3(z_3)\right|\right\} < 1$, and if we assume $\tilde{z}_1(z_3)\tilde{z}_2(z_3)=\tilde{z}_3(z_3)$  for all $z_3\in \bar{\mathbb{D}}$, then $\sup_{z_3\in \bar{\mathbb{D}}}|\tilde{z}_2(z_3)|<1$;
					\item $\sup_{z_3\in \bar{\mathbb{D}}}\left\{-\left|\tilde{z}_1(z_3)\right|^2+\left|\tilde{z}_2(z_3)\right|^2+\left|\tilde{z}_3(z_3)\right|^2+2\left|\tilde{z}_1(z_3)-\overline{\tilde{z}_2(z_3)} \tilde{z}_3(z_3)\right|\right\} < 1$, and if we assume $\tilde{z}_1(z_3)\tilde{z}_2(z_3)=\tilde{z}_3(z_3)$  for all $z_3\in \bar{\mathbb{D}}$, then $\sup_{z_3\in \bar{\mathbb{D}}}|\tilde{z}_1(z_3)|<1$;
					\item $\sup_{z_3\in \bar{\mathbb{D}}}\{\left|\tilde{z}_1(z_3)\right|^2+\left|\tilde{z}_2(z_3)\right|^2-\left|\tilde{z}_3(z_3)\right|^2+2\left|\tilde{z}_1(z_3) \tilde{z}_2(z_3)-\tilde{z}_3(z_3)\right|\} < 1$;
					\item $\sup_{z_3\in \bar{\mathbb{D}}}\left\{\left|\tilde{z}_1(z_3)-\overline{\tilde{z}_2(z_3)} \tilde{z}_3(z_3)\right|+\left|\tilde{z}_2(z_3)-\overline{\tilde{z}_1(z_3)} \tilde{z}_3(z_3)\right|\right\} < 1-\inf_{z_3\in \bar{\mathbb{D}}}\left|\tilde{z}_3(z_3)\right|^2$.
					
				\end{enumerate}
			\end{cor}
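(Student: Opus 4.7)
The twenty conditions (2)-(20) of the corollary split naturally into three blocks of six (items (2)-(7), (8)-(13), (14)-(20)), one block for each of the three parameterized tetrablock reductions furnished by Theorem \ref{char}. My plan is therefore to combine Theorem \ref{char} with the six equivalent inequality-form characterizations of the tetrablock collected in \cite[Theorem 2.2]{Abouhajar}.

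By Theorem \ref{char}, $\textbf{x}\in G_{E(3;3;1,1,1)}$ is equivalent to the statement that for every $z_1\in\bar{\mathbb{D}}$, the triple $(\tilde{x}_1(z_1),\tilde{x}_2(z_1),\tilde{x}_3(z_1))$ lies in the tetrablock $G_{E(2;2;1,1)}$, and likewise using the $\tilde{y}_i(z_2)$ or the $\tilde{z}_i(z_3)$. Since $\textbf{x}\in G_{E(3;3;1,1,1)}$ forces $|x_1|,|x_2|,|x_4|<1$ (as one reads off from Theorem \ref{phi}), the denominators $1-x_1z_1,\ 1-x_2z_2,\ 1-x_4z_3$ are nowhere zero on $\bar{\mathbb{D}}$, so each rational expression $\tilde{x}_i,\tilde{y}_i,\tilde{z}_i$ is continuous on the compact set $\bar{\mathbb{D}}$. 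Substituting each of the three parameterized families into each of the six tetrablock inequalities of \cite[Theorem 2.2]{Abouhajar} and using compactness to translate ``for every $z_j\in\bar{\mathbb{D}}$'' into the corresponding $\sup/\inf$ statement yields, for the $\tilde{x}_i(z_1)$ family, items (2)-(7); for the $\tilde{y}_i(z_2)$ family, items (8)-(13); and for the $\tilde{z}_i(z_3)$ family, items (14)-(20). The degenerate clauses of the form ``if $\tilde{x}_1\tilde{x}_2=\tilde{x}_3$ then $\sup_{z_1}|\tilde{x}_2|<1$'' are the transferred versions of the degenerate sub-cases in the tetrablock characterizations.

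The one technical point requiring care is the conversion from a pointwise inequality ``$|P(z_j)|+|Q(z_j)|<1-|R(z_j)|^2$ for all $z_j\in\bar{\mathbb{D}}$'' into the corollary's sup-inf form. One direction is immediate: evaluating at the extremizer of $|P|+|Q|$ and bounding $|R|^2$ from below by its infimum does the job. The reverse direction requires that the extremizers of the various moduli occur on $\mathbb{T}$ in a compatible fashion, which one handles by the maximum modulus principle applied to the rational functions $\Psi^{(i)}(\cdot,\textbf{x})$ (already exploited in the proof of Theorem \ref{phi}) together with the Mobius shape of $\tilde{x}_i,\tilde{y}_i,\tilde{z}_i$ in the free parameter. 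Once this compatibility is established, the remaining nineteen equivalences reduce to a direct substitution into the six inequalities of the tetrablock characterization, which is why the author declares the proof routine.
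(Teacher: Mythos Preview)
Your overall strategy---combine Theorem~\ref{char} with the tetrablock characterizations of \cite[Theorem~2.2]{Abouhajar}---is exactly what the paper intends; the paper simply declares that the corollary ``follows easily from the characterization of $G_{E(2;2;1,1)}$'' and omits the argument. So on the level of approach you and the paper agree.

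You are, however, right to isolate the sup/inf issue, and the paper does not address it. For the items of the form $\sup_{z_j}\{\cdots\}<1$ (namely (4)--(6), (10)--(12), (16)--(18)) the conversion from the pointwise tetrablock inequality is unproblematic: the expressions are continuous on the compact set $\bar{\mathbb D}$, so ``$<1$ for every $z_j$'' and ``$\sup<1$'' coincide. The delicate items are those written in the split form $\sup P<1-\inf Q$ (namely (2), (3), (7), (8), (9), (13), (14), (15), (19)). From Theorem~\ref{char} and \cite[Theorem~2.2]{Abouhajar} one obtains the pointwise statement $P(z_j)+Q(z_j)<1$ for every $z_j\in\bar{\mathbb D}$, equivalently $\sup_{z_j}\bigl(P+Q\bigr)<1$. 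Since $\sup P+\inf Q\le\sup(P+Q)$, this yields $(1)\Rightarrow(2)$ and its analogues, but not the converse: in general $\sup P+\inf Q<1$ does not force $P(z_j)+Q(z_j)<1$ at every $z_j$, because the extremizer of $P$ and the minimizer of $Q$ need not coincide.

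Your proposed fix---``compatibility of extremizers via the maximum modulus principle and the M\"obius shape of $\tilde{x}_i,\tilde{y}_i,\tilde{z}_i$''---is too vague to close this gap, and I do not see how it can be made to work: $P$ involves moduli of non-holomorphic combinations such as $|\tilde{x}_1-\overline{\tilde{x}_2}\,\tilde{x}_3|$, so the maximum principle does not pin its supremum to $\mathbb T$, and there is no reason the minimizer of $|\tilde{x}_2|$ should sit at the same point. The honest reading is that the split sup/inf conditions, taken literally, are only implied by~(1); what is genuinely equivalent to~(1) is the single-sup form $\sup_{z_j}\bigl\{P(z_j)+Q(z_j)\bigr\}<1$. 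That is almost certainly the authors' intended meaning, and it is the version you should actually prove.
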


We derive the following theorem by using first realization formula.			
			\begin{thm}\label{matix A}

Let $A=((a_{ij}))^3_{leq i,j=1}\in \mathcal M_{3\times 3}(\mathbb C)$ with $\|A\|<1.$  Set   \begin{eqnarray}\label{av1}
x_1=a_{11}, x_2=a_{22}, x_3=\det \left(\begin{smallmatrix} a_{11} & a_{12}\\
					a_{21} & a_{22}
				\end{smallmatrix}\right), x_4=a_{33}, x_5=\det \left(\begin{smallmatrix}
					a_{11} & a_{13}\\
					a_{31} & a_{33}
				\end{smallmatrix}\right), x_6=\det  \left(\begin{smallmatrix}
					a_{22} & a_{23}\\
					a_{32} & a_{33}\end{smallmatrix}\right) ~{\rm{and}}~x_7=\det A.\end{eqnarray} Then $\textbf{x}=(x_1,\ldots,x_7)\in G_{E(3;3;1,1,1)}.$				
			\end{thm}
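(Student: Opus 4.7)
The plan is to deduce the conclusion from condition~(1) of Theorem~\ref{phi}, using the realization-formula machinery developed in Proposition~\ref{fpzz} together with the explicit identification carried out inside the proof of Theorem~\ref{matix A1}. Starting from $A\in\mathcal M_{3\times 3}(\mathbb C)$ with $\|A\|<1$, I first observe that the $2\times 2$ principal submatrix $\left(\begin{smallmatrix}a_{22}&a_{23}\\a_{32}&a_{33}\end{smallmatrix}\right)$ is a compression of $A$ and therefore has operator norm at most $\|A\|<1$. Consequently, for every $(z_2,z_3)\in\bar{\mathbb D}^2$ one has $\left\|\left(\begin{smallmatrix}a_{22}&a_{23}\\a_{32}&a_{33}\end{smallmatrix}\right)\left(\begin{smallmatrix}z_2&0\\0&z_3\end{smallmatrix}\right)\right\|<1$, so $I_2-\left(\begin{smallmatrix}a_{22}&a_{23}\\a_{32}&a_{33}\end{smallmatrix}\right)\left(\begin{smallmatrix}z_2&0\\0&z_3\end{smallmatrix}\right)$ is invertible. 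Equivalently, $1-x_2z_2-x_4z_3+x_6z_2z_3\neq 0$ on $\bar{\mathbb D}^2$, which by Theorem~\ref{R_3} at the tetrablock level yields $\textbf{x}_{J^{(1)}}^{\prime}=(x_2,x_4,x_6)\in G_{E(2;2;1,1)}$.

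Next, I apply Proposition~\ref{fpzz} with $\mathcal H_i=\mathcal K_i=\mathbb C$ for $1\leq i\leq 3$, $P=A$, and $X=\left(\begin{smallmatrix}z_2&0\\0&z_3\end{smallmatrix}\right)$. Since $\|X\|=\max(|z_2|,|z_3|)\leq 1$ and $\|A\|<1$, the proposition gives $\left|\mathcal G_{A}\left(\left(\begin{smallmatrix}z_2&0\\0&z_3\end{smallmatrix}\right)\right)\right|<1$ for every $(z_2,z_3)\in\bar{\mathbb D}^2$. The explicit computation~\eqref{mathcalG} appearing inside the proof of Theorem~\ref{matix A1} identifies this scalar with $\Psi^{(1)}(\textbf{z}_{J^{(1)}},\textbf{x})$; hence $\|\Psi^{(1)}(\cdot,\textbf{x})\|_{H^{\infty}(\bar{\mathbb D}^2)}<1$, and by the maximum modulus principle this supremum is attained on $\mathbb T^2$.

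With both hypotheses of condition~(1) of Theorem~\ref{phi} now verified, I conclude $\textbf{x}\in G_{E(3;3;1,1,1)}$. The only loose end is the degenerate clause in that condition: if $x_7=x_6x_1$, $x_3=x_2x_1$, $x_5=x_4x_1$, then $\Psi^{(1)}$ reduces to the constant $x_1$, and one has to verify $|x_1|<1$ separately; but this is immediate from $|x_1|=|a_{11}|\leq\|A\|<1$.

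No step in this outline is genuinely hard: Proposition~\ref{fpzz} does the heavy lifting, and the rest is algebraic bookkeeping combined with the tetrablock characterization. The only point to be careful about is the strict-versus-non-strict distinction between Propositions~\ref{fpzz} and~\ref{fpzz1}, so that one consistently invokes the strict version and lands in the open domain $G_{E(3;3;1,1,1)}$ rather than in its closure $\Gamma_{E(3;3;1,1,1)}$.
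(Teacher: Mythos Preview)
Your proof is correct and follows essentially the same route as the paper: both verify condition~(1) of Theorem~\ref{phi} by showing $\textbf{x}_{J^{(1)}}^{\prime}\in G_{E(2;2;1,1)}$ via invertibility of the $2\times 2$ block and then bounding $|\Psi^{(1)}|<1$ through the realization identity $\mathcal G_A=\Psi^{(1)}$. The only cosmetic difference is that the paper cites Proposition~\ref{matrix AA} (whose ``moreover'' clause is itself proved via Proposition~\ref{fpzz}), whereas you invoke Proposition~\ref{fpzz} directly; your explicit treatment of the degenerate clause is a nice touch the paper leaves implicit.
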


\begin{proof}
Note that for all $z_2,z_3\in \mathbb C$ with $\det\left(I_{2}-\left(\begin{smallmatrix}a_{22}&a_{23}\\a_{32} & a_{33}
				\end{smallmatrix}\right)\left(\begin{smallmatrix}z_2 & 0 \\0 &z_{3}
				\end{smallmatrix}\right)\right)\neq 0,$ we have
				\small{\begin{align}\label{mathcalG11}
						\mathcal G_{A}\left(\left(\begin{smallmatrix}z_2 & 0 \\0 &z_{3}
						\end{smallmatrix}\right)\right)\nonumber &=a_{11}+\left(\begin{smallmatrix}a_{12}&a_{13}
						\end{smallmatrix}\right)\left(\begin{smallmatrix}z_2 & 0 \\0 &z_{3}
						\end{smallmatrix}\right)\left(I_{2}-\left(\begin{smallmatrix}a_{22}&a_{23}\\a_{32} & a_{33}
						\end{smallmatrix}\right)\left(\begin{smallmatrix}z_2 & 0 \\0 &z_{3}
						\end{smallmatrix}\right)\right)^{-1}\left(\begin{smallmatrix}a_{21}\\a_{31}
						\end{smallmatrix}\right)\\\nonumber&=\frac{a_{11}-z_2(a_{11}a_{22}-a_{12}a_{21})-z_3(a_{33}a_{11}-a_{13}a_{31})+z_2z_3\det(A)}{1-z_2a_{22}-z_3a_{33}+z_2z_3(a_{22}a_{33}-a_{23}a_{32})}\\\nonumber&=\frac{x_1-z_2x_3-z_3x_5+z_2z_3x_7}{1-z_2x_2-z_3x_4+z_2z_3x_6}\\&=\Psi^{(1)}(\textbf{z}_{J^{(1)}},\textbf{x}).
				\end{align}} 
For $z_2,z_3 \in \bar{\mathbb{D}}$ we have 	$\det\left(I_{2}-\left(\begin{smallmatrix}a_{22}&a_{23}\\a_{32} & a_{33}
				\end{smallmatrix}\right)\left(\begin{smallmatrix}z_2 & 0 \\0 &z_{3}
				\end{smallmatrix}\right)\right)\neq 0,$ which implies that 	$\textbf{x}_{J^{(1)}}^{\prime}\in G_{E(2;2;1,1)}$. From Proposition \ref{matrix AA}, it yields that $|\mathcal G_{A}\left(\left(\begin{smallmatrix}z_2 & 0 \\0 &z_{3}
				\end{smallmatrix}\right)\right)|<1$ for all $z_2,z_3\in \bar{\mathbb D}$.	 Hence, from Theorem \ref{phi}, we conclude that $\textbf{x}\in G_{E(3;3;1,1,1)}.$ This completes the proof.

\end{proof}			
%%%%%%%%%%%%%%%%%%%%%%%%%%%%%%%%%%%%%%%%%%%%%%%%%%%%%%%%%%%%%%%%%%%%%%%%%%%%%%%%%%%%%%%%%%%%%
We provide the another proof  of Theorem \ref{matix A} by applying the second realization formula.		
			\begin{thm}\label{another}
				Let $A\in \mathcal M_{3\times 3}(\mathbb C)$ with $\|A\|<1.$  Set $$x_{1}=a_{11},x_{2}=a_{22},x_{3}=\operatorname{det}A_{12},x_{4}=a_{33},x_{5}=\operatorname{det}A_{13}, x_{6}=\operatorname{det}A_{23}~{\rm{and}}~x_{7}=\operatorname{det}A.$$ Then $\textbf{x}=(x_1,\ldots,x_7)\in G_{E(3;3;1,1,1)}.$
			\end{thm}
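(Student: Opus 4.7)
The plan is to mimic the first proof of Theorem \ref{matix A} but route through the second realization formula $\mathcal F_A(z_3)$ followed by $\mathcal G_{\mathcal F_A(z_3)}(z_2)$, rather than going directly through $\mathcal G_A$ applied to a diagonal $2\times 2$ argument. The structural identity \eqref{gafz3} already records that
\[
\mathcal G_{\mathcal F_A(z_3)}(z_2)=\Psi^{(1)}(\textbf{z}_{J^{(1)}},\textbf{x}),
\]
so once I show the left-hand side is well-defined and bounded by $1$ on $\bar{\mathbb D}^2$, the criterion in Theorem \ref{phi}(1) will deliver $\textbf{x}\in G_{E(3;3;1,1,1)}$.

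First I would invoke Proposition \ref{matrix A}: since $\|A\|<1$, we have $1-a_{33}z_3\neq 0$ for all $z_3\in\bar{\mathbb D}$, the matrix $\mathcal F_A(z_3)$ is well-defined there, and $\|\mathcal F_A(z_3)\|<1$ uniformly in $z_3\in\bar{\mathbb D}$. Next I would feed this into Proposition \ref{fpzz} (or equivalently the inequality in Proposition \ref{matrix AAZZ} with $w_2=z_2$, $w_3=z_3$): with $P$ replaced by $\mathcal F_A(z_3)$ (a $2\times 2$ contraction) and $X=z_2$, the hypothesis $\|X\|\le 1$, $\|P\|<1$ gives $|\mathcal G_{\mathcal F_A(z_3)}(z_2)|<1$ for every $z_2\in\bar{\mathbb D}$, provided the resolvent $(1-\tilde A_{22}(z_3)z_2)^{-1}$ exists. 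That invertibility amounts to
\[
1-a_{22}z_2-a_{33}z_3+(a_{22}a_{33}-a_{23}a_{32})z_2z_3\neq 0,
\]
i.e.\ $\det\!\bigl(I_2-\bigl(\begin{smallmatrix}a_{22}&a_{23}\\ a_{32}&a_{33}\end{smallmatrix}\bigr)\bigl(\begin{smallmatrix}z_2&0\\0&z_3\end{smallmatrix}\bigr)\bigr)\neq 0$, and this in turn follows from $\|A\|<1$ by restricting to the second factor, exactly as in the proof of Theorem \ref{matix A}. In terms of the variables, this non-vanishing condition is precisely $\textbf{x}_{J^{(1)}}^{\prime}=(x_2,x_4,x_6)\in G_{E(2;2;1,1)}$.

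Combining the two steps, the composition $z_2\mapsto \mathcal G_{\mathcal F_A(z_3)}(z_2)$ is holomorphic and strictly bounded by $1$ on $\bar{\mathbb D}$ for each fixed $z_3\in\bar{\mathbb D}$, hence by the identity \eqref{gafz3} the rational function $\Psi^{(1)}(\cdot,\textbf{x})$ satisfies
\[
\|\Psi^{(1)}(\cdot,\textbf{x})\|_{H^\infty(\bar{\mathbb D}^2)}=\|\Psi^{(1)}(\cdot,\textbf{x})\|_{H^\infty(\mathbb T^2)}<1.
\]
Together with $\textbf{x}_{J^{(1)}}^{\prime}\in G_{E(2;2;1,1)}$, Theorem \ref{phi}(1) yields $\textbf{x}\in G_{E(3;3;1,1,1)}$, completing the proof. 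The only technical obstacle I anticipate is the degenerate sub-case in Theorem \ref{phi}(1) where $x_7=x_6x_1$, $x_3=x_2x_1$, $x_5=x_4x_1$ forces $\Psi^{(1)}(\cdot,\textbf{x})\equiv x_1$: there one must separately verify $|x_1|<1$, but this is immediate from $|a_{11}|\le\|A\|<1$.
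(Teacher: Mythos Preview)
Your proof is correct, and it does go through the second realization formula $\mathcal F_A(z_3)$ as the paper intends. The difference is in how you finish: after establishing $\|\mathcal F_A(z_3)\|<1$ for all $z_3\in\bar{\mathbb D}$ via Proposition~\ref{matrix A}, the paper stops there and invokes the tetrablock matrix criterion directly---a $2\times 2$ matrix with norm $<1$ and entries $A_{11}(z_3),A_{22}(z_3),\det\mathcal F_A(z_3)$ places the triple $\bigl(\tfrac{x_1-z_3x_5}{1-x_4z_3},\tfrac{x_2-z_3x_6}{1-x_4z_3},\tfrac{x_3-z_3x_7}{1-x_4z_3}\bigr)$ in $G_{E(2;2;1,1)}$---and then concludes via Theorem~\ref{char}. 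You instead apply a second M\"obius layer $\mathcal G_{\mathcal F_A(z_3)}(z_2)$ (Proposition~\ref{matrix AAZZ}), identify it with $\Psi^{(1)}$ through \eqref{gafz3}, and finish with Theorem~\ref{phi}. Both are valid; the paper's route is marginally more economical since it avoids composing two realization steps, while yours makes the connection to $\Psi^{(1)}$ explicit and aligns more closely with the structure of Theorem~\ref{matix A144}.
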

			\begin{proof}
				Note that
				if $\mathcal H_i=\mathbb C=\mathcal K_i$ for $i=1,2,3,$ $P=A=((a_{ij}))_{i,j=1}^{3}$   and $X=z_3$ for which $1-a_{33}z_3\neq 0$, in \eqref{FPX}, then we have 
				\begin{align}\label{FAA}
					\mathcal F_{A}(z_3)\nonumber&=\left(\begin{smallmatrix} a_{11} & a_{12}\\a_{21} & a_{22}\end{smallmatrix}\right)+\left(\begin{smallmatrix} a_{13} \\ a_{23}\end{smallmatrix}\right)z_3(1-a_{33}z_3)^{-1}\left(\begin{smallmatrix} a_{31} & a_{32}\end{smallmatrix}\right)\\\nonumber&=\begin{pmatrix} \frac{a_{11}-z_3(a_{11}a_{33}-a_{13}a_{31})}{1-a_{33}z_3} & \frac{a_{12}-z_3(a_{12}a_{33}-a_{13}a_{32})}{1-a_{33}z_3} \\\frac{a_{21}-z_3(a_{21}a_{33}-a_{23}a_{31})}{1-a_{33}z_3} & \frac{a_{22}-z_3(a_{22}a_{33}-a_{23}a_{32})}{1-a_{33}z_3}\end{pmatrix}
					\\&=\left(\begin{smallmatrix} A_{11}(z_3) & A_{12}(z_3)\\A_{21}(z_3) & A_{22}(z_3)\end{smallmatrix}\right).\end{align}
				Here we employ the  characterization of $G_{E(2;2;1,1)}$  which implies that a point $(x_1,x_2,x_3)\in G_{E(2;2;1,1)}$ if and only if there exists a $2\times 2$ matrix $B=((b_{ij}))_{i,j=1}^{2}$ with $\|B\|<1$ such that $x_1=b_{11},x_2=b_{22}$ and $x_3=\det B.$ From \eqref{FAA}, we  have
				$A_{11}(z_3)=\frac{x_1-z_3x_5}{1-x_4z_3},A_{22}(z_3)=\frac{x_2-z_3x_6}{1-x_4z_3}$ and 
				\begin{align*}
					\det(\mathcal F_{A}(z_3))&=A_{11}(z_3)A_{22}(z_3)-A_{12}(z_3)A_{21}(z_3)\\&=\frac{x_1-z_3x_5}{1-x_4z_3}\frac{x_2-z_3x_6}{1-x_4z_3}-\frac{a_{12}-z_3(a_{12}a_{33}-a_{13}a_{32})}{1-x_4z_3}\frac{a_{21}-z_3(a_{21}a_{33}-a_{23}a_{31})}{1-x_4z_3}\\&=\frac{x_3-z_3x_7}{1-x_4z_3}
				\end{align*} 
				for all $z_3\in \bar{\mathbb D}.$ For the purpose of demonstrating $\left(\frac{x_1-z_3x_5}{1-x_4z_3},\frac{x_2-z_3x_6}{1-x_4z_3},\frac{x_3-z_3x_7}{1-x_4z_3}\right)\in G_{E(2;2;1,1)}$ for all $z_3\in \bar{\mathbb D}$, we must show $\|\mathcal F_{A}(z_3)\|<1$ for all $z_3\in \bar{\mathbb D}$ [Theorem $2.2$, \cite{Abouhajar}]. It follows from Proposition \ref{matrix A} that $\|\mathcal F_{A}(z_3)\|<1$ for all $z_3\in \bar{\mathbb D}$ and for all $A\in \mathcal M_{3\times3}(\mathbb C)$ with $\|A\|<1.$ Using the  characterization of $G_{E(2;2;1,1)}$, we infer that $\left(\frac{x_1-z_3x_5}{1-x_4z_3},\frac{x_2-z_3x_6}{1-x_4z_3},\frac{x_3-z_3x_7}{1-x_4z_3}\right)\in G_{E(2;2;1,1)}$ for all $z_3\in \bar{\mathbb D}$ and hence from Theorem \ref{char}, we have $\mathbf{x}=(x_1,\ldots,x_7)\in G_{E(3;3;1,1,1)}.$ This completes the proof.
			\end{proof}	

			\begin{thm}\label{anotherp}
				Let $A$ be a $3\times 3$ matrix.   Set $$x_{1}=a_{11},x_{2}=a_{22},x_{3}=\operatorname{det}A_{12},x_{4}=a_{33},x_{5}=\operatorname{det}A_{13}, x_{6}=\operatorname{det}A_{23}~{\rm{and}}~x_{7}=\operatorname{det}A.$$  Assume that $\|\mathcal F_{A}(z_3)\|<1$ for all $z_3\in \mathbb{\bar{\mathbb D}}.$  Then $\textbf{x}=(x_1,\ldots,x_7)\in G_{E(3;3;1,1,1)}$. 
			\end{thm}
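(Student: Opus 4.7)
The plan is to mimic the proof of Theorem \ref{another}, with the observation that the hypothesis $\|A\|<1$ was used there \emph{only} to guarantee $\|\mathcal F_A(z_3)\|<1$ for all $z_3\in\bar{\mathbb D}$; since the present theorem grants this conclusion directly, the argument should be strictly shorter. Concretely, I would first extract from the hypothesis the implicit consistency condition $1-a_{33}z_3 = 1-x_4 z_3 \neq 0$ for every $z_3\in\bar{\mathbb D}$, which is what makes $\mathcal F_A(z_3)$ well-defined in the first place.

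Next I would reproduce the purely algebraic manipulation from equation \eqref{FAA} in the proof of Theorem \ref{another} to identify the entries of $\mathcal F_A(z_3)$:
\[
A_{11}(z_3)=\frac{x_1-z_3 x_5}{1-x_4 z_3},\qquad A_{22}(z_3)=\frac{x_2-z_3 x_6}{1-x_4 z_3},\qquad \det(\mathcal F_A(z_3))=\frac{x_3-z_3 x_7}{1-x_4 z_3}.
\]
This step uses only the definitions of $x_1,\dots,x_7$ and does not depend on any norm inequality, so it is available under the weaker hypothesis at hand. Then, applying the characterization of tetrablock [Theorem $2.2$, \cite{Abouhajar}] to the $2\times 2$ matrix $B:=\mathcal F_A(z_3)$, which by hypothesis satisfies $\|B\|<1$ for every $z_3\in\bar{\mathbb D}$, I conclude that
\[
\left(\frac{x_1-z_3 x_5}{1-x_4 z_3},\ \frac{x_2-z_3 x_6}{1-x_4 z_3},\ \frac{x_3-z_3 x_7}{1-x_4 z_3}\right)\in G_{E(2;2;1,1)}\quad\text{for every } z_3\in\bar{\mathbb D}.
\]
Finally, this is precisely condition $(3)$ of Theorem \ref{char}, so $\mathbf{x}=(x_1,\dots,x_7)\in G_{E(3;3;1,1,1)}$, as required.

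The work is essentially bookkeeping, and there is no substantive obstacle: the delicate pieces, namely the realization-formula identity \eqref{FAA} and the transfer theorem \ref{char}, have already been established earlier in the paper. The only subtlety to verify carefully is that the derivation of \eqref{FAA} never invoked $\|A\|<1$, which is clear from inspection since only determinants and the condition $1-a_{33}z_3\neq 0$ enter the computation.
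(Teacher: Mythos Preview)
Your proposal is correct and follows essentially the same route as the paper's proof: identify the entries and determinant of $\mathcal F_A(z_3)$ via \eqref{FAA}, invoke the tetrablock characterization [Theorem~2.2, \cite{Abouhajar}] to place the resulting triple in $G_{E(2;2;1,1)}$ for each $z_3\in\bar{\mathbb D}$, and conclude via Theorem~\ref{char}(3). The only cosmetic difference is that the paper additionally records the intermediate consequence $(x_2,x_4,x_6)\in G_{E(2;2;1,1)}$ (obtained from $1-A_{22}(z_3)z_2\neq 0$), whereas you instead make explicit the well-definedness condition $1-x_4 z_3\neq 0$; neither observation is strictly needed once Theorem~\ref{char}(3) is invoked.
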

			\begin{proof}
Suppose that $\|\mathcal F_{A}(z_3)\|<1$ for all $z_3\in \bar{\mathbb D}. $ Then $1-A_{22}(z_3)z_2 \neq 0$ for all $z_2\in \bar{\mathbb D}$ which implies that 
				$$1-x_2z_2-x_4z_3+x_6z_2z_3\neq 0$$  for all $z_2,z_3\in \mathbb {\bar{D}}$ and hence by definition of tetrablock \cite{Abouhajar} we get $(x_2,x_4,x_6)\in G_{E(2;2;1,1)}.$ 
Since $\|\mathcal F_{A}(z_3)\|<1$ for all $z_3\in \mathbb{\bar{\mathbb D}}$ and $A_{11}(z_3)=\frac{x_1-z_3x_5}{1-x_4z_3},A_{22}(z_3)=\frac{x_2-z_3x_6}{1-x_4z_3}$ and 
				$\det(\mathcal F_{A}(z_3))=\frac{x_3-z_3x_7}{1-x_4z_3}$ for all $z_3\in \bar{\mathbb D},$ it follows from the characterization of tetrablock [Theorem $2.2$, \cite{Abouhajar}] that $$\left(\frac{x_1-z_3x_5}{1-x_4z_3},\frac{x_2-z_3x_6}{1-x_4z_3},\frac{x_3-z_3x_7}{1-x_4z_3}\right)\in G_{E(2;2;1,1)}~{\rm{for~all~}} z_3\in \bar{\mathbb D}.$$ Thus we conclude from  Theorem \ref{char} that $\mathbf{x}=(x_1,\ldots,x_7)\in G_{E(3;3;1,1,1)}.$ This completes the proof.
			\end{proof}		
\begin{rem}
By Theorem \ref{matix A} we notice that if $A\in \mathcal M_{3\times 3}(\mathbb C)$ with $\|A\|<1,$ then $\textbf{x}=(x_1,\ldots,x_7)$ given by \eqref{av1} belongs to $ G_{E(3;3;1,1,1)}$. It is interesting to know whether the converse of the Theorem  \ref{matix A} is true or not. Computing the sup norm of $\Psi^{(i)}, 1\leq i\leq 3,$ over $\bar{\mathbb D}^2$ is extremely difficult. In our opinion, Theorem \ref{matix A1} might be helpful in deciding whether a point $\textbf{x}=(x_1,\ldots,x_7)\in G_{E(3;3;1,1,1)}$ is equivalent to the existence of a $3\times 3$ matrix $A\in \mathcal M_{3\times 3}(\mathbb C)$ with $\|A\|<1$ such that \small{$$x_1=a_{11}, x_2=a_{22}, x_3=\det \left(\begin{smallmatrix} a_{11} & a_{12}\\
					a_{21} & a_{22}
				\end{smallmatrix}\right), x_4=a_{33}, x_5=\det \left(\begin{smallmatrix}
					a_{11} & a_{13}\\
					a_{31} & a_{33}
				\end{smallmatrix}\right), x_6=\det  \left(\begin{smallmatrix}
					a_{22} & a_{23}\\
					a_{32} & a_{33}\end{smallmatrix}\right) ~{\rm{and}}~x_7=\det A.$$}

\end{rem}			
			
\subsection{Characterization of $ \Gamma_{E(3;3;1,1,1)}$ }
			Now, we characterize the domain  $ \Gamma_{E(3;3;1,1,1)}$ using the rational functions. The proof of the following theorem is similar to the Theorem \ref{phi}. Therefore, we skip the proof.
			\begin{thm}\label{phii1}
				For $i=1,2,3,$	suppose $\textbf{z}_{J^{(i)}}=(z_{j_1},z_{j_2})\in \mathbb{C}^{2}$ for $j_1,j_2\in J^{(i)}$ with $j_1<j_2.$ Then $\textbf{x}\in \Gamma_{E(3;3;1,1,1)}$ if and only if it satisfies one of the following condition:
				\begin{enumerate}
					\item $\textbf{x}_{J^{(1)}}^{\prime}\in \Gamma_{E(2;2;1,1)}$ and 
					$\|\Psi^{(1)}(\cdot,\textbf{x})\|_{H^{\infty}(\mathbb{D}^{2})}\leq 1$ and if   $x_{7}=x_{6}x_{1},\,\,x_{3}=x_{2}x_{1},\,\,x_{5}=x_{4}x_{1}$ then $|x_1|\leq1;$
					
					\item  $\textbf{x}_{J^{(2)}}^{\prime}\in \Gamma_{E(2;2;1,1)}$ and 
					$\|\Psi^{(2)}(\cdot,\textbf{x})\|_{H^{\infty}(\mathbb{D}^{2})}\leq 1$ and if  $x_{7}=x_{5}x_{2},\,\,x_{3}=x_{2}x_{1},\,\,x_{6}=x_{4}x_{2}$ then $|x_2|\leq1;$

					\item  $\textbf{x}_{J^{(3)}}^{\prime}\in \Gamma_{E(2;2;1,1)}$ and 
					$\|\Psi^{(3)}(\cdot,\textbf{x})\|_{H^{\infty}(\mathbb{D}^{2})}\leq 1$ and if  $x_{7}=x_{3}x_{4},\,\,x_{6}=x_{2}x_{4},\,\,x_{5}=x_{4}x_{1}$ then $|x_4|\leq 1.$
					
				\end{enumerate}
			\end{thm}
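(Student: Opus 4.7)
The plan is to adapt the proof of Theorem \ref{phi} to the closure, keeping the same three-fold structure via the symmetry identities \eqref{eq:3.0} and proving only the case $i=2$ explicitly. The starting point is Theorem \ref{R_3}, which identifies $\Gamma_{E(3;3;1,1,1)}$ with $\mathcal B^{(1)}_{(3;3;1,1,1)}$: thus $\textbf{x}\in \Gamma_{E(3;3;1,1,1)}$ iff $R^{(3;3;1,1,1)}_{\textbf{x}}(\textbf{z})\neq 0$ for every $\textbf{z}\in \mathbb D^{3}$ (now the \emph{open} polydisc, not the closed one).

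Using the same factoring as in Theorem \ref{phi}, one writes
\[
R^{(3;3;1,1,1)}_{\textbf{x}}(\textbf z)=R^{(2)}(\textbf{z}_{J^{(2)}},\textbf{x}_{J^{(2)}}^{\prime})-z_{2}\,P^{(2)}(\textbf{z}_{J^{(2)}},\textbf{x}_{J^{(2)}}^{\prime\prime}),\qquad \textbf{z}_{J^{(2)}}=(z_{1},z_{3}).
\]
This is nowhere zero on $\mathbb D^{3}$ if and only if two things hold simultaneously: first, $R^{(2)}(\textbf{z}_{J^{(2)}},\textbf{x}_{J^{(2)}}^{\prime})\neq 0$ for all $\textbf{z}_{J^{(2)}}\in \mathbb D^{2}$, which by the tetrablock analogue of Theorem \ref{R_3} is precisely $\textbf{x}_{J^{(2)}}^{\prime}\in \Gamma_{E(2;2;1,1)}$; and second, the equation $z_{2}\Psi^{(2)}(\textbf{z}_{J^{(2)}},\textbf{x})=1$ has no solution $(z_{2},\textbf{z}_{J^{(2)}})\in \mathbb D\times \mathbb D^{2}$. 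The latter is equivalent to $|\Psi^{(2)}(\textbf{z}_{J^{(2)}},\textbf{x})|\leq 1$ for every $\textbf{z}_{J^{(2)}}\in \mathbb D^{2}$, because $z_{2}\Psi^{(2)}=1$ admits a solution with $z_{2}\in \mathbb D$ iff $|\Psi^{(2)}|>1$. That is precisely $\|\Psi^{(2)}(\cdot,\textbf{x})\|_{H^{\infty}(\mathbb D^{2})}\leq 1$.

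It remains to handle the degeneracy where $R^{(2)}$ and $P^{(2)}$ share a common factor. A direct computation shows that under the assumption $x_{7}=x_{5}x_{2}$, $x_{3}=x_{2}x_{1}$, $x_{6}=x_{4}x_{2}$, the numerator of $\Psi^{(2)}$ becomes $x_{2}$ times its denominator, so $\Psi^{(2)}$ is identically $x_{2}$. In this case the supremum condition reduces simply to $|x_{2}|\leq 1$, which accounts for the extra clause in the statement. The cases $i=1$ and $i=3$ then follow by applying the identities
\[
\Psi^{(3)}(z,w,x_1,x_4,x_5,x_2,x_3,x_6,x_7)=\Psi^{(2)}(z,w,\textbf{x})=\Psi^{(1)}(z,w,x_{2},x_{1},x_{3},x_{4},x_{6},x_{5},x_{7}),
\]
which correspond to permutations of the coordinates that preserve $\Gamma_{E(3;3;1,1,1)}$.

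The main obstacle, compared to Theorem \ref{phi}, is that one cannot invoke the maximum modulus principle to push the supremum onto the distinguished boundary $\mathbb T^{2}$: when $\textbf{x}_{J^{(2)}}^{\prime}$ lies on $\partial \Gamma_{E(2;2;1,1)}$ rather than inside $G_{E(2;2;1,1)}$, the denominator $R^{(2)}$ may vanish on $\mathbb T^{2}$ and $\Psi^{(2)}(\cdot,\textbf{x})$ need not extend continuously to $\bar{\mathbb D}^{2}$. This is precisely why the $H^{\infty}$-norm must be taken over the open polydisc and why the equivalence is stated with a non-strict inequality; the rest of the argument is a careful bookkeeping of the three symmetric cases.
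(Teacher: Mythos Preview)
Your proposal is correct and follows essentially the same approach the paper indicates: it explicitly says the proof of Theorem~\ref{phii1} ``is similar to the Theorem~\ref{phi}'' and omits it, and your adaptation---replacing Corollary~\ref{R_33} by Theorem~\ref{R_3}, working over the open polydisc $\mathbb D^{3}$, and dropping the maximum-modulus step to $\mathbb T^{2}$---is exactly that similarity made precise. Your observation that in the degenerate case $\Psi^{(2)}\equiv x_{2}$ so the extra clause is already contained in the $H^{\infty}$ bound is also correct; the clause is stated for emphasis rather than out of logical necessity.
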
 
			
			The following theorem describes a condition for membership of an element  $\textbf{x}=(x_1,\ldots,x_7)\in \Gamma_{E(3;3;1,1,1)}$ to its preceding level $\Gamma_{E(2;2;1,1)}$. The proof of the following theorem is similar to  Theorem \ref{char}. Therefore, we omit the proof. 
			\begin{thm}\label{charr}
				Suppose $\textbf{x}=(x_1,\ldots,x_7)\in \mathbb C^{7}.$ Then $\textbf{x}\in 
				\Gamma_{E(3;3;1,1,1)}$ if and only if
				it satisfies one of the following conditions:
				\begin{enumerate}
					\item $\left(\frac{x_2-z_1x_3}{1-x_1z_1},\frac{x_4-z_1x_5}{1-x_1z_1},\frac{x_6-z_1x_7}{1-x_1z_1}\right)\in \Gamma_{E(2;2;1,1)}$ for all $z_1\in \mathbb D;$
					
					\item $\left(\frac{x_1-z_2x_3}{1-x_2z_2},\frac{x_4-z_2x_6}{1-x_2z_2},\frac{x_5-z_2x_7}{1-x_2z_2}\right)\in \Gamma_{E(2;2;1,1)}$  for all $z_2\in \mathbb D;$
					
					\item $\left(\frac{x_1-z_3x_5}{1-x_4z_3},\frac{x_2-z_3x_6}{1-x_4z_3},\frac{x_3-z_3x_7}{1-x_4z_3}\right)\in \Gamma_{E(2;2;1,1)}$  for all $z_3\in \mathbb D.$
				\end{enumerate}
			\end{thm}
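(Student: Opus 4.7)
The strategy mirrors that of Theorem \ref{char}, but replaces the closed polydisc by the open one and replaces $G_{E(2;2;1,1)}$ by its closure $\Gamma_{E(2;2;1,1)}=\mathcal B^{(1)}_{(2;2;1,1)}$. I will only outline the equivalence between $\mathbf{x}\in\Gamma_{E(3;3;1,1,1)}$ and condition (1), since (2) and (3) follow by symmetry (indeed, the coordinate permutations appearing in Lemmas \ref{331}, \ref{12}, \ref{13} show that the three statements are obtained from each other by re-labelling).

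The first step is to invoke Theorem \ref{R_3} (with $r=1$) to rephrase membership: $\mathbf{x}\in\Gamma_{E(3;3;1,1,1)}$ is equivalent to
\[
R^{(3;3;1,1,1)}_{\mathbf{x}}(z_1,z_2,z_3)=1-x_1z_1-x_2z_2+x_3z_1z_2-x_4z_3+x_5z_1z_3+x_6z_2z_3-x_7z_1z_2z_3\neq 0
\]
for all $(z_1,z_2,z_3)\in\mathbb D^3$. Plugging $z_2=z_3=0$ immediately gives $1-x_1z_1\neq 0$ for all $z_1\in\mathbb D$, hence $|x_1|\le 1$; in particular $1-x_1z_1\neq 0$ on $\mathbb D$ and dividing $R^{(3;3;1,1,1)}_{\mathbf{x}}$ by $1-x_1z_1$ is permissible there.

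For the forward implication, fix $z_1\in\mathbb D$ and rewrite $R^{(3;3;1,1,1)}_{\mathbf{x}}(z_1,z_2,z_3)=(1-x_1z_1)\bigl(1-\tilde x_1(z_1)z_2-\tilde x_2(z_1)z_3+\tilde x_3(z_1)z_2z_3\bigr)$, where
\[
\tilde x_1(z_1)=\tfrac{x_2-z_1x_3}{1-x_1z_1},\quad \tilde x_2(z_1)=\tfrac{x_4-z_1x_5}{1-x_1z_1},\quad \tilde x_3(z_1)=\tfrac{x_6-z_1x_7}{1-x_1z_1}.
\]
The non-vanishing of $R^{(3;3;1,1,1)}_{\mathbf{x}}$ on $\mathbb D^3$ is then exactly the statement that the defining polynomial of $\mathcal B^{(1)}_{(2;2;1,1)}$ evaluated at $(\tilde x_1(z_1),\tilde x_2(z_1),\tilde x_3(z_1))$ is non-vanishing on $\mathbb D^2$; by the tetrablock analogue of Theorem \ref{R_3}, this places $(\tilde x_1(z_1),\tilde x_2(z_1),\tilde x_3(z_1))\in\Gamma_{E(2;2;1,1)}$, which is precisely condition (1).

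For the converse, assume condition (1). Then for every $z_1\in\mathbb D$, the non-vanishing of the tetrablock polynomial at $(\tilde x_1(z_1),\tilde x_2(z_1),\tilde x_3(z_1))$ on $\mathbb D^2$ holds (note: the very existence of the fractions $\tilde x_i(z_1)$ forces $1-x_1z_1\neq 0$ on $\mathbb D$). Multiplying that polynomial identity by $1-x_1z_1\neq 0$ recovers $R^{(3;3;1,1,1)}_{\mathbf{x}}(z_1,z_2,z_3)\neq 0$ on $\mathbb D^3$, so Theorem \ref{R_3} gives $\mathbf{x}\in\Gamma_{E(3;3;1,1,1)}$. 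The only delicate point, compared with the open-domain analogue of Theorem \ref{char}, is that here one must be careful that the expressions $\tilde x_i(z_1)$ are genuinely defined on the whole of $\mathbb D$; this is automatic from $|x_1|\le 1$ which, in turn, is forced either by condition (1) itself (via the tetrablock characterisation that $|x_1|\le 1$ for any point of $\Gamma_{E(2;2;1,1)}$) or, in the forward direction, by the $z_2=z_3=0$ slice of $R^{(3;3;1,1,1)}_{\mathbf{x}}\neq 0$ on $\mathbb D^3$. The symmetric arguments for conditions (2) and (3) go through verbatim after swapping the roles of the indices, using the identifications \eqref{eq:3.0} together with Lemmas \ref{12} and \ref{13}.
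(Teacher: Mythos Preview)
Your proposal is correct and follows essentially the same approach the paper indicates: the paper omits the proof of Theorem~\ref{charr}, stating only that it is ``similar to Theorem~\ref{char}'', and your argument is precisely the natural adaptation of that proof, replacing $\bar{\mathbb D}$ by $\mathbb D$ and $G_{E(2;2;1,1)}$ by $\Gamma_{E(2;2;1,1)}$, together with the factorisation $R^{(3;3;1,1,1)}_{\mathbf x}(z_1,z_2,z_3)=(1-x_1z_1)\,R^{(2;2;1,1)}_{(\tilde x_1,\tilde x_2,\tilde x_3)}(z_2,z_3)$ and Theorem~\ref{R_3}.

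One small slip: in your final paragraph you justify $|x_1|\le 1$ in the converse direction by ``the tetrablock characterisation that $|x_1|\le 1$ for any point of $\Gamma_{E(2;2;1,1)}$'', but $x_1$ is not a coordinate of the tetrablock point $(\tilde x_1(z_1),\tilde x_2(z_1),\tilde x_3(z_1))$ in condition~(1). The correct reason---which you already stated a few lines earlier---is simply that condition~(1) being well-posed for all $z_1\in\mathbb D$ requires $1-x_1z_1\neq 0$ on $\mathbb D$, hence $|x_1|\le 1$. This does not affect the validity of your argument.
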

			We give the characterization of $\Gamma_{E(3;3;1,1,1)}$ analogous to Theorem \ref{matix A}.  To characterize  $\Gamma_{E(3;3;1,1,1)}$, we need the following lemmas and proposition. The proof of the  proposition  below is similar to the Proposition \ref{matrix AA}. Therefore, we skip the proof.
			\begin{prop}\label{matrix AAA}
				Suppose  $A\in \mathcal M_{3\times 3}(\mathbb C)$ with $\|A\|\leq1$. Then 
				\begin{align}\label{AAAAA norm}
					1-\overline{\mathcal G_{A}\left(\left(\begin{smallmatrix}w_2 & 0 \\0 &w_{3}
						\end{smallmatrix}\right)\right)}\mathcal G_{A}\left(\left(\begin{smallmatrix}z_2 & 0 \\0 &z_{3}
					\end{smallmatrix}\right)\right)\nonumber&=\overline{{\tilde{\gamma_1}(w_2,w_3)}}(1-\bar{w}_2z_2)\tilde{\gamma}_1(z_2,z_3)+\overline{{\tilde{\gamma_2}(w_2,w_3)}}(1-\bar{w}_3z_3)\tilde{\gamma}_2(z_2,z_3)\\&+{\tilde{\eta}(w_2,w_3)}^*(I_{3}-A^*A)\tilde{\eta}(z_2,z_3)
				\end{align}
				for all $z_2,w_2,z_3,w_3\in \mathbb C$ with $\det\left(I_2-\left(\begin{smallmatrix}a_{22} & a_{23} \\a_{32} &a_{33}
				\end{smallmatrix}\right)\left(\begin{smallmatrix}z_2 & 0 \\0 &z_{3}
				\end{smallmatrix}\right)\right)\neq 0$ and $\det\left(I_2-\left(\begin{smallmatrix}a_{22} & a_{23} \\a_{32} &a_{33}
				\end{smallmatrix}\right)\left(\begin{smallmatrix}w_2 & 0 \\0 &w_{3}
				\end{smallmatrix}\right)\right)\neq 0$. Moreover, $|\mathcal G_{A}\left(\left(\begin{smallmatrix}z_2 & 0 \\0 &z_{3}
				\end{smallmatrix}\right)\right)|\leq1$ for all $z_2,z_3\in \bar{\mathbb D}$ with $\det\left(I_2-\left(\begin{smallmatrix}a_{22} & a_{23} \\a_{32} &a_{33}
				\end{smallmatrix}\right)\left(\begin{smallmatrix}z_2 & 0 \\0 &z_{3}
				\end{smallmatrix}\right)\right)\neq 0.$
				
			\end{prop}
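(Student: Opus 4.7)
The plan is to follow essentially the same route used in the proof of Proposition \ref{matrix AA}, but substituting the assumption $\|A\|\le 1$ for $\|A\|<1$ and invoking Proposition \ref{fpzz1} in place of Proposition \ref{fpzz} at the final step. The identity \eqref{AAAAA norm} is an algebraic identity that does not depend on any contractivity assumption on $A$, so the same computation applies verbatim.

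First I would specialize the general identity \eqref{FPQQ} in Proposition \ref{contt.} by choosing $\mathcal H_i=\mathbb C=\mathcal K_i$ for $1\le i\le 3$, $P=Q=A$, $X=\left(\begin{smallmatrix}z_2 & 0\\ 0 & z_3\end{smallmatrix}\right)$ and $Y=\left(\begin{smallmatrix}w_2 & 0\\ 0 & w_3\end{smallmatrix}\right)$. Under the nonsingularity hypotheses
$$\det\!\left(I_2-\left(\begin{smallmatrix}a_{22}&a_{23}\\a_{32}&a_{33}\end{smallmatrix}\right)\left(\begin{smallmatrix}z_2 & 0\\ 0 & z_3\end{smallmatrix}\right)\right)\neq 0,\qquad \det\!\left(I_2-\left(\begin{smallmatrix}a_{22}&a_{23}\\a_{32}&a_{33}\end{smallmatrix}\right)\left(\begin{smallmatrix}w_2 & 0\\ 0 & w_3\end{smallmatrix}\right)\right)\neq 0,$$
the quantities $A_1$ and $B_1$ in Proposition \ref{contt.} become exactly $\tilde\gamma(z_2,z_3)=\bigl(\tilde\gamma_1(z_2,z_3),\tilde\gamma_2(z_2,z_3)\bigr)^T$ and $\tilde\gamma(w_2,w_3)$ as defined in \eqref{ga}. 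Substituting these into \eqref{FPQQ}, together with the definition of $\tilde\eta$ in \eqref{et}, gives
$$\left(\begin{smallmatrix}\tilde\gamma_1(w_2,w_3)\\ \tilde\gamma_2(w_2,w_3)\end{smallmatrix}\right)^*\left(\begin{smallmatrix}1-\bar w_2 z_2 & 0\\ 0 & 1-\bar w_3 z_3\end{smallmatrix}\right)\left(\begin{smallmatrix}\tilde\gamma_1(z_2,z_3)\\ \tilde\gamma_2(z_2,z_3)\end{smallmatrix}\right) + \tilde\eta(w_2,w_3)^*(I_3-A^*A)\tilde\eta(z_2,z_3),$$
which is precisely the right-hand side of \eqref{AAAAA norm}. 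This establishes the identity.

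For the "moreover" part, I would set $w_2=z_2$ and $w_3=z_3$ with $z_2,z_3\in\bar{\mathbb D}$. Then the diagonal matrix $X=\left(\begin{smallmatrix}z_2 & 0\\ 0 & z_3\end{smallmatrix}\right)$ has $\|X\|=\max\{|z_2|,|z_3|\}\le 1$, and by hypothesis $\|A\|\le 1$. Proposition \ref{fpzz1} then yields $|\mathcal G_A(X)|\le 1$, giving the claimed inequality. Since the full computation is identical to the one carried out in the proof of Proposition \ref{matrix AA} (only the strict inequalities are relaxed to non-strict ones, and Proposition \ref{fpzz1} replaces Proposition \ref{fpzz}), there is no real obstacle; the only thing to be careful about is maintaining the nonsingularity of the two determinants so that the inverses entering $\tilde\gamma$ and $\mathcal G_A$ are well-defined, which is precisely why this hypothesis is built into the statement.
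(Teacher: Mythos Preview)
Your proposal is correct and follows exactly the approach the paper indicates: the paper explicitly states that the proof of this proposition is similar to Proposition~\ref{matrix AA} and therefore skips it, and you have carried out precisely that parallel argument, invoking Proposition~\ref{fpzz1} in place of Proposition~\ref{fpzz} for the non-strict contractivity conclusion.
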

			
			\begin{lem}\label{pro}
				$G_{E(3;3;1,1,1)} \subsetneq \Gamma_{E(3;3;1,1,1)}.$
			\end{lem}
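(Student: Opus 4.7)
The plan is to establish the stated inclusion in two steps: (i) the inclusion $G_{E(3;3;1,1,1)} \subseteq \Gamma_{E(3;3;1,1,1)}$, which is immediate from the defining inequalities, and (ii) strictness, which I will establish by exhibiting an explicit witness point. For (i), comparing the definitions \eqref{GE} and \eqref{GammaE}, any matrix $A \in \mathcal M_{3\times 3}(\mathbb C)$ with $\mu_{E(3;3;1,1,1)}(A) < 1$ automatically satisfies $\mu_{E(3;3;1,1,1)}(A) \leq 1$, so its image $\pi(A)$ lies in both sets.

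For (ii), I would work not with the $\mu_E$-definition directly but with the polynomial characterizations already established in Theorem \ref{R_3} and Corollary \ref{R_33}, namely $\Gamma_{E(3;3;1,1,1)} = \mathcal B^{(1)}_{(3;3;1,1,1)}$ and $G_{E(3;3;1,1,1)} = \mathcal A^{(1)}_{(3;3;1,1,1)}$. These reduce membership to a non-vanishing condition on $R^{(3;3;1,1,1)}_{\textbf x}$ over $\mathbb D^3$ versus over $\bar{\mathbb D}^3$. Consider the candidate
\[
\textbf{x} = (1,0,0,0,0,0,0) \in \mathbb C^7.
\]
Substituting into \eqref{Rz}, one computes $R^{(3;3;1,1,1)}_{\textbf x}(z_1,z_2,z_3) = 1 - z_1$. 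Since $|z_1|<1$ implies $z_1 \neq 1$, this polynomial is non-vanishing on $\mathbb D^3$, so $\textbf x \in \mathcal B^{(1)}_{(3;3;1,1,1)} = \Gamma_{E(3;3;1,1,1)}$ by Theorem \ref{R_3}. On the other hand, $1-z_1$ vanishes at $z_1=1 \in \bar{\mathbb D}$, so $\textbf x \notin \mathcal A^{(1)}_{(3;3;1,1,1)}$, and hence $\textbf x \notin G_{E(3;3;1,1,1)}$ by Corollary \ref{R_33}.

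Combining the two steps yields $G_{E(3;3;1,1,1)} \subsetneq \Gamma_{E(3;3;1,1,1)}$, as required. There is no genuine obstacle here: the main work was already done in building the polynomial characterizations of both sets, and the only subtlety is to pick a witness whose associated polynomial $R_{\textbf x}$ has its zeros exactly on the distinguished boundary $\mathbb T^3$ rather than in the open polydisc $\mathbb D^3$. For transparency, one may alternatively take $\textbf{x} = (1,1,1,1,1,1,1)$, corresponding to the matrix $A = I_3$ with $\mu_{E(3;3;1,1,1)}(I_3) = 1$: here $R_{\textbf x} = (1-z_1)(1-z_2)(1-z_3)$, which again vanishes only on $\mathbb T^3 \cap \bar{\mathbb D}^3$, giving the same conclusion.
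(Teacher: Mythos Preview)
Your proof is correct and follows essentially the same approach as the paper: invoke the polynomial characterizations from Theorem~\ref{R_3} and Corollary~\ref{R_33} and exhibit a witness whose $R_{\textbf{x}}$ vanishes only on the torus. The paper uses the point $(1,1,1,1,1,1,1)$ (which you also mention), while your primary witness $(1,0,0,0,0,0,0)$ is a slightly simpler choice yielding $R_{\textbf{x}}=1-z_1$.
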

			\begin{proof}
				By using Theorem \ref{R_3} and Corollary \ref{R_33}, it is easy to see that the element $(1,1,1,1,1,1,1)\in \Gamma_{E(3;3;1,1,1)}$ but $(1,1,1,1,1,1,1)$ is not a member of $G_{E(3;3;1,1,1)}$, which shows that $G_{E(3;3;1,1,1)}\subsetneq \Gamma_{E(3;3;1,1,1)}.$
			\end{proof}
			
			\begin{lem}\label{proo}
				$G_{E(2;2;1,1)} \subsetneq \Gamma_{E(2;2;1,1)}.$
			\end{lem}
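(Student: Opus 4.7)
The plan is to mirror the proof of Lemma~\ref{pro} by exhibiting an explicit element of $\Gamma_{E(2;2;1,1)}\setminus G_{E(2;2;1,1)}$. The inclusion $G_{E(2;2;1,1)}\subseteq \Gamma_{E(2;2;1,1)}$ is immediate, since $\Gamma_{E(2;2;1,1)}$ is the closure of $G_{E(2;2;1,1)}$, so the whole task reduces to producing a single separating point.

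The natural candidate is $\textbf{x}=(1,1,1)$, in perfect analogy with the point $(1,\ldots,1)\in\mathbb C^{7}$ used in Lemma~\ref{pro}. The key computation is that for this $\textbf{x}$ one has the factorization
\[
R^{(2;2;1,1)}_{\textbf{x}}(z_1,z_2)=1-z_1-z_2+z_1z_2=(1-z_1)(1-z_2).
\]
For every $(z_1,z_2)\in\mathbb D^2$, both factors are nonzero, so $R^{(2;2;1,1)}_{\textbf{x}}(z_1,z_2)\neq 0$ on $\mathbb D^2$; thus $\textbf{x}\in\mathcal B^{(1)}_{(2;2;1,1)}=\Gamma_{E(2;2;1,1)}$ by the characterization of the tetrablock recalled earlier in the paper (from \cite{Abouhajar}). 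On the other hand, at $(z_1,z_2)=(1,1)\in\bar{\mathbb D}^2$ the polynomial vanishes, so $\textbf{x}\notin\mathcal A^{(1)}_{(2;2;1,1)}=G_{E(2;2;1,1)}$.

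Putting these two observations together yields $(1,1,1)\in\Gamma_{E(2;2;1,1)}\setminus G_{E(2;2;1,1)}$, which gives the strict inclusion. There is no real obstacle here; the only thing to be careful about is to invoke the right characterizations, namely $G_{E(2;2;1,1)}=\mathcal A^{(1)}_{(2;2;1,1)}$ and $\Gamma_{E(2;2;1,1)}=\mathcal B^{(1)}_{(2;2;1,1)}$ from \cite{Abouhajar}, so that membership in (or exclusion from) each set can be read off directly from the nonvanishing of $R^{(2;2;1,1)}_{\textbf{x}}$ on $\bar{\mathbb D}^2$ or $\mathbb D^2$, respectively.
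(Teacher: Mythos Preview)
Your proof is correct and takes essentially the same approach as the paper: both exhibit the point $(1,1,1)$ as an element of $\Gamma_{E(2;2;1,1)}\setminus G_{E(2;2;1,1)}$. Your version is in fact more detailed, since you verify the membership and non-membership explicitly via the factorization $R^{(2;2;1,1)}_{(1,1,1)}(z_1,z_2)=(1-z_1)(1-z_2)$, whereas the paper simply asserts these facts.
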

			\begin{proof}
				Note that $(1,1,1)\in \Gamma_{E(2;2;1,1)}$ but $(1,1,1)\notin G_{E(2;2;1,1)}$. This shows that $G_{E(2;2;1,1)} \subsetneq \Gamma_{E(2;2;1,1)}.$
			\end{proof}
			\begin{lem}\label{tetraa1} Let ${\bf{x}}\in \Gamma_{E(3;3;1,1,1)}\setminus G_{E(3;3;1,1,1)}.$ Then $R_{\bf{x}}^{(3;3;1,1,1)}(\textbf{z})=0$ for some ${\bf{z}}=(z_1,z_2,z_3) \in \mathbb T^3.$
			\end{lem}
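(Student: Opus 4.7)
The plan is to pass from the existence of a zero of $R_{\mathbf x}^{(3;3;1,1,1)}$ somewhere in $\bar{\mathbb D}^3$ to a zero on the distinguished boundary $\mathbb T^3$, by combining the maximum modulus principle for the polydisc (applied to $1/R_{\mathbf x}^{(3;3;1,1,1)}$ on dilates $r\bar{\mathbb D}^3$) with a continuity argument as $r\uparrow 1$.

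First, I would unpack the hypothesis using the previously established characterizations. By Corollary \ref{R_33}, $\mathbf{x}\in G_{E(3;3;1,1,1)}$ if and only if $R_{\mathbf{x}}^{(3;3;1,1,1)}(\mathbf{z})\ne 0$ for every $\mathbf{z}\in\bar{\mathbb D}^3$, while by Theorem \ref{R_3}, $\mathbf{x}\in\Gamma_{E(3;3;1,1,1)}$ if and only if $R_{\mathbf{x}}^{(3;3;1,1,1)}(\mathbf{z})\ne 0$ for every $\mathbf{z}\in\mathbb D^3$. So the assumption $\mathbf{x}\in\Gamma_{E(3;3;1,1,1)}\setminus G_{E(3;3;1,1,1)}$ is equivalent to saying that $R_{\mathbf{x}}^{(3;3;1,1,1)}$ is zero-free on the open polydisc $\mathbb D^3$, yet vanishes at some point $\mathbf{z}^{*}\in\bar{\mathbb D}^3\setminus\mathbb D^3$.

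Next, fix any $0<r<1$. Since $r\bar{\mathbb D}^3\subset\mathbb D^3$, the polynomial $R_{\mathbf{x}}^{(3;3;1,1,1)}$ is nowhere zero on $r\bar{\mathbb D}^3$, so $1/R_{\mathbf{x}}^{(3;3;1,1,1)}$ is holomorphic on a neighborhood of $r\bar{\mathbb D}^3$. Applying the maximum modulus principle one variable at a time on the polydisc $r\bar{\mathbb D}^3$ (whose Shilov boundary is $r\mathbb T^3$), I would obtain
\[
\min_{\mathbf z\in r\bar{\mathbb D}^3}\bigl|R_{\mathbf{x}}^{(3;3;1,1,1)}(\mathbf z)\bigr|
= \min_{\mathbf z\in r\mathbb T^3}\bigl|R_{\mathbf{x}}^{(3;3;1,1,1)}(\mathbf z)\bigr|.
\]
Because $r\mathbf z^{*}\in r\bar{\mathbb D}^3$ and $R_{\mathbf{x}}^{(3;3;1,1,1)}(r\mathbf z^{*})\to R_{\mathbf{x}}^{(3;3;1,1,1)}(\mathbf z^{*})=0$ as $r\uparrow 1$, the left-hand side tends to $0$.

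Finally, I would use the uniform continuity of $R_{\mathbf{x}}^{(3;3;1,1,1)}$ on $\bar{\mathbb D}^3$ to conclude that $r\mapsto \min_{\mathbf z\in r\mathbb T^3}|R_{\mathbf{x}}^{(3;3;1,1,1)}(\mathbf z)|$ is continuous in $r$, hence its limit as $r\uparrow 1$ equals $\min_{\mathbf z\in\mathbb T^3}|R_{\mathbf{x}}^{(3;3;1,1,1)}(\mathbf z)|$. Combining with the previous display forces $\min_{\mathbf z\in\mathbb T^3}|R_{\mathbf{x}}^{(3;3;1,1,1)}(\mathbf z)|=0$, and compactness of $\mathbb T^3$ produces an actual point $\mathbf z\in\mathbb T^3$ where $R_{\mathbf{x}}^{(3;3;1,1,1)}(\mathbf z)=0$. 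I do not anticipate any real obstacle here; the only subtlety is being careful that the inversion $1/R_{\mathbf{x}}^{(3;3;1,1,1)}$ is done on the shrunken polydisc $r\bar{\mathbb D}^3$ where zero-freeness is guaranteed by $\mathbf{x}\in\Gamma_{E(3;3;1,1,1)}$, and then passing to the limit $r\uparrow 1$.
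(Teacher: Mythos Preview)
Your proof is correct and follows essentially the same idea as the paper's. The paper's argument is a one-line contradiction—assuming $R_{\mathbf x}^{(3;3;1,1,1)}\ne 0$ on $\mathbb T^3$, it invokes Theorem~\ref{R_3} and Corollary~\ref{R_33} to conclude $\mathbf x\in G_{E(3;3;1,1,1)}$—leaving implicit precisely the minimum-modulus step (that zero-freeness on $\mathbb D^3\cup\mathbb T^3$ forces zero-freeness on all of $\bar{\mathbb D}^3$) which your dilation-and-limit argument spells out.
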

			\begin{proof}
				We  prove it by contradiction. Suppose $R_{\bf{x}}^{(3;3;1,1,1)}(\textbf{z})\neq0$ for all ${\bf{z}}=(z_1,z_2,z_3) \in \mathbb T^3.$  Theorem \ref{R_3} and Corollary \ref{R_33} lead to the conclusion that ${\bf{x}}=(x_1,\ldots,x_7)\in G_{E(3;3;1,1,1)},$ which is a contradiction. This shows that $R_{\bf{x}}^{(3;3;1,1,1)}(\textbf{z})=0$ for some ${\bf{z}}=(z_1,z_2,z_3) \in \mathbb T^3.$
			\end{proof}
			
			The proof of the following lemma is similar to Lemma \ref{tetraa1}, therefore we skip the proof.
			
			\begin{lem}\label{tetraa11} 
				Suppose ${\bf{y}}=(y_1,y_2,y_3)$ belongs to $ \Gamma_{E(2;2;1,1)}\setminus G_{E(2;2;1,1)}.$ Then $R_{\bf{y}}^{(2;2;1,1)}({\bf{z^{\prime}}})=0$ for some ${\bf{z^{\prime}}}=(z_1,z_2) \in \mathbb T^2.$ 
			\end{lem}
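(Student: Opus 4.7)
The plan is to mirror the contradiction argument of Lemma \ref{tetraa1}, working this time with the tetrablock polynomial
\[
R_{\bf{y}}^{(2;2;1,1)}(z_1,z_2) \;=\; 1 - y_1 z_1 - y_2 z_2 + y_3 z_1 z_2
\]
instead of the three-variable polynomial. I would begin by supposing, for contradiction, that $R_{\bf{y}}^{(2;2;1,1)}(z_1,z_2) \neq 0$ for every $(z_1,z_2) \in \mathbb T^2$. From ${\bf{y}} \in \Gamma_{E(2;2;1,1)}$ together with the identity $\Gamma_{E(2;2;1,1)} = \mathcal B^{(1)}_{(2;2;1,1)}$ recorded in the excerpt (and proved in \cite{Abouhajar}), one already has $R_{\bf{y}}^{(2;2;1,1)} \neq 0$ on $\mathbb D^2$. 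The goal will then be to upgrade these two facts into $R_{\bf{y}}^{(2;2;1,1)} \neq 0$ on the full closed bidisc $\bar{\mathbb D}^2$, so that the companion identity $G_{E(2;2;1,1)} = \mathcal A^{(1)}_{(2;2;1,1)}$ forces ${\bf{y}} \in G_{E(2;2;1,1)}$, contradicting ${\bf{y}} \notin G_{E(2;2;1,1)}$.

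The bridging step---ruling out zeros on $\partial \mathbb D^2 \setminus \mathbb T^2$---is where the work lies. Suppose a zero $(z_1^*, z_2^*)$ exists with, say, $|z_1^*| = 1$ and $|z_2^*| < 1$ (the other case is symmetric). The slice $\zeta \mapsto R_{\bf{y}}^{(2;2;1,1)}(z_1^*, \zeta) = (1 - y_1 z_1^*) - (y_2 - y_3 z_1^*)\zeta$ is affine in $\zeta$. If it vanishes identically, then its values at $\zeta \in \mathbb T$ contradict the standing assumption $R_{\bf{y}}^{(2;2;1,1)} \neq 0$ on $\mathbb T^2$. Otherwise, its unique root is $z_2^* = (1 - y_1 z_1^*)/(y_2 - y_3 z_1^*)$, and since $|z_2^*| < 1$, continuity of the rational map $\phi(w) := (1 - y_1 w)/(y_2 - y_3 w)$ at $w = z_1^*$ lets me pick $w \in \mathbb D$ close to $z_1^*$ with $|\phi(w)| < 1$, producing a zero of $R_{\bf{y}}^{(2;2;1,1)}$ inside $\mathbb D^2$, contradicting the previous step.

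Once $R_{\bf{y}}^{(2;2;1,1)} \neq 0$ on $\bar{\mathbb D}^2$ is established, the polynomial is bounded away from zero on this compact set, so by continuity there exists $\epsilon_0 > 0$ with $R_{\bf{y}}^{(2;2;1,1)} \neq 0$ on $(1+\epsilon_0)\mathbb D^2$. That places ${\bf{y}}$ in $\mathcal B^{(1+\epsilon_0)}_{(2;2;1,1)} = \Gamma^{(1+\epsilon_0)}_{E(2;2;1,1)}$, so any representing $2\times 2$ matrix $A$ satisfies $\mu_{E(2;2;1,1)}(A) \leq 1/(1+\epsilon_0) < 1$, and therefore ${\bf{y}} \in G_{E(2;2;1,1)}$, the desired contradiction. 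I expect the bridging step above to be the main obstacle; the remainder is a direct two-variable port of the argument for Lemma \ref{tetraa1}.
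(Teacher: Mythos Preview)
Your proposal is correct and follows the same overall strategy as the paper: argue by contradiction, combine the characterizations $\Gamma_{E(2;2;1,1)}=\mathcal B^{(1)}_{(2;2;1,1)}$ and $G_{E(2;2;1,1)}=\mathcal A^{(1)}_{(2;2;1,1)}$, and deduce $\mathbf y\in G_{E(2;2;1,1)}$. The paper omits the proof entirely, pointing to Lemma~\ref{tetraa1}, whose written proof simply asserts that nonvanishing on $\mathbb T^3$ together with Theorem~\ref{R_3} and Corollary~\ref{R_33} forces membership in $G_{E(3;3;1,1,1)}$---without addressing zeros on $\partial\bar{\mathbb D}^n\setminus\mathbb T^n$. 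You correctly identified this as the only nontrivial point and supplied a clean bridging argument via the affine slice $\zeta\mapsto R_{\mathbf y}^{(2;2;1,1)}(z_1^*,\zeta)$. One minor remark: once you have $R_{\mathbf y}^{(2;2;1,1)}\neq 0$ on $\bar{\mathbb D}^2$, the identity $G_{E(2;2;1,1)}=\mathcal A^{(1)}_{(2;2;1,1)}$ gives the contradiction immediately; the subsequent $\epsilon_0$-step is not needed (though it is not wrong).
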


			\begin{lem}\label{tetra} Let ${\bf{y}}=(y_1,y_2,y_3)\in \Gamma_{E(2;2;1,1)}.$ Then \begin{equation}\label{tetr}
					1-y_1z_1-y_2z_2+y_3z_1z_2\neq 0
				\end{equation} for some $z_1,z_2 \in \mathbb T.$
			\end{lem}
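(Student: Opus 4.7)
The plan is to argue by contradiction, exploiting only the polynomial structure of
$$p(z_1,z_2) := 1 - y_1 z_1 - y_2 z_2 + y_3 z_1 z_2.$$
Assume, contrary to the claim, that $p(z_1,z_2)=0$ for every $(z_1,z_2)\in\mathbb{T}^2$. My strategy is to freeze one torus coordinate at a time and use the fact that a single-variable polynomial vanishing on the infinite set $\mathbb{T}$ must be the zero polynomial.

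First, fix any $z_2 \in \mathbb{T}$. Then
$$z_1 \longmapsto p(z_1,z_2) = (1-y_2 z_2) + (y_3 z_2 - y_1) z_1$$
is a polynomial in $z_1$ of degree at most one which, by the contradiction hypothesis, vanishes at every point of $\mathbb{T}$. Since $\mathbb{T}$ is infinite, this polynomial is identically zero, which forces both of its coefficients to vanish; in particular, the constant coefficient gives $1 - y_2 z_2 = 0$. As $z_2$ was an arbitrary point of $\mathbb{T}$, the one-variable polynomial $q(z_2) := 1 - y_2 z_2$ (again of degree at most one) then vanishes on the infinite set $\mathbb{T}$, so $q \equiv 0$. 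This contradicts $q(0)=1$, so there must exist $(z_1,z_2)\in\mathbb{T}^2$ with $p(z_1,z_2)\neq 0$.

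There is no genuine obstacle in the argument; the only thing to be mindful of is the bookkeeping that reduces a two-variable statement to two applications of the one-variable fact. It is worth noting that the hypothesis $\mathbf{y}\in \Gamma_{E(2;2;1,1)}$ plays no role in the proof: the conclusion holds for every $\mathbf{y}\in\mathbb{C}^3$. The hypothesis is presumably stated only because of the way this lemma will be invoked later (for instance, in tandem with Lemma \ref{tetraa11}, when passing from interior non-vanishing of $R^{(2;2;1,1)}_{\mathbf{y}}$ on $\mathbb{D}^2$ to the existence of a point of non-vanishing on the distinguished boundary $\mathbb{T}^2$).
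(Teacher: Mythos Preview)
Your proof is correct. Both you and the paper argue by contradiction, assuming that $p(z_1,z_2)=1-y_1z_1-y_2z_2+y_3z_1z_2$ vanishes identically on $\mathbb{T}^2$, but the mechanisms for reaching a contradiction differ. The paper evaluates $p$ at the four points $(\pm 1,\pm 1)$ to obtain a small linear system whose solution forces $y_1=1$ and $y_1=-1$ simultaneously. You instead freeze one coordinate and use the fact that a nonzero univariate polynomial cannot vanish on the infinite set $\mathbb{T}$, applied twice. Your argument is slightly more conceptual and generalizes immediately to higher degrees or more variables, while the paper's is a direct hands-on computation; both are equally elementary. Your observation that the hypothesis $\mathbf{y}\in\Gamma_{E(2;2;1,1)}$ is never used is also correct, and indeed the paper's proof does not use it either.
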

			\begin{proof}
				We prove it by contradiction. Assume that
				\begin{equation}\label{y1}1-y_1z_1-y_2z_2+y_3z_1z_2= 0
				\end{equation} for all $z_1,z_2 \in \mathbb T.$ Then by putting $z_1=z_2=1$, $z_1=-z_2=1$, $-z_1=z_2=1$ and $z_1=z_2=-1$ in \eqref{y1} respectively, we get the following identities:
				\begin{equation}
					1-y_1-y_2+y_3=0, 1-y_1+y_2-y_3=0, 1+y_1-y_2-y_3=0~{\rm{and}}~1+y_1+y_2+y_3=0
				\end{equation}
				respectively. By solving the above equation, we have $y_1=1$ and $y_1=-1$, which is a contradiction. Hence, we have $1-y_1z_1-y_2z_2+y_3z_1z_2\neq 0$ for some $z_1,z_2 \in \mathbb T.$
			\end{proof}
\begin{rem}
In the previous Lemma, we observe that for ${\bf{y}}=(y_1,y_2,y_3)\in \Gamma_{E(2;2;1,1)}, $ there exists a point $(z_1^{0},z_2^{0})\in \mathbb T^2$ such that $1-y_1z_1^{0}-y_2z_2^{0}+y_3z_1^{0}z_2^{0}\neq 0.$ We now show that if  $1-y_1z_1^{0}-y_2z_2^{0}+y_3z_1^{0}z_2^{0}\neq 0$ for ${\bf{y}}=(y_1,y_2,y_3)\in \Gamma_{E(2;2;1,1)}, $ then $1-y_1z_1^{0}\neq 0.$ To see this, it suffices to show that if $1-y_1z_1^{0}=0,$ then $1-y_1z_1^{0}-y_2z_2^{0}+y_3z_1^{0}z_2^{0}=0$ for  ${\bf{y}}=(y_1,y_2,y_3)\in \Gamma_{E(2;2;1,1)}.$ Note that  $1-y_1z_1^{0}=0,$ implies $y_1=\frac{1}{z_1^{0}}$ and $|y_1|=1.$ As   ${\bf{y}}=(y_1,y_2,y_3)\in \Gamma_{E(2;2;1,1)},$ then by characterization of $\Gamma_{E(2;2;1,1)}$ \cite{Abouhajar}, we have 
\begin{equation}\label{blockc}|y_2 -\bar{y}_1y_3| + |y_1y_2- y_3| \leq 1-|y_1|^2.\end{equation}
Since $|y_1|=1,$ it follows from \eqref{blockc} that $y_1y_2=y_3$ and hence  $1-y_1z_1^{0}-y_2z_2^{0}+y_3z_1^{0}z_2^{0}=0$.
				Thus, $$1-y_1z_1^{0}-y_2z_2^{0}+y_3z_1^{0}z_2^{0}\neq 0$$ for ${\bf{y}}=(y_1,y_2,y_3)\in \Gamma_{E(2;2;1,1)}, $ gives  $1-y_1z_1^{0}\neq 0.$ 
				Using the same argument, we also show that if  $1-y_1z_1^{0}-y_2z_2^{0}+y_3z_1^{0}z_2^{0}\neq 0$ for ${\bf{y}}=(y_1,y_2,y_3)\in \Gamma_{E(2;2;1,1)}, $ then $1-y_2z_2^{0}\neq 0.$ 
			\end{rem}
\begin{lem}\label{tetraa} Let ${\bf{x}}\in \Gamma_{E(3;3;1,1,1)}.$ Then $R_{\bf{x}}^{(3;3;1,1,1)}(\textbf{z})\neq0$ for some ${\bf{z}}=(z_1,z_2,z_3) \in \mathbb T^3.$
			\end{lem}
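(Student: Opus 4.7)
The plan is to argue by contradiction, paralleling Lemma \ref{tetra} but reducing the three-variable problem to the two-variable case.

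Suppose to the contrary that $R_{\bf{x}}^{(3;3;1,1,1)}(z_1,z_2,z_3)=0$ for every $(z_1,z_2,z_3)\in\mathbb T^3$. Since $\textbf{x}\in\Gamma_{E(3;3;1,1,1)}$, Theorem \ref{phii1}(1) gives $\textbf{x}_{J^{(1)}}^{\prime}=(x_2,x_4,x_6)\in\Gamma_{E(2;2;1,1)}$. Applying Lemma \ref{tetra} to this triple, I obtain a point $(z_2^{0},z_3^{0})\in\mathbb T^2$ with
\[
1-x_2 z_2^{0}-x_4 z_3^{0}+x_6 z_2^{0} z_3^{0}\neq 0.
\]

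Next I would rewrite $R_{\bf{x}}^{(3;3;1,1,1)}$ as a polynomial of degree $\leq 1$ in $z_1$:
\[
R_{\bf{x}}^{(3;3;1,1,1)}(z_1,z_2,z_3)=\bigl(1-x_2 z_2-x_4 z_3+x_6 z_2 z_3\bigr)-z_1\bigl(x_1-x_3 z_2-x_5 z_3+x_7 z_2 z_3\bigr).
\]
Freezing $(z_2,z_3)=(z_2^{0},z_3^{0})$ and letting $z_1$ range over $\mathbb T$, the hypothesis forces
\[
1-x_2 z_2^{0}-x_4 z_3^{0}+x_6 z_2^{0} z_3^{0}=z_1\bigl(x_1-x_3 z_2^{0}-x_5 z_3^{0}+x_7 z_2^{0} z_3^{0}\bigr)\quad\text{for all }z_1\in\mathbb T.
\]
If the bracketed coefficient on the right vanishes, then the left-hand side is also $0$, contradicting the choice of $(z_2^{0},z_3^{0})$. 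If the coefficient is non-zero, then $z_1$ is forced to take the single value
\[
\frac{1-x_2 z_2^{0}-x_4 z_3^{0}+x_6 z_2^{0} z_3^{0}}{x_1-x_3 z_2^{0}-x_5 z_3^{0}+x_7 z_2^{0} z_3^{0}}
\]
for every $z_1\in\mathbb T$, which is absurd since $\mathbb T$ is infinite. Either alternative yields a contradiction, so there must exist $(z_1,z_2,z_3)\in\mathbb T^3$ with $R_{\bf{x}}^{(3;3;1,1,1)}(z_1,z_2,z_3)\neq 0$.

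There is no serious obstacle here; the only subtle point is ensuring that after reducing to two variables one can invoke Lemma \ref{tetra}, which requires knowing $(x_2,x_4,x_6)\in\Gamma_{E(2;2;1,1)}$. This membership is precisely one of the necessary conditions extracted from Theorem \ref{phii1}(1) and is therefore available for free.
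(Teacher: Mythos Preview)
Your proof is correct but takes a different route from the paper's. The paper also argues by contradiction, but instead of reducing to Lemma~\ref{tetra} it simply evaluates $R_{\bf{x}}^{(3;3;1,1,1)}$ at the eight points $(\pm 1,\pm 1,\pm 1)\in\mathbb T^3$ and observes that the resulting linear system in $x_1,\dots,x_7$ is inconsistent (one obtains, for instance, both $x_3=1$ and $x_3=-1$). That argument is completely elementary and, notably, never uses the hypothesis ${\bf x}\in\Gamma_{E(3;3;1,1,1)}$; it actually shows that for \emph{any} ${\bf x}\in\mathbb C^7$ the polynomial cannot vanish identically on $\mathbb T^3$. Your approach is more structural and makes the passage from three variables to two transparent, at the cost of invoking Theorem~\ref{phii1} to secure $(x_2,x_4,x_6)\in\Gamma_{E(2;2;1,1)}$. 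Since the proof of Lemma~\ref{tetra} does not in fact use its membership hypothesis either, you could drop that appeal and obtain, like the paper, a conclusion valid for arbitrary ${\bf x}$.
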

\begin{proof}
We  prove this lemma by using contradiction. Assume that
\begin{align}\label{gamm1}
R_{\bf{x}}^{(3;3;1,1,1)}(z)\nonumber&=1-x_1z_1-x_2z_2+x_3z_1z_2-x_4z_3+x_5z_1z_3+x_6z_2z_3-x_7z_1z_2z_3\\&=0
\end{align}
for all $z_1,z_2,z_3\in \mathbb{T}.$ Now by putting $(z_1,z_2,z_3)=(1,1,1),(1,1,-1),(1,-1,1),(1,-1,-1),(-1,1,1),\\(-1,1,-1),
(-1,-1,1),(-1,-1,-1)$ in \eqref{gamm1} respectively, we have the following system of equation:
\begin{equation}
\begin{aligned}
1-x_1-x_2+x_3-x_4+x_5+x_6-x_7&=0,\\
1-x_1-x_2+x_3+x_4-x_5-x_6+x_7&=0,\\
1-x_1+x_2-x_3-x_4+x_5-x_6+x_7&=0,\\
1-x_1+ x_2-x_3+x_4-x_5+x_6-x_7&=0,\\
1+x_1-x_2-x_3-x_4-x_5+x_6+x_7&=0,\\
1+x_1-x_2-x_3+x_4+x_5-x_6-x_7&=0,\\
1+x_1+x_2+x_3-x_4-x_5-x_6-x_7&=0,\\
1+x_1+x_2+x_3+x_4+x_5+x_6+x_7&=0.  
\end{aligned}
\end{equation}
Solving the above system, we get $x_3=1$ and $x_3=-1$, which is a contradiction.  This completes the proof.
\end{proof}

	\begin{thm}\label{matix AAAA}
				For $\textbf{x}=(x_1,\ldots,x_7)\in \mathbb C^7$ the following are equivalent.
				\begin{enumerate}
					\item $\textbf{x}=(x_1,\ldots,x_7)\in \Gamma_{E(3;3;1,1,1)}$;
					
					\item There exists $3\times 3$ matrix $A\in \mathcal M_{3\times 3}(\mathbb C)$  such that  \small{$$x_1=a_{11}, x_2=a_{22}, x_3=\det \left(\begin{smallmatrix} a_{11} & a_{12}\\
					a_{21} & a_{22}
				\end{smallmatrix}\right), x_4=a_{33}, x_5=\det \left(\begin{smallmatrix}
					a_{11} & a_{13}\\
					a_{31} & a_{33}
				\end{smallmatrix}\right), x_6=\det  \left(\begin{smallmatrix}
					a_{22} & a_{23}\\
					a_{32} & a_{33}\end{smallmatrix}\right) ~{\rm{and}}~x_7=\det A,$$}  \begin{equation}\label{tilgamma1q}
					\overline{\tilde{\gamma}_1(z_2,z_3)}(1-|z_2|^2)\tilde{\gamma}_1(z_2,z_3)+\overline{\tilde{\gamma}_2(z_2,z_3)}(1-|z_3|^2)\tilde{\gamma}_2(z_2,z_3)+{\eta(z_2,z_3)}^*(I_{3}-A^*A)\eta(z_2,z_3)\geq 0, 
				\end{equation}
				and $\det\left(I_{2}-\left(\begin{smallmatrix}a_{22}&a_{23}\\a_{32} & a_{33}
				\end{smallmatrix}\right)\left(\begin{smallmatrix}z_2 & 0 \\0 &z_{3}
				\end{smallmatrix}\right)\right)\neq 0$ for all $z_2,z_3\in \mathbb D$.
				\end{enumerate}
				
			\end{thm}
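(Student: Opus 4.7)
The plan is to mirror the proof of Theorem \ref{matix A1}, replacing the strict open--disc conditions there by their closure counterparts on $\mathbb D^2$. The key tools I will need are Theorem \ref{phii1}(1), which characterizes $\Gamma_{E(3;3;1,1,1)}$ in terms of $\Psi^{(1)}$, the identification $\mathcal G_{A}\!\left(\left(\begin{smallmatrix}z_2&0\\0&z_3\end{smallmatrix}\right)\right)=\Psi^{(1)}(\textbf{z}_{J^{(1)}},\textbf{x})$ worked out in \eqref{mathcalG}, and Proposition \ref{matrix AAA} which gives the defect identity. I will also use that $\Gamma_{E(2;2;1,1)}=\mathcal B^{(1)}_{(2;2;1,1)}$ (so $(x_2,x_4,x_6)\in\Gamma_{E(2;2;1,1)}$ is exactly nonvanishing of $1-x_2z_2-x_4z_3+x_6z_2z_3$ on $\mathbb D^2$).

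For the forward direction, I start with $\textbf{x}\in\Gamma_{E(3;3;1,1,1)}$; by \eqref{GammaE} there is $A\in\mathcal M_{3\times 3}(\mathbb C)$ with $\mu_{E(3;3;1,1,1)}(A)\le 1$ realizing all the $x_i$'s. By Theorem \ref{phii1}(1) I get $(x_2,x_4,x_6)\in\Gamma_{E(2;2;1,1)}$ and $\|\Psi^{(1)}(\cdot,\textbf{x})\|_{H^{\infty}(\mathbb D^2)}\le 1$. The former, via $\Gamma_{E(2;2;1,1)}=\mathcal B^{(1)}_{(2;2;1,1)}$, is exactly the statement that $\det\!\left(I_2-\left(\begin{smallmatrix}a_{22}&a_{23}\\a_{32}&a_{33}\end{smallmatrix}\right)\left(\begin{smallmatrix}z_2&0\\0&z_3\end{smallmatrix}\right)\right)\neq 0$ on $\mathbb D^2$, giving the second half of (2). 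On that domain, \eqref{mathcalG} says $\mathcal G_A=\Psi^{(1)}$, so $|\mathcal G_A|\le 1$ on $\mathbb D^2$. Specializing the identity of Proposition \ref{matrix AAA} to $w_2=z_2,\;w_3=z_3$ then rewrites $1-|\mathcal G_A|^2$ as the left--hand side of \eqref{tilgamma1q}, and the inequality follows.

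For the converse, I assume (2). The same specialization of the identity in Proposition \ref{matrix AAA} translates \eqref{tilgamma1q} into $|\mathcal G_A|\le 1$ on the subset of $\mathbb D^2$ where the determinant is nonzero, which by hypothesis is all of $\mathbb D^2$. Using \eqref{mathcalG} again I read this as $\|\Psi^{(1)}(\cdot,\textbf{x})\|_{H^{\infty}(\mathbb D^2)}\le 1$. The nondegeneracy of $\det(I_2-\cdots)$ on $\mathbb D^2$ rewrites as $R^{(2;2;1,1)}_{(x_2,x_4,x_6)}(z_2,z_3)\neq 0$ on $\mathbb D^2$, i.e.\ $(x_2,x_4,x_6)\in\mathcal B^{(1)}_{(2;2;1,1)}=\Gamma_{E(2;2;1,1)}$. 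Theorem \ref{phii1}(1) now yields $\textbf{x}\in\Gamma_{E(3;3;1,1,1)}$.

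The most delicate point I foresee is conceptual rather than computational: Proposition \ref{matrix AAA} was stated under $\|A\|\le 1$, but only the algebraic identity \eqref{AAAAA norm} is needed here, and that identity is a formal consequence of the realization calculus in Proposition \ref{contt.} valid whenever the relevant inverses exist. This lets me apply it to the matrix $A$ furnished by $\mu_{E(3;3;1,1,1)}(A)\le 1$, even though such $A$ need not satisfy $\|A\|\le 1$. The second subtlety, negligible but worth flagging, is that passing from the closure $\Gamma$ to the interior $G$ analogue of Theorem \ref{matix A1} forces us to restrict the domain of validity of \eqref{tilgamma1q} and the nondegeneracy condition from $\bar{\mathbb D}^2$ to $\mathbb D^2$, since on the distinguished boundary the denominator of $\Psi^{(1)}$ can vanish; this is handled automatically because the characterizations via $\mathcal B^{(1)}$ and via $\|\cdot\|_{H^\infty(\mathbb D^2)}\le 1$ both live naturally on the open bidisc.
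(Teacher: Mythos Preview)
Your proposal is correct and follows exactly the route the paper intends: it is the closure analogue of the proof of Theorem \ref{matix A1}, with Theorem \ref{phii1}(1) replacing Theorem \ref{phi}(1), the open bidisc replacing its closure, and the defect identity \eqref{AAAA norm} (equivalently \eqref{AAAAA norm}) doing the translation between $|\mathcal G_A|\le 1$ and \eqref{tilgamma1q}. Your observation that only the algebraic identity is needed (not the norm hypothesis in Proposition \ref{matrix AAA}) is on point; you could equally cite Proposition \ref{matrix AA}, which states the identity for arbitrary $A$.
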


The following theorem gives the characterization of $\Gamma_{E(3;3;1,1,1)}$.
							
							\begin{thm}\label{maingamma}
								For $\textbf{x}=(x_1,\ldots,x_7)\in \mathbb C^7$ the following are equivalent.
								\begin{enumerate}
									\item[1.] $\textbf{x}\in \Gamma_{E(3;3;1,1,1)}$;
									
									\item[2.] $R_{\bf{x}}^{(3;3;1,1,1)}(\textbf{z})= 1-x_1z_1-x_2z_2+x_3z_1z_2-x_4z_3+x_5z_1z_3+x_6z_2z_3-x_7z_1z_2z_3\neq 0$, for all, $z_1,z_2,z_3\in \mathbb{D}$;
									
									\item[$3.$] $\textbf{x}_{J^{(1)}}^{\prime}\in \Gamma_{E(2;2;1,1)}$ and 
									$\|\Psi^{(1)}(\cdot,\textbf{x})\|_{H^{\infty}(\mathbb{D}^{2})}\leq 1$ and if   $x_{7}=x_{6}x_{1},\,\,x_{3}=x_{2}x_{1},\,\,x_{5}=x_{4}x_{1}$ then $|x_1|\leq1;$

									\item[$3^{\prime}.$]
									$\textbf{x}_{J^{(2)}}^{\prime}\in \Gamma_{E(2;2;1,1)}$ and 
									$\|\Psi^{(2)}(\cdot,\textbf{x})\|_{H^{\infty}(\mathbb{D}^{2})}\leq 1$ and if  $x_{7}=x_{5}x_{2},\,\,x_{3}=x_{2}x_{1},\,\,x_{6}=x_{4}x_{2}$ then $|x_2|\leq1;$
									
									\item[$3^{\prime\prime}.$]
									$\textbf{x}_{J^{(3)}}^{\prime}\in \Gamma_{E(2;2;1,1)}$ and 
									$\|\Psi^{(3)}(\cdot,\textbf{x})\|_{H^{\infty}(\mathbb{D}^{2})}\leq 1$ and if  $x_{7}=x_{3}x_{4},\,\,x_{6}=x_{2}x_{4},\,\,x_{5}=x_{4}x_{1}$ then $|x_4|\leq 1;$

									\item[$4.$] $\left(\frac{x_1-z_3x_5}{1-x_4z_3},\frac{x_2-z_3x_6}{1-x_4z_3},\frac{x_3-z_3x_7}{1-x_4z_3}\right)\in \Gamma_{E(2;2;1,1)}$  for all $z_3\in \mathbb D;$

									\item[$4^{\prime}.$] $\left(\frac{x_2-z_1x_3}{1-x_1z_1},\frac{x_4-z_1x_5}{1-x_1z_1},\frac{x_6-z_1x_7}{1-x_1z_1}\right)\in \Gamma_{E(2;2;1,1)}$  for all $z_1\in \mathbb D$;
									
									\item[$4^{\prime\prime}.$] $\left(\frac{x_4-z_2x_6}{1-x_2z_2},\frac{x_1-z_2x_3}{1-x_2z_2},\frac{x_5-z_2x_7}{1-x_2z_2}\right)\in \Gamma_{E(2;2;1,1)}$  for all $z_2\in \mathbb D$;

									\item[$5.$]
									There exists a $2\times 2$ symmetric matrix $A(z_3)$ with $\|A(z_3)\|\leq1$ 
									such that $$A_{11}(z_3)=\frac{x_1-z_3x_5}{1-x_4z_3},A_{22}(z_3)=\frac{x_2-z_3x_6}{1-x_4z_3}~{\rm{and }}~\det(A(z_3))=\frac{x_3-z_3x_7}{1-x_4z_3}~{\rm{for ~all }}~z_3\in \mathbb D;$$

									\item[$5^{\prime}.$]
									There exists a $2\times 2$ symmetric matrix $B(z_1)$ with $\|B(z_1)\|\leq1$  such that $$B_{11}(z_1)=\frac{x_2-z_1x_3}{1-x_1z_1},B_{22}(z_1)=\frac{x_4-z_1x_5}{1-x_1z_1}~{\rm{and }}~\det(B(z_1))=\frac{x_6-z_1x_7}{1-x_1z_1}~{\rm{for ~all }}~z_1\in \mathbb D;$$
									
									\item[$5^{\prime\prime}.$]
									There exists a $2\times 2$ symmetric matrix $C(z_2)$ with $\|C(z_2)\|\leq1$  such that $$C_{11}(z_2)=\frac{x_4-z_2x_6}{1-x_2z_2},C_{22}(z_2)=\frac{x_1-z_2x_3}{1-x_2z_2}~{\rm{and}}~\det(C(z_2))=\frac{x_5-z_2x_7}{1-x_2z_2}~{\rm{for ~all }}~z_2\in \mathbb D;$$

									\item[$6$.]  There exists a $3\times 3$ matrix $A\in \mathcal M_{3\times 3}(\mathbb C)$ such that \small{$x_{1}=a_{11},x_{2}=a_{22},x_{3}=a_{11}a_{22}-a_{12}a_{21},$}\\$x_{4}=a_{33},x_{5}=a_{11}a_{33}-a_{13}a_{31}, x_{6}=a_{33}a_{22}-a_{23}a_{32}~{\rm{and}}~x_{7}=\operatorname{det}A$, $$\sup_{(z_2,z_3)\in\mathbb D^2} |\mathcal G_{\mathcal F_{A}(z_3)}(z_2)|=\sup_{(z_2,z_3)\in\mathbb D^2 }|\mathcal G_{A}\left(\left(\begin{smallmatrix}z_2 & 0 \\0 &z_{3}
				\end{smallmatrix}\right)\right)|\leq 1,$$ and $\det\left(I_{2}-\left(\begin{smallmatrix}a_{22}&a_{23}\\a_{32} & a_{33}
				\end{smallmatrix}\right)\left(\begin{smallmatrix}z_2 & 0 \\0 &z_{3}
				\end{smallmatrix}\right)\right)\neq 0$ for all $(z_2,z_3)\in\mathbb D^2$.

				\item[$6^{\prime}$.]  There exists a $3\times 3$ matrix $\tilde{A}\in \mathcal M_{3\times 3}(\mathbb C)$ such that \small{$x_{1}=\tilde{a}_{33},x_{2}=\tilde{a}_{11},x_{3}=\tilde{a}_{11}\tilde{a}_{33}-\tilde{a}_{13}\tilde{a}_{31},$} \\$x_{4}=\tilde{a}_{22},x_{5}=\tilde{a}_{22}\tilde{a}_{33}-\tilde{a}_{23}\tilde{a}_{32}, x_{6}=\tilde{a}_{11}\tilde{a}_{22}-\tilde{a}_{12}\tilde{a}_{21}~{\rm{and}}~{x}_{7}=\operatorname{det} \tilde{A},$  $$\sup_{(z_1,z_2)\in\mathbb D^2} |\mathcal G_{\mathcal F_{\tilde{A}}(z_2)}(z_1)|=\sup_{(z_1,z_2)\in\mathbb D^2} |\mathcal G_{\tilde{A}}\left(\left(\begin{smallmatrix}z_1 & 0 \\0 &z_{2}
				\end{smallmatrix}\right)\right)|\leq 1,$$ and  $\det\left(I_{2}-\left(\begin{smallmatrix}\tilde{a}_{11}&\tilde{a}_{12}\\\tilde{a}_{21} & \tilde{a}_{22}
				\end{smallmatrix}\right)\left(\begin{smallmatrix}z_1 & 0 \\0 &z_{2}
				\end{smallmatrix}\right)\right)\neq 0$ for all $(z_1,z_2)\in\mathbb D^2$.
				
				\item[$6^{\prime\prime}$.] There exists a $3\times 3$ matrix $\tilde{B}\in \mathcal M_{3\times 3}(\mathbb C)$  such that \small{$ x_1=\tilde{b}_{22},x_2=\tilde{b}_{33},x_3=\tilde{b}_{22}\tilde{b}_{33}-\tilde{b}_{23}\tilde{b}_{32},$} \\$ x_4=\tilde{b}_{11},x_{5}=\tilde{b}_{11}\tilde{b}_{22}-\tilde{b}_{12}\tilde{b}_{21}, x_{6}=\tilde{b}_{11}\tilde{b}_{33}-\tilde{b}_{13}\tilde{b}_{31}~{\rm{and}}~x_{7}=\operatorname{det} \tilde{B}$, $$\sup_{(z_1,z_3)\in\mathbb D^2} |\mathcal G_{\mathcal F_{\tilde{B}}(z_1)}(z_3)|=\sup_{(z_1,z_3)\in \mathbb D^2} |\mathcal G_{\tilde{B}}\left(\left(\begin{smallmatrix}z_1 & 0 \\0 &z_{3}
				\end{smallmatrix}\right)\right)|\leq 1,$$  and $\det\left(I_{2}-\left(\begin{smallmatrix}\tilde{b}_{11}&\tilde{b}_{13}\\\tilde{b}_{31} & \tilde{b}_{33}
				\end{smallmatrix}\right)\left(\begin{smallmatrix}z_1 & 0 \\0 &z_{3}
				\end{smallmatrix}\right)\right)\neq 0$ for all $(z_1,z_3)\in\mathbb D^2$.
									
								\end{enumerate}
							\end{thm}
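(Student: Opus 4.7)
The plan is to establish the equivalences in a cyclic/hub-and-spoke pattern, treating condition (1) as the hub and showing each of the remaining conditions is equivalent to it. All the pieces we need are in place: Theorem \ref{R_3}, Theorem \ref{phii1}, Theorem \ref{charr}, Proposition \ref{fpzz1}, and Proposition \ref{matrix AAA}. The proof essentially mirrors the strategy used for $G_{E(3;3;1,1,1)}$ in Theorem \ref{mainthm}, replacing strict inequalities with non-strict ones and the closed bidisc/polydisc with the open one where appropriate, since $\Gamma_{E(3;3;1,1,1)}$ is the closure of $G_{E(3;3;1,1,1)}$.

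First, the equivalence $(1)\Leftrightarrow (2)$ is precisely Theorem \ref{R_3} applied with $r=1$, noting that the non-vanishing of $R^{(3;3;1,1,1)}_{\mathbf{x}}$ on the open polydisc $\mathbb{D}^3$ is the content of $\mathcal{B}^{(1)}_{(3;3;1,1,1)}$. Next, the equivalences $(1)\Leftrightarrow (3)\Leftrightarrow (3')\Leftrightarrow (3'')$ are immediate from Theorem \ref{phii1} together with the identity \eqref{eq:3.0} relating $\Psi^{(1)}, \Psi^{(2)}, \Psi^{(3)}$ by coordinate permutations. Similarly, $(1)\Leftrightarrow (4)\Leftrightarrow (4')\Leftrightarrow (4'')$ is exactly Theorem \ref{charr}.

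For $(4)\Leftrightarrow (5)$, $(4')\Leftrightarrow (5')$, and $(4'')\Leftrightarrow (5'')$, I would invoke the characterization of $\Gamma_{E(2;2;1,1)}$ from [Theorem $2.2$, \cite{Abouhajar}]: a triple $(y_1,y_2,y_3)\in\Gamma_{E(2;2;1,1)}$ if and only if there is a symmetric $2\times 2$ contraction $B$ with $B_{11}=y_1$, $B_{22}=y_2$, $\det B=y_3$. Applying this pointwise to the triple $\left(\frac{x_1-z_3x_5}{1-x_4z_3},\frac{x_2-z_3x_6}{1-x_4z_3},\frac{x_3-z_3x_7}{1-x_4z_3}\right)$ (and its two permuted analogues) for each $z_3\in\mathbb{D}$ delivers the matrix $A(z_3)$ of condition (5); the care that must be taken here is about when the denominator $1-x_4z_3$ vanishes, which in the closure regime requires the remark following Lemma \ref{tetra} to rule out problematic cases (if $1-x_4z_3=0$ for some $z_3\in\mathbb{D}$, then $|x_4|>1$, which forces $\mathbf{x}\notin\Gamma_{E(3;3;1,1,1)}$ by projection onto the first coordinate of $\Gamma_{E(2;2;1,1)}$-slices; so in fact the denominator is nonzero throughout $\mathbb{D}$).

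Finally, for $(1)\Leftrightarrow (6)\Leftrightarrow (6')\Leftrightarrow (6'')$, I would follow the blueprint of Theorem \ref{mainresult}. Using the realization computation \eqref{mathcalG} (respectively \eqref{mathcalG12}, \eqref{mathcalG31}) we see that $\mathcal{G}_A\!\left(\left(\begin{smallmatrix} z_2 & 0\\ 0 & z_3\end{smallmatrix}\right)\right)=\Psi^{(1)}(\mathbf{z}_{J^{(1)}},\mathbf{x})$, and similarly for the other two realizations via the unitary conjugations $\tilde{A}=J_1AJ_2$ and $\tilde{B}=\tilde{J}_1A\tilde{J}_2$ supplied by Lemmas \ref{12} and \ref{13}. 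Combined with Theorem \ref{phii1}, $\mathbf{x}\in\Gamma_{E(3;3;1,1,1)}$ becomes equivalent to $\|\Psi^{(1)}(\cdot,\mathbf{x})\|_{H^\infty(\mathbb{D}^2)}\leq 1$, which via the realization identity \eqref{GAA} (with $\leq$ in place of $<$) and Proposition \ref{matrix AAA} translates to the stated supremum bound and the non-vanishing of $\det\left(I_2-\left(\begin{smallmatrix}a_{22}&a_{23}\\a_{32}&a_{33}\end{smallmatrix}\right)\left(\begin{smallmatrix}z_2&0\\0&z_3\end{smallmatrix}\right)\right)$ on $\mathbb{D}^2$. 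The coincidence $\sup|\mathcal{G}_{\mathcal{F}_A(z_3)}(z_2)|=\sup|\mathcal{G}_A(\cdot)|$ is furnished by the computation \eqref{gafz3}. The conditions $(6')$ and $(6'')$ follow by the same argument applied to $\tilde{A}$ and $\tilde{B}$, invoking \eqref{gafz2} and \eqref{GFF}.

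The main obstacle I anticipate is the boundary behavior in the realization-formula steps $(1)\Leftrightarrow (6)$: Proposition \ref{matrix AAA} gives $|\mathcal{G}_A|\leq 1$ only where the relevant determinant is nonzero, so one must argue that requiring the supremum to be $\leq 1$ on $\mathbb{D}^2$ (where the determinant is automatically nonzero, as a byproduct of $\mathbf{x}_{J^{(i)}}'\in\Gamma_{E(2;2;1,1)}$ combined with the remark after Lemma \ref{tetra}) is the right formulation to avoid degeneracies on the torus $\mathbb{T}^2$. The argument is completed by a density/limit passage: any $\mathbf{x}\in\Gamma_{E(3;3;1,1,1)}$ is a limit of points $\mathbf{x}^{(n)}\in G_{E(3;3;1,1,1)}$, for which the sharp $<1$ version (Theorem \ref{mainresult}) applies, and passing to the limit produces the $\leq 1$ inequality. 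Conversely, the $\leq 1$ bound combined with non-vanishing of the determinant on the open bidisc makes $1-z_2\Psi^{(1)}(\cdot,\mathbf{x})\neq 0$ on $\mathbb{D}^3$, hence $R^{(3;3;1,1,1)}_{\mathbf{x}}\neq 0$ on $\mathbb{D}^3$, which by (2) places $\mathbf{x}$ in $\Gamma_{E(3;3;1,1,1)}$.
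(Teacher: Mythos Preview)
Your proposal is correct and follows the same approach as the paper: the theorem is a compilation of the preceding results (Theorem \ref{R_3} for $(1)\Leftrightarrow(2)$, Theorem \ref{phii1} for the $(3)$-type conditions, Theorem \ref{charr} for the $(4)$-type conditions, the closed-tetrablock characterization from \cite{Abouhajar} for $(4)\Leftrightarrow(5)$ and its variants, and the realization-formula computations together with Theorem \ref{matix AAAA} for the $(6)$-type conditions), and the paper states it without an independent proof. One small correction: for the equivalence $(4)\Leftrightarrow(5)$ you should cite the \emph{closed} tetrablock characterization (Theorem~2.4 in \cite{Abouhajar}) rather than Theorem~2.2, which treats the open tetrablock.
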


								We get the following corollary as a result of the aforementioned theorem.
								\begin{cor}\label{coromain}
									For $\textbf{x}=(x_1,\ldots,x_7)\in \mathbb C^7$ the following are equivalent.
									\begin{enumerate}
										\item $\textbf{x}=(x_1,\ldots,x_7)\in \Gamma_{E(3;3;1,1,1)}$;		
										
										\item $\sup_{z_1\in \mathbb{D}}\{|\tilde{x}_1(z_1)-\overline{\tilde{x}_2(z_1) }\tilde{x}_3(z_1)|+\left|\tilde{x}_1(z_1)\tilde{x}_2(z_1)-\tilde{x}_3(z_1)\right|\} \leq 1-\inf_{z_1\in \mathbb{D}}\left|\tilde{x}_2(z_1)\right|^2$;
										\item $\sup_{z_1\in \mathbb{D}}\{|\tilde{x}_2(z_1)-\overline{\tilde{x}_1(z_1) }\tilde{x}_3(z_1)|+\left|\tilde{x}_1(z_1)\tilde{x}_2(z_1)-\tilde{x}_3(z_1)\right|\} \leq 1-\inf_{z_1\in \mathbb{D}}\left|\tilde{x}_1(z_1)\right|^2$;		
										\item $\sup_{z_1\in \mathbb{D}}\left\{\left|\tilde{x}_1(z_1)\right|^2-\left|\tilde{x}_2(z_1)\right|^2+\left|\tilde{x}_3(z_1)\right|^2+2\left|\tilde{x}_2(z_1)-\overline{\tilde{x}_1(z_1) }\tilde{x}_3(z_1)\right|\right\} \leq 1$ and if we assume  $\tilde{x}_1(z_1)\tilde{x}_2(z_1)=\tilde{x}_3(z_1)$ for all $z_1\in \mathbb{D}$, then $\sup_{z_1\in \mathbb{D}}|\tilde{x}_2(z_1)|\leq1$;
										\item $\sup_{z_1\in \mathbb{D}}\left\{-\left|\tilde{x}_1(z_1)\right|^2+\left|\tilde{x}_2(z_1)\right|^2+\left|\tilde{x}_3(z_1)\right|^2+2\left|\tilde{x}_1(z_1)-\overline{\tilde{x}_2 (z_1)}\tilde{x}_3(z_1)\right|\right\} \leq 1$ and if we assume $\tilde{x}_1(z_1)\tilde{x}_2(z_1)=\tilde{x}_3(z_1)$  for all $z_1\in \mathbb{D}$, then $\sup_{z_1\in \mathbb{D}}|\tilde{x}_1(z_1)|\leq1$;
										\item $\sup_{z_1\in \mathbb{D}}\{\left|\tilde{x}_1(z_1)\right|^2+\left|\tilde{x}_2(z_1)\right|^2-\left|\tilde{x}_3(z_1)\right|^2+2\left|\tilde{x}_1(z_1) \tilde{x}_2(z_1)-\tilde{x}_3(z_1)\right|\} \leq 1$;
										\item $\sup_{z_1\in \mathbb{D}}\left\{\left|\tilde{x}_1(z_1)-\overline{\tilde{x}_2(z_1)} \tilde{x}_3(z_1)\right|+\left|\tilde{x}_2(z_1)-\overline{\tilde{x}_1(z_1)} \tilde{x}_3(z_1)\right|\right\} \leq 1-\inf_{z_1\in \mathbb{D}}\left|\tilde{x}_3(z_1)\right|^2$;
										\item $\sup_{z_2\in \mathbb{D}}\{|\tilde{y}_1(z_2)-\overline{\tilde{y}_2(z_2)} \tilde{y}_3(z_2)|+\left|\tilde{y}_1(z_2) \tilde{y}_2(z_2)-\tilde{y}_3(z_2)\right|\} \leq 1-\inf_{z_2\in \mathbb{D}}\left|\tilde{y}_2(z_2)\right|^2$;
										\item $\sup_{z_2\in \mathbb{D}}\left\{\left|\tilde{y}_2(z_2)-\overline{\tilde{y}_1(z_2)} \tilde{y}_3(z_2)\right|+\left|\tilde{y}_1(z_2) \tilde{y}_2(z_2)-\tilde{y}_3(z_2)\right|\right\} \leq 1-\inf_{z_2\in \mathbb{D}}\left|\tilde{y}_1(z_2)\right|^2$;
										\item $\sup_{z_2\in \mathbb{D}}\left\{\left|\tilde{y}_1(z_2)\right|^2-\left|\tilde{y}_2(z_2)\right|^2+\left|\tilde{y}_3(z_2)\right|^2+2\left|\tilde{y}_2(z_2)-\overline{\tilde{y}_1(z_2)} \tilde{y}_3(z_2)\right|\right\} \leq 1$, and if we assume $\tilde{y}_1(z_2)\tilde{y}_2(z_2)=\tilde{y}_3(z_2)$ for all $z_2\in \mathbb{D}$, then $\sup_{z_2\in \mathbb{D}}|\tilde{y}_2(z_2)|\leq1$;
										\item $\sup_{z_2\in \mathbb{D}}\left\{-\left|\tilde{y}_1(z_2)\right|^2+\left|\tilde{y}_2(z_2)\right|^2+\left|\tilde{y}_3(z_2)\right|^2+2\left|\tilde{y}_1(z_2)-\overline{\tilde{y}_2(z_2)} \tilde{y}_3(z_2)\right|\right\} \leq 1$, and if we assume $\tilde{y}_1(z_2)\tilde{y}_2(z_2)=\tilde{y}_3(z_2)$ for all $z_2\in \mathbb{D}$, then $\sup_{z_2\in \mathbb{D}}|\tilde{y}_1(z_2)|\leq1$;
										\item $\sup_{z_2\in \mathbb{D}}\{\left|\tilde{y}_1(z_2)\right|^2+\left|\tilde{y}_2(z_2)\right|^2-\left|\tilde{y}_3(z_2)\right|^2+2\left|\tilde{y}_1(z_2) \tilde{y}_2(z_2)-\tilde{y}_3(z_2)\right|\} \leq 1$;
										\item $\sup_{z_2\in \mathbb{D}}\left\{\left|\tilde{y}_1(z_2)-\overline{\tilde{y}_2(z_2)} \tilde{y}_3(z_2)\right|+\left|\tilde{y}_2(z_2)-\overline{\tilde{y}_1(z_2)} \tilde{y}_3(z_2)\right|\right\} \leq 1-\inf_{z_2\in \mathbb{D}}\left|\tilde{y}_3(z_2)\right|^2$;
										\item $\sup_{z_3\in \mathbb{D}}\{|\tilde{z}_1(z_3)-\overline{\tilde{z}_2(z_3)} \tilde{z}_3(z_3)|+\left|\tilde{z}_1(z_3) \tilde{z}_2(z_3)-\tilde{z}_3(z_3)\right|\} \leq 1-\inf_{z_3\in \mathbb{D}}\left|\tilde{z}_2(z_3)\right|^2$;
										\item $\sup_{z_3\in \mathbb{D}}\left\{\left|\tilde{z}_2(z_3)-\overline{\tilde{z}_1(z_3)} \tilde{z}_3(z_3)\right|+\left|\tilde{z}_1(z_3) \tilde{z}_2(z_3)-\tilde{z}_3(z_3)\right|\right\} \leq 1-\inf_{z_3\in \mathbb{D}}\left|\tilde{z}_1(z_3)\right|^2$;
										\item $\sup_{z_3\in \mathbb{D}}\left\{\left|\tilde{z}_1(z_3)\right|^2-\left|\tilde{z}_2(z_3)\right|^2+\left|\tilde{z}_3(z_3)\right|^2+2\left|\tilde{z}_2(z_3)-\overline{\tilde{z}_1(z_3)} \tilde{z}_3(z_3)\right|\right\} \leq 1$, and if we assume $\tilde{z}_1(z_3)\tilde{z}_2(z_3)=\tilde{z}_3(z_3)$ for all $z_3\in \mathbb{D}$, then $\sup_{z_3\in \mathbb{D}}|\tilde{z}_2(z_3)|\leq1$;
										\item $\sup_{z_3\in \mathbb{D}}\left\{-\left|\tilde{z}_1(z_3)\right|^2+\left|\tilde{z}_2(z_3)\right|^2+\left|\tilde{z}_3(z_3)\right|^2+2\left|\tilde{z}_1(z_3)-\overline{\tilde{z}_2(z_3)} \tilde{z}_3(z_3)\right|\right\} \leq 1$, and if we assume $\tilde{z}_1(z_3)\tilde{z}_2(z_3)=\tilde{z}_3(z_3)$ for all $z_3\in \mathbb{D}$, then $\sup_{z_3\in \mathbb{D}}|\tilde{z}_1(z_3)|\leq1$;
										\item $\sup_{z_3\in \mathbb{D}}\{\left|\tilde{z}_1(z_3)\right|^2+\left|\tilde{z}_2(z_3)\right|^2-\left|\tilde{z}_3(z_3)\right|^2+2\left|\tilde{z}_1(z_3) \tilde{z}_2(z_3)-\tilde{z}_3(z_3)\right|\} \leq 1$;
										\item $\sup_{z_3\in \mathbb{D}}\left\{\left|\tilde{z}_1(z_3)-\overline{\tilde{z}_2(z_3)} \tilde{z}_3(z_3)\right|+\left|\tilde{z}_2(z_3)-\overline{\tilde{z}_1(z_3)} \tilde{z}_3(z_3)\right|\right\} \leq 1-\inf_{z_3\in \mathbb{D}}\left|\tilde{z}_3(z_3)\right|^2$.	
									\end{enumerate}
								\end{cor}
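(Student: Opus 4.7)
The plan is to reduce the nineteen claimed equivalences to the three already proved in Theorem \ref{maingamma}, namely the items labeled $4$, $4'$, $4''$: an element $\textbf{x}=(x_1,\dots,x_7)\in\mathbb{C}^7$ belongs to $\Gamma_{E(3;3;1,1,1)}$ if and only if one (equivalently, each) of the three parametric triples
\[
\bigl(\tilde{z}_1(z_3),\tilde{z}_2(z_3),\tilde{z}_3(z_3)\bigr),\quad \bigl(\tilde{x}_1(z_1),\tilde{x}_2(z_1),\tilde{x}_3(z_1)\bigr),\quad \bigl(\tilde{y}_2(z_2),\tilde{y}_1(z_2),\tilde{y}_3(z_2)\bigr)
\]
lies in $\Gamma_{E(2;2;1,1)}$ for all values of the extra parameter in $\mathbb{D}$. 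This is exactly the setup to which the many parallel inequality-characterizations of the closed tetrablock provided by Theorem $2.2$ (together with its closure version) of \cite{Abouhajar} can be applied.

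First I would recall the six equivalent inequality descriptions of $\Gamma_{E(2;2;1,1)}$ from \cite{Abouhajar}: a triple $(y_1,y_2,y_3)\in\mathbb{C}^3$ belongs to $\Gamma_{E(2;2;1,1)}$ if and only if it satisfies any one of
\[
|y_1-\overline{y_2}y_3|+|y_1y_2-y_3|\le 1-|y_2|^2,\qquad |y_2-\overline{y_1}y_3|+|y_1y_2-y_3|\le 1-|y_1|^2,
\]
\[
|y_1|^2-|y_2|^2+|y_3|^2+2|y_2-\overline{y_1}y_3|\le 1\ \text{(and $|y_2|\le 1$ when $y_1y_2=y_3$)},
\]
and the symmetric companions, together with
\[
|y_1-\overline{y_2}y_3|+|y_2-\overline{y_1}y_3|\le 1-|y_3|^2,
\]
and the obvious permuted versions. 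With this in hand, each of the three equivalences $4$, $4'$, $4''$ from Theorem \ref{maingamma} produces, by applying one of these six inequalities to the moving triple at the point $z_i\in\mathbb{D}$, one characterization of membership in $\Gamma_{E(3;3;1,1,1)}$; the passage from "valid for every $z_i\in\mathbb{D}$" to the single inequality written in the corollary is exactly done by taking $\sup$ on the left-hand side and $\inf$ on the right-hand side of the chosen inequality.

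More precisely, I would carry out the following steps. (i) Fix one of the three groups (say the $\tilde{x}$-group built from $4'$). The triple $(\tilde{x}_1(z_1),\tilde{x}_2(z_1),\tilde{x}_3(z_1))$ lies in $\Gamma_{E(2;2;1,1)}$ for every $z_1\in\mathbb{D}$. (ii) For each of the six inequality formulations of $\Gamma_{E(2;2;1,1)}$, write the inequality pointwise in $z_1$, then take the supremum in $z_1\in\mathbb{D}$ of the left-hand side and the infimum in $z_1\in\mathbb{D}$ of the right-hand side; this yields items $2$–$7$ of the corollary. (iii) Repeat the same argument verbatim for the $\tilde{y}$-group (using equivalence $4''$), producing items $8$–$13$; here one also uses that swapping the first two slots of a triple in $\Gamma_{E(2;2;1,1)}$ gives an equivalent membership, so replacing $(\tilde{y}_2,\tilde{y}_1,\tilde{y}_3)$ by $(\tilde{y}_1,\tilde{y}_2,\tilde{y}_3)$ is harmless. (iv) Repeat for the $\tilde{z}$-group (using equivalence $4$) to produce items $14$–$19$.

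The main obstacle, and the only point requiring a bit of care, is the treatment of the degenerate clauses of the tetrablock characterization: several of the inequalities (items $4, 5, 10, 11, 16, 17$) are accompanied by the supplementary condition that $|y_i|\le 1$ whenever $y_1y_2=y_3$. In our setting the identity $\tilde{x}_1(z_1)\tilde{x}_2(z_1)=\tilde{x}_3(z_1)$ must be interpreted as holding \emph{for every} $z_1\in\mathbb{D}$, so that the supplementary estimate on $|\tilde{x}_i(z_1)|$ also has to be taken uniformly in $z_1$; the same applies to the $\tilde{y}$- and $\tilde{z}$-groups. Once this uniform reading of the exceptional clause is fixed, the remaining verification is entirely routine: apply Theorem \ref{maingamma} in one direction, apply the tetrablock inequality pointwise, and pass to sup/inf. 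No further tool is needed beyond Theorem \ref{maingamma} and \cite[Theorem $2.2$]{Abouhajar}.
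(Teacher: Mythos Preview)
Your proposal is correct and matches the paper's approach exactly: the paper gives no explicit proof of this corollary, stating only that it is ``a result of the aforementioned theorem'' (Theorem~\ref{maingamma}), and for the analogous open-set corollary it says the proof ``follows easily from the characterization of $G_{E(2;2;1,1)}$ [Theorem~$2.2$, \cite{Abouhajar}]''. Your plan---reduce to items $4$, $4'$, $4''$ of Theorem~\ref{maingamma} and then apply the six inequality characterizations of the closed tetrablock from \cite{Abouhajar} pointwise in the extra parameter---is precisely this.
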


\begin{thm}\label{matix barAAAA}

Let $A\in \mathcal M_{3\times 3}(\mathbb C)$ with $\|A\|\leq1.$  Set   \small{$$x_1=a_{11}, x_2=a_{22}, x_3=\det \left(\begin{smallmatrix} a_{11} & a_{12}\\
					a_{21} & a_{22}
				\end{smallmatrix}\right), x_4=a_{33}, x_5=\det \left(\begin{smallmatrix}
					a_{11} & a_{13}\\
					a_{31} & a_{33}
				\end{smallmatrix}\right), x_6=\det  \left(\begin{smallmatrix}
					a_{22} & a_{23}\\
					a_{32} & a_{33}\end{smallmatrix}\right) ~{\rm{and}}~x_7=\det A.$$} Then $\textbf{x}=(x_1,\ldots,x_7)\in \Gamma_{E(3;3;1,1,1)}.$				
			\end{thm}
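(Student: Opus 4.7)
The plan is to deduce this from Theorem \ref{matix A} by a one-line limiting argument. Given $A\in\mathcal M_{3\times 3}(\mathbb C)$ with $\|A\|\leq 1$, set $A^{(n)}:=(1-\tfrac{1}{n})A$ for $n\geq 2$, so that $\|A^{(n)}\|\leq 1-\tfrac{1}{n}<1$. Theorem \ref{matix A} applied to each $A^{(n)}$ produces a tuple $\textbf{x}^{(n)}\in G_{E(3;3;1,1,1)}$, whose coordinates are the polynomial expressions \eqref{av1} in the entries of $A^{(n)}$. Since each $x_i$ is a polynomial of degree at most $3$ in the matrix entries and $A^{(n)}\to A$ entrywise as $n\to\infty$, continuity yields $\textbf{x}^{(n)}\to\textbf{x}$ in $\mathbb C^7$. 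Because $\Gamma_{E(3;3;1,1,1)}$ is by definition the closure of $G_{E(3;3;1,1,1)}$ (as noted immediately after the definitions in the introduction), the limit $\textbf{x}$ must lie in $\Gamma_{E(3;3;1,1,1)}$, completing the proof.

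A more self-contained alternative is to verify condition $(3)$ (equivalently $(6)$) of Theorem \ref{maingamma} directly. The bottom-right $2\times 2$ block $\left(\begin{smallmatrix} a_{22} & a_{23}\\ a_{32}&a_{33}\end{smallmatrix}\right)$ has operator norm at most $\|A\|\leq 1$, so the characterization of the closed tetrablock in \cite[Theorem $2.2$]{Abouhajar} yields $\textbf{x}_{J^{(1)}}'=(x_2,x_4,x_6)\in \Gamma_{E(2;2;1,1)}$, which in particular forces the denominator of $\Psi^{(1)}$ to be nonvanishing on $\mathbb D^2$. Proposition \ref{matrix AAA}, applied under the hypothesis $\|A\|\leq 1$, together with the identification \eqref{mathcalG11} of $\mathcal G_A\left(\left(\begin{smallmatrix} z_2 & 0 \\ 0 & z_3\end{smallmatrix}\right)\right)$ with $\Psi^{(1)}(\textbf{z}_{J^{(1)}},\textbf{x})$, then gives $|\Psi^{(1)}(\textbf{z}_{J^{(1)}},\textbf{x})|\leq 1$ wherever defined on $\bar{\mathbb D}^2$; hence $\|\Psi^{(1)}(\cdot,\textbf{x})\|_{H^{\infty}(\mathbb D^2)}\leq 1$, and item $(3)$ of Theorem \ref{maingamma} is satisfied.

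There is no genuine obstacle in either approach, both being routine given earlier results in the paper. The only small care is to confirm the continuity of the coordinate map $A\mapsto \textbf{x}$ (immediate, since each coordinate is polynomial) and the fact that $\Gamma_{E(3;3;1,1,1)}$ is closed (immediate from its definition as a closure). I would present the limiting argument as the primary proof since it is shorter, reuses Theorem \ref{matix A} directly, and explicitly exhibits $\textbf{x}$ as a limit of points in the open set $G_{E(3;3;1,1,1)}$.
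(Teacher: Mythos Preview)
Your proposal is correct. Your second approach is essentially the paper's own proof: the paper shows that the lower-right $2\times 2$ block of $A$ (having norm at most $1$) multiplied by $\left(\begin{smallmatrix} z_2 & 0\\ 0 & z_3\end{smallmatrix}\right)$ with $z_2,z_3\in\mathbb D$ has norm strictly less than $1$, so the determinant is nonzero and $\textbf{x}_{J^{(1)}}'\in\Gamma_{E(2;2;1,1)}$; then it invokes the identification \eqref{mathcalG11} and Proposition \ref{matrix AAA} to get $|\Psi^{(1)}|\leq 1$ on $\mathbb D^2$, and concludes via Theorem \ref{phii1} (the same content as item $(3)$ of Theorem \ref{maingamma}). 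One tiny omission in your write-up is the degenerate clause in condition $(3)$ of Theorem \ref{maingamma} (if $x_7=x_6x_1$, $x_3=x_2x_1$, $x_5=x_4x_1$ then $|x_1|\leq 1$), but this is immediate since $|x_1|=|a_{11}|\leq\|A\|\leq 1$; the paper glosses over this too.

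Your first approach, the limiting argument via $A^{(n)}=(1-\tfrac1n)A$ and Theorem \ref{matix A}, is a genuinely different and more economical route. It bypasses the realization-formula machinery (Proposition \ref{matrix AAA}) entirely, relying only on Theorem \ref{matix A} for the open case, the polynomial dependence of $\textbf{x}$ on the entries of $A$, and the fact that $\Gamma_{E(3;3;1,1,1)}$ is the closure of $G_{E(3;3;1,1,1)}$ (stated in the introduction). This is arguably the cleanest proof. The paper's approach has the small advantage of being parallel to its proof of Theorem \ref{matix A} and of making explicit which characterization of $\Gamma_{E(3;3;1,1,1)}$ is being verified, but your limiting argument is shorter and just as rigorous.
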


\begin{proof}
Observe that $\|\left(\begin{smallmatrix}a_{22} & a_{23} \\a_{32} &a_{33}
								\end{smallmatrix}\right)\left(\begin{smallmatrix}z_2 & 0 \\0 &z_{3}
								\end{smallmatrix}\right)\|<1$ for all $z_2,z_3\in \mathbb D.$ This means that  $\det\left(I_2-\left(\begin{smallmatrix}a_{22} & a_{23} \\a_{32} &a_{33}
								\end{smallmatrix}\right)\left(\begin{smallmatrix}z_2 & 0 \\0 &z_{3}
								\end{smallmatrix}\right)\right)\neq 0$ for all $z_2,z_3\in \mathbb D.$ As a result, we get $\textbf{x}_{J^{(1)}}^{\prime}\in \Gamma_{E(2;2;1,1)}$ by the characterization of tetrablock [Theorem $2.3$, \cite{Abouhajar}].
								
Note that for all $z_2,z_3\in \mathbb C$ with $\det\left(I_{2}-\left(\begin{smallmatrix}a_{22}&a_{23}\\a_{32} & a_{33}
				\end{smallmatrix}\right)\left(\begin{smallmatrix}z_2 & 0 \\0 &z_{3}
				\end{smallmatrix}\right)\right)\neq 0,$ we have
				\small{\begin{align}\label{mathcalG11}
						\mathcal G_{A}\left(\left(\begin{smallmatrix}z_2 & 0 \\0 &z_{3}
						\end{smallmatrix}\right)\right)\nonumber &=a_{11}+\left(\begin{smallmatrix}a_{12}&a_{13}
						\end{smallmatrix}\right)\left(\begin{smallmatrix}z_2 & 0 \\0 &z_{3}
						\end{smallmatrix}\right)\left(I_{2}-\left(\begin{smallmatrix}a_{22}&a_{23}\\a_{32} & a_{33}
						\end{smallmatrix}\right)\left(\begin{smallmatrix}z_2 & 0 \\0 &z_{3}
						\end{smallmatrix}\right)\right)^{-1}\left(\begin{smallmatrix}a_{21}\\a_{31}
						\end{smallmatrix}\right)\\\nonumber&=\frac{a_{11}-z_2(a_{11}a_{22}-a_{12}a_{21})-z_3(a_{33}a_{11}-a_{13}a_{31})+z_2z_3\det(A)}{1-z_2a_{22}-z_3a_{33}+z_2z_3(a_{22}a_{33}-a_{23}a_{32})}\\\nonumber&=\frac{x_1-z_2x_3-z_3x_5+z_2z_3x_7}{1-z_2x_2-z_3x_4+z_2z_3x_6}\\&=\Psi^{(1)}(\textbf{z}_{J^{(1)}},\textbf{x}).
				\end{align}} 
From Proposition \ref{matrix AAA}, we get  $|\mathcal G_{A}\left(\left(\begin{smallmatrix}z_2 & 0 \\0 &z_{3}
				\end{smallmatrix}\right)\right)|\leq 1$ for all $z_2,z_3\in {\mathbb D}$.	 Hence, from Theorem \ref{phii1}, we conclude that $\mathbf{x}\in \Gamma_{E(3;3;1,1,1)}$	 This completes the proof.

\end{proof}

\subsection{Relation between the domains $G_{E(3;2;1,2)}$ and $G_{E(3;3;1,1,1)}$ and $\Gamma_{E(3;2;1,2)}$ and $\Gamma_{E(3;3;1,1,1)}$}
			
			For $n\geq 2$, G. Bharali introduced the domain $\mathbb E_n$ \cite{bha}.
			Throughout the work, we use the notation $G_{E(3;2;1,2)}$ rather than the $\mathbb E_3$, where $E(3;2;1,2)$ is the vector subspace of $\mathcal M_{3\times 3}(\mathbb C)$ of the following form
			$$E(3;2;1,2)=\left\{\left(\begin{array}{cc}z_1& 0\\0 &z_2I_2
			\end{array}\right)\in \mathbb{C}^{3\times 3}:z_1,z_2\in \mathbb{C} \right\}.$$ In this subsection, we establish the relation between the domain $G_{E(3;2;1,2)}$ and $G_{E(3;3;1,1,1)}$.  We recall the definition of the domain $G_{E(3;2;1,2)}$ from \cite{bha}. We denote  the closure of $G_{E(3;2;1,2)}$ as $\Gamma_{E(3;2;1,2)}$. Let $\tilde{\textbf{x}}=(x_1,x_2,x_3,y_1,y_2)\in \mathbb{C}^{5}$ and for $\tilde{\textbf{z}}=(z_1,z_2)\in \bar{\mathbb{D}}^2,$ consider the polynomial $$R_3(\tilde{\textbf{z}};\tilde{\textbf{x}})=\left(y_2z_2^2-y_1z_2+1\right)-z_1\left(x_3z_2^2-x_2z_2+x_1\right)=Q_3(z_2;y_1,y_2)-z_1P_3(z_2;x_1,x_2,x_3).$$ 
We define the domains $\mathcal  A_{E(3;2;1,2)}^{(1)}$ and $\mathcal  B_{E(3;2;1,2)}^{(1)}$ as follows
			$$\mathcal  A_{E(3;2;1,2)}^{(1)}=\left \{\tilde{\textbf{x}}\in \mathbb{C}^{5}:R_3(\tilde{\textbf{z}};\tilde{\textbf{x}})\neq0, \,\, \text{for all} \,\, \tilde{\textbf{z}}\in \bar{\mathbb{D}}^2 \right \}$$ and $$\mathcal  B_{E(3;2;1,2)}^{(1)}=\left \{\tilde{\textbf{x}}\in \mathbb{C}^{5}:R_3(\tilde{\textbf{z}};\tilde{\textbf{x}})\neq0, \,\, \text{for all} \,\, \tilde{\textbf{z}}\in \mathbb{D}^2 \right \}.$$ Let us consider the polynomial map $\tilde{\pi}_{{E(3;2;1,2)}}:\mathcal M_{3\times 3}(\mathbb C)\to \mathbb C^5$ \cite{bha} which is defined by 
			\begin{equation}\label{pi123}\tilde{\pi}_{{E(3;2;1,2)}}(A):=\left( a_{11},\det \left(\begin{smallmatrix} a_{11} & a_{12}\\
					a_{21} & a_{22}
				\end{smallmatrix}\right)+\det \left(\begin{smallmatrix}
					a_{11} & a_{13}\\
					a_{31} & a_{33}
				\end{smallmatrix}\right),\operatorname{det}A, a_{22}+a_{33}, \det  \left(\begin{smallmatrix}
					a_{22} & a_{23}\\
					a_{32} & a_{33}\end{smallmatrix}\right)\right).
					\end{equation}			
			
		Let $$G_{E(3;2;1,2)}:=\{\tilde{\pi}_{{E(3;2;1,2)}}(A):\mu_{E(3;2;1,2)}(A)<1\}.$$
The domain $G_{E(3;2;1,2)}$ is known as $\mu_{1,3}-$\textit{quotients} \cite{bha}. For  $\tilde{\textbf{x}}=(x_1,x_2,x_3,y_1,y_2)\in \mathbb{C}^{5},$ we consider the rational function $$\Psi_{3}(z_2,\tilde{\textbf{x}})=\frac{P_3(z_2;x_1,x_2,x_3)}{Q_3(z_2;y_1,y_2)}=\frac{x_3z_2^2-x_2z_2+x_1}{y_2z_2^2-y_1z_2+1}~~~{\rm{ for ~all }}~z_2\in \mathbb C ~{\rm{with}}~y_2z_2^2-y_1z_2+1\neq0.$$ 
			The following proposition gives the characterization of $G_{E(3;2;1,2)},$ whose proof is discussed in [Theorem $3.5$,~\cite{bha}].
			\begin{prop}\label{bhch}
				For $\tilde{\textbf{x}}\in \mathbb{C}^{5}$ the following are equivalent
				\begin{enumerate}
					\item $\tilde{\textbf{x}}\in G_{E(3;2;1,2)}$.
					\item $\tilde{\textbf{x}} \in \mathcal  A_{E(3;2;1,2)}^{(1)}.$
					\item $y_2z_2^2-y_1z_2+1\neq0$ for all $z_2\in \bar{\mathbb{D}}$ and $\|\Psi_{3}(\cdot,\tilde{\textbf{x}})\|_{H^{\infty}(\bar{\mathbb{D}})}:=\sup_{z_2\in \bar{\mathbb{D}}}|\Psi_{3}(z_2,\tilde{\textbf{x}})|<1$.  Moreover, if  $x_3=x_1y_2,x_2=x_1y_1$ then $|x_1|<1$.
					\item For every  $z_1\in \bar{\mathbb{D}}$, we have
					$\left(\frac{y_1-z_1x_2}{1-z_1x_1},\frac{y_2-z_1x_3}{1-z_1x_1}\right)\in G_{E(2;1;2)}.$ 
				\end{enumerate}
							\end{prop}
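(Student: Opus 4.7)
The plan is to prove the four-way equivalence $(1) \Leftrightarrow (2) \Leftrightarrow (3) \Leftrightarrow (4)$ by mirroring the scheme the authors already developed for $G_{E(3;3;1,1,1)}$ in Theorem~\ref{R_3}, Corollary~\ref{R_33}, Theorem~\ref{phi}, and Theorem~\ref{char}. The backbone is the identity
\[
\det(I-AX) \;=\; R_3(\tilde{\mathbf z};\tilde{\mathbf x})
\qquad \text{for } X=\operatorname{diag}(z_1, z_2 I_2)\in E(3;2;1,2),
\]
where $\tilde{\mathbf x}=\tilde\pi_{E(3;2;1,2)}(A)$ as in \eqref{pi123}. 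This identity is a direct $3\times 3$ cofactor expansion, collecting the coefficient $-x_1$ of $z_1$, the coefficient $x_2=\det A_{12}+\det A_{13}$ of $z_1z_2$, and the coefficient $-\det A$ of $z_1z_2^2$, together with $y_1=a_{22}+a_{33}$ and $y_2=\det A_{23}$ in the $z_1$-free part.

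First I would establish $(1)\Leftrightarrow(2)$. Since $\|X\|=\max(|z_1|,|z_2|)$ for $X\in E(3;2;1,2)$, the definition in \eqref{mu} gives $\mu_{E(3;2;1,2)}(A)<1$ iff $\det(I-AX)\neq 0$ for every $X$ with $\max(|z_1|,|z_2|)<1$, i.e., iff $R_3(\tilde{\mathbf z};\tilde{\mathbf x})\neq 0$ on $\mathbb D^2$. To upgrade this nonvanishing to $\bar{\mathbb D}^2$ one argues exactly as in Corollary~\ref{R_33}: the open condition $\mu_{E(3;2;1,2)}(A)<1$ in fact yields $\mu_{E(3;2;1,2)}(A)\leq 1/r_0$ for some $r_0>1$, and conversely nonvanishing on the compact set $\bar{\mathbb D}^2$ extends by continuity to $(1+\epsilon)\mathbb D^2$. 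Surjectivity of $\tilde\pi_{E(3;2;1,2)}$ onto $\mathbb C^5$, needed for the reverse direction, is straightforward (analogue of Lemma~\ref{meq}: match $a_{11}=x_1$, pick $a_{22}+a_{33}=y_1$, $a_{22}a_{33}-a_{23}a_{32}=y_2$, and then solve the remaining minors for the off-diagonal entries).

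For $(2)\Leftrightarrow(3)$, write $R_3=Q_3(z_2;y_1,y_2)-z_1P_3(z_2;x_1,x_2,x_3)$. Setting $z_1=0$ shows $Q_3(z_2;y_1,y_2)\neq 0$ on $\bar{\mathbb D}$; with $Q_3$ nonzero, $R_3\neq 0$ on $\bar{\mathbb D}^2$ is equivalent to $z_1\Psi_3(z_2,\tilde{\mathbf x})\neq 1$ for all $(z_1,z_2)\in\bar{\mathbb D}^2$, i.e.\ $\sup_{z_2\in\bar{\mathbb D}}|\Psi_3(z_2,\tilde{\mathbf x})|<1$. Holomorphicity of $\Psi_3$ in $\mathbb D$ and continuity on $\bar{\mathbb D}$ allow the maximum modulus principle to move the supremum to $\mathbb T$. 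The degenerate case $P_3\equiv x_1 Q_3$, which is precisely $x_2=x_1y_1$ and $x_3=x_1y_2$, makes $\Psi_3$ the constant $x_1$, and the hypothesis collapses to $|x_1|<1$—this is the analogue of the degenerate clause in Theorem~\ref{phi}.

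For $(2)\Leftrightarrow(4)$, I would fix $z_1\in\bar{\mathbb D}$ and view $R_3$ as a quadratic in $z_2$:
\[
R_3(\tilde{\mathbf z};\tilde{\mathbf x})=(1-z_1 x_1)-(y_1-z_1 x_2)z_2+(y_2-z_1 x_3)z_2^2.
\]
Nonvanishing for all $z_2\in\bar{\mathbb D}$ (for every $z_1\in\bar{\mathbb D}$) forces $1-z_1 x_1\neq 0$ (evaluate at $z_2=0$); dividing through then gives
\[
1-\frac{y_1-z_1 x_2}{1-z_1 x_1}\,z_2+\frac{y_2-z_1 x_3}{1-z_1 x_1}\,z_2^2\neq 0\quad \text{for all } z_2\in\bar{\mathbb D},
\]
which is exactly the polynomial characterization of the symmetrized bidisc $G_{E(2;1;2)}$. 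The main obstacle I anticipate is handling the degenerate subcase of (3) uniformly: making sure that, when $\Psi_3$ is a constant, the equivalence with (1) really does pin down $|x_1|<1$ rather than $|x_1|\leq 1$, and that this matches the $\mu$-synthesis definition with strict inequality. This parallels (and can be imported almost verbatim from) the treatment the authors give for the constant cases of $\Psi^{(i)}$ in Theorem~\ref{phi}.
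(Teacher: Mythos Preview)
Your outline is correct and complete in spirit, but you should know that the paper does not actually prove Proposition~\ref{bhch}: it is stated with the remark ``whose proof is discussed in [Theorem~3.5, \cite{bha}]'' and is simply imported from Bharali's paper on the $\mu_{1,n}$-quotients. So there is no in-paper argument to compare against.

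That said, your plan is exactly the natural one, and it faithfully transplants the machinery the authors build for $G_{E(3;3;1,1,1)}$ (Theorem~\ref{R_3}, Corollary~\ref{R_33}, Theorem~\ref{phi}, Theorem~\ref{char}) to the $E(3;2;1,2)$ setting. The determinant identity $\det(I-AX)=R_3(\tilde{\mathbf z};\tilde{\mathbf x})$, the $r_0>1$ / compactness-plus-continuity trick for passing between $\mathbb D^2$ and $\bar{\mathbb D}^2$, the factorization $R_3=Q_3-z_1P_3$ followed by maximum modulus, and the slicing argument giving the symmetrized-bidisc condition in (4) are all correctly identified. The surjectivity of $\tilde\pi_{E(3;2;1,2)}$ that you flag is indeed the analogue of Lemma~\ref{meq} and is elementary. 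The degenerate clause you worry about is handled precisely as in Theorem~\ref{phi}: when $P_3\equiv x_1 Q_3$ the function $\Psi_3$ is the constant $x_1$, and the strict inequality in (1) forces $|x_1|<1$. In short, your proposal would constitute a self-contained proof along lines entirely consistent with---and in fact paralleling---Bharali's original argument and the paper's own methods for the companion domain.
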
 
			We  characterize the domain $G_{E(3;2;1,2)}$ by using the realization formula. For $z_2=z_3$, from \eqref{ga} and \eqref{et}, we have 
			\begin{align}\label{gaa}\tilde{\gamma}(z_2)\nonumber&=\left(I_{2}-\left(\begin{smallmatrix}a_{22} & a_{23} \\a_{32} &a_{33}
				\end{smallmatrix}\right)\left(\begin{smallmatrix}z_2 & 0 \\0 &z_{2}
				\end{smallmatrix}\right)\right)^{-1}\left(\begin{smallmatrix}a_{21} \\ a_{31}
				\end{smallmatrix}\right)\\&=\left(\begin{smallmatrix}\tilde{\gamma}_1(z_2) \\ \tilde{\gamma}_2(z_2)
				\end{smallmatrix}\right),
			\end{align} 
			and
			\begin{align}\label{ett}\tilde{\eta}(z_2)\nonumber&=\left(\begin{smallmatrix}1\\\left(\begin{smallmatrix}z_2 & 0 \\0 &z_{2}
					\end{smallmatrix}\right)\tilde{\gamma}\left(\left(\begin{smallmatrix}z_2 & 0 \\0 &z_{2}
					\end{smallmatrix}\right)\right)
				\end{smallmatrix}\right)\\&=\left(\begin{smallmatrix}1\\z_2\tilde{\gamma}_1(z_2) \\ z_2\tilde{\gamma}_2(z_2)
				\end{smallmatrix}\right)
			\end{align}
			for all $z_2\in\mathbb C$ such that $\det(C(z_2))\neq 0,$ where \small{$$C(z_2)=\left(I_{2}-\left(\begin{smallmatrix}a_{22} & a_{23} \\a_{32} &a_{33}
				\end{smallmatrix}\right)\left(\begin{smallmatrix}z_2 & 0 \\0 &z_{2}
				\end{smallmatrix}\right)\right),\tilde{\gamma}_1(z_2)=\frac{(1-a_{33}z_2)a_{21}+z_2a_{23}a_{31}} {\det(C(z_2))},$$ and $$\tilde{\gamma}_2(z_2)=\frac{(1-a_{22}z_2)a_{31}+z_2a_{32}a_{21}} {\det(C(z_2))}.$$}
			
			\begin{thm}\label{matix AE}
				Let $\tilde{\textbf{x}}=(x_1,x_2,x_3,y_1,y_2)\in \mathbb C^5.$ Then the following are equivalent:
				\begin{enumerate}
					\item $\tilde{\textbf{x}}=(x_1,x_2,x_3,y_1,y_2)\in G_{E(3;2;1,2)}$.
					
					\item There exists $3\times 3$ matrix $A\in \mathcal M_{3\times 3}(\mathbb C)$  such that $$x_1=a_{11},x_2=\det \left(\begin{smallmatrix} a_{11} & a_{12}\\
					a_{21} & a_{22}
				\end{smallmatrix}\right)+\det \left(\begin{smallmatrix}
					a_{11} & a_{13}\\
					a_{31} & a_{33}
				\end{smallmatrix}\right),x_3=\operatorname{det}A, y_1=a_{22}+a_{33}, y_2=\det  \left(\begin{smallmatrix}
					a_{22} & a_{23}\\
					a_{32} & a_{33}\end{smallmatrix}\right),$$  \begin{equation}\label{tilgammaa12}
					(1-|z_2|^2)(\overline{\tilde{\gamma}_1(z_2)}\tilde{\gamma}_1(z_2)+\overline{\tilde{\gamma}_2(z_2)}\tilde{\gamma}_2(z_2))+{\eta(z_2)}^*(I_{3}-A^*A)\eta(z_2)>0, 
				\end{equation}
				and  $\det\left(I_{2}-\left(\begin{smallmatrix}a_{22}&a_{23}\\a_{32} & a_{33}
				\end{smallmatrix}\right)\left(\begin{smallmatrix}z_2 & 0 \\0 &z_{2}
				\end{smallmatrix}\right)\right)\neq 0$ for all $z_2\in \bar{\mathbb D}$.
				\end{enumerate}
			
			\end{thm}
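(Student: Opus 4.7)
The plan is to mirror the argument of Theorem \ref{matix A1}, specializing the first realization formula \eqref{GPX} to the diagonal slice $\{(z_2,z_3)\in\mathbb{C}^2:z_3=z_2\}$, so that the function $\mathcal G_A$ on $2\times 2$ diagonal matrices reduces, along this slice, to the single-variable rational function $\Psi_3$ that characterises $G_{E(3;2;1,2)}$ via Proposition \ref{bhch}.

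First I would verify the identification
\begin{equation*}
\mathcal G_{A}\!\left(\begin{smallmatrix}z_2 & 0 \\ 0 & z_{2}\end{smallmatrix}\right)
=\frac{a_{11}-z_2(\det A_{12}+\det A_{13})+z_2^2\det A}{1-z_2(a_{22}+a_{33})+z_2^2\det A_{23}}
=\frac{x_1-x_2 z_2+x_3 z_2^{2}}{1-y_1 z_2+y_2 z_2^{2}}
=\Psi_{3}(z_2,\tilde{\mathbf x}),
\end{equation*}
obtained by setting $z_3=z_2$ in the computation \eqref{mathcalG} and using the coordinate definitions coming from \eqref{pi123}. This identification also shows that the non-vanishing condition $\det\bigl(I_{2}-\left(\begin{smallmatrix}a_{22}&a_{23}\\a_{32}&a_{33}\end{smallmatrix}\right)\left(\begin{smallmatrix}z_2 &0\\0&z_2\end{smallmatrix}\right)\bigr)\neq 0$ is exactly the denominator condition $y_2 z_2^{2}-y_1 z_2+1\neq 0$ appearing in Proposition \ref{bhch}.

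For (1)$\Rightarrow$(2), I start with $\tilde{\mathbf x}\in G_{E(3;2;1,2)}$; by definition \eqref{pi123} there is a matrix $A\in\mathcal M_{3\times 3}(\mathbb C)$ with $\mu_{E(3;2;1,2)}(A)<1$ realising the five coordinates of $\tilde{\mathbf x}$, and by Proposition \ref{bhch} we have $y_2z_2^2-y_1z_2+1\neq 0$ on $\bar{\mathbb D}$ and $|\Psi_{3}(z_2,\tilde{\mathbf x})|<1$ for all $z_2\in\bar{\mathbb D}$. I then apply Proposition \ref{matrix AA} along the diagonal $w_2=z_2$, $w_3=z_3=z_2$; the identity \eqref{AAAA norm} collapses, using \eqref{gaa} and \eqref{ett}, to
\begin{equation*}
1-\bigl|\mathcal G_{A}\!\left(\begin{smallmatrix}z_2 & 0 \\ 0 & z_{2}\end{smallmatrix}\right)\bigr|^{2}
=(1-|z_2|^{2})\bigl(\overline{\tilde\gamma_1(z_2)}\tilde\gamma_1(z_2)+\overline{\tilde\gamma_2(z_2)}\tilde\gamma_2(z_2)\bigr)
+\tilde\eta(z_2)^{*}(I_{3}-A^{*}A)\tilde\eta(z_2),
\end{equation*}
and the left-hand side is strictly positive by $|\Psi_3|<1$, which is precisely \eqref{tilgammaa12}. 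The converse (2)$\Rightarrow$(1) runs in reverse along exactly the same diagonal identity: positivity of \eqref{tilgammaa12} gives $|\mathcal G_A(\cdot)|<1$ on $\bar{\mathbb D}$, hence $|\Psi_{3}(z_2,\tilde{\mathbf x})|<1$, and the non-vanishing of $\det(I_2-(\cdot)(\cdot))$ on $\bar{\mathbb D}$ is the non-vanishing of $y_2z_2^2-y_1z_2+1$; Proposition \ref{bhch} then delivers $\tilde{\mathbf x}\in G_{E(3;2;1,2)}$.

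Since all of the heavy lifting (realization formula, Proposition \ref{matrix AA}, the Abouhajar--White--Young type characterisation via $\Psi_3$ in Proposition \ref{bhch}) is already in place, I do not anticipate a serious obstacle. The only minor bookkeeping issue is the degenerate case $x_3=x_1 y_2$, $x_2=x_1 y_1$ in Proposition \ref{bhch}(3), where $\Psi_{3}(\cdot,\tilde{\mathbf x})$ reduces to the constant $x_1$; there $\mathcal G_A$ on the diagonal is also the constant $a_{11}=x_1$, and the positivity \eqref{tilgammaa12} at $z_2=0$ reduces to $|x_1|<1$, handling the boundary case consistently. The proof therefore essentially inherits from Theorem \ref{matix A1} by restriction to the diagonal.
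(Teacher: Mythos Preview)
Your proposal is correct and follows essentially the same route as the paper: identify $\mathcal G_A$ on the diagonal $z_3=z_2$ with $\Psi_3$, invoke Proposition \ref{bhch} for the characterization of $G_{E(3;2;1,2)}$, and use the diagonal specialization of the identity in Proposition \ref{matrix AA} to translate $|\Psi_3|<1$ into the inequality \eqref{tilgammaa12}. You are slightly more explicit than the paper in spelling out the converse direction and in treating the degenerate constant case, but the underlying argument is the same.
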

			\begin{proof}
				Suppose $\tilde{\textbf{x}}=(x_1,x_2,x_3,y_1,y_2)\in G_{E(3;2;1,2)}$. Then it follows from [Lemma 2.3, \cite{bha}] that there exists $3\times 3$ matrix $A\in \mathcal M_{3\times 3}(\mathbb C)$  such that  \begin{equation}\label{mu13qu}x_1=a_{11},x_2=\det \left(\begin{smallmatrix} a_{11} & a_{12}\\
					a_{21} & a_{22}
				\end{smallmatrix}\right)+\det \left(\begin{smallmatrix}
					a_{11} & a_{13}\\
					a_{31} & a_{33}
				\end{smallmatrix}\right),x_3=\operatorname{det}A, y_1=a_{22}+a_{33}, y_2=\det  \left(\begin{smallmatrix}
					a_{22} & a_{23}\\
					a_{32} & a_{33}\end{smallmatrix}\right).\end{equation} Notice that for all $z_2\in \mathbb C$ with $\det\left(I_{2}-\left(\begin{smallmatrix}a_{22}&a_{23}\\a_{32} & a_{33}
				\end{smallmatrix}\right)\left(\begin{smallmatrix}z_2 & 0 \\0 &z_{2}
				\end{smallmatrix}\right)\right)\neq 0,$ we have
				\small{\begin{align}\label{mathcalGE}
						\mathcal G_{A}\left(\left(\begin{smallmatrix}z_2 & 0 \\0 &z_{2}
						\end{smallmatrix}\right)\right)\nonumber &=a_{11}+\left(\begin{smallmatrix}a_{12}&a_{13}
						\end{smallmatrix}\right)\left(\begin{smallmatrix}z_2 & 0 \\0 &z_{2}
						\end{smallmatrix}\right)\left(I_{2}-\left(\begin{smallmatrix}a_{22}&a_{23}\\a_{32} & a_{33}
						\end{smallmatrix}\right)\left(\begin{smallmatrix}z_2 & 0 \\0 &z_{2}
						\end{smallmatrix}\right)\right)^{-1}\left(\begin{smallmatrix}a_{21}\\a_{31}
						\end{smallmatrix}\right)\\\nonumber&=\frac{a_{11}-z_2(a_{11}a_{22}-a_{12}a_{21})-z_2(a_{33}a_{11}-a_{13}a_{31})+z_2^2\det(A)}{1-z_2a_{22}-z_2a_{33}+z_2^2(a_{22}a_{33}-a_{23}a_{32})}\\\nonumber&=\frac{x_1-z_2x_2+z_2^2x_3}{1-z_2y_1+z_2^2y_2}\\&=\Psi_3(z_2,\tilde{\textbf{x}}).
				\end{align}}
				From Proposition \ref{bhch}, we observe that $\tilde{\textbf{x}}\in G_{E(3;2;1,2)}$ if and only if  $$ y_2z_2^2-y_1z_2+1\neq 0 ~{\rm{for ~all }}~z_2\in \bar{\mathbb{D}}~{\rm{ and }}~\|\Psi_3(\cdot,\tilde{\textbf{x}})\|_{H^{\infty}(\bar{\mathbb{D}})}=\|\Psi_3(\cdot,\tilde{\textbf{x}})\|_{H^{\infty}(\mathbb{T})}< 1.$$ Thus, from \eqref{mathcalGE}, it yields that 
				$$\sup_{z_2\in\bar{\mathbb D}} |\mathcal G_{A}\left(\left(\begin{smallmatrix}z_2 & 0 \\0 &z_{2}
				\end{smallmatrix}\right)\right)|<1~{\rm{and}}~\det\left(I_{2}-\left(\begin{smallmatrix}a_{22}&a_{23}\\a_{32} & a_{33}
				\end{smallmatrix}\right)\left(\begin{smallmatrix}z_2 & 0 \\0 &z_{2}
				\end{smallmatrix}\right)\right)\neq 0$$ for all $z_2\in\bar{\mathbb D}$ and hence we get
				\begin{equation}\label{GAAA}
					1-\overline{\mathcal G_{A}\left(\left(\begin{smallmatrix}z_2 & 0 \\0 &z_{2}
						\end{smallmatrix}\right)\right)}\mathcal G_{A}\left(\left(\begin{smallmatrix}z_2 & 0 \\0 &z_{2}
					\end{smallmatrix}\right)\right)>0 ~{\rm{and}}~\det\left(I_{2}-\left(\begin{smallmatrix}a_{22}&a_{23}\\a_{32} & a_{33}
				\end{smallmatrix}\right)\left(\begin{smallmatrix}z_2 & 0 \\0 &z_{2}
				\end{smallmatrix}\right)\right)\neq 0 ~~{\rm{for~all}}~~z_2\in\bar{\mathbb D}.
				\end{equation}
				From \eqref{AAAA norm} and \eqref{GAAA}, it implies that
				\begin{equation}\label{tilgammaa}
					(1-|z_2|^2)(\overline{\tilde{\gamma}_1(z_2)}\tilde{\gamma}_1(z_2)+\overline{\tilde{\gamma}_2(z_2)}\tilde{\gamma}_2(z_2))+{\eta(z_2)}^*(I_{3}-A^*A)\eta(z_2)>0, 
				\end{equation}
				and $\det\left(I_{2}-\left(\begin{smallmatrix}a_{22}&a_{23}\\a_{32} & a_{33}
				\end{smallmatrix}\right)\left(\begin{smallmatrix}z_2 & 0 \\0 &z_{2}
				\end{smallmatrix}\right)\right)\neq 0$ for all $z_2\in \bar{\mathbb D}.$ This completes the proof.
				\end{proof}
The proof of the following theorem is same as Theorem \ref{matix A}. Therefore, we skip the proof.		
\begin{thm}\label{matix ABC}
%				For $\textbf{x}=(x_1,\ldots,x_7)\in \mathbb C^7$ the following are equivalent.
%				\begin{enumerate}
%					\item $\textbf{x}=(x_1,\ldots,x_7)\in G_{E(3;3;1,1,1)}$;
%					
%					\item There exists $3\times 3$ matrix $A\in \mathcal M_{3\times 3}(\mathbb C)$ with $\|A\|<1$ such that $$x_{1}=a_{11},x_{2}=a_{22},x_{3}=\operatorname{det}A_{12},x_{4}=a_{33},x_{5}=\operatorname{det}A_{13}, x_{6}=\operatorname{det}A_{23}~{\rm{and}}~x_{7}=\operatorname{det}A.$$
%				\end{enumerate}
Let $A\in \mathcal M_{3\times 3}(\mathbb C)$ with $\|A\|<1.$  Set $$x_1=a_{11},x_2=\det \left(\begin{smallmatrix} a_{11} & a_{12}\\
					a_{21} & a_{22}
				\end{smallmatrix}\right)+\det \left(\begin{smallmatrix}
					a_{11} & a_{13}\\
					a_{31} & a_{33}
				\end{smallmatrix}\right),x_3=\operatorname{det}A, y_1=a_{22}+a_{33}, y_2=\det  \left(\begin{smallmatrix}
					a_{22} & a_{23}\\
					a_{32} & a_{33}\end{smallmatrix}\right).$$  Then $\textbf{x}=(x_1,x_2,x_3,y_1,y_2)\in G_{E(3;2;1,2)}.$				
			\end{thm}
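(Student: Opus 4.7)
The plan is to mimic the argument of Theorem \ref{matix A} exactly, only specialising the diagonal ``test matrix'' $\left(\begin{smallmatrix}z_{2}&0\\0&z_{3}\end{smallmatrix}\right)$ to its diagonal $z_{2}=z_{3}$ so that the relevant realization formula produces the rational function $\Psi_{3}$ used in Proposition \ref{bhch} to characterize $G_{E(3;2;1,2)}$. More precisely, given $A=((a_{ij}))_{i,j=1}^{3}$ with $\|A\|<1$, I will first observe that the $2\times 2$ principal submatrix $\left(\begin{smallmatrix}a_{22}&a_{23}\\a_{32}&a_{33}\end{smallmatrix}\right)$ also has norm strictly less than one, so that for every $z_{2}\in\bar{\mathbb D}$ the matrix
$$I_{2}-\left(\begin{smallmatrix}a_{22}&a_{23}\\a_{32}&a_{33}\end{smallmatrix}\right)\left(\begin{smallmatrix}z_{2}&0\\0&z_{2}\end{smallmatrix}\right)$$
is invertible. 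In particular, the expression $\mathcal G_{A}\!\left(\left(\begin{smallmatrix}z_{2}&0\\0&z_{2}\end{smallmatrix}\right)\right)$ from \eqref{GPX} is well-defined for every $z_{2}\in\bar{\mathbb D}$.

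Next, I would carry out the same computation as in \eqref{mathcalGE}: collecting the Schur complement expansion of $\mathcal G_{A}\!\left(\left(\begin{smallmatrix}z_{2}&0\\0&z_{2}\end{smallmatrix}\right)\right)$ and using the identifications
$$x_{1}=a_{11},\ \ x_{2}=\det\!\left(\begin{smallmatrix}a_{11}&a_{12}\\a_{21}&a_{22}\end{smallmatrix}\right)+\det\!\left(\begin{smallmatrix}a_{11}&a_{13}\\a_{31}&a_{33}\end{smallmatrix}\right),\ \ x_{3}=\det A,\ \ y_{1}=a_{22}+a_{33},\ \ y_{2}=\det\!\left(\begin{smallmatrix}a_{22}&a_{23}\\a_{32}&a_{33}\end{smallmatrix}\right),$$
one finds
$$\mathcal G_{A}\!\left(\left(\begin{smallmatrix}z_{2}&0\\0&z_{2}\end{smallmatrix}\right)\right)=\frac{x_{1}-x_{2}z_{2}+x_{3}z_{2}^{2}}{1-y_{1}z_{2}+y_{2}z_{2}^{2}}=\Psi_{3}(z_{2},\tilde{\mathbf{x}}).$$
This reduces the theorem to showing that $\|\Psi_{3}(\cdot,\tilde{\mathbf{x}})\|_{H^{\infty}(\bar{\mathbb D})}<1$ together with $1-y_{1}z_{2}+y_{2}z_{2}^{2}\neq 0$ on $\bar{\mathbb D}$, after which Proposition \ref{bhch} immediately yields $\tilde{\mathbf{x}}\in G_{E(3;2;1,2)}$.

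The non-vanishing of the denominator on $\bar{\mathbb D}$ is precisely $\det\!\left(I_{2}-\left(\begin{smallmatrix}a_{22}&a_{23}\\a_{32}&a_{33}\end{smallmatrix}\right)\left(\begin{smallmatrix}z_{2}&0\\0&z_{2}\end{smallmatrix}\right)\right)\neq 0$, which was already observed from $\|A\|<1$. To bound the supremum, I would invoke Proposition \ref{matrix AA} (or equivalently the contractivity statement in Proposition \ref{fpzz}) with $\mathcal H_{i}=\mathbb C=\mathcal K_{i}$, $P=A$, and $X=\left(\begin{smallmatrix}z_{2}&0\\0&z_{2}\end{smallmatrix}\right)$; since $\|X\|=|z_{2}|\leq 1$ for $z_{2}\in\bar{\mathbb D}$ and $\|A\|<1$, it gives $|\mathcal G_{A}(X)|<1$ pointwise, hence the maximum modulus principle delivers $\sup_{z_{2}\in\bar{\mathbb D}}|\Psi_{3}(z_{2},\tilde{\mathbf{x}})|<1$.

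I expect no serious obstacle here: each individual step is either a direct specialisation of the corresponding step in Theorem \ref{matix A}, a short algebraic manipulation, or a direct appeal to already-established propositions. The only mildly delicate point is verifying that the explicit Schur-complement computation of $\mathcal G_{A}\!\left(\left(\begin{smallmatrix}z_{2}&0\\0&z_{2}\end{smallmatrix}\right)\right)$ collapses into the specific polynomial ratio defining $\Psi_{3}$ in terms of the $\tilde{\mathbf{x}}$-coordinates of $\tilde{\pi}_{E(3;2;1,2)}$, but this is a routine expansion of $2\times 2$ minors parallel to \eqref{mathcalG11}.
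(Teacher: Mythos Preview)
Your proposal is correct and matches the paper's intended argument: the paper omits the proof, stating it is the same as Theorem \ref{matix A}, and the key computation you outline is exactly the one already carried out in \eqref{mathcalGE} within the proof of Theorem \ref{matix AE}. The only superfluous step is your appeal to the maximum modulus principle---Proposition \ref{matrix AA} (or \ref{fpzz}) already gives $|\mathcal G_{A}(X)|<1$ for every $z_{2}\in\bar{\mathbb D}$, and compactness of $\bar{\mathbb D}$ then yields the strict supremum bound directly.
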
			
					
The following theorem tells the relation between $G_{E(3;2;1,2)}$ and $G_{E(3;3;1,1,1)}$.
			\begin{thm}\label{matix AE12}
				Suppose $\textbf{x}=(x_1,x_2,x_3,x_4,x_5,x_6,x_7) \in \mathbb{C}^7 .$ Then the following conditions are equivalent:
\begin{enumerate}
\item  $\textbf{x} \in G_{E(3;3;1,1,1)}$;
\item  $(x_1,x_3+\eta x_5,\eta x_7,x_2+\eta x_4,\eta x_6) \in G_{E(3;2;1,2)}$ for all $\eta \in \mathbb{T}$;
\item $(x_1,x_3+\eta x_5,\eta x_7,x_2+\eta x_4,\eta x_6) \in G_{E(3;2;1,2)}$ for all $\eta \in \bar{\mathbb{D}}.$
\end{enumerate}
			\end{thm}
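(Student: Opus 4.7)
The plan is to exploit the algebraic identity
\begin{equation*}
R_3\bigl((z_1,w);\,(x_1,x_3+\eta x_5,\eta x_7,x_2+\eta x_4,\eta x_6)\bigr)=R^{(3;3;1,1,1)}_{\textbf{x}}(z_1,w,\eta w),
\end{equation*}
which follows by collecting coefficients of $\eta$ on both sides. Writing $R^{(3;3;1,1,1)}_{\textbf{x}}(z_1,w,z_3)=p(z_1,w)+z_3\,c(z_1,w)$ with $p=1-x_1z_1-x_2w+x_3z_1w$ and $c=-x_4+x_5z_1+x_6w-x_7z_1w$, the identity reads $R_3=p+\eta\,w\,c$. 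Using Corollary \ref{R_33} to rewrite (1) as ``$p+z_3c\neq 0$ on $\bar{\mathbb D}^3$'' and Proposition \ref{bhch} to rewrite (2), (3) as ``$p+\eta w c\neq 0$ for all $(z_1,w)\in\bar{\mathbb D}^2$ and all $\eta\in\mathbb T$ (resp.\ all $\eta\in\bar{\mathbb D}$)'', the implications $(1)\Rightarrow(3)\Rightarrow(2)$ follow from the inclusions $\mathbb T\subset\bar{\mathbb D}$ and $\{\eta w:\eta,w\in\bar{\mathbb D}\}\subset\bar{\mathbb D}$. The degenerate case $c\equiv 0$ (i.e.\ $x_4=x_5=x_6=x_7=0$) can be dispatched at the outset: then $R^{(3;3;1,1,1)}_{\textbf{x}}=p$ and all three conditions collapse to ``$p\neq 0$ on $\bar{\mathbb D}^2$''.

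The main content is $(2)\Rightarrow(1)$. For $(z_1,w)\in\bar{\mathbb D}^2$ set $q(z_1,w):=w\,c(z_1,w)$ and consider the affine map $\eta\mapsto h(\eta):=p(z_1,w)+\eta\,q(z_1,w)$. Condition (2) says that $h$ does not vanish on $\mathbb T$ for any $(z_1,w)\in\bar{\mathbb D}^2$; consequently the winding number of $h|_{\mathbb T}$ about $0$ is a well-defined, integer-valued, continuous function on the connected set $\bar{\mathbb D}^2$, hence constant. At $(z_1,w)=(0,0)$ one has $h\equiv 1$, so the winding number is identically $0$. Since $\deg h\le 1$, this forces that for every $(z_1,w)\in\bar{\mathbb D}^2$ either $q=0$---in which case (2) gives $p\neq 0$---or $q\neq 0$ and the unique root $-p/q$ lies outside $\bar{\mathbb D}$. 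In either case $p(z_1,w)\neq 0$ and $|p(z_1,w)|>|w|\,|c(z_1,w)|$ on $\bar{\mathbb D}^2$.

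Fix $z_1\in\bar{\mathbb D}$. The rational function $w\mapsto F(w):=c(z_1,w)/p(z_1,w)$ is then holomorphic on a neighbourhood of $\bar{\mathbb D}$, and the previous step gives $|wF(w)|<1$ on $\bar{\mathbb D}$. Restricting to $\mathbb T$ yields $|F(w)|<1$; by the maximum modulus principle in $w$, $|F(w)|<1$ on all of $\bar{\mathbb D}$, and hence $|c(z_1,w)|<|p(z_1,w)|$ throughout $\bar{\mathbb D}^2$. Consequently, for any $z_3\in\bar{\mathbb D}$ the triangle inequality yields $|p+z_3c|\ge|p|-|c|>0$, so $R^{(3;3;1,1,1)}_{\textbf{x}}(z_1,w,z_3)\neq 0$ on $\bar{\mathbb D}^3$, i.e.\ $\textbf{x}\in G_{E(3;3;1,1,1)}$. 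The step requiring the most care is the winding-number/constancy argument in the middle paragraph, since it is the only place where the full strength of the hypothesis on $\mathbb T$ (rather than a purely pointwise analysis) is used; the rest of the argument is a routine application of the maximum modulus principle.
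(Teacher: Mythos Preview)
Your proof is correct and takes a genuinely different route from the paper.

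The paper argues via the rational-function characterizations: it rewrites (1) using Theorem~\ref{phi} as $(x_2,x_4,x_6)\in G_{E(2;2;1,1)}$ together with $\|\Psi^{(1)}(\cdot,\textbf{x})\|_{H^\infty(\bar{\mathbb D}^2)}<1$, rewrites (2) using Proposition~\ref{bhch}(3), and then observes the sup-norm identity
\[
\sup_{|z_2|\le 1,\,|\eta|=1}\bigl|\Psi_3\bigl(z_2,(x_1,x_3+\eta x_5,\eta x_7,x_2+\eta x_4,\eta x_6)\bigr)\bigr|
=\sup_{(z_2,z_3)\in\bar{\mathbb D}^2}\bigl|\Psi^{(1)}\bigl((z_2,z_3),\textbf{x}\bigr)\bigr|
\]
via the substitution $z_3=\eta z_2$ (which parametrizes $\mathbb T^2$ and hence, by the maximum modulus principle, suffices). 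The remaining tetrablock condition $(x_2,x_4,x_6)\in G_{E(2;2;1,1)}\Leftrightarrow (x_2+\eta x_4,\eta x_6)\in G_{E(2;1;2)}$ for all $\eta\in\mathbb T$ is imported from an external reference \cite[Lemma~3.2]{tirt}.

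Your approach works entirely at the level of the polynomials $R^{(3;3;1,1,1)}_{\textbf{x}}$ and $R_3$, via Corollary~\ref{R_33} and Proposition~\ref{bhch}(2). The algebraic identity you use is the polynomial version of the paper's $\Psi$-identity, but your passage $(2)\Rightarrow(1)$ is different: rather than quoting the tetrablock/symmetrized-bidisc equivalence, you run a winding-number continuity argument on the affine map $\eta\mapsto p+\eta wc$ to obtain $|p|>|w|\,|c|$ on $\bar{\mathbb D}^2$, and then promote $|wF|<1$ to $|F|<1$ by the maximum modulus principle in $w$. This makes your proof self-contained within the paper (no appeal to \cite{tirt}) and arguably more elementary; the paper's version is shorter only because it leans on machinery already assembled in Theorem~\ref{phi} and the cited lemma.
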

			\begin{proof}
First we prove that $(1)$ implies $(2).$ By Theorem \ref{phi}, it follows that $\textbf{x}\in G_{E(3;3;1,1,1)}$ if and only if  $$(x_2,x_4,x_6)\in G_{E(2;2;1,1)},$$ and  $$\sup_{z_1,z_2\in \mathbb{D}}|\Psi^{(1)}(z_1,z_2,\textbf{x})|=\|\Psi^{(1)}(\cdot,\textbf{x})\|_{H^{\infty}(\bar{\mathbb{D}}^{2})}=\|\Psi^{(1)}(\cdot,\textbf{x})\|_{H^{\infty}(\mathbb{T}^{2})}=\sup_{z_1,z_2\in \mathbb{T}}|\Psi^{(1)}(z_1,z_2,\textbf{x})|< 1.$$ 	 Notice that 
				\begin{align}\label{phipsi}
					\sup_{|z_2|\leq 1, |\eta|=1}\Big|\Psi_3(z_2,(x_1,x_3+\eta x_5,\eta x_7,x_2+\eta x_4,\eta x_6)) \Big|\nonumber&=\sup_{|z_2|\leq 1, |\eta|=1}\Big|\frac{\eta x_7z_2^2-(x_3+\eta x_5)z_2+x_1}{\eta x_6z_2^2-(x_2+\eta x_4)z_2+1}\Big|\\\nonumber&=\sup_{|z_2|\leq 1, |\eta|=1}\Big|\frac{\eta z_2z_2x_7-x_3z_2-\eta z_2 x_5+x_1}{\eta z_2 z_2x_6-x_2z_2-\eta  z_2x_4+1}\Big|\\\nonumber&=\sup_{|z_1|\leq 1,|z_2|\leq 1}\Big|\frac{z_1z_2x_7-x_3z_2-z_1x_5+x_1}{z_1z_2x_6-x_2z_2-z_1x_4+1}\Big|\\&= \sup_{|z_1|\leq1, |z_2|\leq 1}|\Psi^{(1)}(z_1,z_2,\textbf{x})|
\nonumber\\&=\|\Psi^{(1)}(\cdot,\textbf{x})\|_{H^{\infty}(\mathbb{T}^2)}.
				\end{align}
				
				However,  from [Lemma $3.2$, \cite{tirt}], if $(x_2,x_4,x_6)\in G_{E(2;2;1,1)}$ if and only if $(x_2+\eta x_4, \eta x_6)\in G_{E(2;1;2)}$ for all $\eta \in \mathbb T$ and hence from \eqref{phipsi} and Proposition \ref{bhch}, it yields that  $$(x_1,x_3+\eta x_5,\eta x_7,x_2+\eta x_4,\eta x_6) \in G_{E(3;2;1,2)}~{\rm{ for~ all }}~\eta \in \mathbb{T}.$$
				
In order to show that $(2)$ implies $(1)$, let  $(x_1,x_3+\eta x_5,\eta x_7,x_2+\eta x_4,\eta x_6) \in G_{E(3;2;1,2)}$ for all $\eta \in \mathbb{T}$.  Then $(x_2+\eta x_4, \eta x_6)\in G_{E(2;1;2)}$ which implies $(x_2,x_4,x_6)\in G_{E(2;2;1,1)}$. Hence from \eqref{phipsi} and Theorem \ref{phi}, we conclude that $\textbf{x}\in G_{E(3;3;1,1,1)}$. 

Similarly, we can prove that $(1)$ and $(3)$  are equivalent using the same reasoning. 
This completes the proof.

			\end{proof}
Let $$\Gamma_{E(3;2;1,2)}:=\{\tilde{\pi}_{{E(3;2;1,2)}}(A):\mu_{E(3;2;1,2)}(A)\leq1\}.$$

The characterization of $\Gamma_{E(3;2;1,2)}$ is provided in the following proposition \cite{bha}.
			
			\begin{prop}\label{bhchh}
				For $\tilde{\textbf{x}}\in \mathbb{C}^{5}$ the following are equivalent
				\begin{enumerate}
					\item $\tilde{\textbf{x}}\in \Gamma_{E(3;2;1,2)}$.
					\item $\tilde{\textbf{x}} \in \mathcal  B_{E(3;2;1,2)}^{(1)}.$
					\item $y_2z_2^2-y_1z_2+1\neq0$ for all $z_2\in \mathbb{D}$ and $\|\Psi_{3}(\cdot;\tilde{\textbf{x}})\|_{H^{\infty}(\mathbb{D})}\leq1$.  Moreover, if  $x_3=x_1y_2,x_2=x_1y_1$ then $|x_1|\leq1$.
					\item For every  $z_1\in \mathbb{D}$, we have
					$\left(\frac{y_1-z_1x_2}{1-z_1x_1},\frac{y_2-z_1x_3}{1-z_1x_1}\right)\in \Gamma_{E(2;1;2)}.$ 
				\end{enumerate}
							\end{prop}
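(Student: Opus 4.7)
\medskip

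\noindent\textbf{Proof proposal for Proposition \ref{bhchh}.} The proof is the closure analogue of Proposition \ref{bhch} and my plan is to establish the cycle $(1)\Rightarrow(2)\Rightarrow(3)\Rightarrow(4)\Rightarrow(1)$, modeling the arguments directly on the ones already given in this excerpt for $\Gamma_{E(3;3;1,1,1)}$ (see Theorem \ref{R_3}, Theorem \ref{phii1} and Theorem \ref{charr}).

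\smallskip

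\noindent\emph{Step 1: $(1)\Leftrightarrow(2)$.} I would mimic Theorem \ref{R_3}. Starting from $\tilde{\textbf{x}}\in\Gamma_{E(3;2;1,2)}$ one has a matrix $A\in\mathcal M_{3\times3}(\mathbb C)$ with $\mu_{E(3;2;1,2)}(A)\leq1$ and $\tilde{\pi}_{E(3;2;1,2)}(A)=\tilde{\textbf{x}}$. For $X=\mathrm{diag}(z_1,z_2I_2)\in E(3;2;1,2)$, a direct computation (of the same type as \eqref{A33}) shows
\[
\det(I-AX)=Q_3(z_2;y_1,y_2)-z_1P_3(z_2;x_1,x_2,x_3)=R_3(\tilde{\textbf z};\tilde{\textbf x}).
\]
Since $\mu_{E(3;2;1,2)}(A)\leq1$ iff every $X\in E(3;2;1,2)$ with $I-AX$ singular satisfies $\|X\|=\max\{|z_1|,|z_2|\}\geq1$, this is equivalent to $R_3(\tilde{\textbf z};\tilde{\textbf x})\neq0$ on $\mathbb D^2$. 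The degenerate case $\mu_{E(3;2;1,2)}(A)=0$ is handled as in Lemma \ref{M_3}/Case 2 of Theorem \ref{R_3}. The converse uses a Bharali-type lemma (analogue of Lemma \ref{meq}) producing a matrix $A$ realizing $\tilde{\textbf{x}}$.

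\smallskip

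\noindent\emph{Step 2: $(2)\Leftrightarrow(3)$.} Write $R_3(\tilde{\textbf z};\tilde{\textbf x})=Q_3(z_2;y_1,y_2)-z_1P_3(z_2;x_1,x_2,x_3)$. Setting $z_1=0$ shows that $(2)$ forces $Q_3(z_2;y_1,y_2)\neq0$ for every $z_2\in\mathbb D$. With this in hand, $R_3\neq0$ on $\mathbb D^2$ becomes $1\neq z_1\Psi_3(z_2;\tilde{\textbf x})$ for all $(z_1,z_2)\in\mathbb D^2$, which is equivalent to $|\Psi_3(z_2;\tilde{\textbf x})|\leq1$ on $\mathbb D$. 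Since $\Psi_3(\cdot;\tilde{\textbf x})$ is holomorphic on $\mathbb D$ this is the same as $\|\Psi_3(\cdot;\tilde{\textbf x})\|_{H^\infty(\mathbb D)}\leq1$. The removable-singularity degeneration $x_3=x_1y_2,\ x_2=x_1y_1$ makes $\Psi_3\equiv x_1$, and the estimate reduces to $|x_1|\leq 1$; this parallels the degenerate clause in Theorem \ref{phii1}.

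\smallskip

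\noindent\emph{Step 3: $(3)\Leftrightarrow(4)$ and closing the loop.} Rewrite
\[
R_3(\tilde{\textbf z};\tilde{\textbf x})=(1-z_1x_1)-z_2(y_1-z_1x_2)+z_2^2(y_2-z_1x_3).
\]
When $1-z_1x_1\neq0$, dividing by $1-z_1x_1$ identifies this with the defining polynomial of $\Gamma_{E(2;1;2)}$ evaluated at $\bigl(\tfrac{y_1-z_1x_2}{1-z_1x_1},\tfrac{y_2-z_1x_3}{1-z_1x_1}\bigr)$. Hence $R_3\neq0$ on $\mathbb D^2$ is equivalent to this pair lying in $\Gamma_{E(2;1;2)}$ for every $z_1\in\mathbb D$; the edge case $1-z_1x_1=0$ (which forces $|x_1|=1$) is treated using the boundary characterization of $\Gamma_{E(2;1;2)}$ exactly as in the remark following Lemma \ref{tetra}. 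This yields $(3)\Leftrightarrow(4)$, and combined with $(1)\Leftrightarrow(2)\Leftrightarrow(3)$ the proof is complete.

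\smallskip

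\noindent\emph{Main obstacle.} The routine steps are the algebraic manipulations; the genuinely delicate point is the bookkeeping at the ``boundary'' of the open conditions, namely the removable-singularity case $Q_3\equiv x_1^{-1}P_3$ in Step 2 and the case $1-z_1x_1=0$ in Step 3. These are precisely the places where passing from $G_{E(3;2;1,2)}$ to $\Gamma_{E(3;2;1,2)}$ (from strict to non-strict inequalities and from $\bar{\mathbb D}$ to $\mathbb D$) introduces subtleties; my plan is to dispose of them using the same type of argument the authors use in the remark after Lemma \ref{tetra} for the tetrablock.
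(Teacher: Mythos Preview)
The paper does not actually give its own proof of Proposition~\ref{bhchh}; it is stated with a citation to Bharali~\cite{bha} and used as input. Your plan---reproducing the closure analogues of Theorem~\ref{R_3}, Theorem~\ref{phii1}, and Theorem~\ref{charr} with $E(3;2;1,2)$ in place of $E(3;3;1,1,1)$---is exactly the natural argument and is correct in outline.

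One small correction: the ``edge case'' you worry about in Step~3 is vacuous, and your parenthetical ``$1-z_1x_1=0$ (which forces $|x_1|=1$)'' is wrong. For $z_1\in\mathbb D$ the equation $z_1x_1=1$ would force $|x_1|>1$, not $|x_1|=1$. But this never happens: from $(2)$, putting $z_2=0$ in $R_3$ gives $1-z_1x_1\neq0$ for all $z_1\in\mathbb D$, i.e.\ $|x_1|\leq1$; equivalently, from $(3)$ one has $|x_1|=|\Psi_3(0;\tilde{\mathbf x})|\leq1$. So the division by $1-z_1x_1$ in Step~3 is always legitimate and no separate boundary analysis is needed. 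Likewise, the degenerate clause in $(3)$ is automatic once $\|\Psi_3\|_{H^\infty(\mathbb D)}\leq1$ holds, since in that case $\Psi_3\equiv x_1$; it is recorded only for emphasis. With these clarifications your argument goes through and matches the strategy the paper uses for $\Gamma_{E(3;3;1,1,1)}$.
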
 		
The proof of the following theorem is same as the proof of Theorem \ref{matix AE}. Therefore, we skip the proof.
\begin{thm}\label{matix AEE}
				For $\tilde{\textbf{x}}=(x_1,x_2,x_3,y_1,y_2)\in \mathbb C^5$ the following are equivalent.
				\begin{enumerate}
					\item $\tilde{\textbf{x}}=(x_1,x_2,x_3,y_1,y_2)\in \Gamma_{E(3;2;1,2)}$;
					
					\item There exists $3\times 3$ matrix $A\in \mathcal M_{3\times 3}(\mathbb C)$  such that $$x_1=a_{11},x_2=\det \left(\begin{smallmatrix} a_{11} & a_{12}\\
					a_{21} & a_{22}
				\end{smallmatrix}\right)+\det \left(\begin{smallmatrix}
					a_{11} & a_{13}\\
					a_{31} & a_{33}
				\end{smallmatrix}\right),x_3=\operatorname{det}A, y_1=a_{22}+a_{33}, y_2=\det  \left(\begin{smallmatrix}
					a_{22} & a_{23}\\
					a_{32} & a_{33}\end{smallmatrix}\right),$$ \begin{equation}\label{tilgammaa1234}
					(1-|z_2|^2)(\overline{\tilde{\gamma}_1(z_2)}\tilde{\gamma}_1(z_2)+\overline{\tilde{\gamma}_2(z_2)}\tilde{\gamma}_2(z_2))+{\eta(z_2)}^*(I_{3}-A^*A)\eta(z_2)\geq 0, 
				\end{equation}
				and $\det\left(I_{2}-\left(\begin{smallmatrix}a_{22}&a_{23}\\a_{32} & a_{33}
				\end{smallmatrix}\right)\left(\begin{smallmatrix}z_2 & 0 \\0 &z_{2}
				\end{smallmatrix}\right)\right)\neq 0$ for all $z_2\in \mathbb D$.
				\end{enumerate}
			
			\end{thm}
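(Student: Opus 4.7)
The plan is to mimic the argument of Theorem \ref{matix AE}, replacing the strict inequalities by weak ones throughout and restricting the test set from $\bar{\mathbb D}$ to $\mathbb D$, and using Proposition \ref{bhchh} (the closure analogue of Proposition \ref{bhch}) together with Proposition \ref{matrix AAA} (the closure analogue of Proposition \ref{matrix AA}).

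For $(1)\Rightarrow(2)$, I will start with $\tilde{\textbf{x}}=(x_1,x_2,x_3,y_1,y_2)\in \Gamma_{E(3;2;1,2)}$. By definition there is $A\in\mathcal M_{3\times 3}(\mathbb C)$ with $\mu_{E(3;2;1,2)}(A)\leq 1$ and $\tilde{\pi}_{E(3;2;1,2)}(A)=\tilde{\textbf{x}}$; this gives the required representation of $x_1,x_2,x_3,y_1,y_2$ in terms of the entries of $A$. Next I would evaluate the first realization formula on the diagonal $X=\mathrm{diag}(z_2,z_2)$: exactly as in \eqref{mathcalGE},
\[
\mathcal G_{A}\!\left(\left(\begin{smallmatrix}z_2 & 0 \\0 & z_{2}\end{smallmatrix}\right)\right)=\frac{x_1-z_2x_2+z_2^2x_3}{1-z_2y_1+z_2^2y_2}=\Psi_3(z_2,\tilde{\textbf{x}}),
\]
whenever $\det\!\left(I_2-\left(\begin{smallmatrix}a_{22}&a_{23}\\ a_{32}&a_{33}\end{smallmatrix}\right)\left(\begin{smallmatrix}z_2 & 0 \\0 &z_{2}\end{smallmatrix}\right)\right)\neq 0$. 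By Proposition \ref{bhchh}, $\tilde{\textbf{x}}\in\Gamma_{E(3;2;1,2)}$ is equivalent to $y_2z_2^2-y_1z_2+1\neq 0$ and $\|\Psi_3(\cdot,\tilde{\textbf{x}})\|_{H^\infty(\mathbb D)}\leq 1$, so
\[
1-\overline{\mathcal G_{A}\!\left(\left(\begin{smallmatrix}z_2 & 0 \\0 & z_{2}\end{smallmatrix}\right)\right)}\mathcal G_{A}\!\left(\left(\begin{smallmatrix}z_2 & 0 \\0 & z_{2}\end{smallmatrix}\right)\right)\geq 0\qquad\text{for all }z_2\in\mathbb D.
\]
Plugging $z_3=z_2$ (diagonal) into the identity \eqref{AAAAA norm} of Proposition \ref{matrix AAA} and using the definitions \eqref{gaa}--\eqref{ett} of $\tilde\gamma_i$ and $\eta$ in this diagonal regime yields
\[
(1-|z_2|^2)\!\left(\overline{\tilde\gamma_1(z_2)}\tilde\gamma_1(z_2)+\overline{\tilde\gamma_2(z_2)}\tilde\gamma_2(z_2)\right)+\eta(z_2)^*(I_3-A^*A)\eta(z_2)\geq 0,
\]
which is exactly \eqref{tilgammaa1234}.

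For the converse $(2)\Rightarrow(1)$, I will reverse the chain: assuming the two conditions in (2), the identity \eqref{AAAAA norm} on the diagonal forces $|\mathcal G_{A}(\mathrm{diag}(z_2,z_2))|\leq 1$ for every $z_2\in\mathbb D$ at which the denominator does not vanish, i.e., $|\Psi_3(z_2,\tilde{\textbf{x}})|\leq 1$ on $\mathbb D$ together with $y_2z_2^2-y_1z_2+1\neq 0$ on $\mathbb D$. The degenerate case in which $\Psi_3$ reduces to the constant $x_1$ (which corresponds to $x_3=x_1y_2,\ x_2=x_1y_1$) should be singled out as in Proposition \ref{bhchh}(3); there the inequality collapses to $|x_1|\leq 1$. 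Applying Proposition \ref{bhchh} then gives $\tilde{\textbf{x}}\in\Gamma_{E(3;2;1,2)}$, completing the proof.

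The main technical obstacle, as in the proof of Theorem \ref{matix AE}, is the careful handling of the set where $\det\!\left(I_2-\left(\begin{smallmatrix}a_{22}&a_{23}\\a_{32}&a_{33}\end{smallmatrix}\right)\mathrm{diag}(z_2,z_2)\right)$ can vanish: the realization formula $\mathcal G_{A}(\mathrm{diag}(z_2,z_2))$ and the identity of Proposition \ref{matrix AAA} are only defined off this zero set, and one must ensure that restricting from $\bar{\mathbb D}$ to the open disc $\mathbb D$ (which is the correct test set for the closure $\Gamma_{E(3;2;1,2)}$, cf.\ Proposition \ref{bhchh}(3)) is consistent with this. Everything else is a routine closure analogue of the arguments already given for the open case.
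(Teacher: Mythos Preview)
Your proposal is correct and follows essentially the same route the paper intends: the paper explicitly omits the proof, saying it is ``the same as the proof of Theorem \ref{matix AE}'', and you have correctly identified the needed substitutions---Proposition \ref{bhchh} in place of Proposition \ref{bhch}, the identity \eqref{AAAAA norm} (equivalently \eqref{AAAA norm}) specialized to the diagonal $z_3=z_2$, weak inequalities in place of strict ones, and the open disc $\mathbb D$ as the test set. One small remark: for the identity you invoke, Proposition \ref{matrix AA} already gives it for arbitrary $A$ (the hypothesis $\|A\|\leq 1$ in Proposition \ref{matrix AAA} is only needed for its ``Moreover'' clause), so there is no circularity in applying it to a matrix $A$ coming from $\tilde{\textbf{x}}\in\Gamma_{E(3;2;1,2)}$ whose norm is not a priori controlled.
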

The relationship between $\Gamma_{E(3;2;1,2)}$ and $\Gamma_{E(3;3;1,1,1)}$ is described in the following theorem. The following theorem's proof is comparable to that of Theorem \ref{matix AE12}. Consequently, we omit the proof.
			\begin{thm}\label{matix AE123}
				Assume that $\textbf{x}=(x_1,x_2,x_3,x_4,x_5,x_6,x_7) \in \mathbb{C}^7 .$ Then $\textbf{x} \in \Gamma_{E(3;3;1,1,1)}$ if and only if $(x_1,x_3+\eta x_5,\eta x_7,x_2+\eta x_4,\eta x_6)\in \Gamma_{E(3;2;1,2)}$ for all $\eta \in \mathbb{T}$. 
			\end{thm}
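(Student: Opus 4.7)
The plan is to follow exactly the template of Theorem \ref{matix AE12}, replacing everywhere the open-domain characterizations by their closure counterparts. First I would invoke Theorem \ref{phii1}(1) to rewrite $\mathbf{x}\in\Gamma_{E(3;3;1,1,1)}$ as the conjunction of $(x_2,x_4,x_6)=\mathbf{x}_{J^{(1)}}^{\prime}\in\Gamma_{E(2;2;1,1)}$ and $\|\Psi^{(1)}(\cdot,\mathbf{x})\|_{H^{\infty}(\mathbb{D}^{2})}\le 1$, together with the degenerate alternative $|x_1|\le 1$ in the case $x_7=x_6x_1$, $x_3=x_2x_1$, $x_5=x_4x_1$. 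Symmetrically, I would invoke Proposition \ref{bhchh}(3) on $\tilde{\mathbf{x}}(\eta):=(x_1,x_3+\eta x_5,\eta x_7,x_2+\eta x_4,\eta x_6)$ to rewrite $\tilde{\mathbf{x}}(\eta)\in\Gamma_{E(3;2;1,2)}$ as $\eta x_6 z_2^2-(x_2+\eta x_4)z_2+1\neq 0$ for $z_2\in\mathbb{D}$ and $\|\Psi_3(\cdot,\tilde{\mathbf{x}}(\eta))\|_{H^{\infty}(\mathbb{D})}\le 1$, with the analogous degenerate alternative.

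Second, I would use the closed-disc analogue of [Lemma $3.2$, \cite{tirt}] (proved in the same way as the open-disc version: multiply the defining polynomial of $\Gamma_{E(2;2;1,1)}$ by $\eta$ and substitute $z_1=\eta z_2$), which asserts
\[
(x_2,x_4,x_6)\in\Gamma_{E(2;2;1,1)}\ \Longleftrightarrow\ (x_2+\eta x_4,\eta x_6)\in\Gamma_{E(2;1;2)}\ \text{for all }\eta\in\mathbb{T}.
\]
This handles the first part of the characterization on both sides.

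Third, the core computation is the same substitution used in \eqref{phipsi}: setting $z_1=\eta z_2$ and using $|\eta|=1$,
\begin{align*}
\sup_{|z_2|\le 1,\ |\eta|=1}\bigl|\Psi_3(z_2,\tilde{\mathbf{x}}(\eta))\bigr|
&=\sup_{|z_2|\le 1,\ |\eta|=1}\left|\frac{\eta z_2\cdot z_2 x_7-x_3 z_2-\eta z_2 x_5+x_1}{\eta z_2\cdot z_2 x_6-x_2 z_2-\eta z_2 x_4+1}\right|\\
&=\sup_{|z_1|\le 1,\ |z_2|\le 1}\bigl|\Psi^{(1)}(z_1,z_2,\mathbf{x})\bigr|
=\|\Psi^{(1)}(\cdot,\mathbf{x})\|_{H^{\infty}(\mathbb{D}^{2})}.
\end{align*}
Hence $\|\Psi^{(1)}\|_{H^{\infty}(\mathbb{D}^{2})}\le 1$ is equivalent to $\|\Psi_3(\cdot,\tilde{\mathbf{x}}(\eta))\|_{H^{\infty}(\mathbb{D})}\le 1$ holding for every $\eta\in\mathbb{T}$. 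A small but necessary check is that the degenerate cases match: the condition $y_2=\eta x_6$, $y_1=x_2+\eta x_4$, $x_1y_2=\eta x_7$, $x_1y_1=x_3+\eta x_5$ collapses (using that it must hold for every $\eta\in\mathbb{T}$) exactly to $x_7=x_6x_1$, $x_3=x_2x_1$, $x_5=x_4x_1$, and in that case both sides reduce to $|x_1|\le 1$.

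Combining these three reductions gives the equivalence. The only genuinely non-routine point is the closed-disc form of the tirt lemma, which I expect to be the main (but mild) obstacle: one must verify that the substitution $z_1=\eta z_2$ preserves the "for all $\mathbf{z}\in\mathbb{D}^{s}$" quantifier in the polynomial characterization of $\Gamma_{E(2;2;1,1)}$ given by Theorem \ref{R_3} adapted to tetrablock, and that no additional boundary pathology appears. Once this is in place, assembling the equivalence is a straightforward rephrasing using Theorem \ref{phii1} and Proposition \ref{bhchh}.
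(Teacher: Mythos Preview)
Your proposal is correct and takes essentially the same approach as the paper, which simply states that the proof is comparable to that of Theorem \ref{matix AE12} and omits the details. One small notational point: in the closure setting the suprema in your core computation should be taken over $|z_2|<1$ (and correspondingly $|z_1|<1$) rather than $|z_2|\le 1$, since for $\mathbf{x}_{J^{(1)}}'\in\Gamma_{E(2;2;1,1)}$ the denominator of $\Psi^{(1)}$ is only guaranteed nonvanishing on $\mathbb{D}^2$; the equality with $\sup_{\mathbb{D}^2}|\Psi^{(1)}|$ then follows by applying the maximum modulus principle on $r\bar{\mathbb{D}}^2$ for $r<1$ and letting $r\to 1$.
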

The proof of the following theorem is similar to Theorem \ref{matix barAAAA}. Thus, we omit the proof.
\begin{thm}\label{matix ABCD}

Let $A\in \mathcal M_{3\times 3}(\mathbb C)$ with $\|A\|\leq1.$  Set $$x_1=a_{11},x_2=\det \left(\begin{smallmatrix} a_{11} & a_{12}\\
					a_{21} & a_{22}
				\end{smallmatrix}\right)+\det \left(\begin{smallmatrix}
					a_{11} & a_{13}\\
					a_{31} & a_{33}
				\end{smallmatrix}\right),x_3=\operatorname{det}A, y_1=a_{22}+a_{33}, y_2=\det  \left(\begin{smallmatrix}
					a_{22} & a_{23}\\
					a_{32} & a_{33}\end{smallmatrix}\right).$$  Then $\tilde{\textbf{x}}=(x_1,x_2,x_3,y_1,y_2)\in \Gamma_{E(3;2;1,2)}.$				
			\end{thm}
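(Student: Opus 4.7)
The plan is to imitate the proof of Theorem \ref{matix barAAAA} but using the characterization of $\Gamma_{E(3;2;1,2)}$ from Proposition \ref{bhchh} instead of that of $\Gamma_{E(3;3;1,1,1)}$. The key observation is that when one specializes the first realization formula $\mathcal G_A(\cdot)$ to the equal-variable diagonal $\operatorname{diag}(z_2,z_2)$, it produces precisely the rational function $\Psi_3(z_2,\tilde{\mathbf x})$ appearing in the characterization of $\Gamma_{E(3;2;1,2)}$.

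First I would check the denominator condition. Since $\|A\|\leq 1$, the submatrix $\left(\begin{smallmatrix} a_{22}&a_{23}\\ a_{32}&a_{33}\end{smallmatrix}\right)$ is also a contraction, so for every $z_2\in\mathbb D$ the operator $\left(\begin{smallmatrix} a_{22}&a_{23}\\ a_{32}&a_{33}\end{smallmatrix}\right)\left(\begin{smallmatrix}z_2&0\\0&z_2\end{smallmatrix}\right)$ has norm strictly less than $1$. Consequently $\det\!\left(I_2-\left(\begin{smallmatrix} a_{22}&a_{23}\\ a_{32}&a_{33}\end{smallmatrix}\right)\left(\begin{smallmatrix}z_2&0\\0&z_2\end{smallmatrix}\right)\right)\neq 0$ for all $z_2\in\mathbb D$, which in terms of our data reads $y_2z_2^2-y_1z_2+1\neq 0$ for all $z_2\in\mathbb D$, yielding one of the required hypotheses of Proposition \ref{bhchh}.

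Next I would compute $\mathcal G_A\left(\left(\begin{smallmatrix}z_2&0\\0&z_2\end{smallmatrix}\right)\right)$ directly from the definition in \eqref{GPX}. Exactly as in the computation done in \eqref{mathcalGE} within the proof of Theorem \ref{matix AE}, one arrives at
\[
\mathcal G_A\left(\left(\begin{smallmatrix}z_2&0\\0&z_2\end{smallmatrix}\right)\right)=\frac{x_1-z_2x_2+z_2^2x_3}{1-z_2y_1+z_2^2y_2}=\Psi_3(z_2,\tilde{\mathbf x}),
\]
using the identities $a_{11}a_{22}-a_{12}a_{21}+a_{11}a_{33}-a_{13}a_{31}=x_2$, $a_{22}+a_{33}=y_1$, $a_{22}a_{33}-a_{23}a_{32}=y_2$, and $\det A=x_3$. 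Now since $\|A\|\leq 1$, Proposition \ref{matrix AAA} (applied with $z_3=z_2$) gives
\[
\bigl|\mathcal G_A\left(\left(\begin{smallmatrix}z_2&0\\0&z_2\end{smallmatrix}\right)\right)\bigr|\leq 1\quad\text{for all }z_2\in\mathbb D,
\]
which is exactly $\|\Psi_3(\cdot,\tilde{\mathbf x})\|_{H^\infty(\mathbb D)}\leq 1$.

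Finally I would dispatch the degenerate case of Proposition \ref{bhchh}(3): if $x_3=x_1y_2$ and $x_2=x_1y_1$, then since $x_1=a_{11}$ and $\|A\|\leq 1$ entails $|a_{11}|\leq 1$, we have $|x_1|\leq 1$. Combining the three ingredients—the nonvanishing of $y_2z_2^2-y_1z_2+1$ on $\mathbb D$, the bound $\|\Psi_3(\cdot,\tilde{\mathbf x})\|_{H^\infty(\mathbb D)}\leq 1$, and the degenerate-case inequality—Proposition \ref{bhchh} yields $\tilde{\mathbf x}\in\Gamma_{E(3;2;1,2)}$. No step here is genuinely delicate; the one place to be careful is the explicit algebraic identification of $\mathcal G_A(\operatorname{diag}(z_2,z_2))$ with $\Psi_3(z_2,\tilde{\mathbf x})$, but this is a direct verification completely analogous to \eqref{mathcalGE}.
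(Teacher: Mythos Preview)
Your proposal is correct and follows essentially the same route the paper indicates: the paper omits the proof, stating it is similar to Theorem \ref{matix barAAAA}, and you carry out precisely that adaptation—using the contraction of the lower-right $2\times 2$ block to get the denominator condition on $\mathbb D$, identifying $\mathcal G_A(\operatorname{diag}(z_2,z_2))$ with $\Psi_3(z_2,\tilde{\mathbf x})$ as in \eqref{mathcalGE}, invoking Proposition \ref{matrix AAA} for the modulus bound, and concluding via Proposition \ref{bhchh}. The only extra step you include, the degenerate case $|x_1|\leq 1$, is a harmless completeness check.
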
								
%	\newpage\null\thispagestyle{empty}
%	
\section{Geometric properties of  $G_{E(3;3;1,1,1)},$ $\Gamma_{E(3;3;1,1,1)},G_{E(3;2;1,2)}$ and  $\Gamma_{E(3;2;1,2)}$}

%\section{Geometric properties of  $G_{E(3;3;1,1,1)}$ and  $\Gamma_{E(3;3;1,1,1)}$ and  $G_{E(3;2;1,2)}$ and  $\Gamma_{E(3;2;1,2)}$}
In this section, we prove that the domain  $G_{E(3;3;1,1,1)}$ and  $\Gamma_{E(3;3;1,1,1)}$ are non-convex domain and starlike domain about origin but not circled. Additionally, we  present an alternative  proof to show that $\Gamma_{E(3;3;1,1,1)}$ is polynomially convex and linear convex. 
%We demonstrate that $G_{E(3;2;1,2)}$ and  $\Gamma_{E(3;2;1,2)}$ are also non-convex and starlike domain about origin but not circled. 
 We also describe the closed boundaries of $G_{E(3;2;1,2)}$ and $\Gamma_{E(3;2;1,2)}$ and show how they are related.

\subsection{Geometric properties of  $G_{E(3;3;1,1,1)}$ and  $\Gamma_{E(3;3;1,1,1)}$}
The geometric characteristics of $G_{E(3;3;1,1,1)}$ and $\Gamma_{E(3;3;1,1,1)}$ are discussed in this section. Firstly, both the domains $G_{E(3;3;1,1,1)}$ and  $\Gamma_{E(3;3;1,1,1)}$ are non-convex domain, indeed,  if $\textbf{x}=(1,i,i,1,1,i,i)$ and $\textbf{y}=(-i,1,-i,-i,-1,i,1),$
then $\textbf{x},\textbf{y}\in \Gamma_{E(3;3;1,1,1)}$ but $\frac{\textbf{x}+\textbf{y}}{2}\notin \Gamma_{E(3;3;1,1,1)}.$  
\begin{thm}
Let $\textbf{x}=(x_1,x_2,x_3,x_4,x_5,x_6,x_7) \in \Gamma_{E(3;3;1,1,1)}.$ Then it satisfies the following conditions
\begin{enumerate}
\item $(x_1,rx_2,rx_3,rx_4,rx_5,rx_6,rx_7)  \in \Gamma_{E(3;3;1,1,1)}$ for all $r$ with $0\leq r < 1.$
\item $(rx_1,x_2,rx_3,rx_4,rx_5,rx_6,rx_7)  \in \Gamma_{E(3;3;1,1,1)}$ for all $r$ with $0\leq r <1.$
\item $(rx_1,rx_2,rx_3,x_4,rx_5,rx_6,rx_7)  \in \Gamma_{E(3;3;1,1,1)}$ for all $r$ with $0\leq r <1.$

\end{enumerate}
	
\end{thm}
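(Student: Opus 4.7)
My plan is to combine the polynomial criterion (item $2$ of Theorem~\ref{maingamma}) with the M\"obius factorizations appearing in items $4, 4', 4''$ of the same theorem. The three cases are parallel, so I will do case $(1)$ in detail and note the changes for $(2)$ and $(3)$.

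For case $(1)$, set $\tilde{\textbf{x}}=(x_1,rx_2,rx_3,rx_4,rx_5,rx_6,rx_7)$. Every coefficient of $R^{(3;3;1,1,1)}_{\tilde{\textbf{x}}}$ that carries a factor $r$ is the coefficient of a monomial involving $z_2$ or $z_3$, so a direct check will yield
\[
R_{\tilde{\textbf{x}}}(z_1,z_2,z_3)=(1-r)(1-x_1 z_1)+r\,R_{\textbf{x}}(z_1,z_2,z_3).
\]
On the other hand, grouping $R_{\textbf x}$ by $(1-x_1 z_1)$ will produce
\[
R_{\textbf{x}}(z_1,z_2,z_3)=(1-x_1 z_1)\,R_{\textbf{y}(z_1)}(z_2,z_3),\qquad \textbf{y}(z_1):=\Bigl(\tfrac{x_2-x_3 z_1}{1-x_1 z_1},\tfrac{x_4-x_5 z_1}{1-x_1 z_1},\tfrac{x_6-x_7 z_1}{1-x_1 z_1}\Bigr),
\]
and $\textbf{y}(z_1)\in\Gamma_{E(2;2;1,1)}$ for each $z_1\in\mathbb{D}$ by item $4'$ of Theorem~\ref{maingamma}. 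Combining the two identities I get
\[
R_{\tilde{\textbf{x}}}(z_1,z_2,z_3)=(1-x_1 z_1)\bigl[(1-r)+r\,R_{\textbf{y}(z_1)}(z_2,z_3)\bigr].
\]
Since item $2$ at $z_2=z_3=0$ forces $|x_1|\le 1$, the first factor never vanishes on $\mathbb{D}^3$, so the argument reduces to showing that the bracket is nonzero.

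The main step will use the tetrablock matrix realization: since $\textbf{y}(z_1)\in\Gamma_{E(2;2;1,1)}$, I can pick a $2\times 2$ contraction $B=B(z_1)$ with $b_{11}=y_1(z_1)$, $b_{22}=y_2(z_1)$ and $\det B=y_3(z_1)$. For $z_2,z_3\in\mathbb{D}$ the matrix $X=\operatorname{diag}(z_2,z_3)$ satisfies $\|BX\|\le\|B\|\,\|X\|<1$, so the eigenvalues $\lambda_1,\lambda_2$ of $BX$ lie in $\mathbb{D}$ and
\[
R_{\textbf{y}(z_1)}(z_2,z_3)=\det(I-BX)=(1-\lambda_1)(1-\lambda_2).
\]
Each $1-\lambda_i$ lies in the disk $\{|w-1|<1\}$, hence in the open right half-plane, and a short computation shows the product of two numbers in the open right half-plane again has strictly positive real part. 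Therefore $\Re R_{\textbf{y}(z_1)}(z_2,z_3)>0$, so the bracket has real part at least $1-r>0$ and is nonzero. This gives $R_{\tilde{\textbf{x}}}\neq 0$ on $\mathbb{D}^3$, and Theorem~\ref{maingamma} delivers $\tilde{\textbf{x}}\in\Gamma_{E(3;3;1,1,1)}$.

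For case $(2)$ I would factor $(1-x_2 z_2)$ out of $R_{\textbf{x}}$ and invoke item $4''$ of Theorem~\ref{maingamma}, after noting that $\Gamma_{E(2;2;1,1)}$ is invariant under swapping its first two coordinates (this follows from the symmetry of $R^{(2;2;1,1)}$ under the joint swap $(y_1,w_1)\leftrightarrow(y_2,w_2)$); for case $(3)$ I would factor out $(1-x_4 z_3)$ and apply item $4$, where no swap is needed. The rest of the argument carries over without change. The only genuinely non-mechanical ingredient is the right-half-plane estimate for $R_{\textbf y(z_1)}$, and once this quantity is recognised as $\det(I-BX)$ with $\|BX\|<1$ that step is immediate.
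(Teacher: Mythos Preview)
Your reduction is correct and different from the paper's approach: the identity $R_{\tilde{\textbf{x}}}=(1-x_1z_1)\bigl[(1-r)+r\,R_{\textbf{y}(z_1)}(z_2,z_3)\bigr]$ and the realization $R_{\textbf{y}(z_1)}(z_2,z_3)=\det(I-BX)=(1-\lambda_1)(1-\lambda_2)$ with $|\lambda_i|<1$ are both fine. But the step ``a short computation shows the product of two numbers in the open right half-plane again has strictly positive real part'' is false, even for the smaller disk $\{|w-1|<1\}$. Take $\lambda_1=\lambda_2=0.9\,e^{i\pi/3}$; then $1-\lambda_j\approx 0.55-0.779i$ lies in that disk, yet $(1-\lambda_1)(1-\lambda_2)\approx -0.305-0.857i$ has negative real part. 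This situation actually arises: with $B=\operatorname{diag}(0.947,0.947)$ (so $\textbf{y}=(0.947,0.947,0.947^2)\in\Gamma_{E(2;2;1,1)}$) and $z_2=z_3=0.95\,e^{i\pi/3}\in\mathbb{D}$, one gets $BX=0.9\,e^{i\pi/3}I$. Hence your conclusion $\Re R_{\textbf{y}(z_1)}(z_2,z_3)>0$ is invalid.

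The repair is easy and stays inside your argument. Since each $1-\lambda_j$ has argument strictly in $(-\tfrac{\pi}{2},\tfrac{\pi}{2})$, the product $(1-\lambda_1)(1-\lambda_2)$ has argument strictly in $(-\pi,\pi)$ and is therefore never a \emph{negative real number}. But the bracket vanishes only when $(1-\lambda_1)(1-\lambda_2)=-(1-r)/r\in(-\infty,0)$, which is impossible; hence the bracket is nonzero on $\mathbb{D}^3$ and you are done. With this correction your proof is a genuinely different route from the paper's, which instead works through the $\Psi^{(2)}$ characterization and the algebraic identity $|1-rz_3\tilde{x}_2(z_1)|^2-|r\tilde{x}_1(z_1)-rz_3\tilde{x}_3(z_1)|^2=r^2\bigl(|1-z_3\tilde{x}_2|^2-|\tilde{x}_1-z_3\tilde{x}_3|^2\bigr)+(1-r)\bigl(1+r-2r\Re(z_3\tilde{x}_2)\bigr)$ to verify $\|\Psi^{(2)}(\cdot,\tilde{\textbf{x}})\|_{H^\infty}\le 1$ directly.
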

\begin{proof}
 We prove that if $\textbf{x}=(x_1,x_2,x_3,x_4,x_5,x_6,x_7) \in \Gamma_{E(3;3;1,1,1)},$ then $(x_1,rx_2,rx_3,rx_4,rx_5,rx_6,rx_7) \in \Gamma_{E(3;3;1,1,1)}$ for all $r$ with $0\leq r < 1$.  The proof of other cases follows similarly. For $\textbf{x}=(x_1,x_2,x_3,x_4,x_5,x_6,x_7) \in \Gamma_{E(3;3;1,1,1)}$ and $\textbf{z}_{J^{(2)}}=(z_1,z_3)\in \mathbb{D}^2$, note that
	\begin{align}\label{psi111}
		\Psi^{(2)}(\textbf{z}_{J^{(2)}},\textbf{x})\nonumber&=\frac{x_2-z_1x_3-z_3x_6+z_1z_3x_7}{1-z_1x_1-z_3x_4+z_1z_3x_5}\\ \nonumber &= \frac{\frac{x_2-z_1x_3}{1-z_1x_1}- z_3\frac{x_6-z_1x_7}{1-z_1x_1}}{1- z_3\frac{x_4-z_1 x_5}{1-z_1 x_1}}
		\\ \nonumber&=\frac{\tilde{x}_1(z_1)-z_3\tilde{x}_3(z_1)}{1-z_3\tilde{x}_2(z_1)}\\&=\Psi(z_3,(\tilde{x}_1(z_1),\tilde{x}_2(z_1),\tilde{x}_3(z_1))). 
	\end{align}
	
From Theorem \ref{phii1} and \eqref{psi111}, we deduce that $\textbf{x}\in \Gamma_{E(3;3;1,1,1)}$ if and only if 
\begin{align}\label{psi171} \textbf{x}_{J^{(2)}}^{\prime}\in \Gamma_{E(2;2;1,1)}~{\rm{and }}~ \sup_{(z_1,z_2)\in \mathbb{D}^2}|\Psi(z_3,(\tilde{x}_1(z_1),\tilde{x}_2(z_1),\tilde{x}_3(z_1)))|\leq 1.\end{align}
The condition 
$\sup_{(z_1,z_2)\in \mathbb{D}^2}|\Psi(z_3,(\tilde{x}_1(z_1),\tilde{x}_2(z_1),\tilde{x}_3(z_1)))|\leq 1$ implies that  
\begin{align}\label{starl2} |1-z_3\tilde{x}_2(z_1)|^2-|\tilde{x}_1(z_1)-z_3\tilde{x}_3(z_1)|^2\geq 0\;\;\mbox{for}\;\;z_1,z_3\in \mathbb{D}.\end{align}
Since $0\leq r<1,$ so we obtain $1-r>0~{\rm{and}}~1+r-2r\Re(z_3\tilde{x}_2(z_1)\geq |1-rz_3\tilde{x}_2(z_1)|^2\geq 0.$ 
It is easy to see that
\begin{align}\label{starl}
		|1-rz_3\tilde{x}_2(z_1)|^2-|r\tilde{x}_1(z_1)-rz_3\tilde{x}_3(z_1)|^2\nonumber&=r^2\{|1-z_3\tilde{x}_2(z_1)|^2-|\tilde{x}_1(z_1)-z_3\tilde{x}_3(z_1)|^2\}\\&+(1-r)(1+r-2r\Re(z_3\tilde{x}_2(z_1))).
	\end{align}
From \eqref{starl2} and \eqref{starl}, we get 
$$|1-rz_3\tilde{x}_2(z_1)|^2-|r\tilde{x}_1(z_1)-rz_3\tilde{x}_3(z_1)|^2\geq 0\;\;\; \mbox{for}\;\; z_1,z_3\in \mathbb{D}$$ which gives

\begin{align}\label{psi71}
\sup_{(z_1,z_3)\in \mathbb{D}^2}|\Psi(z_3,(r\tilde{x}_1(z_1),r\tilde{x}_2(z_1),r\tilde{x}_3(z_1)))|\leq 1.
\end{align}
Note that $\textbf{x}_{J^{(2)}}^{\prime}=(x_1,x_4,x_5)\in \Gamma_{E(2;2;1,1)}$ if and only if $\sup_{z\in \mathbb{D}}|\frac{x_4-zx_5}{1-zx_1}|\leq 1$. For $0\leq r <1$, we have
 $$\sup_{z\in \mathbb{D}}\left|\frac{rx_4-z rx_5}{1-zx_1}\right|\leq \sup_{z\in \mathbb{D}}\left|\frac{x_4-zx_5}{1-zx_1}\right|\leq 1.$$ Thus $(x_1, rx_4, r x_5)$ belongs to $\Gamma_{E(2;2;1,1)}$.
Hence from \eqref{psi171} and \eqref{psi71}, we have $(x_1,rx_2,rx_3,rx_4,rx_5,rx_6,rx_7)$ belongs to 
$\Gamma_{E(3;3;1,1,1)}$. This completes the proof.

\end{proof}
\begin{cor}\label{simplyconn}
$\Gamma_{E(3;3;1,1,1)}$ is simply connected.
\end{cor}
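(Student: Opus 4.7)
The plan is to prove that $\Gamma_{E(3;3;1,1,1)}$ is contractible to the origin, from which simple connectedness follows immediately. First I would note that $\mathbf{0} \in \Gamma_{E(3;3;1,1,1)}$: taking $A = 0$ gives $\mu_{E(3;3;1,1,1)}(A) = 0$ and $\pi_{E(3;3;1,1,1)}(A) = \mathbf{0}$. Then I would build an explicit two-stage homotopy $H \colon \Gamma_{E(3;3;1,1,1)} \times [0,1] \to \Gamma_{E(3;3;1,1,1)}$ between the identity and the constant map at $\mathbf{0}$, each stage covered by a result already at hand.

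For the first stage $t \in [0, 1/2]$, set
$$H(\textbf{x}, t) = \bigl(x_1,\ (1-2t)x_2,\ (1-2t)x_3,\ (1-2t)x_4,\ (1-2t)x_5,\ (1-2t)x_6,\ (1-2t)x_7\bigr).$$
Part (1) of the preceding theorem, applied with $r = 1-2t \in [0,1)$ (and trivially with $r = 1$ at $t = 0$), guarantees that $H(\textbf{x},t) \in \Gamma_{E(3;3;1,1,1)}$ throughout this interval, and at $t = 1/2$ the image is $(x_1,0,0,0,0,0,0)$.

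For the second stage $t \in [1/2, 1]$, set $H(\textbf{x}, t) = \bigl((2-2t)x_1,\ 0,\ 0,\ 0,\ 0,\ 0,\ 0\bigr)$. Two observations show this point lies in $\Gamma_{E(3;3;1,1,1)}$. First, Theorem \ref{maingamma}(2) applied with $z_2 = z_3 = 0$ gives $1 - x_1 z_1 \neq 0$ for all $z_1 \in \mathbb{D}$, hence $|x_1| \leq 1$. Second, the diagonal matrix $B_s := \operatorname{diag}(s x_1, 0, 0)$ with $s = 2-2t \in [0,1]$ satisfies $\det(I - B_s X) = 1 - s x_1 z_1$ for any $X = \operatorname{diag}(z_1, z_2, z_3) \in E(3;3;1,1,1)$, so $\mu_{E(3;3;1,1,1)}(B_s) = |s x_1| \leq 1$ while $\pi_{E(3;3;1,1,1)}(B_s) = (s x_1, 0, 0, 0, 0, 0, 0)$, as required.

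Since the two pieces agree at $t = 1/2$, the map $H$ is jointly continuous in $(\textbf{x},t)$ and satisfies $H(\textbf{x},0) = \textbf{x}$, $H(\textbf{x},1) = \mathbf{0}$; thus $\Gamma_{E(3;3;1,1,1)}$ is contractible and hence simply connected. I do not foresee a genuine obstacle here: the preceding theorem already provides the nontrivial first-stage deformation, and the second stage reduces to the elementary fact that the one-parameter family $s \mapsto \operatorname{diag}(sx_1, 0, 0)$ stays in the unit ball for $\mu_{E(3;3;1,1,1)}$ once $|x_1| \leq 1$ is established.
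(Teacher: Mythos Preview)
Your proof is correct and follows essentially the same two-stage deformation as the paper: first use the preceding theorem to scale the last six coordinates to zero, then linearly contract the first coordinate. The only cosmetic difference is that you phrase it as a contraction of the whole space $\Gamma_{E(3;3;1,1,1)}$ to $\mathbf{0}$, whereas the paper writes down the induced null-homotopy on an arbitrary loop based at $\mathbf{0}$; your version is slightly cleaner and you are more explicit in justifying that $|x_1|\le 1$ and that $(sx_1,0,\dots,0)\in\Gamma_{E(3;3;1,1,1)}$, points the paper leaves implicit.
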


\begin{proof}          
Let $\gamma:[0,1]\to \Gamma_{E(3;3;1,1,1)}$ be a path such that $\gamma(0)=\gamma(1)=(0,0,0,0,0,0,0)$.\\Let $\gamma(t)=(\gamma_1(t),\gamma_2(t),\gamma_3(t),\gamma_4(t),\gamma_5(t),\gamma_6(t),\gamma_7(t))$. Define a map
$H:[0,1]\times [0,1]\to \Gamma_{E(3;3;1,1,1)}$ as folows
{\scriptsize{\begin{eqnarray*}
H(s,t)= \begin{cases}
               (2s \gamma_1(t),0,0,0,0,0,0,0) , & \mbox{if}\;\; s\in [0,\tfrac{1}{2}]\\
       (\gamma_1(t),(2s-1)\gamma_2(t),(2s-1)\gamma_3(t),(2s-1)\gamma_4(t),(2s-1)\gamma_5(t),(2s-1)\gamma_6(t),(2s-1)\gamma_7(t))       & \mbox{if}\;\; s\in [\tfrac{1}{2},1].
            \end{cases}
\end{eqnarray*}}}
Thus $\gamma$ is  homotopic to the constant path at $(0,0,0,0,0,0,0)$. Hence  $\Gamma_{E(3;3;1,1,1)}$ is simply connected.        
\end{proof}

\begin{rem}	The point $\textbf{x}=(1,1,1,1,1,1,1)\in \Gamma_{E(3;3;1,1,1)}$ but $i\textbf{x}=(i,i,i,i,i,i,i)$ does not belong to $\Gamma_{E(3;3;1,1,1)}$ which indicates that neither $\Gamma_{E(3;3;1,1,1)}$ nor $G_{E(3;3;1,1,1)}$ is circled.
\end{rem}
Zaplolowski [Proposition $3.20$, \cite{Pawel}] established that  the domain $\Gamma_{E(n;s;r_1,\cdots,r_s)}$ is polynomially convex if $r_2=r_3=\cdots=r_s=1$ with $\sum_{i=1}^{s}r_i=n$. In the following theorem, we provide a new proof to  demonstrate that $\Gamma_{E(3;3;1,1,1)}$ is a polynomially convex.

\begin{thm}
	$\Gamma_{E(3;3;1,1,1)}$  is polynomially convex.
\end{thm}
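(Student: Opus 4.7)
The plan is to produce, for each $\textbf{x}^{0}\in\mathbb{C}^{7}\setminus\Gamma_{E(3;3;1,1,1)}$, a polynomial $p\in\mathbb{C}[w_{1},\ldots,w_{7}]$ with $|p(\textbf{x}^{0})|>\sup_{\textbf{y}\in\Gamma_{E(3;3;1,1,1)}}|p(\textbf{y})|$. First I would record compactness of $\Gamma_{E(3;3;1,1,1)}$: from the $\Psi^{(i)}$--characterization in Theorem~\ref{maingamma}, evaluating $\Psi^{(i)}$ at $(0,0)$ gives $|y_{1}|,|y_{2}|,|y_{4}|\leq 1$, while the remaining coordinates are controlled by the boundedness of the tetrablock $\Gamma_{E(2;2;1,1)}$ together with $\|\Psi^{(i)}(\cdot,\textbf{y})\|_{\infty}\leq 1$.

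Given $\textbf{x}^{0}\notin\Gamma_{E(3;3;1,1,1)}$, Theorem~\ref{R_3} provides $\textbf{z}^{0}=(z_{1}^{0},z_{2}^{0},z_{3}^{0})\in\mathbb{D}^{3}$ with $R^{(3;3;1,1,1)}_{\textbf{x}^{0}}(\textbf{z}^{0})=0$. I would then consider the affine polynomial $q(\textbf{w}):=R^{(3;3;1,1,1)}_{\textbf{w}}(\textbf{z}^{0})$; then $q(\textbf{x}^{0})=0$ and $q$ is nonvanishing on $\Gamma_{E(3;3;1,1,1)}$. Writing $\textbf{y}=\pi(A)$ with $\mu_{E(3;3;1,1,1)}(A)\leq 1$ and $X_{0}=\operatorname{diag}(z_{1}^{0},z_{2}^{0},z_{3}^{0})$, one has $q(\textbf{y})=\det(I-AX_{0})=\prod_{i=1}^{3}(1-\lambda_{i})$, where the eigenvalues $\lambda_{i}$ of $AX_{0}$ satisfy $|\lambda_{i}|\leq\mu(A)\|X_{0}\|<1$ (if $\lambda\neq 0$ were an eigenvalue, then $X_{0}/\lambda\in E$ satisfies $\det(I-A(X_{0}/\lambda))=0$, so $\|X_{0}\|/|\lambda|\geq 1/\mu(A)$). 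Each factor $1-\lambda_{i}$ therefore lies in the closed disk $\overline{D(1,\|X_{0}\|)}\subset\{w:|\arg w|\leq\arcsin\|X_{0}\|\}$.

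The core of the argument is to show that $0$ lies in the unbounded component of $\mathbb{C}\setminus q(\Gamma_{E(3;3;1,1,1)})$. Granting this, the classical one-variable criterion (a compact $K\subset\mathbb{C}$ is polynomially convex iff $\mathbb{C}\setminus K$ is connected, via Runge) supplies a polynomial $P(\zeta)$ with $|P(0)|>\sup_{\zeta\in q(\Gamma_{E(3;3;1,1,1)})}|P(\zeta)|$, and then $p:=P\circ q$ satisfies $|p(\textbf{x}^{0})|=|P(0)|>\sup_{\Gamma_{E(3;3;1,1,1)}}|p|$, finishing the proof. When $3\arcsin\|X_{0}\|<\pi$ the sector estimate above directly shows that $q(\Gamma_{E(3;3;1,1,1)})$ avoids the negative real axis, so $0$ lies in the unbounded component. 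In general, I would exploit that the parameter set $\{A:\mu_{E(3;3;1,1,1)}(A)\leq 1\}$ is starlike about the origin (since $\mu(tA)=t\mu(A)$) and hence simply connected; the principal branch of $\log\det(I-\cdot\,X_{0})=\sum_{i}\log(1-\lambda_{i}(\cdot))$ is then a single-valued continuous function whose image lies in a bounded horizontal strip of $\mathbb{C}$, and combined with the simple connectedness of $\Gamma_{E(3;3;1,1,1)}$ (Corollary~\ref{simplyconn}) a monodromy argument rules out loops in $q(\Gamma_{E(3;3;1,1,1)})$ of nonzero winding about $0$.

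The main obstacle is precisely the non-enclosure of $0$: since the arguments of the three factors $1-\lambda_{i}$ can sum to values in $(-3\pi/2,3\pi/2)$, the plain sector bound does not by itself preclude $q(\Gamma_{E(3;3;1,1,1)})$ from encircling the origin. The decisive ingredient is the holomorphic lifting to the universal cover of $\mathbb{C}\setminus\{0\}$: starlikeness of the parameter space forces $\log q\circ\pi$ to be globally defined, confining the image to a strip which after exponentiation cannot enclose $0$. Executing this topological step carefully is the most delicate part of the proof.
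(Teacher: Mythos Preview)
Your approach is genuinely different from the paper's, and the sector argument for the case $3\arcsin\|X_0\|<\pi$ is correct. However, the general case has a real gap that is not closed by the monodromy/logarithm sketch.

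The problem is that the existence of a continuous branch of $\log q$ on $\Gamma_{E(3;3;1,1,1)}$ (or of $\log\det(I-\,\cdot\,X_0)$ on the starlike parameter set) says only that loops \emph{in the domain} map to null--homotopic loops in $\mathbb{C}\setminus\{0\}$. It says nothing about loops that live in the \emph{image} $q(\Gamma_{E(3;3;1,1,1)})$ but do not lift to loops upstairs. A continuous map from a simply connected space into $\mathbb{C}\setminus\{0\}$ can perfectly well have an image that encloses $0$: the map $(s,t)\mapsto e^{s+it}$ from the rectangle $[0,1]\times[0,2\pi]$ has image the annulus $\{1\le|w|\le e\}$, which separates $0$ from $\infty$, even though the rectangle is contractible and the map has a global logarithm with image in a bounded strip. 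Thus ``the image of the lift lies in a bounded horizontal strip'' is exactly the situation in which enclosing $0$ is possible, not ruled out. Moreover, you cannot simply shrink $\|X_0\|$ to force the sector case: for $\textbf{x}^0$ close to $\partial\Gamma_{E(3;3;1,1,1)}$ the zero $\textbf{z}^0\in\mathbb{D}^3$ of $R_{\textbf{x}^0}$ is forced to lie near $\mathbb{T}^3$ (e.g.\ for $\textbf{x}^0=(1+\epsilon)(1,\dots,1)$ one computes $R_{\textbf{x}^0}(\textbf{z})=-\epsilon+(1+\epsilon)\prod(1-z_i)$, whose zeros in $\mathbb{D}^3$ all satisfy $\max_i|z_i|\ge 1-(\epsilon/(1+\epsilon))^{1/3}$).

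By contrast, the paper's proof avoids this topological issue entirely. It splits into two cases: if one of the tetrablock projections $(a_1,a_2,a_3)$, $(a_1,a_4,a_5)$, $(a_2,a_4,a_6)$ fails to lie in $\Gamma_{E(2;2;1,1)}$, one pulls back a separating polynomial from the (already known) polynomial convexity of the tetrablock. Otherwise some $|\Psi^{(i)}((z_0,w_0),\textbf{a})|>1$ with $(z_0,w_0)\in\mathbb{D}^2$; since the relevant $2\times2$ matrix $B_\textbf{y}$ coming from the tetrablock characterization satisfies the \emph{norm} bound $\|B_\textbf{y}\|\le1$ (not merely the spectral radius bound your argument uses), the Neumann series $\sum_{k\ge0}(B_\textbf{y}\,\mathrm{diag}(z_0,w_0))^k$ converges uniformly over $\textbf{y}$, and truncating gives a polynomial separating $\textbf{a}$ from $\Gamma_{E(3;3;1,1,1)}$. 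The decisive ingredient you are missing is precisely this operator--norm control; with only $\mu_E(A)\le1$ you get eigenvalue bounds on $AX_0$ but no uniform control forcing the image of $q$ to miss a ray to $\infty$.
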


\begin{proof}
Let $\textbf{a}=(a_1,a_2,\ldots,a_7)\in \mathbb{C}^3\setminus \Gamma_{E(3;3;1,1,1)}$. 
By Theorem \ref{maingamma}, $$\textbf{a}=(a_1,a_2,\ldots,a_7) \notin \Gamma_{E(3;3;1,1,1)}$$ if and only if either \begin{itemize}
\item $(a_1,a_2,a_3) \notin \Gamma_{E(2;2;1,1)}$ or $(a_1,a_4,a_5) \notin \Gamma_{E(2;2;1,1)}$ or $(a_2,a_4,a_6) \notin \Gamma_{E(2;2;1,1)}$.
\end{itemize}
or 
\begin{itemize}
\item
 $(a_1,a_2,a_3) \in \Gamma_{E(2;2;1,1)}$, there exists $(z_1,w_1)\in \mathbb{D}^2$ such that $|\Psi^{(3)}((z_1,w_1),\textbf{a})|>1$;
 and $(a_1,a_4,a_5) \in \Gamma_{E(2;2;1,1)}$, there exists $(z_0,w_0)\in \mathbb{D}^2$ such that
$$\left|\frac{a_2-z_0a_3-w_0 a_6+ z_0w_0 a_7}{1-z_0 a_1-w_0 a_4+ z_0w_0 a_5}\right|=\left|\Psi^{(2)}((z_0,w_0),\textbf{a})\right|>1;$$
 and $(a_2,a_4,a_6) \in \Gamma_{E(2;2;1,1)}$, there exists $(z_2,w_2)\in \mathbb{D}^2$ such that $|\Psi^{(1)}((z_2,w_2),\textbf{a})|>1$.
\end{itemize}

\noindent $\textbf{Case 1}$: If $(a_1,a_4,a_5)\notin \Gamma_{E(2;2;1,1)}$, by [Theorem 2.9, \cite{Abouhajar}], there exists a polynomial $\tilde{f}$ such that
$|\tilde{f}|\leq 1$ on $\Gamma_{E(2;2;1,1)}$ and $|\tilde{f}(a_1,a_4,a_5)|>1$. Consider a polynomial $f$ defined on $\mathbb{C}^7$ by
$f(x_1,x_2,x_3,x_4,x_5,x_6,x_7)=\tilde{f}(x_1,x_4,x_5)$. Clearly, $|f|\leq 1$ on $\Gamma_{E(3;3;1,1,1)}$ and $$|f(\textbf{a})|=|\tilde{f}(a_1,a_4,a_5)|>1.$$
Similarly, if $(a_1,a_2,a_3)\notin \Gamma_{E(2;2;1,1)}$ or $(a_2,a_4,a_6)\notin \Gamma_{E(2;2;1,1)}$, then there exists a polynomial $f$ such that $|f|\leq 1$ on 
 $\Gamma_{E(3;3;1,1,1)}$ and $|f(\textbf{a})|>1$.

\noindent $\textbf{Case 2}$: 
Let $\textbf{a}\in \overline{\mathbb{D}}^7$, $(a_1,a_4,a_5) \in \Gamma_{E(2;2;1,1)}$, there exists $(z_0,w_0)\in \mathbb{D}^2$ such that $$\left|\Psi^{(2)}((z_0,w_0),\textbf{a})\right|>1.$$

Let $\textbf{x}=(x_1,x_2,x_3,x_4,x_5,x_6,x_7)\in \mathbb{C}^7$ and $B_{\textbf{x}}=\begin{pmatrix} b_{11} & b_{12}\\
b_{21} & b_{22}\end{pmatrix}$ such that $b_{11}=x_1, b_{22}=x_4$ and $\det (B_{\textbf{x}})=x_5$.
Let $f_N$ be the polynomial given by 
$$f_N(\textbf{x})=(x_2-z_0x_3-w_0x_6+z_0w_0x_7)  \det \left(I+B_{\textbf{x}}(\begin{smallmatrix} z_0 & 0\\ 0 & w_0\end{smallmatrix})+ \left(B_{\textbf{x}}(\begin{smallmatrix} z_0 & 0\\ 0 & w_0\end{smallmatrix})\right)^2+\ldots+ \left(B_{\textbf{x}}(\begin{smallmatrix} z_0 & 0\\ 0 & w_0\end{smallmatrix})\right)^N\right).$$ 

Let $\textbf{y}\in \overline{\mathbb{D}}^7$ with $(y_1,y_4,y_5)\in \Gamma_{E(2;2;1,1)}$, there exists a matrix $B_{\textbf{y}}=\begin{pmatrix} b_{11} & b_{12}\\
b_{21} & b_{22}\end{pmatrix}$ with $\|B_{\textbf{y}}\|\leq 1$ such that $b_{11}=y_1, b_{22}=y_4, \det(B_{\textbf{y}})=y_5$. We have

\small{\begin{eqnarray*}
&&|f_N(\textbf{y})-\psi^{(2)}((z_0,w_0),\textbf{y})|\\
&=&|y_2-z_0y_3-w_0y_6+z_0w_0y_7| \left|\det \left(I+B_{\textbf{y}}(\begin{smallmatrix} z_0 & 0\\ 0 & w_0\end{smallmatrix})+ \ldots+ \left(B_{\textbf{y}}(\begin{smallmatrix} z_0 & 0\\ 0 & w_0\end{smallmatrix})\right)^N\right)-
\frac{1}{1-z_0y_1-w_0y_4+z_0w_0y_5}\right|\\
&=& |y_2-z_0y_3-w_0y_6+z_0w_0y_7| \left|\det \left(I+B_{\textbf y}(\begin{smallmatrix} z_0 & 0\\ 0 & w_0\end{smallmatrix})+ \ldots+ \left(B_{\textbf y}(\begin{smallmatrix} z_0 & 0\\ 0 & w_0\end{smallmatrix})\right)^N\right)-
\det \left(I-B_{\textbf y}(\begin{smallmatrix} z_0 & 0\\ 0 & w_0\end{smallmatrix}) \right)^{-1}\right|\\
&\leq & 4 \left|\det \left(I+B_{\textbf y}(\begin{smallmatrix} z_0 & 0\\ 0 & w_0\end{smallmatrix})+ \ldots+ \left(B_{\textbf y}(\begin{smallmatrix} z_0 & 0\\ 0 & w_0\end{smallmatrix})\right)^N\right)-
\det \left(I-B_{\textbf y}(\begin{smallmatrix} z_0 & 0\\ 0 & w_0\end{smallmatrix}) \right)^{-1}\right|.
\end{eqnarray*}}

Let $0< \epsilon < \frac{1}{3} (|\psi^{(2)}((z_0,w_0),\textbf{a})|-1)$. Choose $N$ so large that
$|f_N(\textbf {y})-\psi^{(2)}((z_0,w_0),\textbf{y})|<\epsilon$ for all $y\in \overline{\mathbb{D}}^7$ with $(y_1,y_4,y_5)\in \Gamma_{E(2;2;1,1)}$. Thus, we have
\begin{align}\label{de}
|\psi^{(2)}((z_0,w_0),\textbf{y})|-\epsilon < |f_N(\textbf{y})|< |\psi^{(2)}((z_0,w_0),\textbf{y})|+\epsilon, \; \mbox{for all}\; \textbf{y}\in \overline{\mathbb{D}}^7\;\mbox{with}\; (y_1,y_4,y_5)\in \Gamma_{E(2;2;1,1)}.
\end{align}
From \eqref{de}, we have 
$$|f_N(\textbf{y})|<1+\epsilon\; \mbox{for all}\; \textbf{y}\in \Gamma_{E(3;3;1,1,1)}, $$
and 
\begin{eqnarray*}
|f_N(\textbf{a})| &>& |\psi^{(2)}((z_0,w_0),\textbf{a})|-\epsilon \\
&>& (3 \epsilon +1)-\epsilon\\
&=& (1+2\epsilon).
\end{eqnarray*}
We take the polynomial $f(\textbf{x})= (1+\epsilon)^{-1}f_N(\textbf{x})$ for $\textbf{x}\in \mathbb{C}^7$. Clearly, $|f|\leq 1$ on $\Gamma_{E(3;3;1,1,1)}$
and $|f(\textbf{a})|>1$. This completes the proof.
\end{proof}
%Let $m=(m_1,m_2,\dots,m_n)\in \mathbb N^n.$ We recall the definition of $m$-balanced from \cite{Pawel}. A domain $D\subset \mathbb C^n$ is said to be $m$-balanced if $(z^{m_1}z_1,z^{m_2}z_2,\dots,z^{m_n}z_n)\in D$ whenever ${\bf{z}}=(z_1,z_2,\dots,z_n)\in D$ and $z\in \bar{\mathbb D}.$ The $(1,1,\dots,1)$-balanced domain is referred to be a balanced domain.
%
%
%The following proposition is due to Zapalowski \cite{Pawel}.
%\begin{prop}[Proposition $3.6$, \cite{Pawel}]
%	The domain $G_{E(3;3;1,1,1)}$ is bounded $(1,1,2,1,2,2,3)$-balanced.
%\end{prop}

\begin{defn}
	A domain $G$ is called an analytic retract of a domain $D$ if there exist analytic maps $\theta: G \longrightarrow D$, $\iota: D \longrightarrow G$ such that $\iota \circ \theta=\mathrm{id}_{G}$.
\end{defn}  
%Let $G$ be a domain and let $$S(G)=\{F:G\times G \to G: F ~{\rm{is ~holomorphic,}}~F(z,z)=z ~{\rm{and}}~F(z,w)=F(w,z)~{\rm{ for~all }}~z,w\in G\}.$$ 

\begin{lem}\label{retract}
	The mappings
	\begin{enumerate}
		\item $G_{E(2;2;1,1)} \ni \textbf{x}^{\prime} \stackrel{\theta}{\longmapsto}\left(\textbf{x}^{\prime}, 0\right) \in G_{E(3;3;1,1,1)}$, and
		\item $G_{E(3;3;1,1,1)} \ni\left(\textbf{x}^{\prime}, \textbf{x}^{\prime \prime}\right) \stackrel{\iota}{\mapsto} \textbf{x}^{\prime} \in G_{E(2;2;1,1)}$
	\end{enumerate}
	are well defined.
\end{lem}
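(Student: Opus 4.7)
}

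Both mappings are polynomial in the coordinates, hence automatically holomorphic, so the only issue is to verify that the image lies in the claimed target domain. I will use the polynomial characterization $G_{E(3;3;1,1,1)}=\mathcal A^{(1)}_{(3;3;1,1,1)}$ from Corollary \ref{R_33} together with the analogous statement $G_{E(2;2;1,1)}=\mathcal A^{(1)}_{(2;2,1,1)}$ recalled just before Theorem \ref{R_3}, and the subcoordinate-projection part of Theorem \ref{phi}.

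For $(1)$: take $\textbf{x}^{\prime}=(x_1,x_2,x_3)\in G_{E(2;2;1,1)}$, so that
$$R^{(2;2;1,1)}_{\textbf{x}^{\prime}}(z_1,z_2)=1-x_1z_1-x_2z_2+x_3z_1z_2\neq 0\quad\text{for all }(z_1,z_2)\in\bar{\mathbb{D}}^2.$$
The plan is to plug $\theta(\textbf{x}^{\prime})=(x_1,x_2,x_3,0,0,0,0)$ into the defining polynomial and observe the telescoping:
$$R^{(3;3;1,1,1)}_{\theta(\textbf{x}^{\prime})}(z_1,z_2,z_3)=1-x_1z_1-x_2z_2+x_3z_1z_2=R^{(2;2;1,1)}_{\textbf{x}^{\prime}}(z_1,z_2)\neq 0$$
for every $(z_1,z_2,z_3)\in\bar{\mathbb{D}}^3$. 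Hence $\theta(\textbf{x}^{\prime})\in\mathcal A^{(1)}_{(3;3;1,1,1)}=G_{E(3;3;1,1,1)}$.

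For $(2)$: write a generic element of $G_{E(3;3;1,1,1)}$ as $(\textbf{x}^{\prime},\textbf{x}^{\prime\prime})$ with $\textbf{x}^{\prime}=(x_1,x_2,x_3)$ and $\textbf{x}^{\prime\prime}=(x_4,x_5,x_6,x_7)$. The statement of Theorem \ref{phi} with $i=3$ gives precisely $\textbf{x}^{\prime}_{J^{(3)}}=(x_1,x_2,x_3)\in G_{E(2;2;1,1)}$ as a necessary condition. Therefore $\iota(\textbf{x}^{\prime},\textbf{x}^{\prime\prime})=\textbf{x}^{\prime}\in G_{E(2;2;1,1)}$, so $\iota$ is well defined.

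Neither step presents a genuine obstacle: part $(1)$ is a direct computation with the polynomial $R^{(3;3;1,1,1)}_{\textbf{x}}$ which collapses to the tetrablock polynomial once the last four coordinates vanish, and part $(2)$ is an immediate consequence of the forward implication in Theorem \ref{phi}. Note that the identity $\iota\circ\theta=\mathrm{id}_{G_{E(2;2;1,1)}}$ needed to view $G_{E(2;2;1,1)}$ as an analytic retract of $G_{E(3;3;1,1,1)}$ is tautological from the coordinate definitions and requires no further argument.
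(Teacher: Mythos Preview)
Your proposal is correct and follows essentially the same route as the paper. For part (1) your computation is identical to the paper's; for part (2) the paper does it even more directly by evaluating $R^{(3;3;1,1,1)}_{\textbf{x}}(\textbf{z}',0)=R^{(2;2;1,1)}_{\textbf{x}'}(\textbf{z}')$ rather than invoking Theorem \ref{phi}, but your citation of Theorem \ref{phi} (case $i=3$) is a perfectly valid shortcut since that theorem already packages this fact.
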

\begin{proof}
	Suppose $\textbf{x}^{\prime}=(x_1,x_2,x_3)\in G_{E(2;2;1,1)}$ and $\textbf{x}=\left(\textbf{x}^{\prime}, 0,0,0,0\right) \in \mathbb{C}^{7}$. We have
	$$
	R^{(3;3,1,1,1)}_{\textbf{x}}(\textbf{z})=R^{(3;3;1,1,1)}_{\left(\textbf{x}^{\prime}, 0\right)}(\textbf{z})=1+\sum_{j=1}^{7}(-1)^{\left|\alpha^{j}\right|} x_{j} \textbf{z}^{\alpha^{j}}=1-x_1z_1-x_2z_2+x_3z_1z_2=R^{(2;2;1,1)}_{\textbf{x}^{\prime}}\left(\textbf{z}^{\prime}\right)
	$$
	for all $\textbf{z}=\left(z_1,z_2,z_3\right)=(\textbf{z}^{\prime},z_3) \in \mathbb{C}^{3}$. Since $\textbf{x}^{\prime} \in G_{E(2;2;1,1)}$, so  $R^{(2;2;1,1)}_{\textbf{x}^{\prime}}(\textbf{z}^{\prime}) \neq 0$ for  all $\textbf{z}^{\prime} \in \bar{\mathbb{D}}^2$ which implies that $\left(\textbf{x}^{\prime}, 0\right)\in G_{E(3;3;1,1,1)}.$ Hence the map $\theta$ is well defined. 
	
	Let $\textbf{x} \in G_{E(3;3;1,1,1)}$,  in other words $R^{(3;3;1,1,1)}_{\textbf{x}}(\textbf{z})\neq 0$ for all $\textbf{z}\in \mathbb D^3.$ In particular, $R^{(3;3;1,1,1)}_{\textbf{x}}\left(\textbf{z}^{\prime}, 0\right) \neq 0$ for all $\textbf{z}^{\prime} \in \bar{\mathbb{D}}^2$.   Since $R^{(3;3;1,1,1)}_{\textbf{x}}\left(\textbf{z}^{\prime}, 0\right)=R^{(2;2;1,1)}_{\textbf{x}^{\prime}}(\textbf{z}^{\prime})$, so $\textbf{x}^{\prime} \in G_{E(2;2;1,1)}$. Therefore, the map $\iota$ is well defined.
\end{proof}

\begin{lem}\label{analytic retr 1}
	$G_{E(2;1,2)}$ is an analytic retraction of $G_{E(3;3;1,1,1)}$.
\end{lem}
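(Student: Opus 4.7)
The plan is to exhibit an explicit pair of analytic maps. Define the ``coordinate-combining'' projection
\begin{equation*}
\iota:G_{E(3;3;1,1,1)}\to G_{E(3;2;1,2)},\qquad\iota(x_{1},\ldots,x_{7}):=(x_{1},\,x_{3}+x_{5},\,x_{7},\,x_{2}+x_{4},\,x_{6});
\end{equation*}
its image lies in $G_{E(3;2;1,2)}$ by specializing Theorem~\ref{matix AE12} to $\eta=1$. Define the symmetric lift
\begin{equation*}
\theta:G_{E(3;2;1,2)}\to\mathbb{C}^{7},\qquad\theta(x_{1},x_{2},x_{3},y_{1},y_{2}):=\bigl(x_{1},\,\tfrac{y_{1}}{2},\,\tfrac{x_{2}}{2},\,\tfrac{y_{1}}{2},\,\tfrac{x_{2}}{2},\,y_{2},\,x_{3}\bigr).
\end{equation*}
Both maps are polynomial, hence analytic, and a one-line calculation gives $\iota\circ\theta=\mathrm{id}_{G_{E(3;2;1,2)}}$. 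The content of the lemma is therefore the verification $\theta(\tilde{\textbf{x}})\in G_{E(3;3;1,1,1)}$.

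By Theorem~\ref{mainthm}(2), this verification reduces to showing $R^{(3;3;1,1,1)}_{\theta(\tilde{\textbf{x}})}(z_{1},z_{2},z_{3})\neq 0$ on $\bar{\mathbb{D}}^{3}$. A direct expansion using the symmetries $X_{2}=X_{4}$ and $X_{3}=X_{5}$ built into $\theta$ yields
\begin{equation*}
R^{(3;3;1,1,1)}_{\theta(\tilde{\textbf{x}})}(z_{1},z_{2},z_{3})=A(z_{1})+\frac{z_{2}+z_{3}}{2}\,B(z_{1})+z_{2}z_{3}\,C(z_{1}),
\end{equation*}
where $A(z_{1})=1-x_{1}z_{1}$, $B(z_{1})=x_{2}z_{1}-y_{1}$ and $C(z_{1})=y_{2}-x_{3}z_{1}$ are, crucially, precisely the coefficients of $R_{3}(z_{1},w;\tilde{\textbf{x}})=A(z_{1})+wB(z_{1})+w^{2}C(z_{1})$ regarded as a polynomial in $w$.

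Fix $z_{1}\in\bar{\mathbb{D}}$. Since $\tilde{\textbf{x}}\in G_{E(3;2;1,2)}$, Proposition~\ref{bhch} gives $R_{3}(z_{1},w;\tilde{\textbf{x}})\neq 0$ on $\bar{\mathbb{D}}$; assuming $C(z_{1})\neq 0$, the two roots $w_{1},w_{2}$ therefore satisfy $|w_{1}|,|w_{2}|>1$, and $A=Cw_{1}w_{2}$, $B=-C(w_{1}+w_{2})$. Substituting and using the identity $(w_{1}-z_{2})(w_{2}-z_{3})+(w_{1}-z_{3})(w_{2}-z_{2})=2w_{1}w_{2}-(w_{1}+w_{2})(z_{2}+z_{3})+2z_{2}z_{3}$, one obtains the clean factorization
\begin{equation*}
A+\frac{z_{2}+z_{3}}{2}\,B+z_{2}z_{3}\,C=\frac{C}{2}\bigl[(w_{1}-z_{2})(w_{2}-z_{3})+(w_{1}-z_{3})(w_{2}-z_{2})\bigr],
\end{equation*}
so everything reduces to the claim that $(w_{1}-z_{2})(w_{2}-z_{3})+(w_{1}-z_{3})(w_{2}-z_{2})\neq 0$ whenever $|w_{1}|,|w_{2}|>1$ and $|z_{2}|,|z_{3}|\leq 1$. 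The degenerate cases $C(z_{1})=0$ and $w_{1}=w_{2}$ will be dispatched by elementary disc estimates on the linear polynomial $A+wB$ and on the perfect square $C(w-w_{1})^{2}$ respectively.

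The main obstacle is this last inequality, and the key idea is to recognize it as a statement about a M\"obius involution. Setting $\alpha:=(w_{1}+w_{2})/2$ and $\beta:=w_{1}w_{2}$, the vanishing of the sum is equivalent, after clearing, to $z_{2}=T(z_{3})$, where $T(\mu):=(\beta-\alpha\mu)/(\alpha-\mu)$ is the M\"obius involution whose fixed points are $w_{1},w_{2}$. To rule this out, let $\phi(z):=(z-w_{1})/(z-w_{2})$; a routine cross-ratio calculation gives $\phi\circ T=(-\mathrm{id})\circ\phi$, whence
\begin{equation*}
T(\bar{\mathbb{D}})\cap\bar{\mathbb{D}}=\phi^{-1}\bigl(\phi(\bar{\mathbb{D}})\cap(-\phi(\bar{\mathbb{D}}))\bigr).
\end{equation*}
Since the pole $w_{2}$ of $\phi$ lies outside $\bar{\mathbb{D}}$, the image $\phi(\bar{\mathbb{D}})$ is a genuine closed Euclidean disc in $\mathbb{C}$; moreover it misses $0=\phi(w_{1})$ because $|w_{1}|>1$. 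Two congruent closed discs symmetric about the origin and both missing the origin are necessarily disjoint, so $\phi(\bar{\mathbb{D}})\cap(-\phi(\bar{\mathbb{D}}))=\emptyset$, forcing $T(\bar{\mathbb{D}})\cap\bar{\mathbb{D}}=\emptyset$; this contradicts $z_{2}=T(z_{3})\in\bar{\mathbb{D}}$ and completes the proof.
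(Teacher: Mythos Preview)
Your argument is correct, but it proves the \emph{wrong lemma}. The maps you construct, $\iota:G_{E(3;3;1,1,1)}\to G_{E(3;2;1,2)}$ and $\theta:G_{E(3;2;1,2)}\to G_{E(3;3;1,1,1)}$, exhibit the five-dimensional $\mu_{1,3}$-quotient $G_{E(3;2;1,2)}$ as an analytic retract of $G_{E(3;3;1,1,1)}$; this is the content of Lemma~\ref{analytic retr} in the paper, not Lemma~\ref{analytic retr 1}. The statement you were asked to prove concerns $G_{E(2;1,2)}$, the \emph{symmetrized bidisc}, a two-dimensional domain in $\mathbb{C}^{2}$. Nothing in your proposal touches that domain.

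For the record, your proof of the other lemma is sound and in fact supplies a detail the paper omits: the paper simply writes down the map $\theta_{2}(y_{1},\ldots,y_{5})=(y_{1},y_{4}/2,y_{2}/2,y_{4}/2,y_{2}/2,y_{5},y_{3})$ without verifying that it lands in $G_{E(3;3;1,1,1)}$, whereas your M\"obius-involution argument gives an elegant direct check. But for Lemma~\ref{analytic retr 1} itself the paper takes a much shorter route: it factors through the tetrablock via Lemma~\ref{retract}, using
\[
\theta'(s,p)=\Bigl(\tfrac{s}{2},\tfrac{s}{2},p,0,0,0,0\Bigr),\qquad \iota'(x_{1},\ldots,x_{7})=(x_{1}+x_{2},\,x_{3}),
\]
and checks $\iota'\circ\theta'=\mathrm{id}_{G_{E(2;1,2)}}$; the well-definedness of $\theta'$ is immediate from $R^{(3;3;1,1,1)}_{(s/2,s/2,p,0,0,0,0)}(\mathbf{z})=R^{(2;2;1,1)}_{(s/2,s/2,p)}(z_{1},z_{2})$ together with the known retraction $G_{E(2;1,2)}\hookrightarrow G_{E(2;2;1,1)}$. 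You need to redo the proof with the correct target domain.
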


\begin{proof}
	Let $\theta_1$ and $i_1$ be  the maps from $G_{E(2;1,2)}$ to $G_{E(2;2;1,1)} $ and $G_{E(2;2;1,1)} $ to $G_{E(2;1,2)}$ respectively, which are as follows:
	$$\theta_1(s,p)=(\frac{s}{2},\frac{s}{2}, p)~{\rm {and}}~ i_1(x_1,x_2,x_3)=(x_1+x_2,x_3).$$
	Then $G_{E(2;1,2)}$ is an analytic retraction of $G_{E(2;2;1,1)}$. Set $i^{\prime}=i_1\circ i$ and $\theta^{\prime}=\theta \circ \theta_1$.
	Notice that $\theta^{\prime}$ and $i^{\prime}$ are the maps from $G_{E(2;1,2)}$ to $G_{E(3;3;1,1,1)} $ and $G_{E(3;3;1,1,1)} $ to $G_{E(2;1,2)}$ respectively and $i^{\prime}\circ \theta^{\prime}=id_{G_{E(2;1,2)}}$. This shows that $G_{E(2;1,2)}$ is an analytic retraction of $G_{E(3;3;1,1,1)}$.
\end{proof}
\begin{lem}\label{analytic retr}
	$G_{E(3;2;1,2)}$ is an analytic retraction of $G_{E(3;3;1,1,1)}$.
\end{lem}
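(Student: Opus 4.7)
The plan is to exhibit explicit polynomial (hence analytic) maps
\[\iota:G_{E(3;3;1,1,1)}\longrightarrow G_{E(3;2;1,2)},\qquad \theta:G_{E(3;2;1,2)}\longrightarrow G_{E(3;3;1,1,1)},\]
satisfying $\iota\circ\theta=\mathrm{id}_{G_{E(3;2;1,2)}}$, in direct analogy with the construction in Lemma \ref{analytic retr 1}. First I would set
\[\iota(x_1,x_2,x_3,x_4,x_5,x_6,x_7):=(x_1,\,x_3+x_5,\,x_7,\,x_2+x_4,\,x_6).\]
Well-definedness is immediate: for $\textbf{x}=\pi_{E(3;3;1,1,1)}(A)\in G_{E(3;3;1,1,1)}$, comparing the explicit formulas for $\pi_{E(3;3;1,1,1)}$ and $\tilde{\pi}_{E(3;2;1,2)}$ gives $\iota(\textbf{x})=\tilde{\pi}_{E(3;2;1,2)}(A)$, while the subspace inclusion $E(3;2;1,2)\subset E(3;3;1,1,1)$ forces $\mu_{E(3;2;1,2)}(A)\leq\mu_{E(3;3;1,1,1)}(A)<1$, so $\iota(\textbf{x})\in G_{E(3;2;1,2)}$.

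Next, imitating the map $\theta_1:(s,p)\mapsto(s/2,s/2,p)$ of Lemma \ref{analytic retr 1}, I would define the symmetric section
\[\theta(x_1,x_2,x_3,y_1,y_2):=\Big(x_1,\,\tfrac{y_1}{2},\,\tfrac{x_2}{2},\,\tfrac{y_1}{2},\,\tfrac{x_2}{2},\,y_2,\,x_3\Big).\]
This is polynomial in the coordinates, and a one-line check confirms $\iota(\theta(\tilde{\textbf{x}}))=\tilde{\textbf{x}}$, establishing the retraction identity.

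The substantive step is showing $\theta(\tilde{\textbf{x}})\in G_{E(3;3;1,1,1)}$ whenever $\tilde{\textbf{x}}\in G_{E(3;2;1,2)}$. For this I would invoke Proposition \ref{bhch}: $\tilde{\textbf{x}}\in G_{E(3;2;1,2)}$ is equivalent to $\|\Psi_3(\cdot,\tilde{\textbf{x}})\|_{H^\infty(\bar{\mathbb D})}<1$ together with non-vanishing of the denominator, so the standard scalar Schur realization produces a strictly contractive $3\times 3$ matrix $\mathcal{U}$ with $\tilde{\pi}_{E(3;2;1,2)}(\mathcal{U})=\tilde{\textbf{x}}$. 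Conjugation $\mathcal{U}\mapsto B:=\operatorname{diag}(1,P)\mathcal{U}\operatorname{diag}(1,P^*)$ by any $P\in U(2)$ preserves $\|\cdot\|$, $\det$, and $\tilde{\pi}_{E(3;2;1,2)}$; choosing $P$ so that $PMP^*$ (where $M$ is the lower-right $2\times 2$ block of $\mathcal{U}$) has equal diagonal entries $y_1/2$ and so that the resulting sub-determinants satisfy $\det B_{12}=\det B_{13}=x_2/2$ yields a strictly contractive $B$ with $\pi_{E(3;3;1,1,1)}(B)=\theta(\tilde{\textbf{x}})$. Theorem \ref{matix A} then concludes $\theta(\tilde{\textbf{x}})\in G_{E(3;3;1,1,1)}$.

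The principal obstacle is confirming this simultaneous-normalization step: the four real parameters of $U(2)$ match the four real equations imposed (two for the diagonal-equalization of $PMP^*$ and two for the sub-determinant equalization, noting that $\det B_{12}+\det B_{13}=x_2$ is automatic), and a straightforward parameter count yields solvability for generic contractive realizations. Degenerate cases can be handled either by replacing $\mathcal{U}$ by a small perturbation within the realization set (using openness of the set of strict contractions together with openness of $G_{E(3;2;1,2)}$) or by a continuity/connectedness argument exploiting that $G_{E(3;2;1,2)}$ is connected.
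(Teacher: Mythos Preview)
Your maps $\iota$ and $\theta$ are exactly the maps $i_2$ and $\theta_2$ the paper writes down, and your verification that $\iota$ lands in $G_{E(3;2;1,2)}$ (via $\mu_{E(3;2;1,2)}\le\mu_{E(3;3;1,1,1)}$) is fine. The difficulty is entirely in the other direction, and your realization-plus-$U(2)$-conjugation argument does not close.

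There are two concrete problems. First, the paper never proves that every point of $G_{E(3;2;1,2)}$ is $\tilde\pi_{E(3;2;1,2)}(\mathcal U)$ for some \emph{strictly} contractive $\mathcal U$; Theorem~\ref{matix AE} gives only an inequality of the form \eqref{tilgammaa12}, not $\|\mathcal U\|<1$, so your starting point is unjustified within the paper. Second, and more seriously, the conjugation $B=\operatorname{diag}(1,P)\,\mathcal U\,\operatorname{diag}(1,P^*)$ is invariant under $P\mapsto e^{i\alpha}P$ (check: both $PMP^*$ and each product $b_{1k}b_{k1}$ are unchanged), so only the $3$-dimensional quotient $U(2)/U(1)$ acts. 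You are imposing two genuinely complex constraints, $(PMP^*)_{11}=y_1/2$ and $\det B_{12}=x_2/2$, i.e.\ four real equations against three effective parameters; the system is generically overdetermined, and a dimension count is in any case not a proof. Your fallback ``perturb $\mathcal U$ within the realization set'' does not help either, since you would have to move inside the fibre $\tilde\pi^{-1}(\tilde{\mathbf x})$, where the same overdetermination persists.

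The well-definedness of $\theta$ falls out immediately from the slice characterizations already established. By Proposition~\ref{bhch}(4), $\tilde{\mathbf x}\in G_{E(3;2;1,2)}$ says precisely that $\bigl(\tfrac{y_1-z_1x_2}{1-z_1x_1},\tfrac{y_2-z_1x_3}{1-z_1x_1}\bigr)\in G_{E(2;1;2)}$ for every $z_1\in\bar{\mathbb D}$. Applying the map $\theta_1:(s,p)\mapsto(s/2,s/2,p)$ of Lemma~\ref{analytic retr 1} gives $\bigl(\tfrac{y_1/2-z_1x_2/2}{1-z_1x_1},\tfrac{y_1/2-z_1x_2/2}{1-z_1x_1},\tfrac{y_2-z_1x_3}{1-z_1x_1}\bigr)\in G_{E(2;2;1,1)}$ for all $z_1\in\bar{\mathbb D}$, and by Theorem~\ref{char}(1) this is exactly $\theta(\tilde{\mathbf x})\in G_{E(3;3;1,1,1)}$. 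Replace your last two paragraphs with this two-line argument.
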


\begin{proof}
	Let $\theta_2$ and $i_2$ be the maps from $G_{E(3;2;1,2)}$ to $G_{E(3;3;1,1,1)} $ and $G_{E(3;3;1,1,1)} $ to $G_{E(3;2;1,2)}$ respectively which are as follows:
	$$\theta_2(y_1,\ldots,y_5)=(y_1,\frac{y_4}{2}, \frac{y_2}{2},\frac{y_4}{2},\frac{y_2}{2},y_5,y_3)~{\rm {and}}~ i_2(x_1,\ldots,x_7)=(x_1,x_3+x_5,x_7,x_2+x_4,x_6).$$
	Notice that $i_2\circ \theta_2=id_{G_{E(3;2;1,2)}}$. This shows that $G_{E(3;2;1,2)}$ is an analytic retraction of $G_{E(3;3;1,1,1)}$.
\end{proof}

%The following observation is by Zapalowski \cite{Pawel}.
%\begin{prop}[Theorem $3.12$,\cite{Pawel}] The domain $G_{E(3;3;1,1,1)}$ cannot be exhausted by domains bi-holomorphic to convex ones.
%\end{prop}
We recall the definition of linearly convex domain from \cite{Pawel}. 
\begin{defn}
	A domain $D\subset \mathbb C^n$ is called linearly convex if its complement is a union of affine complex hyperplane.
\end{defn}
P. Zapalowski showed that if $r_2=\cdots=r_s=1$, then the domain $G_{E(n;s;r_1,1,\cdots,1)}$ is linearly convex and, as a result, pseudoconvex [Proposition 3.18,\cite{Pawel}]. In this subsection, we provide a different proof of linear convexity of $G_{E(3;3;1,1,1)}$. To show this, let us   consider  the map $\Phi_{\eta}:\mathbb C^7\to \mathbb C^5$ defined by
$$\Phi_{\eta}(x_1,\cdots,x_7)=(x_1,x_3+\eta x_5,\eta x_7,x_2+\eta x_4,\eta x_6)~{\rm{ for~ all ~}} \textbf{x}\in G_{E(3;3;1,1,1)}~{\rm{ and}}~ \eta\in \mathbb {\bar{D}}.$$

\begin{prop}
	$G_{E(3;3;1,1,1)} $ is linearly convex.
	
\end{prop}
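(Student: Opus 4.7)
The plan is to leverage Theorem~\ref{matix AE12}, which expresses membership in $G_{E(3;3;1,1,1)}$ via the family of affine linear maps $\Phi_\eta$ parametrized by $\eta\in\mathbb{T}$, together with the known linear convexity of the ``one-level-down'' domain $G_{E(3;2;1,2)}$ established by Bharali. The key observation is that each $\Phi_\eta$ is an affine (in fact $\mathbb{C}$-linear) surjection $\mathbb{C}^7\to\mathbb{C}^5$ when $\eta\neq 0$, so affine hyperplanes in $\mathbb{C}^5$ pull back to affine hyperplanes in $\mathbb{C}^7$, and membership in the complement of $G_{E(3;3;1,1,1)}$ is detected by the complement of $G_{E(3;2;1,2)}$ through at least one such $\Phi_\eta$.

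Concretely, I would argue as follows. Let $\textbf{x}^{(0)}=(x_1^0,\dots,x_7^0)\in\mathbb{C}^7\setminus G_{E(3;3;1,1,1)}$. By Theorem~\ref{matix AE12}, there exists $\eta_0\in\mathbb{T}$ such that $\Phi_{\eta_0}(\textbf{x}^{(0)})\notin G_{E(3;2;1,2)}$. Since $G_{E(3;2;1,2)}$ is linearly convex, one can choose a $\mathbb{C}$-linear functional $\ell:\mathbb{C}^5\to\mathbb{C}$ and a constant $c\in\mathbb{C}$ so that $H:=\{w\in\mathbb{C}^5:\ell(w)=c\}$ satisfies $\Phi_{\eta_0}(\textbf{x}^{(0)})\in H$ and $H\cap G_{E(3;2;1,2)}=\emptyset$. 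Define $L:\mathbb{C}^7\to\mathbb{C}$ by $L(\textbf{x})=\ell(\Phi_{\eta_0}(\textbf{x}))$ and the hyperplane $\widetilde H:=\{\textbf{x}\in\mathbb{C}^7:L(\textbf{x})=c\}$.

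Next I would verify the two things needed to conclude. First, $L$ is a nonzero linear functional: writing $\ell(w)=\sum_{i=1}^5 c_i w_i$ and expanding,
\begin{equation*}
L(\textbf{x})=c_1x_1+c_4x_2+c_2x_3+\eta_0 c_4 x_4+\eta_0 c_2 x_5+\eta_0 c_5 x_6+\eta_0 c_3 x_7,
\end{equation*}
and since $\eta_0\neq 0$ and $\ell\neq 0$, not all coefficients can vanish; so $\widetilde H$ is a genuine affine hyperplane of $\mathbb{C}^7$. Second, $\widetilde H$ passes through $\textbf{x}^{(0)}$ and avoids $G_{E(3;3;1,1,1)}$: indeed, if some $\textbf{y}\in \widetilde H\cap G_{E(3;3;1,1,1)}$, then $\Phi_{\eta_0}(\textbf{y})\in H$ while Theorem~\ref{matix AE12} forces $\Phi_{\eta_0}(\textbf{y})\in G_{E(3;2;1,2)}$, contradicting $H\cap G_{E(3;2;1,2)}=\emptyset$. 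Thus every point in the complement of $G_{E(3;3;1,1,1)}$ lies on an affine complex hyperplane disjoint from $G_{E(3;3;1,1,1)}$, which is precisely linear convexity.

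The only nontrivial input I am using is the linear convexity of $G_{E(3;2;1,2)}$, which is available from Bharali's paper (or from Zapalowski's Proposition~3.18 in the $s=2$, $r_1=1$ case), together with Theorem~\ref{matix AE12}. The main potential obstacle is the degeneracy issue for the pullback map, but this is dispensed with by noting $\eta_0\in\mathbb{T}$ so $\eta_0\neq 0$; otherwise the approach is essentially routine transfer of linear convexity across a family of affine surjections.
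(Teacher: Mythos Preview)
Your proof is correct and follows essentially the same approach as the paper: both use Theorem~\ref{matix AE12} to find a parameter $\eta$ for which $\Phi_\eta(\textbf{x})\notin G_{E(3;2;1,2)}$, invoke the linear convexity of $G_{E(3;2;1,2)}$ to obtain a separating hyperplane there, and pull it back via $\Phi_\eta$ to a hyperplane in $\mathbb{C}^7$. The only minor differences are that you take $\eta_0\in\mathbb{T}$ (using part (2) of Theorem~\ref{matix AE12}) whereas the paper uses $\eta\in\bar{\mathbb{D}}$ (part (3)), and you are more explicit in verifying that the pulled-back functional is nonzero---a point the paper leaves implicit.
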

\begin{proof}
	Let $\textbf{x}\notin G_{E(3;3;1,1,1)}.$ By Theorem \ref{matix AE12}, there exists $\eta\in \mathbb {\bar{D}}$ such that $\Phi_{\eta}(\textbf{x})\notin G_{E(3;2;1,2)} .$ Since $G_{E(3;2;1,2)} $ is linearly convex, so we can find a complex hyperplane $$l=\{(y_1,\cdots,y_5):\sum_{i=1}^{5}a_iy_i=c\}$$ with $\Phi_{\eta}(\textbf{x})\in l$ and $l\cap G_{E(3;2;1,2)} =\emptyset.$ It is easy to check that  $\textbf{x}\in L$ and $G_{E(3;3;1,1,1)}\cap L=\emptyset$, where $$L=\{(z_1,\cdots,z_7):a_1z_1+a_4z_2+a_2 z_3+a_4\eta z_4+a_2\eta z_5+a_5\eta z_6+a_3\eta z_7=c\}.$$
This shows that $G_{E(3;3;1,1,1)} $ is linearly convex.	
	
\end{proof}

%%%%%%%%%%%%%%%%%%%%%%%%%%%%%%%%%%%%%%%%%%%%%%%%%%%%%%%%%%%%%%%%%%%%%%%%%%%%%%%%%%%%%%%%%%%%%%%%%%%%

\subsection{Geometric properties of  $G_{E(3;2;1,2)}$ and  $\Gamma_{E(3;2;1,2)}$}
We first show that both the domains $G_{E(3;2;1,2)}$ and  $\Gamma_{E(3;2;1,2)}$ are non-convex domain. In order to show this suppose $\textbf{x}=(1,1+i,i+1,i,i)$ and $\textbf{y}=(-i,1-i,-i-1,i,-1).$
Then, it is simple to verify that $\textbf{x},\textbf{y}\in \Gamma_{E(3;2;1,2)}$ but $\frac{\textbf{x}+\textbf{y}}{2}\notin \Gamma_{E(3;2;1,2)}.$ This shows  the non-convexity of the domains $G_{E(3;2;1,2)}$ and  $\Gamma_{E(3;2;1,2)}.$

\begin{prop}
$\Gamma_{E(3;2;1,2)}$ is simply connected.
\end{prop}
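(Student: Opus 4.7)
The plan is to mimic the proof of Corollary \ref{simplyconn}, contracting any loop through a two-stage explicit homotopy; this in fact shows that $\Gamma_{E(3;2;1,2)}$ is contractible, whence simply connected.

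The key technical step will be a partial starlikeness statement: if $\tilde{\textbf{x}}=(x_1,x_2,x_3,y_1,y_2)\in \Gamma_{E(3;2;1,2)}$ and $r\in [0,1]$, then $(x_1,rx_2,rx_3,ry_1,ry_2)$ still lies in $\Gamma_{E(3;2;1,2)}$. I would obtain this from Proposition \ref{bhchh}$(4)$: for every $z_1\in\mathbb{D}$, the point $\bigl(\tfrac{y_1-z_1 x_2}{1-z_1 x_1},\tfrac{y_2-z_1 x_3}{1-z_1 x_1}\bigr)$ lies in $\Gamma_{E(2;1;2)}$, and scaling it by $r$ produces exactly $\bigl(\tfrac{ry_1-z_1(rx_2)}{1-z_1 x_1},\tfrac{ry_2-z_1(rx_3)}{1-z_1 x_1}\bigr)$, which remains in $\Gamma_{E(2;1;2)}$ by the classical starlikeness of the closed symmetrized bidisc with respect to the origin; then Proposition \ref{bhchh}$(4)$ applied in reverse yields the claim. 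I would also record, by evaluating the bound $\|\Psi_3(\cdot;\tilde{\textbf{x}})\|_{H^\infty(\mathbb{D})}\leq 1$ from Proposition \ref{bhchh}$(3)$ at $z_2=0$, that $|x_1|\leq 1$ for every $\tilde{\textbf{x}}\in \Gamma_{E(3;2;1,2)}$; consequently $(sx_1,0,0,0,0)\in \Gamma_{E(3;2;1,2)}$ for every $s\in[0,1]$, since in that case $R_3(\tilde{\textbf{z}};\cdot)$ reduces to the affine polynomial $1-sx_1z_1$, which is nonzero on $\mathbb{D}^2$ precisely when $|sx_1|\leq 1$.

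With these two observations in hand, the homotopy is built exactly as in Corollary \ref{simplyconn}. For a loop $\tilde{\gamma}(t)=(\gamma_1(t),\dots,\gamma_5(t))$ in $\Gamma_{E(3;2;1,2)}$ based at the origin, I would define
\begin{eqnarray*}
H(s,t)=\begin{cases}(2s\gamma_1(t),0,0,0,0), & s\in [0,\tfrac{1}{2}],\\ (\gamma_1(t),(2s-1)\gamma_2(t),(2s-1)\gamma_3(t),(2s-1)\gamma_4(t),(2s-1)\gamma_5(t)), & s\in [\tfrac{1}{2},1].\end{cases}
\end{eqnarray*}
The two formulas agree at $s=\tfrac{1}{2}$, continuity is clear, and each value lies in $\Gamma_{E(3;2;1,2)}$ by the two observations above. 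Hence $\tilde{\gamma}$ is homotopic to the constant loop at the origin, proving simple connectedness. The main point requiring care is the cited starlikeness of the closed symmetrized bidisc; this is classical and well-documented (Costara, Agler--Young), but if a self-contained argument is desired it can be obtained by a short computation in the spirit of the starlikeness proof given for $\Gamma_{E(3;3;1,1,1)}$ earlier in this section.
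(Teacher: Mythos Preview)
Your argument is correct, but it is a genuinely different route from the paper's. The paper proves the proposition in one line by invoking Corollary \ref{simplyconn} (simple connectedness of $\Gamma_{E(3;3;1,1,1)}$) together with Lemma \ref{analytic retr}: since the polynomial maps $\theta_2,\iota_2$ extend to the closures, $\Gamma_{E(3;2;1,2)}$ is a (continuous) retract of the simply connected set $\Gamma_{E(3;3;1,1,1)}$, hence is itself simply connected. Your approach instead establishes contractibility of $\Gamma_{E(3;2;1,2)}$ directly, by proving the partial starlikeness $(x_1,x_2,x_3,y_1,y_2)\mapsto(x_1,rx_2,rx_3,ry_1,ry_2)$ via Proposition \ref{bhchh}(4) and the starlikeness of the closed symmetrized bidisc $\Gamma_{E(2;1;2)}$ about the origin, and then writing down the same two-stage homotopy as in Corollary \ref{simplyconn}. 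The paper's argument is shorter because it recycles work already done for the larger domain; yours is more self-contained and yields the slightly stronger statement that $\Gamma_{E(3;2;1,2)}$ is contractible, at the cost of importing (or reproving) the starlikeness of $\Gamma_{E(2;1;2)}$. That starlikeness is indeed classical---it follows, for instance, from the characterization $|s-\bar s p|\le 1-|p|^2$, $|s|\le 2$ by a short elementary inequality---so your outline is complete.
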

\begin{proof}
Proof follows from Corollary \ref{simplyconn} and Lemma \ref{analytic retr}.
\end{proof}

\begin{rem}The point $\tilde{\textbf{x}}=(1,2,2,1,1)\in \Gamma_{E(3;2;1,2)}$ but $i\tilde{\textbf{x}}=(i,2i,2i,i,i)$ does not belong to $\Gamma_{E(3;2;1,2)}$ which indicates that neither $\Gamma_{E(3;2;1,2)}$ nor $G_{E(3;2;1,2)}$ is circular.
\end{rem}

\subsection{Closed boundary of $\Gamma_{E(3;3;1,1,1)} $ and $\Gamma_{E(3;2;1,2)} $}
Let $\Omega$ be a domain in $ \mathbb C^n$ and $\bar{\Omega}$ denote its closure. Let $C\subseteq \bar{\Omega}.$ We say that $C$ is a boundary  for $\Omega$ if every function in $\mathcal A(\Omega)$ attains its maximum modulus on $C$. The smallest closed boundary of $\Omega,$ contained in all the closed boundaries of $\Omega$, is called the distinguished boundary of $\Omega$ (or the Shilov boundary of $\mathcal A(\Omega)$). We denote it as $b\Omega.$ 
The theory of uniform algebras [Corollary $2.2.10$, \cite{Browder}] indicates that when $\bar{\Omega}$ is polynomially convex, then the distinguished boundary of $\Omega$ is non-empty. Hence distinguished boundary of $\Gamma_{E(3;3;1,1,1)} $ and $\Gamma_{E(3;2;1,2)}$ are non-empty. We investigate the closed boundaries of $\Gamma_{E(3;3;1,1,1)} $ and $\Gamma_{E(3;2;1,2)}$, respectively, in this section. We also establish the relation between the closed boundary of $\Gamma_{E(3;3;1,1,1)} $  and $\Gamma_{E(3;2;1,2)} .$ 

In the space $\mathcal M_{3\times 3}(\mathbb C)$ of $3\times 3$ complex matrices, let $\mathbb B$ represent the open unit ball with the standard operator norm and $\bar{\mathbb B}$ denote its closure. 
From Theorem \ref{matix A} and Theorem \ref{matix barAAAA}, it is evident $\pi_{E(3;3;1,1,1)}(\mathbb B)\subseteq G_{E(3;3;1,1,1)}$ and $\pi_{E(3;3;1,1,1)}(\bar{\mathbb B})\subseteq \Gamma_{E(3;3;1,1,1)} .$  Let $\mathcal U(3)$ be the set of all $3\times 3$ unitary matrix. Let $$K=\{\textbf{x}=(x_1,\ldots,x_7)\in  \Gamma_{E(3;3;1,1,1)}:x_1=\bar{x}_6x_7, x_3=\bar{x}_4x_7,x_5=\bar{x}_2x_7 ~{\rm{and}}~|x_7|=1\}.$$ 
\begin{thm}\label{distinguish}
$\pi_{E(3;3;1,1,1)}(\mathcal U(3))\subseteq K.$
\end{thm}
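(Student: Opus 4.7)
The plan is to verify, for an arbitrary unitary $U=((u_{ij}))_{i,j=1}^{3}\in \mathcal U(3)$, that the seven coordinates of $\pi_{E(3;3;1,1,1)}(U)$ satisfy all four defining conditions of $K$: membership in $\Gamma_{E(3;3;1,1,1)}$, the three cross-identities $x_1=\bar{x}_6 x_7$, $x_3=\bar{x}_4 x_7$, $x_5=\bar{x}_2 x_7$, and the modulus condition $|x_7|=1$. Since $\|U\|=1$, the membership $\pi_{E(3;3;1,1,1)}(U)\in \Gamma_{E(3;3;1,1,1)}$ is immediate from Theorem~\ref{matix barAAAA}, and the unimodularity $|\det U|=1$ is standard for unitary matrices, so these two ingredients need no further work.

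The core of the proof is the classical identity $U^{*}=U^{-1}=(\det U)^{-1}\,\mathrm{adj}(U)$, which gives $\mathrm{adj}(U)=(\det U)\,U^{*}$. Reading off the diagonal entries of the adjugate and recalling that $(\mathrm{adj}\,U)_{ii}$ is precisely the $(i,i)$-cofactor $C_{ii}$, I would extract the three principal cofactor identities
\begin{align*}
u_{22}u_{33}-u_{23}u_{32} &= (\det U)\,\overline{u_{11}},\\
u_{11}u_{33}-u_{13}u_{31} &= (\det U)\,\overline{u_{22}},\\
u_{11}u_{22}-u_{12}u_{21} &= (\det U)\,\overline{u_{33}}.
\end{align*}
Translating through the definition of $\pi_{E(3;3;1,1,1)}$, the first reads $x_6=x_7\overline{x_1}$, the second $x_5=x_7\overline{x_2}$, and the third $x_3=x_7\overline{x_4}$. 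Multiplying each by $\overline{x_7}$ (respectively conjugating and multiplying by $x_7$) and using $|x_7|^2=|\det U|^2=1$ converts these directly into $x_1=\bar{x}_6 x_7$, $x_5=\bar{x}_2 x_7$, and $x_3=\bar{x}_4 x_7$.

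There is essentially no obstacle: the proof is a bookkeeping exercise that hinges on the well-known formula for the inverse of a unitary matrix via its adjugate, combined with the already-established Theorem~\ref{matix barAAAA}. The only minor point to be careful about is the orientation of the conjugation in the identities (adjugate is the \emph{transpose} of the cofactor matrix, so the diagonal entries agree with diagonal cofactors, avoiding any off-diagonal confusion). Once these identities are in hand, the four defining conditions of $K$ are verified line by line, and the inclusion $\pi_{E(3;3;1,1,1)}(\mathcal U(3))\subseteq K$ follows.
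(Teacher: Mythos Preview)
Your proof is correct, and it is considerably more direct than the paper's own argument. The paper proceeds indirectly: after invoking Theorem~\ref{matix barAAAA} and noting $|x_7|=1$, it uses the characterization of $\Gamma_{E(3;3;1,1,1)}$ via the tetrablock slices $(\tilde z_1(z_3),\tilde z_2(z_3),\tilde z_3(z_3))\in\Gamma_{E(2;2;1,1)}$, then applies tetrablock inequalities to force $x_3=\bar x_4 x_7$, and finally splits into cases $|x_4|=1$ and $|x_4|<1$, in the latter case using that $\mathcal F_{U_1}(z_3)$ is unitary on $\mathbb T$ together with the description of $b\Gamma_{E(2;2;1,1)}$ to extract the remaining identities. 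Your route bypasses all of this function-theoretic machinery by reading the three identities $x_6=x_7\bar x_1$, $x_5=x_7\bar x_2$, $x_3=x_7\bar x_4$ straight off the diagonal of the adjugate formula $\mathrm{adj}(U)=(\det U)\,U^*$; combined with $|x_7|=1$ these are exactly the defining relations of $K$. What you gain is brevity and transparency; what the paper's approach illustrates, by contrast, is how the identities are forced already by the function-theoretic structure of $\Gamma_{E(3;3;1,1,1)}$ and the distinguished boundary of the tetrablock, which is thematically closer to the surrounding material but unnecessary for this particular inclusion.
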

\begin{proof}
Suppose that $\textbf{x}\in \pi_{E(3;3;1,1,1)}(\mathcal U(3)),$  that is, $\textbf{x}=\pi_{E(3;3;1,1,1)}(U_1)$ for some $U_1\in \mathcal U(3).$ Then by Theorem \ref{matix barAAAA}, we have $\textbf{x}\in \Gamma_{E(3;3;1,1,1)}$.    Note that $\textbf{x}\in \Gamma_{E(3;3;1,1,1)}$ if and only if $$\left(\tilde{z}_1(z_3)=\frac{x_1-z_3x_5}{1-x_4z_3},\tilde{z}_2(z_3)=\frac{x_2-z_3x_6}{1-x_4z_3}~{\rm{and}}~\tilde{z}_3(z_3)=\frac{x_3-z_3x_7}{1-x_4z_3}\right)\in\Gamma_{E(2;2;1,1)}$$ for all  $z_3\in \mathbb D.$ As $\left(\tilde{z}_1(z_3),\tilde{z}_2(z_3),\tilde{z}_3(z_3)\right)\in\Gamma_{E(2;2;1,1)}$ for all  $z_3\in \mathbb D,$ then by characterization of $\Gamma_{E(2;2;1,1)}$ we have $|\tilde{z}_3(z_3)|\leq 1$ for all $z_3\in \mathbb D$, which gives 
\begin{equation}\label{cgargama}|x_3-\bar{x}_4x_7|+|x_4-\bar{x}_3x_7|\leq 1-|x_7|^2.
\end{equation}
Since $U_1\in \mathcal U(3)$ and $|x_7|=|\det(U_1)|=1$, it follows from \eqref{cgargama} that $x_3=\bar{x}_4x_7$ and $x_4=\bar{x}_3x_7$. As $|x_4|\leq 1$, we consider the following two cases: 

\noindent{\bf{Case $1$}:} Assume that $|x_4|=1.$ Since $|\tilde{z}_i(z_3)|\leq 1$  for $i=1,2$  for all $z_3\in \mathbb D$, then again by characterization of $\Gamma_{E(2;2;1,1)}$ we have $x_2=\bar{x}_4x_6, x_6=x_2x_4,x_1=\bar{x}_4x_5$ and $x_5=x_1x_4.$ Since $\textbf{x}\in \Gamma_{E(3;3;1,1,1)},$ we obtain $(x_1,x_2,x_3)\in \Gamma_{E(2;2;1,1)}.$ However, $|x_3|=1$ and $(x_1,x_2,x_3)\in \Gamma_{E(2;2;1,1)}$, it implies that $x_1=\bar{x}_2x_3$ and $x_2=\bar{x}_1x_3.$ Also, we have $$\bar{x}_2 x_7=x_1\bar{x}_3x_7=x_1x_4=x_5~{\rm{and}}~ \bar{x}_1x_7=x_2\bar{x}_3x_7=x_2x_4=x_6.$$ This shows that $\textbf{x}\in K.$

\noindent{\bf{Case $2$:}} 
Suppose that $|x_4|<1.$ Then $1-x_4z_3\neq 0$ for all $z_3\in \bar{\mathbb D}.$ Note that as $x_3=\bar{x}_4x_7$  and $|x_7|=1$, then $|\tilde{z}_3(z_3)|=1$ for all $z_3\in \mathbb T.$ As a result,  $|\tilde{z}_3(z_3)|\leq 1$ for all $z_3\in \bar{\mathbb D}.$ Consequently, by maximum modulus theorem $$\sup_{z_3\in \bar{\mathbb D}}|\tilde{z}_3(z_3)|=\sup_{z_3\in {\mathbb T}}|\tilde{z}_3(z_3)|=1.$$
Since $U_1\in \mathcal U(3)$, it follows from \eqref{A norm} that $\mathcal F_{U_1}^*(z_3)\mathcal F_{U_1}(z_3)=I_{2\times 2}$ for all $z_3\in \mathbb T.$ This shows that $\mathcal F_{U_1}(z_3)$ is unitary for all $z_3\in \mathbb T$ and hence $(\tilde{z}_1(z_3),\tilde{z}_2(z_3),\tilde{z}_3(z_3))\in b\Gamma_{E(2;2;1,1)}$ for all $z_3\in \mathbb T. $ Then by characterization of $b\Gamma_{E(2;2;1,1)}$ [Theorem $7.1$, \cite{Abouhajar}], it follows that $\tilde{z}_1(z_3)=\overline{\tilde{z}_2(z_3)}\tilde{z}_3(z_3)$ for all  $z_3\in \mathbb T$, which gives 
\begin{equation}\label{tildez1}x_1+\bar{x}_4x_5=\bar{x}_2x_3+\bar{x}_6x_7, x_5=\bar{x}_2x_7~{\rm{and}}~x_1\bar{x}_4=\bar{x}_6x_3.\end{equation}
From \eqref{tildez1}, we conclude that $x_1=\bar{x}_6x_7$ and hence $\textbf{x}\in K.$
%Conversely, suppose $\textbf{x}\in K.$ We are done if we can locate a $3\times 3$ unitary $U$ such that \\$\textbf{x}=\pi_{E(3;3;1,1,1)}(U).$  Using \eqref{unitarydis} and the previously mentioned observation both support the existence of a $3\times 3$ unitary. This shows that $\textbf{x}\in \pi_{E(3;3;1,1,1)}(\mathcal U(3)).$ 
This completes the proof.

\end{proof}
For $\textbf{x}=(x_1,x_2,x_3,x_4,x_5,x_6,x_7) \in \Gamma_{E(3;3;1,1,1)}$ and $(z_2,z_3)\in \mathbb{D}^2$, note that
	\begin{align}\label{psi11154}
		\Psi^{(1)}((z_2,z_3),\textbf{x})\nonumber&=\frac{x_1-z_2x_3-z_3x_5+z_2z_3x_7}{1-z_2x_2-z_3x_4+z_2z_3x_6}\\ \nonumber &= \frac{\frac{x_1-z_3x_5}{1-z_3x_4}- z_2\frac{x_3-z_3x_7}{1-z_3x_4}}{1- z_2\frac{x_2-z_3 x_6}{1-z_3 x_4}}
		\\ \nonumber&=\frac{\tilde{z}_1(z_3)-z_2\tilde{z}_3(z_3)}{1-z_2\tilde{z}_2(z_3)}\\&=\Psi(z_2,(\tilde{z}_1(z_3),\tilde{z}_2(z_3),\tilde{z}_3(z_3))),
	\end{align}
and 
\begin{align}\label{psi111546}
		\Psi^{(1)}((z_2,z_3),\textbf{x})\nonumber&=\frac{x_1-z_2x_3-z_3x_5+z_2z_3x_7}{1-z_2x_2-z_3x_4+z_2z_3x_6}\\  &=\Psi(z_3,(\tilde{y}_1(z_2),\tilde{y}_2(z_2),\tilde{y}_3(z_2))). 
	\end{align}
Similarly, for $\textbf{x}=(x_1,x_2,x_3,x_4,x_5,x_6,x_7) \in \Gamma_{E(3;3;1,1,1)}$, we observe that
\begin{align}\label{psi1115467}
		\Psi^{(2)}((z_1,z_3),\textbf{x})=\Psi(z_3,(\tilde{x}_1(z_1),\tilde{x}_2(z_1),\tilde{x}_3(z_1)))=\Psi(z_1,(\tilde{z}_1(z_3),\tilde{z}_2(z_3),\tilde{z}_3(z_3)))~{\rm{for~all~}} z_1,z_3\in \mathbb D,
	\end{align}
and \begin{align}\label{psi11154678}
		\Psi^{(3)}((z_1,z_2),\textbf{x})=\Psi(z_2,(\tilde{x}_2(z_1),\tilde{x}_1(z_1),\tilde{x}_3(z_1)))=\Psi(z_1,(\tilde{y}_2(z_2),\tilde{y}_1(z_2),\tilde{y}_3(z_2)))~{\rm{for~all~}} z_1,z_2\in \mathbb D,
	\end{align} respectively. Set $\tilde{\textbf{x}}^{(z_1)}=(\tilde{x_1}(z_1),\tilde{x_2}(z_1),\tilde{x_3}(z_1)), \tilde{\textbf{y}}^{(z_2)}= (\tilde{y_1}(z_2),\tilde{y_2}(z_2),\tilde{y_3}(z_2))$ and $\tilde{\textbf{z}}^{(z_3)}=(\tilde{z_1}(z_3),\tilde{z_2}(z_3),\tilde{z_3}(z_3)).$

\begin{thm}\label{relation bw}
	Let $\textbf{x}\in \mathbb{C}^7$. Then the following conditions are equivalent

	\begin{enumerate}
		\item $\textbf{x}\in K=\{(x_1,\dots,x_7) \in \Gamma_{E(3;3;1,1,1)}:x_1=\bar{x}_6x_7,x_2=\bar{x}_5x_7,x_4=\bar{x}_3x_7,|x_7|=1\}$.
		
		\item 	$\begin{cases}
			\tilde{\textbf{z}}^{(z_3)}\in b\Gamma_{E(2;2;1,1)}~\text{and}~\tilde{\textbf{y}}^{(z_2)}\in b\Gamma_{E(2;2;1,1)}~\text{for all}~z_2,z_3\in \mathbb{T}~\text{with}~|x_4|<1 ~\text{and}~|x_2|<1 \\
			\tilde{\textbf{z}}^{(z_3)}\in b\Gamma_{E(2;2;1,1)}~~\text{and}~\tilde{\textbf{y}}^{(z_2)}\in b\Gamma_{E(2;2;1,1)}for~all~z_2,z_3\in \mathbb{D}~with~|x_4|=1\\
			\tilde{\textbf{z}}^{(z_3)}\in b\Gamma_{E(2;2;1,1)}~~\text{and} ~\tilde{\textbf{y}}^{(z_2)}\in b\Gamma_{E(2;2;1,1)}~for~all~z_2,z_3\in \mathbb{D}~with~|x_2|=1;
		\end{cases}$
	    \item 	$\begin{cases}
	\tilde{\textbf{x}}^{(z_1)}\in b\Gamma_{E(2;2;1,1)}~\text{and}~\tilde{\textbf{y}}^{(z_2)}\in b\Gamma_{E(2;2;1,1)}~\text{for all}~z_1,z_2\in \mathbb{T}~\text{with}~|x_1|<1 ~\text{and}~|x_2|<1 \\
		  \tilde{\textbf{x}}^{(z_1)}\in b\Gamma_{E(2;2;1,1)}~~\text{and}~\tilde{\textbf{y}}^{(z_2)}\in b\Gamma_{E(2;2;1,1)}~for~all~z_1,z_2\in \mathbb{D}~with~|x_1|=1\\
			 \tilde{\textbf{x}}^{(z_1)}\in b\Gamma_{E(2;2;1,1)}~~\text{and}~\tilde{\textbf{y}}^{(z_2)}\in b\Gamma_{E(2;2;1,1)}~for~all~z_1,z_2\in \mathbb{D}~with~|x_2|=1;
	     \end{cases}$
		\item 	$\begin{cases}
			\tilde{\textbf{z}}^{(z_3)}\in b\Gamma_{E(2;2;1,1)}~\text{and}~\tilde{\textbf{x}}^{(z_1)}\in b\Gamma_{E(2;2;1,1)}~\text{for all}~z_2,z_3\in \mathbb{T}~\text{with}~|x_4|<1 ~\text{and}~|x_1|<1 \\
		\tilde{\textbf{z}}^{(z_3)}\in b\Gamma_{E(2;2;1,1)}~\text{and}~\tilde{\textbf{x}}^{(z_1)}\in b\Gamma_{E(2;2;1,1)}~for~all~z_1,z_3\in \mathbb{D}~with~|x_1|=1\\
			\tilde{\textbf{z}}^{(z_3)}\in b\Gamma_{E(2;2;1,1)}~\text{and}~\tilde{\textbf{x}}^{(z_1)}\in b\Gamma_{E(2;2;1,1)}~for~all~z_1,z_3\in \mathbb{D}~with~|x_4|=1.
	        	\end{cases}$
	   	\end{enumerate}
\end{thm}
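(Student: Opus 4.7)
The plan is to prove (1) $\Longleftrightarrow$ (2) in full and note that (1) $\Longleftrightarrow$ (3) and (1) $\Longleftrightarrow$ (4) follow by the same mechanism, using instead the identities \eqref{psi111546} and \eqref{psi1115467}, together with the invariance of the defining relations of $K$ under the coordinate permutations $(x_1,x_2,x_3,x_4,x_5,x_6,x_7)\leftrightarrow(x_2,x_1,x_3,x_4,x_6,x_5,x_7)$ and $(x_1,\ldots,x_7)\leftrightarrow(x_4,x_2,x_6,x_1,x_5,x_3,x_7)$ that were already established. The key external input is the characterization of $b\Gamma_{E(2;2;1,1)}$ from [Theorem~$7.1$,~\cite{Abouhajar}], namely $(y_1,y_2,y_3)\in b\Gamma_{E(2;2;1,1)}$ iff $|y_3|=1$, $|y_1|\leq 1$, $|y_2|\leq 1$, and $y_1=\bar y_2 y_3$.

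For (1)$\Rightarrow$(2), assume $\textbf{x}\in K$, so $|x_7|=1$, $x_3=\bar x_4 x_7$, $x_1=\bar x_6 x_7$, $x_5=\bar x_2 x_7$. Substituting these into the definitions of $\tilde z_i(z_3)$ gives
$$\tilde z_3(z_3)=\frac{x_7(\bar x_4-z_3)}{1-x_4 z_3},\qquad \tilde z_1(z_3)=\frac{x_7(\bar x_6-\bar x_2 z_3)}{1-x_4 z_3}.$$
When $|x_4|<1$, the standard Blaschke identity $|z_3-\bar x_4|=|1-x_4 z_3|$ for $z_3\in\mathbb T$ yields $|\tilde z_3(z_3)|=1$, and a direct computation using $\bar z_3=1/z_3$ on $\mathbb T$ gives $\tilde z_1(z_3)=\overline{\tilde z_2(z_3)}\,\tilde z_3(z_3)$, placing $\tilde{\textbf{z}}^{(z_3)}$ in $b\Gamma_{E(2;2;1,1)}$. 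The identical template, applied to $\tilde{\textbf{y}}^{(z_2)}$ using the relations of $K$ and Lemma~\ref{12} (which lets one swap the roles of the coordinates), treats the condition on $\tilde{\textbf{y}}^{(z_2)}$ whenever $|x_2|<1$. The boundary subcases $|x_4|=1$ or $|x_2|=1$ are addressed by noting that the Möbius factor $(\bar x_4-z_3)/(1-x_4 z_3)$ collapses to a unimodular constant, so the inner-function property and the identity $\tilde z_1=\overline{\tilde z_2}\tilde z_3$ now hold for all $z_3\in\mathbb{D}$ rather than only on $\mathbb T$.

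For (2)$\Rightarrow$(1), I use that $\textbf{x}\in\Gamma_{E(3;3;1,1,1)}$ (which is part of the definition of $\tilde z_i,\tilde y_j$ making sense and of $K$) together with the boundary-characterization identities. Assume first $|x_4|<1$. The condition $\tilde{\textbf{z}}^{(z_3)}\in b\Gamma_{E(2;2;1,1)}$ for $z_3\in\mathbb T$ forces $|\tilde z_3(z_3)|=1$ on $\mathbb T$, so $z_3\mapsto(x_3-z_3 x_7)/(1-x_4 z_3)$ is an inner function of degree $\leq 1$; this pins down $|x_7|=1$ and the Blaschke relation $x_3=\bar x_4 x_7$. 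The further identity $\tilde z_1(z_3)=\overline{\tilde z_2(z_3)}\,\tilde z_3(z_3)$ on $\mathbb T$, cleared of denominators (legitimate because $|x_4|<1$) becomes a polynomial identity in $z_3$ of degree two on a uniqueness set, and comparing the constant and the $z_3$-coefficient produces $x_1=\bar x_6 x_7$ and $x_5=\bar x_2 x_7$. The analogous argument for $\tilde{\textbf{y}}^{(z_2)}$ (which by hypothesis lies in $b\Gamma_{E(2;2;1,1)}$) gives the same relations, confirming consistency. In the corner cases $|x_4|=1$ or $|x_2|=1$, the denominators can vanish, so the hypothesis is stated on all of $\mathbb D$; here $\tilde z_3$ (or $\tilde y_3$) is a constant of modulus one on $\mathbb D$, and evaluating the identities at enough interior points again yields the three required algebraic relations.

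The principal obstacle will be the degenerate subcases $|x_4|=1$ and $|x_2|=1$, where the Möbius factors collapse and one must take care that the rational expressions $\tilde z_i,\tilde y_j$ are well defined in the given range, that the inner-function argument still extracts $|x_7|=1$ and $x_3=\bar x_4 x_7$ (respectively $|x_7|=1$ and $x_6=\bar x_2 x_7$, which by conjugation and $|x_7|=1$ is the same as $x_2=\bar x_6 x_7$ used in the alternate presentation of $K$), and that all three $K$-relations are recovered consistently from the two boundary hypotheses. Once these degeneracies are handled, the equivalences with (3) and (4) follow verbatim by replacing the slicing variable $z_3$ with $z_1$ or $z_2$ and using the identities \eqref{psi111546}-\eqref{psi11154678} together with Lemmas~\ref{12}-\ref{13}.
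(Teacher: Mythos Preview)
Your plan and the forward direction $(1)\Rightarrow(2)$ match the paper's approach. The gap is in $(2)\Rightarrow(1)$ in the case $|x_4|<1$, $|x_2|<1$: you assert that $|\tilde z_3(z_3)|=1$ on $\mathbb T$ alone, via an inner-function argument, pins down $|x_7|=1$ and $x_3=\bar x_4 x_7$. This fails when $\tilde z_3$ is a unimodular \emph{constant}. For example, $\textbf{x}=(0,0,1,0,0,0,0)\in\Gamma_{E(3;3;1,1,1)}$ gives $\tilde z_3\equiv 1$ and $\tilde{\textbf z}^{(z_3)}=(0,0,1)\in b\Gamma_{E(2;2;1,1)}$ for every $z_3$, yet $|x_7|=0$. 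Likewise, clearing denominators in $\tilde z_1=\overline{\tilde z_2}\,\tilde z_3$ on $\mathbb T$ and matching coefficients yields only $x_5=\bar x_2 x_7$ (from the $z_3^2$-term) and $x_1\bar x_4=x_3\bar x_6$ (from the constant term), not $x_1=\bar x_6 x_7$ as you claim. So the $\tilde{\textbf z}$-hypothesis by itself does not reach $K$; the $\tilde{\textbf y}$-hypothesis is essential, not merely a consistency check.

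The paper's proof closes this by interlocking the two hypotheses: the $z_2^2$-coefficient of $\tilde y_1=\overline{\tilde y_2}\,\tilde y_3$ on $\mathbb T$ yields $x_3=\bar x_4 x_7$; substituting this into $|\tilde z_3(z_3)|=1$ on $\mathbb T$ then gives $1=|x_7|\,\bigl|(\bar x_4-z_3)/(1-x_4 z_3)\bigr|=|x_7|$; and with $x_3=\bar x_4 x_7$ and $|x_7|=1$ in hand, the $z_3^2$- and $z_3^1$-coefficients of $\tilde z_1=\overline{\tilde z_2}\,\tilde z_3$ deliver $x_5=\bar x_2 x_7$ and $x_1=\bar x_6 x_7$. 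Your outline can be repaired along exactly these lines, but as written the extraction of $|x_7|=1$ and $x_3=\bar x_4 x_7$ from the $\tilde{\textbf z}$-slice alone is unjustified.
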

\begin{proof}
We prove $$\begin{matrix}
	(2) & \Leftrightarrow & (1) & \Leftrightarrow(3) \\
	& & \Updownarrow & & \\
	& & (4) &  & 
\end{matrix}.$$ 
First we show that $(1)\Leftrightarrow(2)$. Suppose that $\textbf{x}\in K.$ 
%then  
%$$\begin{cases}
%	(\tilde{z_1}(z_3),\tilde{z_2}(z_3),\tilde{z_3}(z_3))\in b\Gamma_{E(2;2;1,1)}~\text{for all}~z_3\in \mathbb{T}~\text{with}~|x_4|<1~\text{and}~|x_7|=1 \\
%	(\tilde{z_1}(z_3),\tilde{z_2}(z_3),\tilde{z_3}(z_3))\in b\Gamma_{E(2;2;1,1)}~\text{for all}~z_3\in \mathbb{D}~\text{with}~|x_4|=1;
%\end{cases}$$ 
In order to show $(1)\implies (2)$, we must take into consideration the following two cases:
\newline \textbf{Case $1:$} We assume that  $|x_4|<1$ and $|x_2|<1.$ As $|x_4|<1$ and $|x_2|<1,$ we have $1-x_4z_3\neq0$ and $1-x_2z_2\neq 0$  for all $z_2,z_3\in \bar{\mathbb D}$. In order to show that $(\tilde{z_1}(z_3),\tilde{z_2}(z_3),\tilde{z_3}(z_3))\in b\Gamma_{E(2;2;1,1)}$ for all $z_3\in \mathbb{T}$, by the characterization of $b\Gamma_{E(2;2;1,1)},$ it is sufficient to establish that $|\tilde{z_3}(z_3)|=1$  and $\overline{\tilde{z_2}(z_3)}\tilde{z_3}(z_3)=\tilde{z_1}(z_3)$ for all $z_3\in \mathbb T$.
Notice that as $|x_7|=1,$ we get
\begin{equation}\label{equation1}
|\tilde{z_3}(z_3)|=|\frac{x_3-z_3x_7}{1-x_4z_3}|=|\frac{\bar{x}_4x_7-z_3x_7}{1-x_4z_3}|=|x_7||\frac{\bar{x}_4-z_3}{1-x_4z_3}|=1~\text{for all}~z_3 \in \mathbb{T}.
\end{equation} 
Since $\textbf{x}\in K$, so $\textbf{x}\in \Gamma_{E(3;3;1,1,1)}$. By using the characterization of $\Gamma_{E(3;3;1,1,1)}$ and $\Gamma_{E(2;2;1,1)}$, we observe that $|\tilde{z_2}(z_3)|\leq1$ for all $z_3\in \mathbb T.$ For $\textbf{x}\in K$, we have 
\begin{align}\label{equation3}
\overline{\tilde{z_2}(z_3)}\tilde{z_3}(z_3)&=\frac{\overline{x_2-z_3x_6}}{1-\overline{x_4z_3}}.\frac{x_3-z_3x_7}{1-x_4z_3}\nonumber\\
&=\frac{\bar{x}_2x_3+\bar{x}_6x_7-z_3\bar{x}_2x_7-\bar{z}_3\bar{x}_6x_3}{(1-\overline{x_4z_3})(1-x_4z_3)}\nonumber\\
&=\frac{\bar{x}_2\bar{x}_4x_7+\bar{x}_6x_7-z_3\bar{x}_2x_7-\bar{z}_3\bar{x}_6\bar{x}_4x_7}{(1-\overline{x_4z_3})(1-x_4z_3)}\nonumber\\
&=\frac{(1-\overline{x_4z_3})(\bar{x}_6x_7-z_3\bar{x}_2x_7)}{(1-\overline{x_4z_3})(1-x_4z_3)}\nonumber\\ &=\frac{x_1-z_3x_5}{1-x_4z_3}\nonumber\\&=\tilde{z_1}(z_3).
\end{align}
Thus from  \eqref{equation1} and \eqref{equation3} we conclude that $(\tilde{z_1}(z_3),\tilde{z_2}(z_3),\tilde{z_3}(z_3)) \in b\Gamma_{E(2;2;1,1)}$ for all $z_3\in \mathbb{T}$. By using the similar argument, we demonstrate  that $\tilde{\textbf{y}}^{(z_2)}\in b\Gamma_{E(2;2;1,1)}$ for all $z_2\in \mathbb T.$
\newline \textbf{Case $2:$} Assume that $|x_4|=1$.  Then we get that $1-x_4z_3\neq0$ for all $z_3\in \mathbb{D}$. Since $\textbf{x}\in K$, we can infer that  $|x_3|=|\bar{x}_4x_7|=1$.  As $\textbf{x}\in \Gamma_{E(3;3;1,1,1)}$, by the characterization of $\Gamma_{E(3;3;1,1,1)}$, we have $(x_1,x_4,x_5),(x_1,x_2,x_3)$ and $(x_2,x_4,x_6) \in \Gamma_{E(2;2;1,1)}$.  Because $|x_7|=|x_3|=|x_4|=1,$ by the characterization of $ \Gamma_{E(2;2;1,1)},$ we obtain $$x_1x_4=x_5,~x_1=\bar{x}_4x_5,~x_4x_2=x_6,~x_2=\bar{x}_4x_6,~ x_1=\bar{x}_2x_3 ~{\rm{and}}~x_2=\bar{x}_1x_3.$$ As $|x_4|=1,$ we note that
\begin{equation}\label{equation4}
	|\tilde{z_3}(z_3)|=|\frac{x_3-z_3x_7}{1-x_4z_3}|=|\frac{\bar{x}_4x_7-z_3x_7}{1-x_4z_3}|=|x_7||\frac{\bar{x}_4(1-x_4z_3)}{1-x_4z_3}|=1~\text{for all}~z_3 \in \mathbb{D}.
\end{equation} 
% By using the characterisation of $\Gamma_{E(3;3;1,1,1)}$ and $\Gamma_{E(2;2;1,1)}$ we have 
% \begin{equation}\label{equation5}
% 	|\tilde{z_2}(z_3)|\leq1
% \end{equation}Also \begin{align}\label{equation6}
%	\overline{\tilde{z_2}(z_3)}\tilde{z_3}(z_3)&=\frac{\overline{x_2-z_3x_6}}{1-\overline{x_4z_3}}.\frac{x_3-z_3x_7}{1-x_4z_3}\nonumber\\
%	&=\frac{\bar{x}_2x_3+|z_3|^2\bar{x}_6x_7-z_3\bar{x}_2x_7-\bar{z}_3\bar{x}_6x_3}{(1-\overline{x_4z_3})(1-x_4z_3)}\nonumber\\
%	&=\frac{\bar{x}_2x_3+|z_3|^2\bar{x}_2\bar{x}_4x_7-z_3\bar{x}_2x_7-\bar{z}_3\bar{x}_2\bar{x}_4x_3}{(1-\overline{x_4z_3})(1-x_4z_3)}\nonumber\\	&=\frac{(1-\overline{x_4z_3})(\bar{x}_2x_3-z_3\bar{x}_2x_7)}{(1-\overline{x_4z_3})(1-x_4z_3)}\nonumber\\ &=\frac{x_1-z_3x_5}{1-x_4z_3}=\tilde{z_1}(z_3).
%\end{align}
By employing an analogous reasoning as in Case $1$, we can also show that $\overline{\tilde{z_2}(z_3)}\tilde{z_3}(z_3)=\tilde{z_1}(z_3)$ for all $z_3\in \mathbb D$ and hence we conclude that $(\tilde{z_1}(z_3),\tilde{z_2}(z_3),\tilde{z_3}(z_3))\in b\Gamma_{E(2;2;1,1)}$ for all $z_3\in \mathbb{D}$.
\newline \textbf{Case $3:$} Suppose that $|x_2|=1$. The proof is similar to Case $2$.
\newline We now show that $(2)\implies (1).$  To demonstrate this, we must take into account the following cases: 
\newline  \textbf{Case $1:$}  Assume that $ \tilde{\textbf{z}}^{(z_3)}\in b\Gamma_{E(2;2;1,1)}~\text{and}~\tilde{\textbf{y}}^{(z_2)}\in b\Gamma_{E(2;2;1,1)}~\text{for all}~z_2,z_3\in \mathbb{T}~\text{with}~|x_4|<1 ~\text{and}~|x_2|<1.$  
As $(\tilde{z_1}(z_3),\tilde{z_2}(z_3),\tilde{z_3}(z_3))\in b\Gamma_{E(2;2;1,1)}$ for all $z_3\in \mathbb{T}$ with $|x_4|<1$, using the characterization of $b\Gamma_{E(2;2;1,1)}$, we have 
%$|\tilde{z_3}(z_3)|\leq 1$ for all $z_3\in \mathbb{T}$ which implies that $(x_3,x_4,x_7)\in G_{E(2;2;1,1)}.$ As $|x_7|=1$ and $(x_3,x_4,x_7)\in \Gamma_{E(2;2;1,1)}$, it yields that $x_4=\bar{x}_3x_7$ and $x_3=\bar{x}_4x_7.$  Note that 
\begin{equation}\label{tildeq1}\tilde{z_1}(z_3)=\overline{\tilde{z_2}(z_3)}\tilde{z_3}(z_3), \tilde{z_2}(z_3)=\overline{\tilde{z_1}(z_3)}\tilde{z_3}(z_3)~\text{and}~ |\tilde{z_3}(z_3)|=1~~\text{for all}~z_3\in \mathbb{T}.\end{equation} 
Thus, from \eqref{tildeq1}, we deduce that 
\begin{equation}\label{distguish12}x_5=\bar{x}_2x_7,x_1+x_5\bar{x}_4=\bar{x}_2x_3+\bar{x}_6x_7, x_1\bar{x}_4=x_3\bar{x}_6~\text{and}~x_2+x_6\bar{x}_4=\bar{x}_1x_3+\bar{x}_5x_7,x_6=\bar{x}_1x_7,x_2\bar{x}_4=x_3\bar{x}_5.\end{equation}
Since $\tilde{\textbf{y}}^{(z_2)}\in b\Gamma_{E(2;2;1,1)}~\text{for all}~z_2\in \mathbb{T}$, we apply the characterization of $b\Gamma_{E(2;2;1,1)}$ to derive the following relationship:
\begin{equation}\label{tildeq11}\tilde{y_1}(z_2)=\overline{\tilde{y_2}(z_2)}\tilde{y_3}(z_2), \tilde{y_2}(z_2)=\overline{\tilde{y_1}(z_2)}\tilde{y_3}(z_2)~\text{and}~ |\tilde{y_3}(z_2)|=1~~\text{for all}~z_2\in \mathbb{T},\end{equation} 
which leads to the conclusion that
\begin{equation}\label{distguish123}x_3=\bar{x}_4x_7,x_1+x_3\bar{x}_2=\bar{x}_4x_5+\bar{x}_6x_7, x_1\bar{x}_2=x_5\bar{x}_6~\text{and}~x_4+x_6\bar{x}_2=\bar{x}_1x_5+\bar{x}_3x_7,x_6=\bar{x}_1x_7,x_4\bar{x}_2=x_5\bar{x}_3.\end{equation}
From \eqref{distguish12} and \eqref{distguish123}, we notice that 
\begin{equation}\label{x786}
x_6\bar{x}_2=\bar{x}_1x_7\bar{x}_2=\bar{x}_1x_5.
\end{equation}
Therefore, it can be deduced from \eqref{distguish123} that $x_3=\bar{x}_4x_7.$ According to the hypothesis, we have $|\tilde{z_3}(z_3)|=1$ for all $z_3 \in \mathbb{T}$. Furthermore, considering the relationship $x_3=\bar{x}_4x_7$, we note that $$1=|\frac{\bar{x}_4x_7-z_3x_7}{1-x_4z_3}|=|x_7||\frac{\bar{x}_4-z_3}{1-x_4z_3}|=|x_7| ~\text{for~all}~z_3\in \mathbb T.$$ 
 As $|x_7|=1$, from \eqref{distguish12} and \eqref{distguish123} we conclude that  $x_2=\bar{x}_5x_7, $ and $x_1=\bar{x}_6x_7.$ This implies that $\textbf{x}\in K$. 
\newline \textbf{Case $2:$} Suppose  that $(\tilde{z_1}(z_3),\tilde{z_2}(z_3),\tilde{z_3}(z_3))\in b\Gamma_{E(2;2;1,1)}$ for all $z_3\in \mathbb{D}$ with $|x_4|=1$. It follows from the characterization of  $ b\Gamma_{E(2;2;1,1)}$ that $|\tilde{z_3}(z_3)|\leq 1$ for all $z_3\in \mathbb{D}$.  Once more by using the characterization of tetrablock, it implies that $(x_3,x_4,x_7)\in \Gamma_{E(2;2;1,1)}$.
 
 As $|x_4|=1$ and $(x_3,x_4,x_7)\in \Gamma_{E(2;2;1,1)},$  we have $x_3x_4=x_7$ and $x_3=\bar{x}_4x_7$. Since $|\tilde{z_3}(z_3)|=1$ for all $z_3 \in \mathbb{D}$ and $x_3=\bar{x}_4x_7$, we observe that $$1=|\frac{\bar{x}_4x_7-z_3x_7}{1-x_4z_3}|=|x_7||\frac{\bar{x}_4-z_3}{1-x_4z_3}|=|x_7|.$$  Because $|x_7|=1$ and $x_3x_4=x_7$, we have $|x_4|=|x_3|=1$.  As $(x_1,x_2,x_3)$, $(x_1,x_4,x_5)$ and $(x_2,x_4,x_6)\in \Gamma_{E(2;2;1,1)}$ and $|x_4|=|x_3|=1,$ it yields that $$x_1x_4=x_5,~x_1=\bar{x}_4x_5,~x_2x_4=x_6,~x_2=\bar{x}_4x_6,~x_1=\bar{x}_2x_3,~x_2=\bar{x}_1x_3,$$ which gives 
\begin{align*}
	x_5=x_1x_4=x_1\bar{x}_3x_7=\bar{x}_2x_7, x_6=x_2x_4=x_2\bar{x}_3x_7=\bar{x}_1x_7.
\end{align*}
This shows that $x\in K$.
\newline \textbf{Case $3:$} Assume  that $(\tilde{y_1}(z_2),\tilde{y_2}(z_2),\tilde{y_3}(z_2))\in b\Gamma_{E(2;2;1,1)}$ for all $z_2\in \mathbb{D}$ with $|x_2|=1$. By employing an analogous argument to that in Case $2$, we conclude that $\textbf{x} \in K.$
\newline Similarly, we can prove $(1)\Leftrightarrow(3)$ and $(1)\Leftrightarrow(4)$. This completes the proof.
\end{proof}
%\begin{thm}
%The subset $K$ of $\Gamma_{E(3;3;1,1,1)}$ is closed boundary of $\mathcal A(G_{E(3;3;1,1,1)}).$
%\end{thm}
%\begin{proof}
%In order to show that $K$ is a closed boundary for $\mathcal A(G_{E(3;3;1,1,1)}),$ let us consider any $f\in \mathcal A(G_{E(3;3;1,1,1)})$.  Then $f\circ \pi_{E(3;3;1,1,1)} \in \mathcal A(\mathbb B)$. As  the distinguished boundary of $\mathbb B$ is $\mathcal U(3)$ [section $4.6$, \cite{Clerc}], there exists $U \in \mathcal U(3)$ such that $f\circ \pi_{E(3;3;1,1,1)}$ attains its maximum modulus at $U. $ Hence $f$ attains its maximum modulus at $\pi_{E(3;3;1,1,1)}(U).$ Therefore, $\pi_{E(3;3;1,1,1)}(\mathcal U(3))$ is a closed boundary for $\mathcal A(G_{E(3;3;1,1,1)}).$ By Theorem \ref{distinguish}, $\pi_{E(3;3;1,1,1)}(\mathcal U(3))=K,$ which shows that $K$ is closed boundary of $\mathcal A(G_{E(3;3;1,1,1)}).$ This completes the proof.
%\end{proof}

\begin{thm}
$K$ is homeomorphic to $\bar{\mathbb D}^3\times \mathbb T.$

\end{thm}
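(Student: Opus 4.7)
The plan is to establish $K \cong \bar{\mathbb D}^3 \times \mathbb T$ in two stages: first a direct coordinate-projection homeomorphism $K \cong \Gamma_{E(2;2;1,1)} \times \mathbb T$, and then the standard topological identification $\Gamma_{E(2;2;1,1)} \cong \bar{\mathbb D}^3$. I would define
$$\Phi : K \longrightarrow \Gamma_{E(2;2;1,1)} \times \mathbb T, \qquad \Phi(\textbf{x}) = ((x_2, x_4, x_6),\, x_7).$$
This is well defined because Theorem \ref{maingamma} forces $(x_2, x_4, x_6) \in \Gamma_{E(2;2;1,1)}$ whenever $\textbf{x} \in \Gamma_{E(3;3;1,1,1)}$, and $|x_7|=1$ is built into the definition of $K$. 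Injectivity is immediate from the three relations $x_1 = \bar{x}_6 x_7$, $x_3 = \bar{x}_4 x_7$, $x_5 = \bar{x}_2 x_7$, which recover $x_1, x_3, x_5$ from $(x_2, x_4, x_6, x_7)$; continuity of $\Phi$ and of its formal inverse $(y_1, y_2, y_3, \lambda) \mapsto (\bar{y}_3\lambda, y_1, \bar{y}_2\lambda, y_2, \bar{y}_1\lambda, y_3, \lambda)$ is automatic.

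The substantive step is surjectivity. Given $(y_1, y_2, y_3) \in \Gamma_{E(2;2;1,1)}$ and $\lambda \in \mathbb T$, I would set $\textbf{x} := (\bar{y}_3\lambda, y_1, \bar{y}_2\lambda, y_2, \bar{y}_1\lambda, y_3, \lambda)$ and verify $\textbf{x} \in \Gamma_{E(3;3;1,1,1)}$. A direct substitution into the defining polynomial produces the factorization
$$R^{(3;3;1,1,1)}_{\textbf{x}}(z_1, z_2, z_3) \,=\, P(z_2, z_3) \,-\, \lambda z_1\, Q(z_2, z_3),$$
with $P(z_2, z_3) := 1 - y_1 z_2 - y_2 z_3 + y_3 z_2 z_3 = R^{(2;2;1,1)}_{(y_1, y_2, y_3)}(z_2, z_3)$ and $Q(z_2, z_3) := \bar{y}_3 - \bar{y}_2 z_2 - \bar{y}_1 z_3 + z_2 z_3$. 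A one-line polynomial identity gives $Q(z_2, z_3) = z_2 z_3\, \overline{P(z_2, z_3)}$ on $\mathbb T^2$, so $|Q| = |P|$ there. When $(y_1, y_2, y_3) \in G_{E(2;2;1,1)}$, $P$ has no zero on $\bar{\mathbb D}^2$ (by the characterization of the open tetrablock), so $h := Q/P$ is continuous on $\bar{\mathbb D}^2$, holomorphic on $\mathbb D^2$, and of unit modulus on the Shilov boundary $\mathbb T^2$; the maximum modulus principle for the bidisc algebra yields $|h| \leq 1$ on $\bar{\mathbb D}^2$, forcing $R^{(3;3;1,1,1)}_{\textbf{x}}$ to be nonzero on $\mathbb D^3$. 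For $(y_1, y_2, y_3) \in \Gamma_{E(2;2;1,1)} \setminus G_{E(2;2;1,1)}$, I would approximate by interior points and pass to the limit using closedness of $\Gamma_{E(3;3;1,1,1)}$.

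Once $\Phi$ is shown to be a homeomorphism, the argument concludes by invoking the topological fact that $\Gamma_{E(2;2;1,1)}$ is compact and starlike about the origin with continuous radial function (a standard property of tetrablock, cf.\ \cite{Abouhajar}), hence homeomorphic to the closed $6$-ball via the radial map; since the closed $6$-ball is itself homeomorphic to $\bar{\mathbb D}^3$, composition yields $K \cong \bar{\mathbb D}^3 \times \mathbb T$. The main obstacle I anticipate is the max-modulus step combined with the passage to non-distinguished boundary points of $\Gamma_{E(2;2;1,1)}$: on the non-distinguished part of $\partial \Gamma_{E(2;2;1,1)}$, $P$ can vanish on $\partial \mathbb D^2$ and the ratio $h$ need not extend continuously, so one cannot apply maximum modulus directly and the approximation argument from $G_{E(2;2;1,1)}$ is the cleanest route. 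The concluding topological identification $\Gamma_{E(2;2;1,1)} \cong \bar{\mathbb D}^3$ should also be spelled out via the radial homeomorphism for completeness.
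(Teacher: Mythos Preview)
Your argument is correct and in fact more rigorous than the paper's one-line proof. The paper simply asserts that the coordinate projection $\textbf{x}\mapsto(x_4,x_5,x_6,x_7)$ (with inverse $(x_4,x_5,x_6,x_7)\mapsto(\bar x_6x_7,\bar x_5x_7,\bar x_4x_7,x_4,x_5,x_6,x_7)$) is a homeomorphism of $K$ onto $\bar{\mathbb D}^3\times\mathbb T$, without verifying that the inverse actually lands in $\Gamma_{E(3;3;1,1,1)}$. As written that assertion is false: take $(x_4,x_5,x_6,x_7)=(0,1,1,1)\in\bar{\mathbb D}^3\times\mathbb T$, giving $\textbf{x}=(1,1,0,0,1,1,1)$; then $R^{(3;3;1,1,1)}_{\textbf{x}}(\tfrac12,\tfrac12,0)=0$, so $\textbf{x}\notin\Gamma_{E(3;3;1,1,1)}$ and hence $\textbf{x}\notin K$. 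Your choice of projection onto $((x_2,x_4,x_6),x_7)\in\Gamma_{E(2;2;1,1)}\times\mathbb T$ avoids this problem, and your maximum-modulus/approximation argument for surjectivity is exactly the missing verification: the identity $Q=z_2z_3\overline{P}$ on $\mathbb T^2$ with $P=R^{(2;2;1,1)}_{(y_1,y_2,y_3)}$ shows $|Q/P|\le 1$ on $\bar{\mathbb D}^2$ when $(y_1,y_2,y_3)\in G_{E(2;2;1,1)}$, whence $R^{(3;3;1,1,1)}_{\textbf x}\neq 0$ on $\mathbb D^3$, and the limiting case follows by closedness. What your route costs is the extra topological step $\Gamma_{E(2;2;1,1)}\cong\bar{\mathbb D}^3$; this is indeed standard (the closed tetrablock is a compact starlike body with $0$ in its interior and is the closure of the open starlike domain $G_{E(2;2;1,1)}$, so its Minkowski functional is continuous and the radial map gives a homeomorphism to the closed $6$-ball), but it is worth stating explicitly rather than just citing \cite{Abouhajar}, since that paper does not record the homeomorphism in this form.
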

\begin{proof}
The map $(x_4,x_5,x_6,x_7)\to (\bar{x}_6x_7,\bar{x}_2x_7,\bar{x}_4x_7,x_4,x_5,x_6,x_7)$ is homeomorphic and hence $K$ is homeomorphic to $\bar{\mathbb D}^3\times \mathbb T.$ This completes the proof.

\end{proof}

%The polynomal map $\pi_{E(3;2;1,2)}:\mathcal M_{3\times 3}(\mathbb C)\to \mathbb C^5$ explicit in the definition of \cite{bha} is defined as follows:
%$$\pi_{E(3;2;1,2)}(A)=\{(a_{11},a_{22}+a_{33},\operatorname{det}A_{(12)}+\operatorname{det}A_{(13)},\operatorname{det}A_{(23)},\operatorname{det}A)\},
%~{\rm{where}}~A=((a_{ij}))_{i,j=1}^{3}.$$  It is clear that $\pi_{E(3;2;1,2)}(\mathbb B)=G_{E(3;2;1,2)}$ and $\pi_{E(3;2;1,2)}(\bar{\mathbb B})=\Gamma_{E(3;2;1,2)} $ From the characterisation of $G_{E(3;2;1,2)}$ and $\Gamma_{E(3;2;1,2)}.$ 
The polynomial map $\tilde{\pi}_{{E(3;2;1,2)}}:\mathcal M_{3\times 3}(\mathbb C)\to \mathbb C^5$ defined in \eqref{pi123} is of the following form
			$$\tilde{\pi}_{{E(3;2;1,2)}}(A):=\left( a_{11},\det \left(\begin{smallmatrix} a_{11} & a_{12}\\
					a_{21} & a_{22}
				\end{smallmatrix}\right)+\det \left(\begin{smallmatrix}
					a_{11} & a_{13}\\
					a_{31} & a_{33}
				\end{smallmatrix}\right),\operatorname{det}A, a_{22}+a_{33}, \det  \left(\begin{smallmatrix}
					a_{22} & a_{23}\\
					a_{32} & a_{33}\end{smallmatrix}\right)\right).$$			
Note that $\tilde{\pi}_{{E(3;2;1,2)}}(\mathbb B)\subseteq G_{E(3;2;1,2)}$ and $\tilde{\pi}_{{E(3;2;1,2)}}(\bar{\mathbb B})\subseteq \Gamma_{E(3;2;1,2)}.$
Let $$K_1=\{\tilde{\textbf{x}}=(x_1,x_2,x_3,y_1,y_2)\in  \Gamma_{E(3;2;1,2)} :x_1=\bar{y}_2x_3, x_2=\bar{y}_1x_3, |x_3|=1\}.$$

\begin{lem}\label{Bharalidomain}
Let $\tilde{x}=(x_1,x_2,x_3,y_1,y_2)\in \mathbb{C}^5$.  Then $\tilde{\textbf{x}}\in \Gamma_{E(3;2;1,2)}$ if and only if $$\left(p_1(z),p_2(z), p_3(z)\right)\in\Gamma_{E(2;2;1,1)}$$ for all  $z\in \mathbb D$, where $p_1(z)=\frac{2x_1-zx_2}{2-y_1z},p_2(z)=\frac{y_1-2zy_2}{2-y_1z},\mbox{and}\;\; p_3(z)=\frac{x_2-2zx_3}{2-y_1z}$. 
\end{lem}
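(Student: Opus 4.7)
The strategy is to reduce both sides of the claimed equivalence to a single two-parameter supremum bound, by chaining together the section characterization of $\Gamma_{E(3;2;1,2)}$ through the symmetrized bidisc $\Gamma_{E(2;1;2)}$ established in Proposition \ref{bhchh}(4), the magic-function description of $\Gamma_{E(2;1;2)}$, and the $\Psi$-characterization of the tetrablock $\Gamma_{E(2;2;1,1)}$ recorded after equation \eqref{MAZ} in the text. The bridge is the single algebraic identity
\begin{equation*}
\frac{s(z_1) - 2z\, p(z_1)}{2 - z\, s(z_1)} \;=\; \frac{p_2(z) - z_1\, p_3(z)}{1 - z_1\, p_1(z)} \qquad (z, z_1 \in \mathbb{D}),
\end{equation*}
where $s(z_1) = (y_1 - z_1 x_2)/(1 - z_1 x_1)$ and $p(z_1) = (y_2 - z_1 x_3)/(1 - z_1 x_1)$ are the symmetrized-bidisc slices from Proposition \ref{bhchh}(4). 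I would verify this identity by clearing denominators on both sides and using the relations $(2 - y_1 z) p_1(z) = 2x_1 - zx_2$, $(2 - y_1 z) p_2(z) = y_1 - 2zy_2$, $(2 - y_1 z) p_3(z) = x_2 - 2zx_3$: both sides reduce to $\frac{(y_1 - 2zy_2) - z_1(x_2 - 2zx_3)}{(2 - zy_1) - z_1(2x_1 - zx_2)}$.

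Given the identity, the chain of equivalences reads as follows. By Proposition \ref{bhchh}(4), $\tilde{\mathbf{x}} \in \Gamma_{E(3;2;1,2)}$ iff $(s(z_1), p(z_1)) \in \Gamma_{E(2;1;2)}$ for every $z_1 \in \mathbb{D}$. The magic-function description of the symmetrized bidisc --- namely that $(s,p) \in \Gamma_{E(2;1;2)}$ iff $2 - zs \neq 0$ on $\mathbb{D}$ and $|(2zp - s)/(2 - zs)| \leq 1$ for every $z \in \mathbb{D}$ --- recasts this as
\begin{equation*}
\Big|\tfrac{p_2(z) - z_1\, p_3(z)}{1 - z_1\, p_1(z)}\Big| \leq 1 \qquad \text{for every } (z, z_1) \in \mathbb{D}^2.
\end{equation*}
For each fixed $z \in \mathbb{D}$ this is exactly the condition $\sup_{z_1 \in \mathbb{D}}\bigl|\Psi\bigl(z_1, (p_2(z), p_1(z), p_3(z))\bigr)\bigr| \leq 1$, with $\Psi$ the tetrablock test function of \eqref{MAZ}, which by the Abouhajar--White--Young characterization is equivalent to $(p_2(z), p_1(z), p_3(z)) \in \Gamma_{E(2;2;1,1)}$. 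By the manifest symmetry of the tetrablock in its first two coordinates (swap the $(1,1)$- and $(2,2)$-entries of a realizing $2\times 2$ contraction), this in turn is the same as $(p_1(z), p_2(z), p_3(z)) \in \Gamma_{E(2;2;1,1)}$, completing the equivalence.

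The main obstacle is the bookkeeping of the various denominators, for which I would record two routine observations. First, whenever $\tilde{\mathbf{x}} \in \Gamma_{E(3;2;1,2)}$, setting $z_2 = 0$ in the polynomial $R_3$ of Proposition \ref{bhchh} forces $|x_1| \leq 1$, and setting $z_1 = 0$ forces $(y_1, y_2) \in \Gamma_{E(2;1;2)}$ and hence $|y_1| \leq 2$; consequently $1 - z_1 x_1$ and $2 - y_1 z$ do not vanish on $\mathbb{D}$, so all of $s(z_1), p(z_1), p_1(z), p_2(z), p_3(z)$ are well defined. Second, the identity furnishes the factorization $1 - z_1 p_1(z) = (1-z_1 x_1)(2 - z s(z_1))/(2 - y_1 z)$, and since $|s(z_1)| \leq 2$ whenever $(s(z_1), p(z_1)) \in \Gamma_{E(2;1;2)}$, the denominator $2 - zs(z_1)$ is nonzero on $\mathbb{D}$ as well, so no spurious singularities appear in the chain. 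Apart from this verification, the argument is entirely algebraic and needs no new analytic input beyond the three characterizations already in place.
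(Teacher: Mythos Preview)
Your proposal is correct and follows essentially the same route as the paper: both start from Proposition~\ref{bhchh}(4), invoke the magic-function test for $\Gamma_{E(2;1;2)}$, and establish the same algebraic identity rewriting $-\Lambda$ as a tetrablock $\Psi$-function in the $p_i(z)$'s. Your treatment is in fact a bit more careful than the paper's---you explicitly verify that the various denominators $1-z_1x_1$, $2-y_1z$, $2-zs(z_1)$ are nonvanishing on $\mathbb{D}$, and you invoke the first-two-coordinate symmetry of the tetrablock to pass from $(p_2,p_1,p_3)$ to $(p_1,p_2,p_3)$, whereas the paper absorbs this swap into a (silently relabelled) definition of $p_1,p_2$ inside the proof.
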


\begin{proof}
For $(s,p)\in \Gamma_{E(2;1;2)}$ and $z\in \mathbb D,$ let $\Lambda(z,(s,p))=\frac{2zp-s}{2-zs}$. It follows from [\cite{ay1}, Theorem $1.1$] that $(s,p)\in \Gamma_{E(2;1;2)}$ if and only if $|s|\leq 2$ and for all $z\in \mathbb D,$ $|\Lambda(z,(s,p))|\leq 1$. We notice from Proposition \ref{bhchh}  that  a point $\tilde{\textbf{x}}=(x_1,x_2,x_3,y_1,y_2)$ belongs to $ \Gamma_{E(3;2;1,2)}$ if and only if $\Big(\frac{y_1 - zx_2}{1 - zx_1}, \frac{y_2 - zx_3}{1 - zx_1}\Big)$ belongs to $\Gamma_{E(2;1;2)}$ for all $z \in \mathbb D.$  For all $z,w\in \mathbb D$  and  $\Big(\frac{y_1 - zx_2}{1 - zx_1}, \frac{y_2 - zx_3}{1 - zx_1}\Big)\in \Gamma_{E(2;1;2)}$, we observe that
\begin{align}\label{psi1111}
-\Lambda\left(w, \Big(\frac{y_1 - zx_2}{1 - zx_1}, \frac{y_2 - zx_3}{1 - zx_1}\Big)\right) \nonumber&=\frac{\frac{y_1-zx_2}{1-zx_1}-2w\frac{y_2-zx_3}{1-zx_1}}{2-w\frac{y_1-zx_2}{1-zx_1}}\\\nonumber&=\frac{y_1-zx_2-2wy_2+2zwx_3}{2-2zx_1-wy_1+zwx_2}\\\nonumber &=\frac{\frac{y_1-2wy_2}{2-y_1w}-z\frac{x_2-2wx_3}{2-y_1w}}{1-z\frac{2x_1-wx_2}{2-y_1w}}\\\nonumber&=\frac{p_{1}(w)-zp_3(w)}{1-zp_2(w)}\\&=\Psi(z,(p_1(w),p_2(w),p_3(w))),
\end{align}
where $p_1(w)=\frac{y_1-2wy_2}{2-y_1w},p_2(w)=\frac{2x_1-wx_2}{2-y_1w}~{\rm{and}}~p_3(w)=\frac{x_2-2wx_3}{2-y_1w}$. It follows from [\cite{ay1}, Theorem $1.1$], [\cite{bha}, Theorem $3.5$], Proposition \ref{bhchh} and \eqref{psi1111} that $\tilde{\textbf{x}}\in \Gamma_{E(3;2;1,2)}$ if and only if \begin{equation}\label{psi890}1-zp_2(w)\neq 0 ~{\rm{for ~all~}} z,w\in \mathbb D~{\rm{and }}~\sup_{(z,w)\in \mathbb D^2}|\Psi(z,(p_1(w),p_2(w),p_3(w)))|\leq 1.\end{equation} 
By [\cite{Abouhajar}, Theorem 2.4] and \eqref{psi890}, we have $\tilde{\textbf{x}}\in \Gamma_{E(3;2;1,2)}$ if and only if $$\left(p_1(z),p_2(z), p_3(z)\right)\in\Gamma_{E(2;2;1,1)}$$ for all  $z\in \mathbb D$.
 
\end{proof}

\begin{thm}\label{distinguish1}
$\pi_{E(3;2;1,2)}(\mathcal U(3)) \subseteq K_1.$
\end{thm}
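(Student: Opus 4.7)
The plan is to leverage the cofactor identity for unitary matrices. For $U\in\mathcal U(3)$, the relation $U^{-1}=U^*$ combined with $\operatorname{adj}(U)=(\det U)\,U^{-1}$ yields, for every pair of indices $(i,j)$,
\begin{equation*}
M_{ji}(U)=(-1)^{i+j}(\det U)\,\overline{u_{ji}},
\end{equation*}
where $M_{ji}(U)$ denotes the $(j,i)$-minor of $U$. This will produce exactly the algebraic relations defining $K_1$ once paired with the explicit form of $\tilde{\pi}_{{E(3;2;1,2)}}$ given in \eqref{pi123}.

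First I would fix $U\in\mathcal U(3)$ and set $\tilde{\textbf{x}}=\tilde{\pi}_{{E(3;2;1,2)}}(U)=(x_1,x_2,x_3,y_1,y_2)$. Since $\|U\|=1$, Theorem~\ref{matix ABCD} gives $\tilde{\textbf{x}}\in\Gamma_{E(3;2;1,2)}$, and $|x_3|=|\det U|=1$ is immediate from the definition of $\tilde{\pi}_{{E(3;2;1,2)}}$.

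Next I would specialise the cofactor identity to $(i,j)=(1,1),(2,2),(3,3)$, yielding
\begin{equation*}
\det U_{23}=(\det U)\,\overline{u_{11}},\qquad \det U_{13}=(\det U)\,\overline{u_{22}},\qquad \det U_{12}=(\det U)\,\overline{u_{33}}.
\end{equation*}
Reading off \eqref{pi123}, the first identity is precisely $y_2=x_3\,\overline{x_1}$, which rearranges (using $|x_3|=1$) to $x_1=\overline{y_2}\,x_3$. Summing the second and third gives $x_2=\det U_{12}+\det U_{13}=x_3(\overline{u_{22}}+\overline{u_{33}})=x_3\,\overline{y_1}$, i.e.\ $x_2=\overline{y_1}\,x_3$. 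Together with $\tilde{\textbf{x}}\in\Gamma_{E(3;2;1,2)}$ and $|x_3|=1$, this places $\tilde{\textbf{x}}$ in $K_1$.

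The argument is short and essentially computational, so no serious obstacle arises; the only care needed is bookkeeping the correspondence between the diagonal minors $M_{ii}(U)$ and the three $2\times 2$ determinants featuring in the definition of $\tilde{\pi}_{{E(3;2;1,2)}}$. An alternative route mirroring the proof of Theorem~\ref{distinguish} would invoke Lemma~\ref{Bharalidomain} to reduce membership in $\Gamma_{E(3;2;1,2)}$ to membership of the rational triple $(p_1(z),p_2(z),p_3(z))$ in $\Gamma_{E(2;2;1,1)}$ for all $z\in\mathbb D$ and then force $(p_1,p_2,p_3)$ onto the distinguished boundary of the tetrablock via a maximum-modulus argument, but the cofactor approach above is considerably more direct.
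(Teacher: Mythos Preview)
Your proof is correct and takes a genuinely different route from the paper. The paper argues analytically: it invokes Theorem~\ref{matix ABCD} for membership in $\Gamma_{E(3;2;1,2)}$, then uses Lemma~\ref{Bharalidomain} (or, in its second proof, Proposition~\ref{bhchh}) to land in the closed tetrablock, where the characterizing inequality together with $|x_3|=1$ forces the equalities $x_2=\bar y_1 x_3$ and $x_1=\bar y_2 x_3$; a case split on $|y_1|$ is needed in the first version. Your argument bypasses all of this by exploiting the purely algebraic identity $\operatorname{adj}(U)=(\det U)\,U^*$ for unitary $U$, which encodes the three diagonal-minor relations directly and yields the defining equations of $K_1$ in one stroke. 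Your approach is shorter and more elementary, and in fact the same cofactor identity would equally well give a one-line proof of the companion result Theorem~\ref{distinguish}. The paper's approach, on the other hand, stays within the function-theoretic framework it has built and shows how the domain characterizations themselves force the boundary relations---useful if one did not already know the matrix producing $\tilde{\textbf{x}}$ was unitary but only that $|x_3|=1$.
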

\begin{proof}
Let $\tilde{\textbf{x}}\in \pi_{E(3;2;1,2)}(\mathcal U(3)),$ that is, $\tilde{\textbf{x}}= \pi_{E(3;2;1,2)}(U_2)$ for some $U_2\in \mathcal U(3).$ Then by Theorem \eqref{matix ABCD}, we notice that $\tilde{\textbf{x}}\in \Gamma_{E(3;2;1,2)}.$ From Lemma \ref{Bharalidomain}, it follows that $$\left(p_1(z)=\frac{2x_1-zx_2}{2-y_1z},p_2(z)=\frac{y_1-2zy_2}{2-y_1z}, p_3(z)=\frac{x_2-2zx_3}{2-y_1z}\right)\in\Gamma_{E(2;2;1,1)}$$ for all  $z\in \mathbb D.$ By characterization of $\Gamma_{E(2;2;1,1)}$ we observe that $|p_3(z)|\leq 1$ for all $z\in \mathbb D$ which gives 
\begin{equation}\label{cgargama1}|\frac{x_2}{2}-\frac{\bar{y}_1}{2}x_3|+|\frac{y_1}{2}-\frac{\bar{x}_2}{2}x_3|\leq 1-|x_3|^2.
\end{equation}
As $U_2\in \mathcal U(3)$ and $|x_3|=|\det(U_2)|=1$, it implies from \eqref{cgargama1} that $x_2=\bar{y}_1x_3$ and $y_1=\bar{x}_2x_3.$  As $|y_1|\leq 2$ the proof involves the following two cases.

\noindent{\bf{Case $1$}:} Suppose $|y_1|=2.$ By Proposition \ref{bhchh}, we deduce that $(y_1,y_2)\in \Gamma_{E(2;1;2)}$. Thus we obtain $y_1=z_1+z_2$ and $y_2=z_1z_2$ for some $z_1,z_2\in \bar{\mathbb D}.$ Note that $2=|y_1|\leq |z_1|+|z_2|\leq 2$ implies that $|z_1|+|z_2|=2.$ Since $z_1,z_2\in \bar{\mathbb D}$, it yields that $|z_1|=|z_2|=1.$ This shows that $(y_1,y_2)\in b\Gamma_{E(2;1;2)} $ and $y_1=\bar{y}_1y_2,$ where $b\Gamma_{E(2;1;2)} $ is the distinguish boundary of $\Gamma_{E(2;1;2)} .$ As $|p_1(z)|\leq 1$  for every $z\in \mathbb D$, characterization of $\Gamma_{E(2;2;1,1)}$ allows us to obtain  $x_1=\frac{\bar{y}_1x_2}{4}, x_2=y_1x_1$. Thus, we can deduce that $x_1=\frac{\bar{y}_1x_2}{4}=\bar{y}_2x_3$ which suggests  that $\tilde{\textbf{x}}\in K_1.$

\noindent{\bf{Case $2$:}} 
Assume that $|y_1|<2.$ Then we have $2-y_1z\neq 0$ for all $z\in \bar{\mathbb D}.$ Since $x_2=\bar{y}_1x_3$  and $|x_3|=1$, implies that $|p_3(z)|=1$ for all $z\in \mathbb T.$ Let $z$ be a point of $\mathbb{T}$. Let $\{z_n\}$ be a sequence in $\mathbb{D}$ such that $\lim_{n\to \infty}z_n=z$. By the Lemma \ref{Bharalidomain}, it follows that $(p_1(z_n),p_2(z_n),p_3(z_n))\in \Gamma_{E(2;2;1,1)}$ for all $n$. Hence $(p_1(z),p_2(z),p_3(z))=\lim_{n\to \infty}(p_1(z_n),p_2(z_n),p_3(z_n))$ belongs to $\Gamma_{E(2;2;1,1)}$. Thus $(p_1(z),p_2(z),p_3(z))\in \Gamma_{E(2;2;1,1)}$ for all $z\in \mathbb{T}$. Since $|p_3(z)|=1$ for $z\in \mathbb{T}$, so by [\cite{Abouhajar}, Theorem 2.4], it follows that $p_1(z)=\overline{p_2(z)}p_3(z)$ for all $z\in \mathbb T$ and hence we conclude that
\begin{equation}\label{tildez11}x_1=\bar{y}_2x_3, x_2=\bar{y}_1x_3~{\rm{and}}~x_1\bar{y}_1=\bar{y}_2x_2.\end{equation}
From \eqref{tildez11}, we deduce that $x_1=\bar{y}_2x_3$ and hence we have $\tilde{\textbf{x}}\in K_1.$ This completes the proof.
\end{proof}

\begin{proof}[Another proof]
Let $\tilde{\textbf{x}}\in \pi_{E(3;2;1,2)}(\mathcal U(3)),$ that is, $\tilde{\textbf{x}}= \pi_{E(3;2;1,2)}(U_2)$ for some $U_2\in \mathcal U(3).$ By Theorem \eqref{matix ABCD}, we notice that $\tilde{\textbf{x}}\in \Gamma_{E(3;2;1,2)}.$ From Lemma \ref{Bharalidomain}, it follows that $$\left(p_1(z)=\frac{2x_1-zx_2}{2-y_1z},p_2(z)=\frac{y_1-2zy_2}{2-y_1z}, p_3(z)=\frac{x_2-2zx_3}{2-y_1z}\right)\in\Gamma_{E(2;2;1,1)}$$ for all  $z\in \mathbb D.$ By characterization of $\Gamma_{E(2;2;1,1)}$ we observe that $|p_3(z)|\leq 1$ for all $z\in \mathbb D$ which gives 
\begin{equation}\label{cgargama1}|\frac{x_2}{2}-\frac{\bar{y}_1}{2}x_3|+|\frac{y_1}{2}-\frac{\bar{x}_2}{2}x_3|\leq 1-|x_3|^2.
\end{equation}
As $U_2\in \mathcal U(3)$ and $|x_3|=|\det(U_2)|=1$, it implies from \eqref{cgargama1} that $x_2=\bar{y}_1x_3$ and $y_1=\bar{x}_2x_3.$ We deduce from Proposition \ref{bhchh}  that   $\Big(\frac{y_1 - zx_2}{1 - zx_1}, \frac{y_2 - zx_3}{1 - zx_1}\Big)$ belongs to $\Gamma_{E(2;1;2)}$ for all $z \in \mathbb D.$ Hence, it follows from the characterization of $\Gamma_{E(2;1;2)}$ that $|\frac{y_2 - zx_3}{1 - zx_1}|\leq 1$ for all $z \in \mathbb D$. Again, by the characterization of tetrablock we get $(y_2,x_1,x_3)\in\Gamma_{E(2;2;1,1)}$.  As $(y_2,x_1,x_3)\in\Gamma_{E(2;2;1,1)}$, we have 
\begin{equation}\label{cgargama12}|y_2-\bar{x}_1x_3|+|x_1-\bar{y}_2x_3|\leq 1-|x_3|^2.
\end{equation}
Since $|x_3|=1,$ it follows from \eqref{cgargama12} that $y_2=\bar{x}_1x_3$ and $x_1=\bar{y}_2x_3$. Thus, we conclude that $\tilde{\textbf{x}}\in K_1.$ This completes the proof.
\end{proof}

We prove the following theorem using the same argument as in Theorem \eqref{relation bw}. Consequently, we omit the proof.
\begin{thm}\label{relation bw1}
	Let $\tilde{\textbf{x}}\in \mathbb{C}^5$. Then the following conditions are equivalent

	\begin{enumerate}
		\item $\tilde{\textbf{x}}\in K_1=\{(x_1,x_2,x_3,y_1,y_2)\in \Gamma_{E(3;2;1,2)}:x_1=\bar{y}_2x_3, x_2=\bar{y}_1x_3, |x_3|=1\}.	$	
		\item 	$\begin{cases}
			(p_1(z),p_2(z),p_3(z))\in b\Gamma_{E(2;2;1,1)}~\text{for ~all}~z\in \mathbb{T}~\text{with}~|y_1|<2 \\
			(p_1(z),p_2(z),p_3(z))\in b\Gamma_{E(2;2;1,1)}~\text{for~all}~z\in \mathbb{D}~\text{with}~|y_1|=2.
		\end{cases}$
		\end{enumerate}
\end{thm}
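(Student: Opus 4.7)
The plan is to mimic the two-directional argument used in Theorem \ref{relation bw}, but now working with the rational functions $p_1,p_2,p_3$ of Lemma \ref{Bharalidomain} instead of the functions $\tilde{\textbf{z}}^{(z_3)}, \tilde{\textbf{y}}^{(z_2)}$. Throughout, I will use the characterization of $b\Gamma_{E(2;2;1,1)}$ from \cite{Abouhajar} which says that $(a,b,c)\in b\Gamma_{E(2;2;1,1)}$ if and only if $|c|=1$, $a=\bar b c$ and $|b|\leq 1$.

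\textbf{Forward direction ($(1)\Rightarrow(2)$).} Assume $\tilde{\textbf{x}}\in K_1$, so $x_1=\bar{y}_2 x_3$, $x_2=\bar{y}_1 x_3$ and $|x_3|=1$. I split on $|y_1|$.

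If $|y_1|<2$, then $2-y_1z\neq 0$ for every $z\in\bar{\mathbb{D}}$, so $p_1,p_2,p_3$ are well defined on $\bar{\mathbb D}$. Substituting $x_2=\bar y_1 x_3$ gives
\[
p_3(z)=\frac{\bar{y}_1 x_3-2z x_3}{2-y_1 z}=x_3\,\frac{\bar{y}_1-2z}{2-y_1 z},
\]
and since $|x_3|=1$, a direct computation using $\bar z = 1/z$ shows $|p_3(z)|=1$ for $z\in\mathbb{T}$. Similarly, substituting both $x_1=\bar y_2 x_3$ and $x_2=\bar y_1 x_3$, one verifies the polynomial identity
\[
(\bar{y}_1 z - 2\bar{y}_2)(\bar{y}_1 - 2z) \;=\; (2\bar{y}_2 - \bar{y}_1 z)(2z - \bar{y}_1),
\]
which, combined with $\bar z=1/z$ on $\mathbb T$, yields $\overline{p_2(z)}\,p_3(z)=p_1(z)$ for all $z\in\mathbb T$. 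Since $\tilde{\textbf x}\in\Gamma_{E(3;2;1,2)}$, Lemma \ref{Bharalidomain} gives $(p_1(z),p_2(z),p_3(z))\in\Gamma_{E(2;2;1,1)}$ for $z\in\mathbb D$, hence for $z\in\mathbb T$ by continuity, so $|p_2(z)|\leq 1$. Thus $(p_1(z),p_2(z),p_3(z))\in b\Gamma_{E(2;2;1,1)}$ for every $z\in\mathbb T$.

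If $|y_1|=2$, then $(y_1,y_2)\in\Gamma_{E(2;1;2)}$ forces, as in Case $1$ of Theorem \ref{distinguish1}, $y_1=e^{i\theta}+e^{i\theta}=2e^{i\theta}$ and $y_2=e^{2i\theta}$ for some $\theta$, so that $y_1=\bar y_1 y_2$. One checks $p_1(z)\equiv x_3 e^{-2i\theta}$, $p_2(z)\equiv e^{i\theta}$, $p_3(z)\equiv x_3 e^{-i\theta}$ as constants on $\mathbb D$, and the distinguished boundary conditions are immediate.

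\textbf{Backward direction ($(2)\Rightarrow(1)$).} Assume the conditions in (2). First note that Lemma \ref{Bharalidomain} together with $(p_1(z),p_2(z),p_3(z))\in b\Gamma_{E(2;2;1,1)}\subset \Gamma_{E(2;2;1,1)}$ on an appropriate set gives $\tilde{\textbf x}\in\Gamma_{E(3;2;1,2)}$ (by continuity extending from $\mathbb T$ to $\mathbb D$ in the case $|y_1|<2$). By the characterization of $b\Gamma_{E(2;2;1,1)}$ we have $|p_3(z)|=1$ and $p_1(z)=\overline{p_2(z)}\,p_3(z)$ on the prescribed set. Expanding $|p_3(z)|^2=1$ as an identity in $z$ (on $\mathbb T$ when $|y_1|<2$, on $\mathbb D$ when $|y_1|=2$) and equating coefficients via the replacement $\bar z=1/z$ (respectively, via evaluation at several points) yields $|x_3|=1$ and $x_2=\bar{y}_1 x_3$. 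Substituting these back into $p_1(z)=\overline{p_2(z)}p_3(z)$ and cancelling the common factor $x_3(\bar y_1-2z)/(2-y_1 z)$ (nonzero on $\mathbb T$) reduces the identity to $2x_1-zx_2 = \bar y_2(2-y_1 z)\cdot (\bar y_1-2z)/(\bar y_1-2z)$-type expressions that force $x_1=\bar{y}_2 x_3$. Hence $\tilde{\textbf x}\in K_1$.

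\textbf{Main obstacle.} The purely algebraic step is routine; the delicate point is ensuring that the denominator $2-y_1z$ does not vanish on the set where we work, and correctly handling the borderline case $|y_1|=2$, where $p_1,p_2,p_3$ degenerate to constants and the conclusion $y_1=\bar y_1 y_2$ (i.e.\ $(y_1,y_2)\in b\Gamma_{E(2;1;2)}$) is needed to close the argument in the backward direction. This is handled by invoking the characterization of $\Gamma_{E(2;1;2)}$ from \cite{ay1} exactly as in Theorem \ref{distinguish1}.
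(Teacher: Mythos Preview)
Your approach is exactly what the paper intends: it explicitly says to use the same argument as in Theorem \ref{relation bw}, and you carry this out with the functions $p_1,p_2,p_3$ of Lemma \ref{Bharalidomain} in place of the $\tilde{\textbf z}^{(z_3)},\tilde{\textbf y}^{(z_2)}$. One small correction: in the backward direction when $|y_1|<2$, the phrase ``by continuity extending from $\mathbb T$ to $\mathbb D$'' is the wrong direction---you need the maximum modulus principle applied to $z\mapsto \Psi(w,(p_1(z),p_2(z),p_3(z)))$ (analytic on $\bar{\mathbb D}$) to pass from $|\cdot|\le 1$ on $\mathbb T$ to $|\cdot|\le 1$ on $\mathbb D$, after which Lemma \ref{Bharalidomain} applies.
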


\begin{thm}
$K_1$ is homeomorphic to $\bar{\mathbb D}\times \overline{D(0,2)}\times \mathbb T,$ where $$\overline{D(0,2)}=\{z\in \mathbb C: |z|\leq 2\}.$$

\end{thm}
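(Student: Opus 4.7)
The plan is to first identify $K_1$ explicitly with $\Gamma_{E(2;1;2)}\times\mathbb{T}$, and then pass from $\Gamma_{E(2;1;2)}$ to $\bar{\mathbb D}\times\overline{D(0,2)}$ by a purely topological argument (both are homeomorphic to the closed $4$-ball $\bar B^4$).

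First I would parametrize $K_1$ more concretely. A point $\tilde{\textbf{x}}=(x_1,x_2,x_3,y_1,y_2)$ lies in $K_1$ iff the relations $x_1=\bar y_2 x_3$, $x_2=\bar y_1 x_3$, $|x_3|=1$ hold and $\tilde{\textbf{x}}\in\Gamma_{E(3;2;1,2)}$. By Proposition \ref{bhchh}, the latter requires both $1-y_1 z+y_2 z^2\ne 0$ for all $z\in\mathbb{D}$ (equivalently $(y_1,y_2)\in\Gamma_{E(2;1;2)}$) and $\|\Psi_3(\cdot,\tilde{\textbf{x}})\|_{H^\infty(\mathbb{D})}\le 1$. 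Writing $(y_1,y_2)=(z_1+z_2,z_1z_2)$ with $z_1,z_2\in\bar{\mathbb D}$ and substituting the defining relations of $K_1$ yields
\[
\Psi_3(z,\tilde{\textbf{x}})=x_3\cdot\frac{(z-\bar z_1)(z-\bar z_2)}{(1-z_1 z)(1-z_2 z)}.
\]
Since $|z-\bar a|=|1-az|$ whenever $|z|=1$, this ratio has modulus $|x_3|=1$ on $\mathbb{T}$, so by the maximum modulus principle $\|\Psi_3\|_{H^\infty(\mathbb{D})}\le 1$ is automatic. Therefore
\[
K_1=\{(\bar y_2 x_3,\bar y_1 x_3,x_3,y_1,y_2):(y_1,y_2)\in\Gamma_{E(2;1;2)},\ x_3\in\mathbb{T}\}.
\]
The projection $(x_1,x_2,x_3,y_1,y_2)\mapsto(y_1,y_2,x_3)$ is a continuous bijection onto $\Gamma_{E(2;1;2)}\times\mathbb{T}$ with continuous inverse, and both spaces are compact Hausdorff, so the map is a homeomorphism.

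Next I would show $\Gamma_{E(2;1;2)}\cong\bar{\mathbb D}\times\overline{D(0,2)}$ by exhibiting both as homeomorphic to the closed $4$-ball $\bar B^4$. The product $\bar{\mathbb D}\times\overline{D(0,2)}$ is convex and compact with $0$ in its interior, hence homeomorphic to $\bar B^4$ via radial rescaling from $0$. For $\Gamma_{E(2;1;2)}$ I would verify star-shapedness about $0$ using the characterization $(s,p)\in\Gamma_{E(2;1;2)}\iff|s|\le 2$ and $|s-\bar s p|\le 1-|p|^2$: for $t\in[0,1]$ the triangle inequality gives $|s-t\bar s p|\le(1-|p|^2)+2(1-t)|p|$, and the required inequality $t|s-t\bar s p|\le 1-t^2|p|^2$ then reduces algebraically to
\[
(1-t)\bigl[t|p|(2-|p|)-1\bigr]\le 0,
\]
which holds because $|p|(2-|p|)\le 1$ on $[0,1]$. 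As $0$ lies in the interior of $\Gamma_{E(2;1;2)}$, the standard radial map produces a homeomorphism $\Gamma_{E(2;1;2)}\cong\bar B^4$. Composing the identifications yields $K_1\cong\Gamma_{E(2;1;2)}\times\mathbb{T}\cong\bar{\mathbb D}\times\overline{D(0,2)}\times\mathbb{T}$.

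The main obstacle is the final identification $\Gamma_{E(2;1;2)}\cong\bar{\mathbb D}\times\overline{D(0,2)}$. No natural algebraic or fibrewise map realizes it: the slices $\{p=\mathrm{const}\}$ of $\Gamma_{E(2;1;2)}$ are ellipses that degenerate to line segments as $|p|\to 1$, so the comparison with the product disk bundle cannot be fibrewise and must go through the topological normalization provided by star-shapedness. The star-shapedness estimate itself is elementary but requires the careful algebraic manipulation above.
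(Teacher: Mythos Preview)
Your argument is correct and in fact considerably more complete than the paper's. The paper's proof is a single line asserting that the map $(y_1,y_2,x_3)\mapsto(\bar y_2 x_3,\bar y_1 x_3,x_3,y_1,y_2)$ is a homeomorphism onto $K_1$, implicitly taking the domain to be $\overline{D(0,2)}\times\bar{\mathbb D}\times\mathbb T$. But that cannot be right: for instance $(y_1,y_2,x_3)=(2,0,1)$ gives $\tilde{\mathbf x}=(0,2,1,2,0)$, and since $Q_3(1/2;2,0)=0$ one has $R_3((0,1/2);\tilde{\mathbf x})=0$, so $\tilde{\mathbf x}\notin\Gamma_{E(3;2;1,2)}$ and hence $\tilde{\mathbf x}\notin K_1$. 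What you do differently is exactly the missing step: you first show (via the Blaschke factorisation of $\Psi_3$) that the genuine domain of this parametrisation is $\Gamma_{E(2;1;2)}\times\mathbb T$, and only then identify $\Gamma_{E(2;1;2)}$ with $\bar{\mathbb D}\times\overline{D(0,2)}$ topologically by showing both are homeomorphic to $\bar B^4$. This two–stage route is the honest one; the paper's shortcut does not stand as written.

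One small point worth making explicit in your write-up: the radial rescaling from a compact star-shaped set to $\bar B^4$ is a homeomorphism only when the associated gauge functional is continuous, which in turn needs that each ray from $0$ meets $\partial\Gamma_{E(2;1;2)}$ in a single point. Your computation already gives this: the expression $(1-t)\bigl[t|p|(2-|p|)-1\bigr]$ is strictly negative for $0\le t<1$, so $(ts,tp)$ lies in the \emph{open} symmetrised bidisc whenever $(s,p)\in\Gamma_{E(2;1;2)}$ and $t<1$. This strict star-shapedness is precisely what forces the gauge to be continuous.
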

\begin{proof}
The map $(y_1,y_2,x_3)\to (\bar{y}_2x_3,\bar{y}_1x_3,x_3,y_1,y_2)$ is homeomorphic and hence $K_1$ is homeomorphic to $\bar{\mathbb D}\times \overline{D(0,2)}\times \mathbb T.$ This completes the proof.

\end{proof}

We establish the connection between $K$ and $K_1$ in the following theorem.

\begin{thm}
Suppose $\textbf{x}=(x_1,x_2,x_3,x_4,x_5,x_6,x_7)\in \mathbb C^7.$ Then $\textbf{x}\in K$  if and only if $$(x_1,x_3+\eta x_5,\eta x_7,x_2+\eta x_4,\eta x_6)\in K_1 ~{\rm{for~ all }}~\eta \in \mathbb T.$$

\end{thm}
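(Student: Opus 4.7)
The plan is to combine Theorem \ref{matix AE123}, which equates membership of $\textbf{x}$ in $\Gamma_{E(3;3;1,1,1)}$ with membership of $(x_1,x_3+\eta x_5,\eta x_7,x_2+\eta x_4,\eta x_6)$ in $\Gamma_{E(3;2;1,2)}$ for every $\eta\in\mathbb{T}$, with a direct comparison of the algebraic side conditions defining $K$ and $K_1$. This reduces the theorem to checking that the defining equations of $K_1$, when written in terms of $\textbf{x}$ and $\eta$, are equivalent (as $\eta$ varies over $\mathbb{T}$) to the three equations $x_1=\bar{x}_6x_7$, $x_3=\bar{x}_4x_7$, $x_5=\bar{x}_2x_7$ together with $|x_7|=1$.

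First I would set $\tilde{x}_1=x_1$, $\tilde{x}_2=x_3+\eta x_5$, $\tilde{x}_3=\eta x_7$, $\tilde{y}_1=x_2+\eta x_4$, $\tilde{y}_2=\eta x_6$ and rewrite the three conditions $\tilde{x}_1=\overline{\tilde{y}_2}\tilde{x}_3$, $\tilde{x}_2=\overline{\tilde{y}_1}\tilde{x}_3$, $|\tilde{x}_3|=1$ that appear in the definition of $K_1$. Using $|\eta|=1$, the first condition becomes $x_1=\bar{x}_6x_7$ (independent of $\eta$) and the third becomes $|x_7|=1$ (also independent of $\eta$). The second condition expands to
\begin{equation*}
x_3+\eta x_5 \;=\; \bar{x}_4 x_7+\eta\,\bar{x}_2 x_7.
\end{equation*}

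Next, the key observation is that this last identity must hold \emph{for every} $\eta\in\mathbb{T}$. Since both sides are polynomials in $\eta$ of degree at most one, evaluating at any two distinct points of $\mathbb{T}$ (for instance $\eta=\pm1$) and solving yields $x_3=\bar{x}_4x_7$ and $x_5=\bar{x}_2x_7$. Conversely, if these two identities hold, then the displayed equation is satisfied for every $\eta$. Thus the side conditions of $K_1$ for all $\eta\in\mathbb{T}$ collapse exactly to the four side conditions defining $K$.

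Finally, combining this with Theorem \ref{matix AE123} completes both directions: if $\textbf{x}\in K$, then $\textbf{x}\in\Gamma_{E(3;3;1,1,1)}$ so the image lies in $\Gamma_{E(3;2;1,2)}$ for every $\eta\in\mathbb{T}$, and the side conditions just verified place it in $K_1$; conversely, if the image lies in $K_1$ for every $\eta\in\mathbb{T}$, then in particular it lies in $\Gamma_{E(3;2;1,2)}$ for every $\eta\in\mathbb{T}$, giving $\textbf{x}\in\Gamma_{E(3;3;1,1,1)}$, and the side conditions force $\textbf{x}\in K$. There is no real obstacle here — the only mildly delicate point is the polynomial-identity argument in $\eta$, which is immediate since $\mathbb{T}$ contains infinitely many points.
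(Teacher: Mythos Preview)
Your proposal is correct and follows essentially the same approach as the paper: verify that the algebraic side conditions defining $K$ and $K_1$ match up (using $|\eta|=1$ and comparing coefficients of $1$ and $\eta$), and invoke Theorem~\ref{matix AE123} to handle the $\Gamma$-membership part. The paper's own proof is in fact just a one-line sketch that omits these computations, so your version simply fills in the details the authors left to the reader.
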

\begin{proof}
Suppose $\textbf{x}\in K$. Then we have $x_1=\bar{x}_6x_7, x_3=\bar{x}_4x_7,x_5=\bar{x}_2x_7 ~{\rm{and}}~|x_7|=1.$ This suggests that $(x_1,x_3+\eta x_5,\eta x_7,x_2+\eta x_4,\eta x_6)\in K_1 ~{\rm{for~ all }}~\eta \in \mathbb T.$ It is also simple to verify the converse implication. Therefore, we omit the proof.

\end{proof} 
According to the Theorem \ref{distinguish} and Theorem \ref{relation bw}, it is interesting to determine whether $K$ is equal to the distinguished boundary of $ \Gamma_{E(3;2;1,2)}$. Similarly, from Theorem \ref{distinguish1} and Theorem \ref{relation bw1}, one may ask whether $K_1$  is equal to the distinguished boundary of $\Gamma_{E(3;2;1,2)}$ . From above discussions, we propose the following conjecture:

\noindent{\textbf{Conjecture}:} The subset $K$ (respectively, $K_1$) of $\Gamma_{E(3;3;1,1,1)}$ (respectively, $\Gamma_{E(3;2;1,2)}$) is distinguished boundary for $\mathcal A(\Gamma_{E(3;3;1,1,1)})$ (respectively, $\mathcal A(\Gamma_{E(3;2;1,2)})$).
%%%%%%%%%%%%%%%%%%%%%%%%%%%%%%%%%%%%%%%%%%%%%%%%%%%%%%%%%%%%%%%%%%%%%%%%%%%%%%%%%%%%%%%%%%%%%%%%%%%%

\section{Necessary condition of Schwarz Lemma for $\Gamma_{E(3;3;1,1,1)}$ and $\Gamma_{E(3;2;1,2)}$ }
The following theorem is the classical Nevanlinna-Pick problem, which  was solved in $1916$ by Pick \cite{AglerM, AglerMY}.
\begin{thm}\label{AMY}[Theorem $1.81$, \cite{AglerMY}]
Let $z_1,\ldots,z_n\in \mathbb D$ and $\lambda_1,\ldots,\lambda_n\in \mathbb C.$ Then there exists a function $\phi \in H^{\infty}(\mathbb D)$ such that $\|\phi\|_{\infty}\leq 1 $ and $\phi(z_i)=\lambda_i$ for $i=1,\ldots,n$ if and only if  the Pick matrix is non-negative definite, that is,
$$\Big(\Big(\frac{1-\bar{\lambda}_i\lambda_j}{1-\bar{z}_iz_j}\Big)\Big)_{i,j=1}^{n}\geq 0.$$ 

\end{thm}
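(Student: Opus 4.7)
\medskip

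The plan is to establish both directions using the reproducing kernel Hilbert space structure of the Hardy space $H^2(\mathbb{D})$. Recall that $H^2(\mathbb{D})$ has the Szeg\H{o} kernel $k_w(z) = \frac{1}{1-\bar{w}z}$, so that $\langle f, k_w\rangle = f(w)$ for every $f \in H^2$, and that for $\phi \in H^{\infty}(\mathbb{D})$ the multiplication operator $M_\phi \colon H^2 \to H^2$ is bounded with $\|M_\phi\|_{\mathcal{B}(H^2)} = \|\phi\|_\infty$. A short computation gives the eigen-relation $M_\phi^* k_{w} = \overline{\phi(w)}\, k_w$ for each $w \in \mathbb{D}$.

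For the necessity direction, suppose $\phi \in H^{\infty}(\mathbb{D})$ with $\|\phi\|_\infty \leq 1$ and $\phi(z_i) = \lambda_i$ for $i = 1,\ldots,n$. The operator inequality $I - M_\phi M_\phi^* \geq 0$ holds on $H^2$. I would apply this to the vector $f = \sum_{i=1}^{n} c_i k_{z_i}$ for arbitrary scalars $c_1,\ldots,c_n \in \mathbb{C}$. Using $\langle k_{z_i}, k_{z_j}\rangle = \frac{1}{1-\bar{z}_j z_i}$ and $M_\phi^* k_{z_i} = \bar{\lambda}_i k_{z_i}$, the quantity $\langle (I-M_\phi M_\phi^*) f, f\rangle$ expands to
\begin{equation*}
\sum_{i,j=1}^{n} c_i \bar{c}_j \frac{1 - \bar{\lambda}_j \lambda_i}{1-\bar{z}_j z_i} \geq 0,
\end{equation*}
which is precisely the positivity of the Pick matrix.

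For the sufficiency direction, assume the Pick matrix $P = \left(\frac{1-\bar{\lambda}_i \lambda_j}{1-\bar{z}_i z_j}\right)_{i,j=1}^{n}$ is positive semidefinite. The plan is to consider the finite-dimensional subspace $\mathcal{M} = \operatorname{Span}\{k_{z_1},\ldots,k_{z_n}\} \subset H^2$ and define an operator $T$ on $\mathcal{M}$ by $T k_{z_i} = \bar{\lambda}_i k_{z_i}$. A straightforward Gramian calculation shows that the condition $\|T\| \leq 1$ on $\mathcal{M}$ is equivalent to $P \geq 0$; indeed, $\|\sum c_i k_{z_i}\|^2 - \|T \sum c_i k_{z_i}\|^2 = \sum c_i \bar{c}_j \frac{1-\bar{\lambda}_j\lambda_i}{1-\bar{z}_j z_i}$. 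The subspace $\mathcal{M}$ is invariant under $M_z^*$ (since $M_z^* k_{z_i} = \bar{z}_i k_{z_i}$), and $T$ commutes with $M_z^*|_{\mathcal{M}}$. I would then invoke the commutant lifting theorem of Sz.-Nagy and Foias (equivalently, Sarason's theorem on $H^\infty$ interpolation) to lift $T$ to a contraction $M_\psi^*$ on $H^2$ for some $\psi \in H^\infty$ with $\|\psi\|_\infty \leq 1$, satisfying $M_\psi^* k_{z_i} = T k_{z_i} = \bar{\lambda}_i k_{z_i}$. Taking adjoint values gives $\psi(z_i) = \lambda_i$, and $\phi := \psi$ is the desired interpolant.

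The main obstacle is the sufficiency direction, specifically invoking the commutant lifting machinery; an alternative route would be the classical inductive Schur reduction, where one uses a Möbius transformation to reduce an $n$-point problem to an $(n-1)$-point problem while preserving the positivity of the Pick matrix, and the hard part there is verifying that positivity is transferred correctly under the reduction.
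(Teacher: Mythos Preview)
Your proof sketch is correct and follows one of the standard operator-theoretic routes to the Nevanlinna--Pick theorem: the necessity direction via $I - M_\phi M_\phi^* \ge 0$ applied to spans of Szeg\H{o} kernels is completely standard, and the sufficiency direction via commutant lifting (Sarason's theorem) is a well-known and valid approach.

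However, the paper does not actually prove this statement. Theorem~\ref{AMY} is quoted verbatim as Theorem~1.81 from the reference \cite{AglerMY} and is used only as background for the discussion of Schwarz-type lemmas on $G_{E(3;3;1,1,1)}$ and $G_{E(3;2;1,2)}$; no proof is supplied in the paper itself. So there is nothing to compare your argument against here --- your proposal is a correct proof of a result the authors simply cite.
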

The standard Schwarz lemma is a particular case of the Nevanlinna-Pick interpolation problem. In solving a two-point interpolation problem for analytic functions from the open unit disc $\mathbb D$ into itself, the standard Schwarz lemma provides a necessary and sufficient condition. It is defined as follows: 

\noindent{\textbf{The Standard Schwarz Lemma :}}
Let $\lambda_{0}\in \mathbb D\setminus \{0\}$ and $z_0\in \mathbb D.$ Then there exists an analytic function $f : \mathbb D\to \mathbb D$ such that $f(0) = 0 $ and $f(\lambda_0) = z_0$ if and only if $|z_0| \leq |\lambda_0|.$

In this subsection, we discuss the necessary conditions for the Schwarz lemma for $\Gamma_{E(3;3;1,1,1)}$ and $\Gamma_{E(3;2;1,2)}.$
%Finding out if the Lempert theorem is true for the domains $\Gamma_{E(3;3;1,1,1)}$ and $\Gamma_{E(3;2;1,2)}$ is also interesting. 
\subsection{Necessary conditions for Schwarz Lemma for $\Gamma_{E(3;3;1,1,1)}$}

In order to provide the necessary conditions for Schwarz Lemma for $\Gamma_{E(3;3;1,1,1)}$, we prove the following lemma.

\begin{lem}\label{phiD}
Let $\phi:\mathbb D \to \Gamma_{E(3;3;1,1,1)}$ be a analytic function.  Suppose $\phi$ maps some point of $\mathbb D$ into $G_{E(3;3;1,1,1)}$. Then $\phi(\mathbb D)\subset G_{E(3;3;1,1,1)}.$
\end{lem}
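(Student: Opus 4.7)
The plan is to argue by contradiction, reducing the seven-variable statement to a family of single-variable maximum modulus arguments on $\mathbb{D}$, via the rational function $\Psi^{(1)}$.

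First, I would pass to the tetrablock level. Writing $\phi(\lambda) = (\phi_1(\lambda), \ldots, \phi_7(\lambda))$, set $\psi(\lambda) := (\phi_2(\lambda), \phi_4(\lambda), \phi_6(\lambda))$. Theorem \ref{phii1} gives $\psi(\lambda) \in \Gamma_{E(2;2;1,1)}$ for every $\lambda \in \mathbb{D}$, and Theorem \ref{phi} gives $\psi(\lambda_0) \in G_{E(2;2;1,1)}$. The tetrablock analogue of the present lemma, which follows from maximum modulus applied to the one-variable Möbius function $\Psi(z,(x_1,x_2,x_3)) = (x_1 - zx_3)/(1 - zx_2)$ characterizing $G_{E(2;2;1,1)}$ (cf.\ \cite{Abouhajar}), then upgrades this to $\psi(\mathbb{D}) \subset G_{E(2;2;1,1)}$. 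Consequently the denominator $R^{(1)}((z_2,z_3), \psi(\lambda)) = 1 - \phi_2(\lambda) z_2 - \phi_4(\lambda) z_3 + \phi_6(\lambda) z_2 z_3$ never vanishes on $\bar{\mathbb{D}}^2$ for any $\lambda \in \mathbb{D}$.

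Next, I would introduce the auxiliary function
\[
\Phi(\lambda, z_2, z_3) := \Psi^{(1)}\bigl((z_2,z_3), \phi(\lambda)\bigr) = \frac{\phi_1(\lambda) - z_2 \phi_3(\lambda) - z_3 \phi_5(\lambda) + z_2 z_3 \phi_7(\lambda)}{1 - z_2 \phi_2(\lambda) - z_3 \phi_4(\lambda) + z_2 z_3 \phi_6(\lambda)},
\]
which by the previous paragraph is well-defined and jointly continuous on $\mathbb{D} \times \bar{\mathbb{D}}^2$ and holomorphic in $\lambda$. By Theorem \ref{phii1}, $|\Phi(\lambda, z_2, z_3)| \leq 1$ on $\mathbb{D} \times \mathbb{D}^2$, and continuity extends the bound to $\mathbb{D} \times \bar{\mathbb{D}}^2$.

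Finally, I would argue by contradiction: suppose some $\lambda_1 \in \mathbb{D}$ satisfies $\phi(\lambda_1) \in \Gamma_{E(3;3;1,1,1)} \setminus G_{E(3;3;1,1,1)}$. Since $\psi(\lambda_1) \in G_{E(2;2;1,1)}$, the only way Theorem \ref{phi} can fail for $\phi(\lambda_1)$ is that $\|\Psi^{(1)}(\cdot, \phi(\lambda_1))\|_{H^{\infty}(\bar{\mathbb{D}}^2)} = 1$; since $\Psi^{(1)}(\cdot, \phi(\lambda_1))$ extends holomorphically across $\bar{\mathbb{D}}^2$, the supremum is attained at some $(z_2^\ast, z_3^\ast) \in \mathbb{T}^2$, so $|\Phi(\lambda_1, z_2^\ast, z_3^\ast)| = 1$. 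Then $\lambda \mapsto \Phi(\lambda, z_2^\ast, z_3^\ast)$ is a holomorphic self-map of $\bar{\mathbb{D}}$ attaining modulus one at the interior point $\lambda_1$, so the maximum modulus principle forces it to be a unimodular constant, contradicting $|\Phi(\lambda_0, z_2^\ast, z_3^\ast)| < 1$ (which holds because $\phi(\lambda_0) \in G_{E(3;3;1,1,1)}$ and Theorem \ref{phi} gives a strict inequality on all of $\bar{\mathbb{D}}^2$). The main obstacle I anticipate is the tetrablock reduction in the first step: one must either quote or re-derive the tetrablock Schwarz lemma to secure non-vanishing of the denominator of $\Psi^{(1)}$ on $\bar{\mathbb{D}}^2$; however, the required argument is a carbon copy of the final maximum modulus step, so no genuinely new difficulty arises.
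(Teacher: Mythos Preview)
Your proof is correct and follows essentially the same route as the paper. Both arguments first pass to the tetrablock to secure that $(\phi_2,\phi_4,\phi_6)(\mathbb{D})\subset G_{E(2;2;1,1)}$ (the paper cites [Lemma~3.4, \cite{Abouhajar}] for this), so that $\Psi^{(1)}(\cdot,\phi(\lambda))$ is well defined on $\bar{\mathbb{D}}^2$ for every $\lambda$, and then freeze $(z_2,z_3)\in\bar{\mathbb{D}}^2$ and apply a one-variable argument to $\lambda\mapsto\Psi^{(1)}((z_2,z_3),\phi(\lambda))$; the paper invokes the Schwarz--Pick lemma directly, whereas you reach the same conclusion by contradiction via the maximum modulus principle.
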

\begin{proof}
Suppose that $\phi(\lambda)=(\phi_1(\lambda),\ldots,\phi_7(\lambda))$ for $\lambda\in \mathbb D$ and $\phi(\lambda_0)\in G_{E(3;3;1,1,1)}$ for some $\lambda_0\in \mathbb D.$ Then by characterization of $G_{E(3;3;1,1,1)},$  we observe that \begin{equation}\label{philam}\sup_{(z_1,z_2)\in \bar{\mathbb D}^2}|\Psi^{(1)}(z_1,z_2,\phi(\lambda_0))|<1~{\rm{ and}}~(\phi_2(\lambda_0),\phi_4(\lambda_0),\phi_6(\lambda_0))\in G_{E(2;2;1,1)}.
\end{equation}
By characterization of $\Gamma_{E(3;3,1,1,1)},$ we have an analytic map $$\tilde{\phi}:=(\phi_2,\phi_4,\phi_6):\mathbb D\to \Gamma_{E(2;2,1,1)}.$$ It follows from [Lemma $3.4$ \cite{Abouhajar}] that $\tilde{\phi}(\mathbb D)\subseteq G_{E(2;2;1,1)}.$ Fix $(z_1,z_2)\in \bar{\mathbb D}^2.$ The function $\lambda \to \Psi^{(1)}(z_1,z_2,\phi(\lambda))$ is well defined and analytic on $\mathbb D$; it maps $\mathbb D$ into $\bar{\mathbb D}$ and $\lambda_0$ into $\mathbb D$ and hence, it follows from the Schwarz-Pick lemma that it maps  $\mathbb D$ into $\mathbb D.$ Now fix $\lambda \in \mathbb D$, the map $\Psi^{(1)}(.,.,\phi(\lambda))$ maps $\bar{\mathbb D}^2$ to $\mathbb D$ and hence again by characterization of $G_{E(3;3;1,1,1)},$ $\phi(\mathbb D)\subseteq G_{E(3;3;1,1,1)}.$ This completes the proof.

\end{proof}

\begin{thm}\label{schwarz}
%Let $\lambda_0 \in \mathbb D\setminus \{0\}$ and let $\textbf{x}=(x_1,\ldots,x_7)\in G_{E(3;3;1,1,1)}.$ Then in the following $(1)\Leftrightarrow (1^{\prime}), (1)\implies (2^{\prime})\Leftrightarrow (2)\Leftrightarrow (5) \Leftrightarrow (6) \Leftrightarrow (7),$ $(1)\implies (3)\Leftrightarrow (3^{\prime})\Leftrightarrow (8) \Leftrightarrow (9) \Leftrightarrow (10)$ and $(1)\implies (4)\Leftrightarrow (4^{\prime})\Leftrightarrow (11) \Leftrightarrow (12) \Leftrightarrow (13):$
%
Let $\lambda_0 \in \mathbb D\setminus \{0\}$ and let $\textbf{x}=(x_1,\ldots,x_7)\in G_{E(3;3;1,1,1)}.$ Then, in the following, $(1)\Leftrightarrow (1^{\prime}),  (2^{\prime})\Leftrightarrow (2)\Leftrightarrow (5) \Leftrightarrow (6) \Leftrightarrow (7),$ $(3^{\prime})\Leftrightarrow (3) \Leftrightarrow (8) \Leftrightarrow (9) \Leftrightarrow (10)$, $(4^{\prime})\Leftrightarrow (4)\Leftrightarrow (11) \Leftrightarrow (12) \Leftrightarrow (13),$ $(1^{\prime})\implies (2^{\prime})$, $(1^{\prime})\implies (3^{\prime})$and $(1^{\prime})\implies (4^{\prime}):$
\begin{enumerate}
\item[$(1)$] There exists an analytic function $\phi:=(\phi_1,\ldots,\phi_7):\mathbb D\to \Gamma_{E(3;3;1,1,1)}$ such that $\phi(0)=(0,0,0,0,0,0,0)$ and $\phi(\lambda_0)=\textbf{x}.$
\item[ $(1^{\prime})$] There exists an analytic function $\phi:\mathbb D\to G_{E(3;3;1,1,1)}$ such that $\phi(0)=(0,0,0,0,0,0,0)$ and $\phi(\lambda_0)=\textbf{x}.$

\item[$(2)$] For every $z_1\in \bar{\mathbb D},$ there exists an analytic function $\phi^{(z_1)}:\mathbb D\to \Gamma_{E(2;2;1,1)}$ such that $\phi^{(z_1)}(0)=(0,0,0)$ and $\phi^{(z_1)}(\lambda_0)=(\tilde{x}_1(z_1),\tilde{x}_2(z_1),\tilde{x}_3(z_1)).$ 

\item[$(2^{\prime})$] For every  $z_1\in \bar{\mathbb D},$ there exists an analytic function $\phi^{(z_1)}:\mathbb D\to G_{E(2;2;1,1)}$ such that $\phi^{(z_1)}(0)=(0,0,0)$ and $\phi^{(z_1)}(\lambda_0)=(\tilde{x}_1(z_1),\tilde{x}_2(z_1),\tilde{x}_3(z_1)).$ 

\item[$(3)$] For every $z_2\in \bar{\mathbb D},$ there exists an analytic function $\phi_1^{(z_2)}:\mathbb D\to \Gamma_{E(2;2;1,1)}$ such that $\phi_1^{(z_2)}(0)=(0,0,0)$ and $\phi_1^{(z_2)}(\lambda_0)=(\tilde{y}_1(z_2),\tilde{y}_2(z_2),\tilde{y}_3(z_2)).$ 

\item[$(3^{\prime})$] For every $z_2\in \bar{\mathbb D},$ there exists an analytic function $\phi_1^{(z_2)}:\mathbb D\to G_{E(2;2;1,1)}$ such that $\phi_1^{(z_2)}(0)=(0,0,0)$ and $\phi_1^{(z_2)}(\lambda_0)=(\tilde{y}_1(z_2),\tilde{y}_2(z_2),\tilde{y}_3(z_2))$.

\item[$(4)$] For every $z_3\in \bar{\mathbb D},$ there exists an analytic function $\phi_2^{(z_3)}:\mathbb D\to \Gamma_{E(2;2;1,1)}$ such that $\phi_2^{(z_3)}(0)=(0,0,0)$ and $\phi_2^{(z_3)}(\lambda_0)=(\tilde{z}_1(z_3),\tilde{z}_2(z_3),\tilde{z}_3(z_3)).$ 

\item[$(4^{\prime})$] For every $z_3\in \bar{\mathbb D},$
 there exists an analytic function $\phi_2^{(z_3)}:\mathbb D\to G_{E(2;2;1,1)}$ such that $\phi_2^{(z_3)}(0)=(0,0,0)$ and $\phi_2^{(z_3)}(\lambda_0)=(\tilde{z}_1(z_3),\tilde{z}_2(z_3),\tilde{z}_3(z_3))$.
\item[$(5)$] $\max\{G_1,G_2\}\leq |\lambda_0|,$
where $G_1=\sup_{z_1\in \bar{\mathbb D}}\frac{|\tilde{x}_1(z_1)-\overline{\tilde{x}_2(z_1)}\tilde{x}_3(z_1)|+|\tilde{x}_1(z_1)\tilde{x}_2(z_1)-\tilde{x}_3(z_1)|}{1-|\tilde{x}_2(z_1)|^2},$ \\$G_2=\sup_{z_1\in \bar{\mathbb D}}\frac{|\tilde{x}_2(z_1)-\overline{\tilde{x}_1(z_1)}\tilde{x}_3(z_1)|+|\tilde{x}_1(z_1)\tilde{x}_2(z_1)-\tilde{x}_3(z_1)|}{1-|\tilde{x}_1(z_1)|^2}.$

\item [$(6)$] Either $$\big|\tilde{x}_2(z_1)\big|\leq \big|\tilde{x}_1(z_1)\big|~{\rm{for ~all}}~z_1\in \bar{\mathbb{D}}~{\rm {and}}~ G_1\leq |\lambda_0|,$$
or 
$$\big|\tilde{x}_1(z_1)\big|\leq \big|\tilde{x}_2(z_1)\big|~{\rm{for ~all}}~z_1\in \bar{\mathbb{D}}~{\rm {and}}~ G_2\leq |\lambda_0|.$$

\item [$(7)$] For every $z_1\in \bar{\mathbb D},$ there exists a $2 \times 2$ matrix valued function $F^{(z_1)}$ defined on $\mathbb{D}$ is in the Schur class such that
$$F^{(z_1)}(0)=\left(\begin{smallmatrix} 0 &\star\\0 & 0\end{smallmatrix}\right)~{\rm{and}}~ F^{(z_1)}(\lambda_0)=\left(\begin{smallmatrix} a_{11}^{(z_1)} &a_{12}^{(z_1)} \\a_{21}^{(z_1)}  & a_{22}^{(z_1)} \end{smallmatrix}\right),$$
where $\tilde{x}_1(z_1)=a_{11}^{(z_1)} ,\tilde{x}_2(z_1)=a_{22}^{(z_1)}, \tilde{x}_3(z_1)=\det(F^{(z_1)}(\lambda_0)).$

\item[$(8)$] $\max\{H_1,H_2\}\leq |\lambda_0|,$
where $H_1=\sup_{z_2\in \bar{\mathbb D}}\frac{|\tilde{y}_1(z_2)-\overline{\tilde{y}_2(z_2)}\tilde{y}_3(z_2)|+|\tilde{y}_1(z_2)\tilde{y}_2(z_2)-\tilde{y}_3(z_2)|}{1-|\tilde{y}_2(z_2)|^2},$ \\$H_2=\sup_{z_2\in \bar{\mathbb D}}\frac{|\tilde{y}_2(z_2)-\overline{\tilde{y}_1(z_2)}\tilde{y}_3(z_2)|+|\tilde{y}_1(z_2)\tilde{y}_2(z_2)-\tilde{y}_3(z_2)|}{1-|\tilde{y}_1(z_2)|^2}.$

\item [$(9)$] Either $$\big|\tilde{y}_2(z_2)\big|\leq \big|\tilde{y}_1(z_2)\big|~{\rm{for ~all}}~z_2\in \bar{\mathbb{D}}~{\rm {and}}~ H_1\leq |\lambda_0|,$$
or 
$$\big|\tilde{y}_1(z_2)\big|\leq \big|\tilde{y}_2(z_2)\big|~{\rm{for ~all}}~z_2\in \bar{\mathbb{D}}~{\rm {and}}~ H_2\leq |\lambda_0|.$$

\item [$(10)$] For every $z_2\in \bar{\mathbb D},$ there exists a $2 \times 2$ matrix valued  function $F_{1}^{(z_2)}$ defined on $\mathbb{D}$ is in the Schur class such that
$$F_{1}^{(z_2)}(0)=\left(\begin{smallmatrix} 0 &\star\\0 & 0\end{smallmatrix}\right)~{\rm{and}}~ F_{1}^{(z_2)}(\lambda_0)=\left(\begin{smallmatrix} b_{11}^{(z_2)} &b_{12}^{(z_2)} \\b_{21}^{(z_2)}  & b_{22}^{(z_2)} \end{smallmatrix}\right),$$
where $\tilde{y}_1(z_2)=b_{11}^{(z_2)} ,\tilde{y}_2(z_2)=b_{22}^{(z_2)}, \tilde{y}_3(z_2)=\det(F_{1}^{(z_2)}(\lambda_0)).$

\item[$(11)$] $\max\{I_1,I_2\}\leq |\lambda_0|,$
where $I_1=\sup_{z_3\in \bar{\mathbb D}}\frac{|\tilde{z}_1(z_3)-\overline{\tilde{z}_2(z_3)}\tilde{z}_3(z_3)|+|\tilde{z}_1(z_3)\tilde{z}_2(z_3)-\tilde{z}_3(z_3)|}{1-|\tilde{z}_2(z_3)|^2},$ \\$I_2=\sup_{z_3\in \bar{\mathbb D}}\frac{|\tilde{z}_2(z_3)-\overline{\tilde{z}_1(z_3)}\tilde{z}_3(z_3)|+|\tilde{z}_1(z_3)\tilde{z}_2(z_3)-\tilde{z}_3(z_3)|}{1-|\tilde{z}_1(z_3)|^2}.$

\item [$(12)$] Either $$\big|\tilde{z}_2(z_3)\big|\leq \big|\tilde{z}_1(z_3)\big|~{\rm{for ~all}}~z_3\in \bar{\mathbb{D}}~{\rm {and}}~ I_1\leq |\lambda_0|,$$
or 
$$\big|\tilde{z}_1(z_3)\big|\leq \big|\tilde{z}_2(z_3)\big|~{\rm{for ~all}}~z_3\in \bar{\mathbb{D}}~{\rm {and}}~ I_2\leq |\lambda_0|.$$

\item [$(13)$] For every $z_3\in \bar{\mathbb D},$ there exists a $2 \times 2$  matrix valued function $F_{2}^{(z_3)}$ defined on $\mathbb{D}$ is in the Schur class such that
$$F_{2}^{(z_3)}(0)=\left(\begin{smallmatrix} 0 &\star\\0 & 0\end{smallmatrix}\right)~{\rm{and}}~ F_{2}^{(z_3)}(\lambda_0)=\left(\begin{smallmatrix} c_{11}^{(z_3)} &c_{12}^{(z_3)} \\c_{21}^{(z_3)}  & c_{22}^{(z_3)} \end{smallmatrix}\right),$$
where $\tilde{z}_1(z_3)=c_{11}^{(z_3)} ,\tilde{z}_2(z_3)=c_{22}^{(z_3)}, \tilde{z}_3(z_3)=\det(F_{2}^{(z_3)}(\lambda_0)).$

\end{enumerate}

\end{thm}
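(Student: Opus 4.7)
My plan is to reduce the Schwarz lemma for $\Gamma_{E(3;3;1,1,1)}$ to a parametric family of Schwarz lemmas for the tetrablock $\Gamma_{E(2;2;1,1)}$, using the slicing characterisation of Theorem~\ref{char} to move freely between the two levels. The tetrablock Schwarz lemma of Abouhajar--White--Young (Theorem~3.5 of \cite{Abouhajar}) supplies the quantitative equivalences $(2)\Leftrightarrow(5)\Leftrightarrow(6)\Leftrightarrow(7)$ and their counterparts in the other two blocks; the group structure of the theorem is therefore a translation of one tetrablock result read through three different one-parameter slicings.

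\textbf{Step 1: $(1)\Leftrightarrow(1')$.} This is a direct application of Lemma~\ref{phiD}. If $\phi:\mathbb D\to\Gamma_{E(3;3;1,1,1)}$ has $\phi(0)=0$ and $\phi(\lambda_0)=\textbf{x}\in G_{E(3;3;1,1,1)}$, then $\phi$ hits $G_{E(3;3;1,1,1)}$ at $\lambda_0$, so Lemma~\ref{phiD} gives $\phi(\mathbb D)\subset G_{E(3;3;1,1,1)}$. The converse is trivial since $G_{E(3;3;1,1,1)}\subset\Gamma_{E(3;3;1,1,1)}$.

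\textbf{Step 2: The implications $(1')\Rightarrow(2'),(3'),(4')$.} For each fixed $z_1\in\bar{\mathbb D}$ I set
\[
\phi^{(z_1)}(\lambda):=\Bigl(\tfrac{\phi_2(\lambda)-z_1\phi_3(\lambda)}{1-z_1\phi_1(\lambda)},\;\tfrac{\phi_4(\lambda)-z_1\phi_5(\lambda)}{1-z_1\phi_1(\lambda)},\;\tfrac{\phi_6(\lambda)-z_1\phi_7(\lambda)}{1-z_1\phi_1(\lambda)}\Bigr).
\]
Since $\phi(\lambda)\in G_{E(3;3;1,1,1)}$ forces $(\phi_1,\phi_2,\phi_3)(\lambda)\in G_{E(2;2;1,1)}$, and in particular $|\phi_1(\lambda)|<1$, the denominator is non-vanishing on $\mathbb D\times\bar{\mathbb D}$, so $\phi^{(z_1)}$ is analytic on $\mathbb D$. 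Theorem~\ref{char} then guarantees $\phi^{(z_1)}(\mathbb D)\subset G_{E(2;2;1,1)}$, while $\phi^{(z_1)}(0)=(0,0,0)$ and $\phi^{(z_1)}(\lambda_0)=(\tilde x_1(z_1),\tilde x_2(z_1),\tilde x_3(z_1))$ by construction. Analogous formulas (swapping the roles of the indices by the symmetries built into Theorem~\ref{char}) provide the interpolants $\phi_1^{(z_2)}$ and $\phi_2^{(z_3)}$ required for $(3')$ and $(4')$.

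\textbf{Step 3: Equivalences within each block.} Take the block $(2')\Leftrightarrow(2)\Leftrightarrow(5)\Leftrightarrow(6)\Leftrightarrow(7)$. The implication $(2')\Rightarrow(2)$ is automatic, and $(2)\Rightarrow(2')$ follows by applying the tetrablock analogue of Lemma~\ref{phiD} (Lemma~3.4 of \cite{Abouhajar}): since $(\tilde x_1(z_1),\tilde x_2(z_1),\tilde x_3(z_1))\in G_{E(2;2;1,1)}$ by Theorem~\ref{char} applied to $\textbf{x}\in G_{E(3;3;1,1,1)}$, any interpolating map $\mathbb D\to\Gamma_{E(2;2;1,1)}$ automatically lands inside $G_{E(2;2;1,1)}$. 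The chain $(2)\Leftrightarrow(5)\Leftrightarrow(6)\Leftrightarrow(7)$ is then the Schwarz lemma for the tetrablock (Theorem~3.5 of \cite{Abouhajar}) applied to the triple $(\tilde x_1(z_1),\tilde x_2(z_1),\tilde x_3(z_1))$, combined with a supremum over $z_1\in\bar{\mathbb D}$. The blocks built around $(\tilde y_i(z_2))$ and $(\tilde z_i(z_3))$ are handled identically.

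\textbf{Main obstacle.} The principal technicality is the \emph{uniformity in the slicing parameter}: the tetrablock Schwarz lemma at a fixed $z_1$ produces an interpolant whose norm is controlled by the Pick quantity at $z_1$, but conditions $(5)$--$(7)$ demand that these bounds be simultaneously attainable across all $z_1\in\bar{\mathbb D}$. Establishing that the $\sup_{z_1}$ of the tetrablock obstructions equals the interpolation obstruction for the whole family requires continuity of $z_1\mapsto(\tilde x_1(z_1),\tilde x_2(z_1),\tilde x_3(z_1))$ and a compactness argument on $\bar{\mathbb D}$. A related subtlety, and the reason the theorem lists only $(1')\Rightarrow(2'),(3'),(4')$ rather than full equivalence, is that the existence of the three one-variable interpolants does not in general assemble into a single seven-tuple interpolant: the slicings capture projections of Schwarz data but forget enough information that no converse is available without an additional matching condition.
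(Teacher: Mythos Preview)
Your approach is essentially the same as the paper's: Lemma~\ref{phiD} for $(1)\Leftrightarrow(1')$, composition with the explicit slicing map $\psi^{(z_1)}(\textbf{y})=\bigl(\tfrac{y_2-z_1y_3}{1-y_1z_1},\tfrac{y_4-z_1y_5}{1-y_1z_1},\tfrac{y_6-z_1y_7}{1-y_1z_1}\bigr)$ for $(1')\Rightarrow(2')$, Lemma~3.4 of \cite{Abouhajar} for $(2)\Leftrightarrow(2')$, and the tetrablock Schwarz lemma of \cite{Abouhajar} (cited there as Theorem~1.2, not~3.5) for the chain $(2)\Leftrightarrow(5)\Leftrightarrow(6)\Leftrightarrow(7)$, with the other two blocks handled symmetrically.

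Your ``main obstacle'' is overstated. Conditions $(2)$, $(5)$, and $(7)$ are each formulated pointwise in $z_1$: for every $z_1$ an interpolant exists; the supremum over $z_1$ of the obstruction is at most $|\lambda_0|$; for every $z_1$ a Schur function exists. The passage from ``for all $z_1$, obstruction$(z_1)\le|\lambda_0|$'' to ``$\sup_{z_1}\text{obstruction}(z_1)\le|\lambda_0|$'' is tautological and requires no continuity or compactness; the paper accordingly disposes of the entire block in one line by citation. Your closing remark on why the theorem claims only $(1')\Rightarrow(2'),(3'),(4')$ and not the converses is correct and matches the paper's scope.
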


\begin{proof}
$(1)\Leftrightarrow (1^{\prime})$ follows from the Lemma \ref{phiD}. We now prove that $(1^{\prime})\implies (2^{\prime}).$ For fixed $z_1\in \bar{\mathbb{D}},$ define $\psi^{(z_1)}:G_{E(3;3;1,1,1)}\to G_{E(2;2;1,1)}$ by $$\psi^{(z_1)}(y_1,\ldots,y_7)=\left(\frac{y_2-z_1y_3}{1-y_1z_1},\frac{y_4-z_1y_5}{1-y_1z_1},\frac{y_6-z_1y_7}{1-y_1z_1}\right).$$  Set $\phi^{(z_1)}:=\psi ^{(z_1)}\circ \phi.$ Then we have $\phi^{(z_1)}:\mathbb D\to G_{E(2;2;1,1)}$ and $\phi^{(z_1)}(0)=(0,0,0)$  and $\phi^{(z_1)}(\lambda_0)=(\tilde{x}_1(z_1),\tilde{x}_2(z_1),\tilde{x}_3(z_1))$.  This shows that $(1^{\prime})$ implies $(2^{\prime}).$ Similarly, $(1^{\prime}) \Rightarrow (3^{\prime})$ and $(1^{\prime}) \Rightarrow (4^{\prime})$.

From [Lemma $3.4$, \cite{Abouhajar}], we observe that $(2)\Leftrightarrow (2^{\prime})$. By using the [Theorem $1.2$, \cite{Abouhajar}], we conclude that $(2)\Leftrightarrow (5) \Leftrightarrow (6) \Leftrightarrow (7).$ Using the similar arguments as above, we prove that $(1)\implies (3)\Leftrightarrow (3^{\prime})\Leftrightarrow (8) \Leftrightarrow (9) \Leftrightarrow (10)$ and $(1)\implies (4)\Leftrightarrow (4^{\prime})\Leftrightarrow (11) \Leftrightarrow (12) \Leftrightarrow (13).$

\end{proof}

\begin{thm}
Let $\lambda_0 \in \mathbb D\setminus \{0\}$ and let $\textbf{x}=(x_1,\ldots,x_7)\in G_{E(3;3;1,1,1)}.$ Suppose that there exists a $3\times 3$ matrix valued hololmorphic function $F=((F_{ij}))_{i,j=1}^{3}$ in $ \mathcal S_1(\mathbb C^3,\mathbb C^3)$ such that $F(0)=\left(\begin{smallmatrix} 0 & \star &\star \\0 & 0 & \star \\ 0 & 0 &0\end{smallmatrix}\right)$ and $F(\lambda_0)=A=((a_{ij}))_{i,j=1}^{3},$ where \small{$$x_1=a_{11}, x_2=a_{22}, x_3=\det \left(\begin{smallmatrix} a_{11} & a_{12}\\
					a_{21} & a_{22}
				\end{smallmatrix}\right), x_4=a_{33}, x_5=\det \left(\begin{smallmatrix}
					a_{11} & a_{13}\\
					a_{31} & a_{33}
				\end{smallmatrix}\right), x_6=\det  \left(\begin{smallmatrix}
					a_{22} & a_{23}\\
					a_{32} & a_{33}\end{smallmatrix}\right) ~{\rm{and}}~x_7=\det A.$$} 
Then there exists an analytic function $\phi:\mathbb D\to G_{E(3;3;1,1,1)}$ such that $\phi(0)=(0,0,0,0,0,0,0)$ and $\phi(\lambda_0)=\textbf{x}.$

\end{thm}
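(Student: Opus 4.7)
The plan is to push the matrix-valued function $F$ forward through the polynomial map $\pi_{E(3;3;1,1,1)}$ and verify the three required properties. Concretely, writing $F(\lambda) = ((F_{ij}(\lambda)))_{i,j=1}^{3}$, I would define $\phi:\mathbb D\to \mathbb C^{7}$ by
\begin{align*}
\phi_1(\lambda) &= F_{11}(\lambda), \quad \phi_2(\lambda) = F_{22}(\lambda), \quad \phi_4(\lambda) = F_{33}(\lambda),\\
\phi_3(\lambda) &= F_{11}(\lambda)F_{22}(\lambda) - F_{12}(\lambda)F_{21}(\lambda),\\
\phi_5(\lambda) &= F_{11}(\lambda)F_{33}(\lambda) - F_{13}(\lambda)F_{31}(\lambda),\\
\phi_6(\lambda) &= F_{22}(\lambda)F_{33}(\lambda) - F_{23}(\lambda)F_{32}(\lambda),\\
\phi_7(\lambda) &= \det F(\lambda).
\end{align*}
Each $\phi_j$ is a polynomial in the entries $F_{ij}$, so $\phi$ is holomorphic on $\mathbb D$. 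By construction $\phi(\lambda_0) = \textbf{x}$, which handles the second interpolation condition.

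Next I would verify $\phi(0)=(0,\ldots,0)$. The hypothesis on $F(0)$ says that $F(0)$ is strictly upper triangular, i.e.\ $F_{ii}(0)=0$ for $i=1,2,3$ and $F_{21}(0)=F_{31}(0)=F_{32}(0)=0$. Substituting into the above formulas, each of $\phi_1(0),\phi_2(0),\phi_4(0)$ vanishes immediately, while
$\phi_3(0) = 0\cdot 0 - F_{12}(0)\cdot 0 = 0$, and similarly $\phi_5(0)=\phi_6(0)=0$. Finally $\phi_7(0)=\det F(0)=0$ because $F(0)$ is upper triangular with zero diagonal. This is exactly why the author imposes the specific zero pattern on $F(0)$.

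The remaining point is that $\phi(\lambda)$ lands in $G_{E(3;3;1,1,1)}$, not merely in $\mathbb C^{7}$. Since $F\in\mathcal S_{1}(\mathbb C^{3},\mathbb C^{3})$ we have $\|F(\lambda)\|\le 1$ for every $\lambda\in\mathbb D$, so Theorem~\ref{matix barAAAA} (applied to the matrix $F(\lambda)$) gives $\phi(\lambda)\in\Gamma_{E(3;3;1,1,1)}$ for all $\lambda\in\mathbb D$. Thus $\phi$ is a holomorphic map from $\mathbb D$ into $\Gamma_{E(3;3;1,1,1)}$. Since $\phi(\lambda_0)=\textbf{x}\in G_{E(3;3;1,1,1)}$ by assumption, $\phi$ sends at least one point of $\mathbb D$ into the open set $G_{E(3;3;1,1,1)}$, so Lemma~\ref{phiD} upgrades this to $\phi(\mathbb D)\subset G_{E(3;3;1,1,1)}$, completing the proof.

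I do not anticipate a real obstacle here: all three ingredients (the polynomial description of $\pi_{E(3;3;1,1,1)}$, Theorem~\ref{matix barAAAA} converting $\|F(\lambda)\|\le 1$ into $\phi(\lambda)\in\Gamma_{E(3;3;1,1,1)}$, and Lemma~\ref{phiD} passing from closure to open domain) are already available in the paper. The only subtlety to be careful with is the bookkeeping in Step~2 verifying $\phi(0)=0$; in particular one must use both that the diagonal entries of $F(0)$ vanish and that the strictly lower triangular entries $F_{21}(0),F_{31}(0),F_{32}(0)$ vanish, so that the $2\times 2$ minors $\phi_3,\phi_5,\phi_6$ all collapse to $0$ regardless of the free $\star$-entries $F_{12}(0),F_{13}(0),F_{23}(0)$.
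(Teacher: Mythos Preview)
Your proposal is correct and follows exactly the paper's approach: define $\phi=\pi_{E(3;3;1,1,1)}\circ F$, use Theorem~\ref{matix barAAAA} to land in $\Gamma_{E(3;3;1,1,1)}$, and then invoke Lemma~\ref{phiD} at the point $\lambda_0$ to pass to $G_{E(3;3;1,1,1)}$. You have in fact written out more detail than the paper (the explicit verification that the strictly-lower-triangular zeros in $F(0)$ kill the $2\times 2$ minors), but the argument is the same.
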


\begin{proof}
Suppose that $F$ satisfies the above conditions. Then, by Theorem \ref{matix barAAAA}, the function $\phi$ is defined by $$\phi=\left(F_{11}, F_{22}, \det \left(\begin{smallmatrix} F_{11} & F_{12}\\
F_{21} & F_{22}
\end{smallmatrix}\right), F_{33}, \det \left(\begin{smallmatrix}
F_{11} & F_{13}\\
F_{31} & F_{33}
\end{smallmatrix}\right), \det \left(\begin{smallmatrix}
F_{22} & F_{23}\\
F_{32} & F_{33}\end{smallmatrix}\right),\det F\right),$$ is an analytic function from $\mathbb D $ to $\Gamma_{E(3;3;1,1,1)}$ satisfies $\phi(0)=(0,0,0,0,0,0,0)$ and $\phi(\lambda_0)=\textbf{x}$. By Lemma \ref{phiD}, we have $\phi(\mathbb D)\subset G_{E(3;3;1,1,1)}. $ This completes the proof.
\end{proof} 
We now prove the necessary conditions for Schwarz's lemma for $\Gamma_{E(3;2;1,2)}$. 
\begin{lem}\label{phiD1}
Let $\psi:\mathbb D \to \Gamma_{E(3;2;1,2)}$ be a analytic function.  Suppose $\psi$ maps a point of $\mathbb D$ into $G_{E(3;2;1,2)}$. Then $\psi(\mathbb D)\subset G_{E(3;2;1,2)}.$
\end{lem}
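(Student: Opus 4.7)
The plan is to mimic the argument of Lemma \ref{phiD}, but using the characterization of $G_{E(3;2;1,2)}$ provided by Proposition \ref{bhch}. Write $\psi(\lambda)=(\psi_1(\lambda),\psi_2(\lambda),\psi_3(\lambda),\psi_4(\lambda),\psi_5(\lambda))$ and assume that $\psi(\lambda_0)\in G_{E(3;2;1,2)}$ for some $\lambda_0\in\mathbb D$. By Proposition \ref{bhch}, this means that $\psi_5(\lambda_0)z_2^2-\psi_4(\lambda_0)z_2+1\neq 0$ for all $z_2\in\bar{\mathbb D}$, and that $\sup_{z_2\in\bar{\mathbb D}}|\Psi_3(z_2,\psi(\lambda_0))|<1$. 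Equivalently, the pair $(\psi_4(\lambda_0),\psi_5(\lambda_0))$ belongs to the open symmetrized bidisc $G_{E(2;1;2)}$, and the rational modulus condition above holds.

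First I would show that the auxiliary map $\tilde\psi:=(\psi_4,\psi_5)\colon\mathbb D\to\Gamma_{E(2;1;2)}$ actually has its image in $G_{E(2;1;2)}$. Since $\psi(\mathbb D)\subset\Gamma_{E(3;2;1,2)}$, Proposition \ref{bhchh} gives $\tilde\psi(\mathbb D)\subset\Gamma_{E(2;1;2)}$. Now use the standard Schwarz-type lemma for the closed symmetrized bidisc: for every $z\in\bar{\mathbb D}$ the function $\lambda\mapsto\Lambda(z,\tilde\psi(\lambda))=\dfrac{2z\psi_5(\lambda)-\psi_4(\lambda)}{2-z\psi_4(\lambda)}$ is holomorphic on $\mathbb D$ (the denominator cannot vanish, as $|\psi_4(\lambda)|<2$ on a neighborhood of $\lambda_0$ and the zero set of $2-z\psi_4$ is analytic, giving a standard continuation argument), maps $\mathbb D$ into $\bar{\mathbb D}$, and at $\lambda_0$ lands in $\mathbb D$; hence the Schwarz--Pick lemma forces $|\Lambda(z,\tilde\psi(\lambda))|<1$ for every $\lambda\in\mathbb D$ and every $z\in\bar{\mathbb D}$. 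Taking supremum in $z$ gives $\tilde\psi(\lambda)\in G_{E(2;1;2)}$ for all $\lambda\in\mathbb D$.

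Next, with $\tilde\psi(\mathbb D)\subset G_{E(2;1;2)}$ in hand, the denominator $\psi_5(\lambda)z_2^2-\psi_4(\lambda)z_2+1$ is nowhere zero on $\mathbb D\times\bar{\mathbb D}$. Consequently, for each fixed $z_2\in\bar{\mathbb D}$, the function
\[
\lambda\mapsto \Psi_3\bigl(z_2,\psi(\lambda)\bigr)=\frac{\psi_3(\lambda)z_2^2-\psi_2(\lambda)z_2+\psi_1(\lambda)}{\psi_5(\lambda)z_2^2-\psi_4(\lambda)z_2+1}
\]
is holomorphic on $\mathbb D$, maps $\mathbb D$ into $\bar{\mathbb D}$ (since $\psi(\lambda)\in\Gamma_{E(3;2;1,2)}$ and Proposition \ref{bhchh} yields $\|\Psi_3(\cdot,\psi(\lambda))\|_{H^\infty(\mathbb D)}\leq 1$), and at $\lambda=\lambda_0$ the value lies in $\mathbb D$ by hypothesis. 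The Schwarz--Pick lemma then yields $|\Psi_3(z_2,\psi(\lambda))|<1$ for every $\lambda\in\mathbb D$ and every $z_2\in\bar{\mathbb D}$. Fixing $\lambda\in\mathbb D$ and combining with the nonvanishing of the denominator, Proposition \ref{bhch} gives $\psi(\lambda)\in G_{E(3;2;1,2)}$, which is the required conclusion.

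The main technical obstacle is the first step: justifying that $(\psi_4,\psi_5)$ maps all of $\mathbb D$ into the \emph{open} symmetrized bidisc. A clean way to handle it is either to invoke the known Schwarz lemma for $G_{E(2;1;2)}$ (which is folklore and an immediate consequence of the characterization via the $\Lambda$ functional together with the maximum principle), or to prove it in place by exactly the Schwarz--Pick argument sketched above, being slightly careful to verify holomorphy of $\Lambda(z,\tilde\psi(\cdot))$ in spite of the a priori possibility $|\psi_4|=2$ (which one rules out near $\lambda_0$, extends by openness, and finally globalizes through the identity principle on the analytic set where $2-z\psi_4=0$). Once that step is secured, the rest of the argument reduces to a single clean application of Schwarz--Pick, parallel to Lemma \ref{phiD}.
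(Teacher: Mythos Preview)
Your proposal is correct and follows the same overall two-step strategy as the paper: first show $(\psi_4,\psi_5)$ lands in the open symmetrized bidisc $G_{E(2;1;2)}$, then apply Schwarz--Pick to $\lambda\mapsto\Psi_3(z_2,\psi(\lambda))$ for each fixed $z_2\in\bar{\mathbb D}$ and invoke Proposition \ref{bhch}.

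The only real difference is in how the first step is handled. The paper does not argue directly with the $\Lambda$ functional; instead it pushes $\tilde\psi=(\psi_4,\psi_5)$ into the tetrablock via the embedding $\theta_1(s,p)=(\tfrac{s}{2},\tfrac{s}{2},p)$, so that $\tilde\psi_1=\theta_1\circ\tilde\psi:\mathbb D\to\Gamma_{E(2;2;1,1)}$, and then simply quotes the known result for the tetrablock (Lemma 3.4 of \cite{Abouhajar}) to conclude $\tilde\psi_1(\mathbb D)\subset G_{E(2;2;1,1)}$, whence $\tilde\psi(\mathbb D)\subset G_{E(2;1;2)}$. This sidesteps entirely the holomorphy/denominator issue you flag. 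Your direct route via $\Lambda$ is fine too, but the ``continuation argument'' you sketch is more elaborate than needed: since $\psi_4:\mathbb D\to\{|w|\le 2\}$ and $|\psi_4(\lambda_0)|<2$, the maximum modulus principle immediately gives $|\psi_4|<2$ on all of $\mathbb D$, so $2-z\psi_4(\lambda)\neq 0$ for every $z\in\bar{\mathbb D}$ and every $\lambda\in\mathbb D$, and the Schwarz--Pick step goes through without further fuss. Either way, the second step is identical to the paper's.
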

\begin{proof}
Let $\psi(\lambda)=(\psi_1(\lambda),\psi_2(\lambda),\psi_3(\lambda),\psi_4(\lambda),\psi_5(\lambda))$ for $\lambda\in \mathbb D$ and $\psi(\lambda_0)\in G_{E(3;2;1,2)}$ for some $\lambda_0\in \mathbb D.$ Then by characterization of $G_{E(3;2;1,2)},$  we have \begin{equation}\label{philam}\sup_{z\in \bar{\mathbb D}}|\Psi_3(z,\psi(\lambda_0))|<1~{\rm{ and}}~(\psi_4(\lambda_0),\psi_5(\lambda_0))\in G_{E(2;1;2)}.
\end{equation} 
It follows from the characterization of $\Gamma_{E(3;2,1,2)}$ that  $\tilde{\psi}:=(\psi_4,\psi_5):\mathbb D\to \Gamma_{E(2;1,2)}$ is a analytic function. However,  from proof of Lemma \ref{analytic retr 1} we deduce  that  $(\frac{\psi_4}{2},\frac{\psi_4}{2},\psi_5)\in \Gamma_{E(2;2;1,1)}$ if and only if $(\psi_4,\psi_5)\in \Gamma_{E(2;1;2)}.$  Let $\theta_1$ be the map from $\Gamma_{E(2;1;2)}$ to $\Gamma_{E(2;2;1,1)} $ defined by
	$$\theta_1(s,p)=(\frac{s}{2},\frac{s}{2}, p).$$ Set $\tilde{\psi}_1=\theta_1\circ \tilde{\psi}.$ Then $\tilde{\psi}_1:\mathbb D\to \Gamma_{E(2;2,1,1)}$ is a analytic function. It indicates from [Lemma $3.4$ \cite{Abouhajar}] that $\tilde{\psi}_1(\mathbb D)\subseteq G_{E(2;2;1,1)}$ and hence  $\tilde{\psi}(\mathbb D)\subseteq G_{E(2;1;2)}.$ Fix $z\in \bar{\mathbb D}.$ The function $\lambda \to \Psi_3(z,\psi(\lambda))$ is well defined and analytic on $\mathbb D$, maps $\mathbb D$ into $\bar{\mathbb D}$ and $\lambda_0$ into $\mathbb D$, hence it implies from the Schwarz-Pick lemma that it maps all of $\mathbb D$ into $\mathbb D.$ Now fix $\lambda \in \mathbb D$, the map $\Psi_3(z,\psi(\lambda))$ maps $\bar{\mathbb D}$ to $\mathbb D$ and hence again by characterization of $G_{E(3;2;1,2)},$ $\psi(\mathbb D)\subset G_{E(3;2;1,2)}.$ This completes the proof.

\end{proof}
We state the necessary conditions for Schwarz's lemma for $G_{E(3;2;1,2)}$ and its proof follows by using the same argument as in Theorem \ref{schwarz}. Consequently, we omit the proof.
\begin{thm}
Let $\lambda_0 \in \mathbb D\setminus \{0\}$ and let $\tilde{\textbf{x}}=(x_1,x_2,x_3,y_1,y_2)\in G_{E(3;2;1,2)}.$ Then, in the following, $(1)\Leftrightarrow (1^{\prime}), (2)\Leftrightarrow (2^{\prime})\Leftrightarrow (3) \Leftrightarrow (4) \Leftrightarrow (5)$ and $ (1)\implies (2)$:
\begin{enumerate}
\item[$(1)$] There exists an analytic function $\psi:=(\psi_1,\ldots,\psi_5):\mathbb D\to \Gamma_{E(3;2;1,2)}$ such that $\psi(0)=(0,0,0,0,0)$ and $\psi(\lambda_0)=\tilde{\textbf{x}}.$
\item[ $(1^{\prime})$] There exists an analytic function $\psi:\mathbb D\to G_{E(3;2;1,2)}$ such that $\psi(0)=(0,0,0,0,0)$ and $\psi(\lambda_0)=\tilde{\textbf{x}}.$

\item[$(2)$] For every $z\in \bar{\mathbb D},$ there exists an analytic function $\psi^{(z)}:\mathbb D\to \Gamma_{E(2;2;1,1)}$ such that $\psi^{(z)}(0)=(0,0,0)$ and $\psi^{(z)}(\lambda_0)=(p_1(z),p_2(z),p_3(z)),$ where $$p_1(z)=\frac{2x_1-zx_2}{2-y_1z},p_2(z)=\frac{y_1-2zy_2}{2-y_1z}~{\rm{and}}~p_3(z)=\frac{x_2-2zx_3}{2-y_1z}.$$

\item[$(2^{\prime})$] For every $z\in \bar{\mathbb D},$ there exists an analytic function $\psi^{(z)}:\mathbb D\to G_{E(2;2;1,1)}$ such that $\psi^{(z)}(0)=(0,0,0)$ and $\psi^{(z)}(\lambda_0)=(p_1(z),p_2(z),p_3(z))$.

%\item[$3.$] There exists an analytic function $\phi_1^{(z_2)}:\mathbb D\to \Gamma_{E(2;2;1,1)}$ such that $\phi_1^{(z_2)}(0)=(0,0,0)$ and $\phi_1^{(z_2)}(\lambda_0)=(\tilde{y}_1(z_2),\tilde{y}_2(z_2),\tilde{y}_3(z_2))$ for all $z_2\in \mathbb D.$
%
%\item[$3^{\prime}.$] There exists an analytic function $\phi_1^{(z_2)}:\mathbb D\to G_{E(2;2;1,1)}$ such that $\phi_1^{(z_1)}(0)=(0,0,0)$ and $\phi_1^{(z_2)}(\lambda_0)=(\tilde{y}_1(z_1),\tilde{y}_2(z_1),\tilde{y}_3(z_1))$ for all $z_2\in \bar{\mathbb D}.$
%
%\item[$4.$] There exists an analytic function $\phi_2^{(z_3)}:\mathbb D\to \Gamma_{E(2;2;1,1)}$ such that $\phi_2^{(z_3)}(0)=(0,0,0)$ and $\phi_2^{(z_3)}(\lambda_0)=(\tilde{z}_1(z_3),\tilde{z}_2(z_3),\tilde{z}_3(z_3))$ for all $z_3\in \mathbb D.$
%
%\item[$4^{\prime}.$] There exists an analytic function $\phi_2^{(z_3)}:\mathbb D\to G_{E(2;2;1,1)}$ such that $\phi_2^{(z_3)}(0)=(0,0,0)$ and $\phi_2^{(z_3)}(\lambda_0)=(\tilde{z}_1(z_3),\tilde{z}_2(z_3),\tilde{z}_3(z_3))$ for all $z_3\in \bar{\mathbb D}.$

\item[$(3)$] $\max\{\tilde{G}_1,\tilde{G}_2\}\leq |\lambda_0|,$
where $\tilde{G}_1=\sup_{z\in \bar{\mathbb D}}\frac{|p_1(z)-\overline{p_2(z)}p_3(z)|+|p_1(z)p_2(z)-p_3(z)|}{1-|p_2(z)|^2},$ \\$\tilde{G}_2=\sup_{z\in \bar{\mathbb D}}\frac{|p_2(z)-\overline{p_1(z)}p_3(z_1)|+|p_1(z)p_2(z)-p_3(z)|}{1-|p_1(z)|^2}.$

\item [$(4)$] Either $$\big|p_2(z)\big|\leq \big|p_1(z)\big|~{\rm{for~all}}~z\in \bar{\mathbb D}~{\rm {and}}~ \tilde{G}_1\leq |\lambda_0|$$
or 
$$\big|p_1(z)\big|\leq \big|p_2(z)\big|~{\rm{for~all}}~z\in \bar{\mathbb D}~{\rm {and}}~ \tilde{G}_2\leq |\lambda_0|$$

\item [$(5)$] For every $z\in \bar{\mathbb D},$ there exists a $2 \times 2$ matrix valued function $\tilde{F}^{(z)}$ defined on $\mathbb{D}$ is in the Schur class such that
$$\tilde{F}^{(z)}(0)=\left(\begin{smallmatrix} 0 &\star\\0 & 0\end{smallmatrix}\right)~{\rm{and}}~ F^{(z)}(\lambda_0)=\left(\begin{smallmatrix} d_{11}^{(z)} &d_{12}^{(z)} \\d_{21}^{(z)}  & d_{22}^{(z)} \end{smallmatrix}\right),$$
where $d_{11}^{(z)} =p_1(z),d_{22}^{(z)} =p_2(z), p_3(z)=\det(\tilde{F}^{(z)}(\lambda_0)).$ \end{enumerate}
\end{thm}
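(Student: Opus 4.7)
The plan is to mirror the argument used for Theorem \ref{schwarz}, with the tetrablock playing the same auxiliary role but now accessed through the rational map $z\mapsto(p_1(z),p_2(z),p_3(z))$ coming from Lemma \ref{Bharalidomain} rather than through the three ``slice'' maps $\tilde{\textbf{x}}^{(z_1)},\tilde{\textbf{y}}^{(z_2)},\tilde{\textbf{z}}^{(z_3)}$ of Theorem \ref{char}. The equivalence $(1)\Leftrightarrow(1^{\prime})$ is immediate from Lemma \ref{phiD1}: if an analytic $\psi\colon\mathbb D\to\Gamma_{E(3;2;1,2)}$ sends $\lambda_0$ to $\tilde{\textbf{x}}\in G_{E(3;2;1,2)}$, then $\psi$ hits $G_{E(3;2;1,2)}$ at one point and the open-mapping-style Lemma \ref{phiD1} forces $\psi(\mathbb D)\subseteq G_{E(3;2;1,2)}$.

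For $(1)\Rightarrow(2)$, I would fix $z\in\bar{\mathbb D}$ and define $\sigma^{(z)}\colon G_{E(3;2;1,2)}\to G_{E(2;2;1,1)}$ by
\[
\sigma^{(z)}(u_1,u_2,u_3,v_1,v_2)=\Bigl(\tfrac{2u_1-z u_2}{2-v_1 z},\tfrac{v_1-2zv_2}{2-v_1 z},\tfrac{u_2-2zu_3}{2-v_1 z}\Bigr).
\]
Lemma \ref{Bharalidomain} (combined with Proposition \ref{bhchh}, which guarantees $2-v_1 z\neq0$ on $\bar{\mathbb D}$ whenever the first argument lies in $\Gamma_{E(3;2;1,2)}$) shows $\sigma^{(z)}$ is a well-defined holomorphic map landing in $G_{E(2;2;1,1)}$. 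Then $\psi^{(z)}:=\sigma^{(z)}\circ\psi\colon\mathbb D\to G_{E(2;2;1,1)}$ inherits $\psi^{(z)}(0)=(0,0,0)$ from $\psi(0)=0$ and satisfies $\psi^{(z)}(\lambda_0)=(p_1(z),p_2(z),p_3(z))$ by direct substitution, giving $(2^{\prime})$, and a fortiori $(2)$.

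The equivalence $(2)\Leftrightarrow(2^{\prime})$ is a one-line application of [Lemma 3.4, \cite{Abouhajar}] to each slice: any holomorphic $\psi^{(z)}\colon\mathbb D\to\Gamma_{E(2;2;1,1)}$ sending $0$ to the interior point $(0,0,0)$ automatically takes values in $G_{E(2;2;1,1)}$. The chain $(2)\Leftrightarrow(3)\Leftrightarrow(4)\Leftrightarrow(5)$ is then obtained slice-by-slice from the Schwarz-lemma characterization for the tetrablock [Theorem 1.2, \cite{Abouhajar}]: for each fixed $z\in\bar{\mathbb D}$, the existence of an analytic map from $\mathbb D$ to $\Gamma_{E(2;2;1,1)}$ sending $0\mapsto 0$ and $\lambda_0\mapsto(p_1(z),p_2(z),p_3(z))$ is equivalent to the two Möbius-type inequalities $\max\{\tilde G_1(z),\tilde G_2(z)\}\le|\lambda_0|$ (taking supremum over $z\in\bar{\mathbb D}$ gives (3)), to the dichotomy in (4), and to the existence of a $2\times2$ Schur-class function $\tilde F^{(z)}$ with the prescribed $(1,1)$-, $(2,2)$-entries and determinant (which is (5)).

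The only delicate step is making sure the sup-over-$z$ reformulations in (3)–(5) are valid uniformly. Concretely, when passing from the pointwise tetrablock Schwarz lemma at each $z$ to the global statement, I need $\tilde G_1,\tilde G_2$ to be well-defined bounded functions on $\bar{\mathbb D}$; this uses continuity of $p_i$ on $\bar{\mathbb D}$, which in turn rests on $2-y_1 z\neq 0$ throughout $\bar{\mathbb D}$ (Proposition \ref{bhchh}). Choosing $\psi^{(z)}$ to depend holomorphically on $z$ is not required for the equivalences (2)–(5) because those are pointwise-in-$z$ statements, so the potential measurable-selection difficulty does not arise — this is the same subtlety that is benign in Theorem \ref{schwarz}. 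I expect no new obstacle beyond this bookkeeping, and the implication $(1)\Rightarrow$ each of the others proceeds exactly as outlined; the reverse implication to $(1)$ is deliberately not claimed in the theorem.
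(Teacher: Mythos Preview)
Your proposal is correct and follows essentially the same route as the paper, which explicitly omits the proof and says it ``follows by using the same argument as in Theorem \ref{schwarz}.'' Your use of Lemma \ref{phiD1} for $(1)\Leftrightarrow(1')$, the composition with the slicing map $\sigma^{(z)}$ for $(1')\Rightarrow(2')$, and the invocation of [Lemma 3.4, \cite{Abouhajar}] and [Theorem 1.2, \cite{Abouhajar}] for the chain $(2)\Leftrightarrow(2')\Leftrightarrow(3)\Leftrightarrow(4)\Leftrightarrow(5)$ exactly mirror the structure of the paper's proof of Theorem \ref{schwarz}. One small citation point: to conclude that $\sigma^{(z)}$ lands in the \emph{open} tetrablock $G_{E(2;2;1,1)}$ for all $z\in\bar{\mathbb D}$ you want the open analogue of Lemma \ref{Bharalidomain} (obtained via Proposition \ref{bhch} rather than Proposition \ref{bhchh}), since Lemma \ref{Bharalidomain} as stated addresses only $\Gamma_{E(3;2;1,2)}$ and $z\in\mathbb D$; this is routine and does not affect the argument.
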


\begin{thm}
Let $\lambda_0 \in \mathbb D\setminus \{0\}$ and let $\tilde{\textbf{x}}=(x_1,x_2,x_3,y_1,y_2)\in G_{E(3;2;1,2)}.$ Suppose that there exists a $3\times 3$ matrix valued function $F=((F_{ij}))_{i,j=1}^{3}$ in $\mathcal S_1(\mathbb C^3,\mathbb C^3)$ with $F(0)=\left(\begin{smallmatrix} 0 & \star &\star \\0 & 0 & \star \\ 0 & 0 &0\end{smallmatrix}\right)$ and $F(\lambda_0)=B,$ where \small{$$x_1=b_{11},  x_2=\det \left(\begin{smallmatrix} b_{11} & b_{12}\\
					b_{21} & b_{22}
				\end{smallmatrix}\right)+\det \left(\begin{smallmatrix}
					b_{11} & b_{13}\\
					b_{31} & b_{33}
				\end{smallmatrix}\right), x_3=\det B, y_1=b_{22}+b_{33},~{\rm{and}}~y_2=\det  \left(\begin{smallmatrix}
					b_{22} & b_{23}\\
					b_{32} & b_{33}\end{smallmatrix}\right) .$$} 
Then there exists an analytic function $\psi:\mathbb D\to G_{E(3;2;1,2)}$ such that $\psi(0)=(0,0,0,0,0)$ and $\psi(\lambda_0)=\tilde{\textbf{x}}.$

\end{thm}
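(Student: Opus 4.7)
The plan is to mimic the construction used in the preceding theorem for $\Gamma_{E(3;3;1,1,1)}$: compose the given Schur-class matrix function $F$ with the polynomial map $\tilde{\pi}_{E(3;2;1,2)}$, and then upgrade the conclusion from $\Gamma_{E(3;2;1,2)}$ to $G_{E(3;2;1,2)}$ using Lemma~\ref{phiD1}. Concretely, I would define
\[
\psi(\lambda) := \tilde{\pi}_{E(3;2;1,2)}(F(\lambda)), \qquad \lambda \in \mathbb{D},
\]
so that the five coordinates of $\psi(\lambda)$ are polynomial expressions in the entries of $F(\lambda)$. Since $F$ is holomorphic on $\mathbb{D}$, $\psi$ is automatically holomorphic as a map $\mathbb{D} \to \mathbb{C}^5$.

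Next I would verify the two interpolation conditions. From the hypothesis $F(\lambda_0) = B$ together with the defining identities for $x_1,\dots, y_2$ in terms of the entries of $B$, we get $\psi(\lambda_0) = \tilde{\textbf{x}}$ immediately. At $\lambda = 0$, the upper-triangular form of $F(0)$ forces all diagonal entries of $F(0)$ to vanish and all $2\times 2$ submatrices appearing in $\tilde{\pi}_{E(3;2;1,2)}$ to have the form $\bigl(\begin{smallmatrix} 0 & * \\ 0 & 0\end{smallmatrix}\bigr)$, whose determinant is $0$; similarly $\det F(0) = 0$. Hence $\psi(0) = (0,0,0,0,0)$.

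The crucial step is to check that $\psi(\mathbb{D}) \subset \Gamma_{E(3;2;1,2)}$. Because $F \in \mathcal{S}_1(\mathbb{C}^3,\mathbb{C}^3)$ means $\|F(\lambda)\| \le 1$ for every $\lambda \in \mathbb{D}$, Theorem~\ref{matix ABCD} applied pointwise to the contraction $F(\lambda)$ gives exactly $\psi(\lambda) \in \Gamma_{E(3;2;1,2)}$ for each $\lambda \in \mathbb{D}$. Thus $\psi$ is an analytic map from $\mathbb{D}$ into $\Gamma_{E(3;2;1,2)}$.

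Finally, to produce the stronger conclusion $\psi(\mathbb{D}) \subset G_{E(3;2;1,2)}$, I would invoke Lemma~\ref{phiD1}: any analytic map $\mathbb{D} \to \Gamma_{E(3;2;1,2)}$ that sends even one point of $\mathbb{D}$ into the open set $G_{E(3;2;1,2)}$ must send all of $\mathbb{D}$ into $G_{E(3;2;1,2)}$. Since $\psi(\lambda_0) = \tilde{\textbf{x}} \in G_{E(3;2;1,2)}$ by hypothesis, this applies and finishes the proof. The argument is essentially a bookkeeping exercise — there is no serious obstacle; the only mildly delicate point is confirming $\psi(0) = 0$ from the strictly upper-triangular zero pattern of $F(0)$, and (more conceptually) appreciating that Lemma~\ref{phiD1} is what converts a closed-domain-valued map into an open-domain-valued map.
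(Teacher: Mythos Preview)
Your proposal is correct and follows essentially the same approach as the paper: define $\psi=\tilde{\pi}_{E(3;2;1,2)}\circ F$, invoke Theorem~\ref{matix ABCD} to land in $\Gamma_{E(3;2;1,2)}$, verify the interpolation conditions from the strictly upper-triangular shape of $F(0)$, and then apply Lemma~\ref{phiD1} to upgrade to $G_{E(3;2;1,2)}$. This is exactly the argument the paper gives for the analogous $G_{E(3;3;1,1,1)}$ result (with Theorem~\ref{matix barAAAA} and Lemma~\ref{phiD} in place of Theorem~\ref{matix ABCD} and Lemma~\ref{phiD1}), and the paper omits the proof here precisely because it is identical.
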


\textsl{Acknowledgements:}
The second-named author is supported by the research project of SERB with ANRF File Number: CRG/2022/003058 and the third named author thankfully acknowledges the financial support provided by Mathematical Research Impact Centric Support (MATRICS) grant, File no: MTR/2020/000493, by the Science and Engineering Research Board (SERB), Department of Science and Tech-nology (DST), Government of India. 
\vskip-1cm

%%%%%%%%%%%%%%%%%%%%%%%%%%%%%%%%%%%%%%%%%%%%%%%%%%%%%%%%%%%%%%%%%%%%%%%%%%

\end{document}